\newtheorem{theorem}{Theorem}
\newtheorem{example}[theorem]{Example}
\newtheorem{definition}[theorem]{Definition}
\newtheorem{lemma}[theorem]{Lemma}
\newtheorem{corollary}[theorem]{Corollary}
\newtheorem{remark}[theorem]{Remark}
\newtheorem{proposition}[theorem]{Proposition}
\newtheorem*{theorem*}{Theorem}
\newcommand {\rat}   {\ensuremath{\mathbb{Q}}}
\newcommand {\pt}    {\ensuremath{\operatorname{pt}}}
\newcommand {\Hom}   {\ensuremath{\operatorname{Hom}}}
\begin{document}

\title{Link Bundles and Intersection Spaces of Complex Toric Varieties}

\author{Markus Banagl}

\address{Institut f\"ur Mathematik, Universit\"at Heidelberg,
  Im Neuenheimer Feld 205, 69120 Heidelberg, Germany}

\email{banagl@mathi.uni-heidelberg.de}

\author{Shahryar Ghaed Sharaf}

\address{Fakultät für Mathematik und Informatik, Friedrich-Schiller-Universität Jena,
	Ernst-Abbe-Platz 2 , 07743 Jena, Germany}

\email{shahryar.ghaed.sharaf@uni-jena.de}

\date{May 31, 2024}

\subjclass[2020]{55N33, 14M25, 57S12, 57N80}
% 55N33 = intersection homology
% 14M25 = toric varieties
% 57S12 = toric topology
% 57N80 = Stratifications in topological manifolds

\keywords{Toric Varieties, Intersection Homology, Stratified Spaces}

	%%%%%% Abstract %%%%%%
\begin{abstract}
There exist several homology theories for singular spaces that satisfy generalized 
Poincar\'e duality, including 
Goresky-MacPherson's intersection homology, Cheeger's $L^2$ cohomology 
and the homology of intersection spaces. The intersection homology and $L^2$ cohomology 
of toric varieties is known. Here, we compute the rational homology of 
intersection spaces of complex 3-dimensional toric varieties and compare it
to intersection homology.
To achieve this, we analyze cell structures and topological 
stratifications of these varieties and determine compatible structures on 
their singularity links. 
In particular, we compute the homology of links in 3-dimensional toric varieties.
We find it convenient to use the concept of a rational homology stratification.
It turns out that the intersection space 
homology of a toric variety, contrary to its intersection homology, is not 
combinatorially invariant and thus retains more refined information on the defining fan.
\end{abstract}

\maketitle

\tableofcontents
	
	%%%%%% Main Text %%%%%%
	
	\section{Introduction}
	
	A toric variety is a complex algebraic variety containing an
	algebraic torus as an open dense subset, such that the action of the
	torus on itself extends to the whole variety. Such varieties
	can be described by combinatorial data, called fans.
	Toric varieties are generally singular, and their homology
	does not usually satisfy Poincar\'e duality.
	
	We focus here on compact complex $3$-dimensional singular toric varieties associated to complete fans.
	For these, the fourth Betti number only depends on the number $f_1$ of
	$1$-dimensional cones in the fan, but the second Betti number depends
	in addition on a parameter $b$, which then measures the discrepancy
	between these Betti numbers. The parameter $b$ is not ``combinatorial'',
	i.e. it depends on the precise form of the fan, not just on the
	number of cones in the various dimensions. If $b\not= 0$, then
	Poincar\'e duality is violated.
	
	There are several cohomology theories available that
	restore duality. The intersection cohomology $IH^*$ of Goresky and
	MacPherson \cite{goresky1980intersection},
	the $L^2$-cohomology of Cheeger \cite{cheeger1980hodge}, \cite{cheeger1979spectral},
	\cite{cheeger1983spectral} for appropriately conical metrics
	on the regular part, and the homotopy-theoretic method of intersection
	spaces $IX$ \cite{banagl2010intersection}, 
	yielding a cohomology theory $HI^* (X) := H^* (IX)$.
	Using a conical metric, Cheeger's cohomology is
	isomorphic to intersection cohomology for a space with, say, only
	even-dimensional strata. While intersection homology of a complex
	$3$-dimensional toric variety does repair Poincar\'e duality, it retains
	only
	a relatively small amount of the actual topology: The middle
	Betti number (indeed all odd Betti numbers) vanishes (Stanley \cite{stanley1987generalized}), and the
	second (and thus fourth) Betti number only depends on $f_1$, as has been shown by Fieseler in \cite{fieseler1991rational}.
	
	It is well-known that $IH^*$ and $HI^*$ are generally not isomorphic.
	Does the homology of intersection spaces perhaps retain more information
	of the toric variety (while also repairing duality)?
	We find this indeed to be the case:
	One of our main results is the computation of $HI^*$ for compact
	complex $3$-dimensional toric varieties. We show that
	$HI^*$ depends on $f_1,$ $f_2$ and $b$; more precisely:
	\begin{theorem*}
		(Theorem \ref{IntSecBetti})\\
		Let $X_{\mathcal{P}}$ be a compact complex $3$-dimensional toric variety with $m$ $\mathbb{Q}$-isolated singularities, $m \geq 1$, where $\mathcal{P}$ is the underlying polytope. Let $\Sigma$ be the dual fan to $\mathcal{P}$. We denote the number of $1$-dimensional and $2$-dimensional cones of $\Sigma$ by $f_{1}$ and $f_{2}$. Then
		\begin{align*}
			\operatorname{rk}(\widetilde{H}_{6}(IX))&=0 \\
			\operatorname{rk}(\widetilde{H}_{5}(IX))&=m-1 \\
			\operatorname{rk}(\widetilde{H}_{4}(IX))&= f_{1}-3-b\\
			\operatorname{rk}(\widetilde{H}_{3}(IX))&=2(3f_{1}-f_{2}-b-6) \\
			\operatorname{rk}(\widetilde{H}_{2}(IX))&=f_{1}-3-b \\
			\operatorname{rk}(\widetilde{H}_{1}(IX))&=m-1\\
			\operatorname{rk}(\widetilde{H}_{0}(IX))&=0,
		\end{align*}
where $b=\operatorname{rk}(H_{4}(X))-\operatorname{rk}(H_{2}(X))$ and
$\widetilde{H}_* (-)$ denotes reduced singular homology. 
	\end{theorem*}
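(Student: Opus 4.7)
The plan is to use the long exact sequence associated to the intersection-space construction for a pseudomanifold with (rationally) isolated singularities, then feed in the toric data gathered earlier in the paper. Since $\dim_{\mathbb{R}} X_{\mathcal{P}} = 6$ and the middle perversity satisfies $\bar m(6) = 2$, each link $L_i$ (of real dimension $5$) is replaced by a spatial homology truncation $\tau_{<3} L_i$ that agrees with $L_i$ in rational homology in degrees $0,1,2$ and vanishes above. Writing $M := X_{\mathcal{P}} \setminus \bigsqcup_i \operatorname{int}(cL_i)$ and $L := \bigsqcup_i L_i$, one has $IX_{\mathcal{P}} \simeq M \cup_{\tau_{<3} L} \operatorname{cone}(\tau_{<3} L)$, and hence the long exact sequence
\[
\cdots \to H_r(\tau_{<3} L; \rat) \to H_r(M; \rat) \to \widetilde{H}_r(IX_{\mathcal{P}}; \rat) \to H_{r-1}(\tau_{<3} L; \rat) \to \cdots .
\]

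Next I would compute $H_*(M; \rat)$ from the long exact sequence of the pair $(X_{\mathcal{P}}, M)$, using the excision isomorphism $H_r(X_{\mathcal{P}}, M; \rat) \cong \bigoplus_{i=1}^{m} \widetilde{H}_{r-1}(L_i; \rat)$. Into this sequence one feeds the rational Betti numbers of a compact complex $3$-dimensional toric variety (vanishing in odd degrees, with $\operatorname{rk} H_2 = f_1 - 3 - b$, $\operatorname{rk} H_4 = f_1 - 3$, and $\operatorname{rk} H_0 = \operatorname{rk} H_6 = 1$) together with the rational link Betti numbers obtained in the preceding sections from the combinatorial structure of the singular cones of $\Sigma$.

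For $r \in \{4,5,6\}$ the truncated link contributes nothing, so $\widetilde{H}_r(IX_{\mathcal{P}}) \cong H_r(M)$; for example, $\widetilde{H}_5(IX_{\mathcal{P}}) = \operatorname{coker}\bigl(H_6(X_{\mathcal{P}}) \to \bigoplus_i \widetilde{H}_5(L_i)\bigr) = \rat^{m-1}$, since the connecting map sends the fundamental class of $X_{\mathcal{P}}$ to the sum of the link fundamental classes. The two low-degree answers $\widetilde{H}_1 = m-1$ and $\widetilde{H}_2 = f_1 - 3 - b$ fall out symmetrically from the other end of the sequence, and are \emph{a posteriori} consistent with the generalized Poincar\'e duality $\widetilde{H}_r(IX_{\mathcal{P}}) \cong \widetilde{H}_{6-r}(IX_{\mathcal{P}})$ enjoyed by intersection spaces.

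The main obstacle is the middle degree $r = 3$, which sits exactly at the truncation threshold:
\[
0 \to H_3(M; \rat) \to \widetilde{H}_3(IX_{\mathcal{P}}; \rat) \to H_2(L; \rat) \to H_2(M; \rat) \to \widetilde{H}_2(IX_{\mathcal{P}}; \rat) \to \cdots .
\]
Here one has to pin down the rank of the inclusion-induced map $H_2(L; \rat) \to H_2(M; \rat)$, i.e.\ decide which $2$-cycles in the links bound in the regular part. This will require the description of the $2$-dimensional orbit closures of $X_{\mathcal{P}}$ and of their interaction with each singular point, and it is where the quantity $3f_1 - f_2 - 6$ (which measures how many additional $2$-cones a full triangulation of the boundary of $\Sigma$ on $S^2$ would require, by the relation $E = 3V - 6$ for a triangulated sphere) naturally appears. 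I expect the factor of $2$ in $\widetilde{H}_3(IX_{\mathcal{P}}) = 2(3f_1 - f_2 - b - 6)$ to emerge from Poincar\'e--Lefschetz duality on the manifold $M$, which forces $\operatorname{rk} H_3(M)$ and $\operatorname{rk} \ker\bigl(H_2(L) \to H_2(M)\bigr)$ to contribute equal summands to $\widetilde{H}_3(IX_{\mathcal{P}})$ once the link-level combinatorics of the earlier sections are inserted.
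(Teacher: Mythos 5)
Your overall strategy --- excise cone-like neighborhoods of the rationally singular points, compute $H_*(\mathcal{M})$, then feed this into the cofiber long exact sequence for the intersection space --- is the right one, and in degrees away from $3$ it is essentially what the paper does, except that the paper computes $H_*(\mathcal{M})$ via Lefschetz duality for $(\mathcal{M},\partial\mathcal{M})$ and the isomorphism $H_i(X)\cong H_i(\mathcal{M},\partial\mathcal{M})$ for $i\geq 2$ rather than through the long exact sequence of $(X,\mathcal{M})$. Two genuine gaps remain, however.

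First, the parenthetical claim that the rational homology of a compact complex $3$-dimensional toric variety vanishes in all odd degrees is false: $H_1(X;\mathbb{Q})$ and $H_5(X;\mathbb{Q})$ vanish, but $\operatorname{rk}H_3(X;\mathbb{Q}) = 3f_1 - f_2 - b - 6$, which is generally nonzero (Proposition \ref{OrdiHom}). Vanishing of all odd Betti numbers is Stanley's theorem for \emph{intersection} homology; it does not hold for ordinary homology of singular toric varieties, and this discrepancy is exactly what makes the theorem interesting. Since your computation of $H_*(\mathcal{M})$ uses the Betti numbers of $X$, this error would propagate.

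Second, and more seriously, the middle-degree step does not follow from Lefschetz duality alone. In the exact piece
\begin{align*}
0 \to H_3(\mathcal{M}) \to \widetilde{H}_3(IX) \to H_2(L) \xrightarrow{\;i_*\;} H_2(\mathcal{M}),
\end{align*}
the long exact sequence of $(\mathcal{M},L)$ gives $\operatorname{rk}\ker(i_*) = \operatorname{rk}H_3(\mathcal{M},L) - \operatorname{rk}(j_*)$, where $j_*\colon H_3(\mathcal{M})\to H_3(\mathcal{M},L)$. Lefschetz duality gives $\operatorname{rk}H_3(\mathcal{M}) = \operatorname{rk}H_3(\mathcal{M},L)$, and since $j_*$ in degree $3$ is self-dual under Lefschetz duality in dimension $6$, duality yields no constraint whatsoever on $\operatorname{rk}(j_*)$: it can in principle be anywhere between $0$ and $\operatorname{rk}H_3(\mathcal{M})$. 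So the assertion that duality \emph{forces} $\operatorname{rk}H_3(\mathcal{M})$ and $\operatorname{rk}\ker(i_*)$ to be equal is a non sequitur. The ingredient that actually closes this gap is Stanley's vanishing theorem $I^{\bar n}H_3(X)=0$, combined with the identity $\operatorname{rk}\widetilde{H}_3(IX)+\operatorname{rk}I^{\bar n}H_3(X)=\operatorname{rk}H_3(\mathcal{M})+\operatorname{rk}H_3(\mathcal{M},\partial\mathcal{M})$ from Corollary \ref{GenCor} (equivalently, the fact that $\operatorname{rk}(j_*)=\operatorname{rk}I^{\bar n}H_3(X)=0$). Without Stanley's theorem the factor of $2$ does not emerge.
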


Table \ref{table.hixihx} of the concluding Section \ref{conclusion} compares the above ranks
to the ranks of intersection homology.	
The latter is obtained by imposing restrictions on how chains intersect 
the strata. Alternatively, one may wish to implement similar local modifications on 
the spatial level rather than the chain level. This motivates in part the theory of 
intersection spaces introduced by the first named author in \cite{banagl2010intersection}. 
To a stratified pseudomanifold $X$ and perversity $\bar{p}$, one wishes to associate a space
\[
I^{\bar{p}}X,
\]
an \emph{intersection space} of $X$, such that the ordinary 
reduced rational homology $\widetilde{H}_{\ast}(I^{\bar{p}}X; \mathbb{Q})$ 
satisfies generalized Poincar\'e 
duality across complementary perversities when $X$ is closed and oriented. 
In the absence of odd-co-dimensional strata and if $\bar{p}$ is the middle 
perversity $\bar{p}=\bar{m}$, then $IX=I^{\bar{m}}X$ satisfies Poincar\'e self-duality 
rationally.
The idea in forming intersection spaces
	is to use the homotopy cofiber of Moore approximations of singularity links.
	Thus we carry out a detailed analysis of these links in the toric case.
	An important structural feature
	that has been particularly emphasized by
	Agust\'in and de Bobadilla in their work on intersection space pairs \cite{vicente2020intersection}
	is that the link bundles of toric varieties
	can be trivialized, see Proposition \ref{trivlink} in the present paper. 
	In \cite{fischli1992toric}, Fischli describes a procedure to endow compact toric 
	varieties with CW structures. However, due to the resulting complexity in higher 
	dimensions, the computation was carried out only up to dimension 2. 
	We construct CW structures on complex 3-dimensional
	toric varieties and on their links. These structures allow us to
	determine the homology of all links, particularly of the
	real $5$-dimensional ones (Proposition \ref{Link3D}). This material may be of independent interest, notwithstanding the theory of intersection spaces.  
	The CW structure on the links also allows us to construct
	their Moore approximations explicitly.
	We use a concept of $\mathbb{Q}$-homology stratified pseudomanifolds in order to
	be able to form the intersection space based on isolated singularity
	techniques. The observation here is that the real $3$-dimensional links
	are rational homology spheres, so do not disturb duality. Homology stratifications are treated by Rourke and Sanderson in \cite{rourkesanderson}.

The theory $HI^{\ast}$ has
had applications in fiber bundle theory and computation of equivariant cohomology
(\cite{banagl2013isometric}), K-theory (\cite[Chapter~2.8]{banagl2010intersection},\cite{spiegel2013k}), algebraic geometry (smooth deformation of
singular varieties \cite{banagl2012intersection}, perverse sheaves 
\cite{banagl2014intersection}, mirror symmetry \cite[Chapter~3.8]{banagl2010intersection}),
 theoretical Physics (\cite[Chapter~3]{banagl2010intersection}, 
\cite{banagl2014intersection}). An $L^2$ Hodge theorem for $HI^{\ast}$ has been 
provided in \cite{banagl2019hodge} based on the use of the scattering metric. 
That theorem then provides a purely topological interpretation of spaces
of extended weighted $L^2$-harmonic forms with respect to fibered scattering metrics.
A de Rham 
complex describing $HI^{\ast}$ with $\mathbb{R}$ coefficients was developed in \cite{banagl2016foliated} for stratification depth one and by Essig in 
\cite{essig2016rham} for higher stratification depth, see also
\cite{essigmatcontemp}.
Schl\"oder and Essig obtained multiplicative 
de Rham theorems for intersection space cohomology in \cite{schloder2019multiplicative}. 
Higher stratification depth was addressed through the notion of intersection space pairs, 
introduced by Agust\'in Vicente and De Bobadilla in  \cite{vicente2020intersection}. 
Verdier self-dual intersection space complexes are treated by  Agust\'in Vicente, Essig and 
De Bobadilla  in  \cite{agustin2020self}. For isolated singularities, 
Klimczak \cite{klimczak2017poincare} explained how to attach a top dimensional cell 
to $IX$ so as to make it into a geometric Poincar\'e complex whose Poincar\'e duality 
isomorphism is given by capping with a fundamental class. Wrazidlo has generalized this 
to non-isolated singularities of depth one in \cite{wrazidlo2021fundamental}.

\section{Preliminaries}
\label{sec.preliminaries}

Let us recall the definition of a rational polyhedral cone in $\mathbb{R}^{n}$.
\begin{definition}\label{Def:Polyhedra}
	We call $\sigma \subset \mathbb{R}^{n}$ a rational polyhedral cone, if there 
	exist finitely many vectors $\nu_{1}, \dots ,\nu_{k} \in \mathbb{Z}^{n}$ 
	such that 
	\begin{align}
	\sigma = \{ x_{1} \nu_{1}+ \dots + x_{k} \nu_{k} ~\vert ~ x_{1}, \dots, x_{k} 
	  \in \mathbb{R}_{\geq 0} \}.
	\end{align} 
	$\sigma$ is \textbf{proper} if it is not spanned by any proper subset of $\{\nu_{1}, \dots , \nu_{k} \}$,	and $\nu_{1}, \dots , \nu_{k}$ lie strictly on one side of some hyperplane in $\mathbb{R}^{n}$.
\end{definition}
	
The dimension of $\sigma$ is defined to be the dimension of $\operatorname{span}(\sigma) \subset \mathbb{R}^{n}$. Note also that $\sigma$ is called simplicial if the generating vectors $\nu_{1}, \dots ,\nu_{k} \in \mathbb{Z}^{n}$ can be chosen linearly independent. This implies that $k=\operatorname{dim}(\sigma)$.
\begin{definition}
We call a cone $\tau$ a (proper) face of cone $\sigma$ if it is spanned by a (proper) subset of $\{ \nu_{1}, \dots ,\nu_{k} \}$ and contained in the topological boundary of $\sigma$. In this case, we write $\tau \preceq \sigma$ ($\tau \prec \sigma$ if $\tau$ is a proper face of $\sigma$). 
\end{definition}

\begin{definition}[Complete rational polyhedral cone] \label{ConeComplex}
	A \textbf{complete rational polyhedral cone complex} (\textbf{complete fan}) is a nonempty set $\Sigma$ of proper rational polyhedral cones in $\mathbb{R}^{n}$ satisfying the following conditions:
	\begin{enumerate}
		\item If $\tau$ is a face of a cone $\sigma \in \Sigma$, then $\tau \in \Sigma$.
		\item If $\sigma , \sigma^{\prime} \in \Sigma$, then $\sigma \cap \sigma^{\prime}$ is a face of both $\sigma$ and $\sigma^{\prime}$.
		\item $\bigcup_{\sigma \in \Sigma} \sigma = \mathbb{R}^{n}$.
	\end{enumerate}  
\end{definition} 
Note that $\{0\}$ is in $\Sigma$ as $\Sigma$ contains at least one cone and $\{0\}$ is a face on any cone.
\begin{definition}
Let $\Sigma$ be a complete rational polyhedral cone complex in $\mathbb{R}^n$. 
Reversing the inclusions and the dimensional grading in the face lattice of $\Sigma$, we obtain 
the face lattice of an \textit{abstract} polyhedron which can be realized in 
$\mathbb{R}^{n}$ as a regular (polyhedral) cell complex. We denote this abstract 
polyhedron by $\mathcal{P}(\Sigma)$ and its geometrical realization by 
$\vert \mathcal{P}(\Sigma) \vert$. The abstract polyhedron $\mathcal{P}(\Sigma)$ 
is called \textbf{ the dual polyhedron} or \textbf{the dual polytope}. If the fan 
$\Sigma$ is understood we will simply write $\mathcal{P}$. By abuse of notation 
we will frequently write $\mathcal{P}$ for $\vert \mathcal{P} \vert$. The dimension of $\mathcal{
P}(\Sigma)$ is $n$, since the top dimensional cell of $\mathcal{
P}(\Sigma)$ is associated to the cone $\{0\}$ and it is $n$-dimensional. 
\end{definition}

\begin{remark}\label{Bijection}
The faces in $\mathcal{P}(\Sigma)$ are in one-to-one correspondence 
(in complementary dimension) with the cones in $\Sigma$. This defines a 
bijection $\delta: \mathcal{P} \longrightarrow \Sigma$ and we denote the dual 
cone to $\tau \in \mathcal{P}$ by $\delta(\tau)\in \Sigma$. In particular $\Sigma$ can be reconstructed from $\mathcal{
P}$. From that perspective we shall also write $\Sigma= \Sigma_{\mathcal{P}}$. 
\end{remark}

\begin{definition} 
A CW complex is called \textbf{regular} if its characteristic maps can 
be chosen to be embeddings.
\end{definition}

\begin{definition}{} 
	The 0-dimensional faces of $\mathcal{P}$ are called \textbf{vertices}. The 1-dimensional faces are called \textbf{edges} and \textbf{facets} are faces with co-dimension 1. The set
	$\mathcal{P}^{i} = \{ F: \: F \: \textit{is a face of} \: \mathcal{P} , \: \dim(F) \leq i \}$ is the i-skeleton of $\mathcal{P}$.
\end{definition}

\noindent \textbf{Regular CW structure of the dual polytope $\mathcal{P}$}. \\
Note that $\mathcal{P}$ is homeomorphic to an $n$-disc $\mathcal{D}^{n}$, 
where $\dim(\mathcal{P})=n$. The structure of the underlying fan induces a regular CW structure on $\mathcal{P}$ as follows: \\
The vertex $\{ 0 \} \in \Sigma$ is dual to the interior of $\mathcal{P}$, represented by an $n$-dimensional cell in the CW structure. Each $k$-dimensional cone in $\Sigma$ is dual to an $(n-k)$-dimensional face of $\mathcal{P}$, represented by an $(n-k)$-dimensional cell in the CW structure. Note also that the boundary maps are induced from the inclusion of faces in $\Sigma$ (or dually in $\mathcal{P}$). This gives us a CW complex on $\mathcal{P} \cong \mathcal{D}^{n}$, which we will use later.
Although we do not require the exact form of the boundary operators of $\mathcal{P}$ in our preceding discussion, we briefly describe the attaching maps. 
As usual, we start with a discrete set, $X^{0}$, whose points represent the vertices of $\mathcal{P}$. We glue each 1-dimensional cell to its topological boundary in $\mathcal{P}$, which consists of its neighboring 0-dimensional cells.
Inductively, we attach each $k$-cell, representing a $k$-dimensional face of $\mathcal{P}$, to its lower-dimensional neighboring faces. Note that $\vert \mathcal{P} \vert - \operatorname{int}(\mathcal{P}) \cong \mathcal{S}^{n-1}$. In the last step we attach $\operatorname{int}(\mathcal{P})$, represented by the $n$-cell, to its topological boundary $\vert \mathcal{P} \vert - \operatorname{int}(\mathcal{P})$. Note that due to the fact that each $k$-dimensional face of $\mathcal{P}$ is homeomorphic to $\mathcal{D}^{k}$, each characteristic map can be chosen to be an embedding. Thus, the above CW structure is a regular CW structure.

Let $T^{n}=$\scalebox{1.3}{ $\sfrac{\mathbb{R}^{n}}{\mathbb{Z}^{n}}$}$ 
\cong \overbrace{ \mathcal{S}^{1} \times \dots \times \mathcal{S}^{1}}^{n}$ 
be an $n$-torus. The projection map $\pi: \mathbb{R}^{n} \longrightarrow  
\text{\scalebox{1.3}{ $\sfrac{\mathbb{R}^{n}}{\mathbb{Z}^{n}} $}}$ maps a 
\textbf{rational} $k$-dimensional linear subspace $\mathbf{V} \subset \mathbb{R}^{n}$ 
to a compact subtorus $\pi(\mathbf{V}) \subset T^{n}$. The \textbf{rationality} here 
means that $\mathbf{V}$ has a basis in $\mathbb{Z}^{n}$. Now each affine $k$-plane 
parallel to $\mathbf{V}$  in $\mathbb{R}^{n}$ determines a subtorus ``parallel'' to 
$\pi(\mathbf{V})$. Collapsing each of these parallel subtori to a point will 
give us a compact subspace $\text{\scalebox{1.3}{$\sfrac{T^{n}}{\pi(\mathbf{V})}$} } \subset T^{n}$, which is obviously homeomorphic to $T^{n-k}$.\\

\noindent \textbf{Toric varieties}. \\
In this work, we mainly study compact toric varieties, from a topological perspective.   
Hence, we restrict ourselves to toric varieties associated with complete fans. 
This ensures that the resulting toric variety is compact. Compact toric varieties are 
even-dimensional topological pseudomanifolds with only even dimensional strata.
We recall the description of toric varieties as given by Yavin in \cite{yavin1990intersection}. \\ 
Topologically, we can construct the toric variety associated with a \textbf{complete rational cone complex} $\Sigma$ (\textbf{complete fan}) as follows: \\
Let $\mathcal{P}$ be the dual polytope of $\Sigma$ and $\overline{X}=\mathcal{P} \times T^{n}$. Each $\sigma_{D}=\delta(\sigma)$ , a $k$-dimensional face of $\mathcal{P}$, is dual to an $(n-k)$-dimensional cone $\sigma$ in $\Sigma$. Let $\mathbf{V}_{\sigma}$
be the linear span of $\sigma$ in $\mathbb{R}^{n}$. Now if $x \in \operatorname{int}(\sigma_{\operatorname{D}})$ we collapse $\{ x \} \times T^{n}$ to $\{ x \} \times \text{\scalebox{1.3}{$\sfrac{T^{n}}{\pi(\mathbf{V}_{\sigma})}$}}$. This procedure can be done for each face of $\mathcal{P}$ and the resulting space $X_{\mathcal{P}}$ or $(X_{\Sigma})$  is called the \textbf{toric variety} associated with $\Sigma$ (or $\mathcal{P}$). \\
Note that for each $x \in \mathcal{P}$ there is a unique $\sigma_{D}$ such that $x \in \operatorname{int}(\sigma_{D})$. Thus the above construction is well-defined.

\begin{example}[$n=2$]\label{2D}
	Let $n,m \in \mathbb{N}_{\geq 0}$, $(n,m) \neq 0$ and n and m be relatively prime.
	Consider the following fan:
		\begin{align*}
		\Sigma_{2} &= \big\{ \{ 0 \}, \tau_{1},\tau_{2},\tau_{3},\tau_{4},\sigma_{12},\sigma_{23},\sigma_{34},\sigma_{41} \big\} \\
		& \text{where} \\
		\tau_{1} &= \big\{ x(n,m) \; \vert \; x \in \mathbb{R}_{\geq 0} \big\}, \\
		\tau_{2} &= \big\{ x(-m,n) \; \vert \; x \in \mathbb{R}_{\geq 0} \big\}, \\
		\tau_{3} &= \big\{ x(-n,-m) \; \vert \; x \in \mathbb{R}_{\geq 0} \big\}, \\
		\tau_{4} &= \big\{ x(m,-n) \; \vert \; x \in \mathbb{R}_{\geq 0} \big\}, \\
		\sigma_{12} &= \big\{ x(n,m)+ y(-m,n) \;  \vert \; x,y \in \mathbb{R}_{\geq 0}\big\};
		\end{align*}
see Figure \ref{fig:Fan2D}.
	\begin{figure}[tbh]
		\centering
		\includegraphics[width=.55\linewidth]{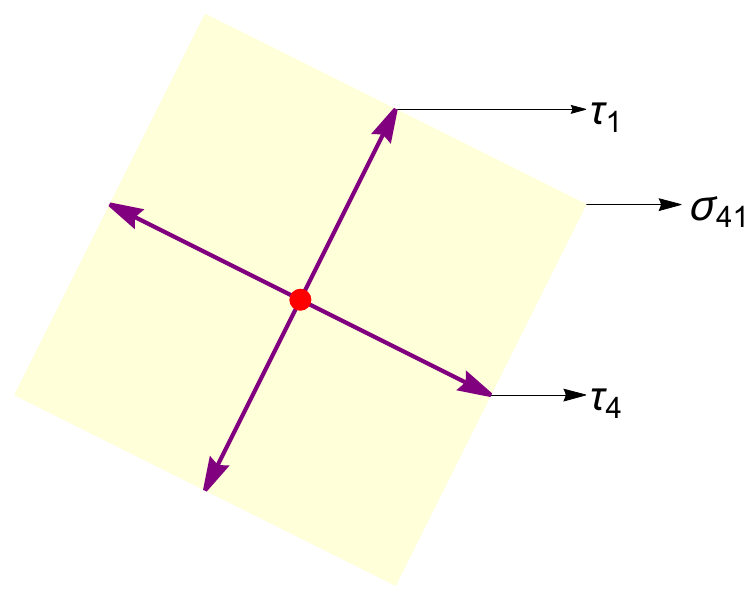}
		\caption{The 2-dimensional complete fan $\Sigma_{2}$. }
		\label{fig:Fan2D}
	\end{figure}

The cones $\sigma_{23}$, $\sigma_{34}$ and $\sigma_{41}$ are defined similarly. 
The dual polytope is homeomorphic to an 2-dimensional convex polygon with four 
1-dimensional
	faces or simply a square. Thus we can write 
	$\overline{X}= \mathcal{I} \times \mathcal{I} \times \mathcal{T}^{2}$. 
	The torus $\mathcal{T}^{2}$ can be written as 
	$\pi(\mathbf{V}_{\tau_{1}}) \times \pi(\mathbf{V}_{\tau_{2}})$. 
	Hence $\overline{X}$ can be rewritten as 
	$\big( \mathcal{I} \times \pi(\mathbf{V}_{\tau_{1}})  \big) \times  
	 \big( \mathcal{I} \times \pi(\mathbf{V}_{\tau_{2}})\big) $. 
	 Note that due to the structure of $\Sigma_{2}$ applying the topological 
	 construction of toric varieties yields that either $\pi(\mathbf{V}_{\tau_{1}})$ 
	 or $\pi(\mathbf{V}_{\tau_{2}})$ is subject to collapses on 1-dimensional faces 
	 of $\mathcal{P}(\Sigma_{2})$. Based on this observation it is easy to show 
	 that $X_{\Sigma_{2}} \cong X_{\Sigma_{1}} \times X_{\Sigma_{1}} \cong \mathbb{P}^{1} 
	 \times \mathbb{P}^{1}$, where $\Sigma_{1}= \big\{ \{0\}, \tau_{1}, \tau_{3} \big\}$;
	 see Figure \ref{fig:Toric2D}.
\begin{figure}[!h]
	\centering
	\includegraphics[width=.55\linewidth]{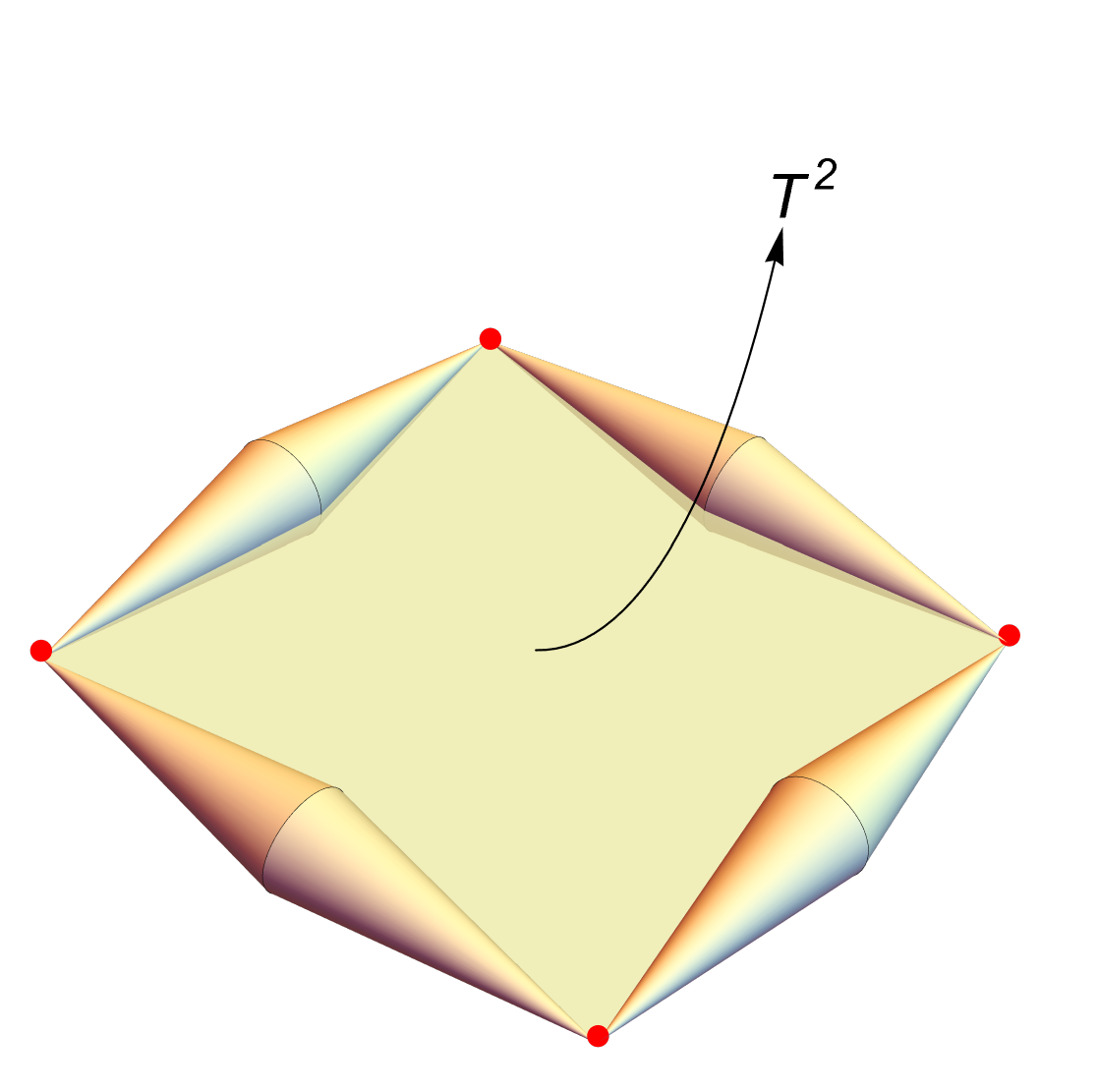}
	\caption{The toric variety associated to $\Sigma_{2}$ is homeomorphic 
	 to $\mathcal{S}^{2} \times \mathcal{S}^{2}$.}
	\label{fig:Toric2D}
\end{figure}
\end{example}

\begin{example}[$n=3$]
	Consider the following fan, $\Sigma_{3}$, where $\tau_{z}$ is generated by $\hat{i}_{z}=(0,0,1)$. The cones $\tau_{x}$ and $\tau_{y}$ are generated by $\hat{i}_{x}$ and $\hat{i}_{y}$, respectively. The fourth 1-dimensional cone of $\Sigma_{3}$ is generated by $i_{4}=(-1,-1,-1)$. The generators of the 2-dimensional cone $\sigma_{xz}$ are $\hat{i}_{x}$ and $\hat{i}_{z}$ and the five remaining 2-dimensional cones are constructed in a similar manner. Finally, $\omega_{xyz}$ is generated by $\hat{i}_{x}$, $\hat{i}_{y}$ and $\hat{i}_{z}$. 
	The three remaining 3-dimensional cones are constructed similarly;
	see Figure \ref{fig:Fan3D}. It is easy to show that the dual polytope is a tetrahedron. Applying the topological construction of toric varieties on $\Sigma_{3}$ yields a toric variety which is homeomorphic to $\mathbb{P}^{3}$ (Figure \ref{fig:Toric3D}).
\end{example}

% [!h]
\begin{figure}
	\centering
	\includegraphics[width=.65\linewidth]{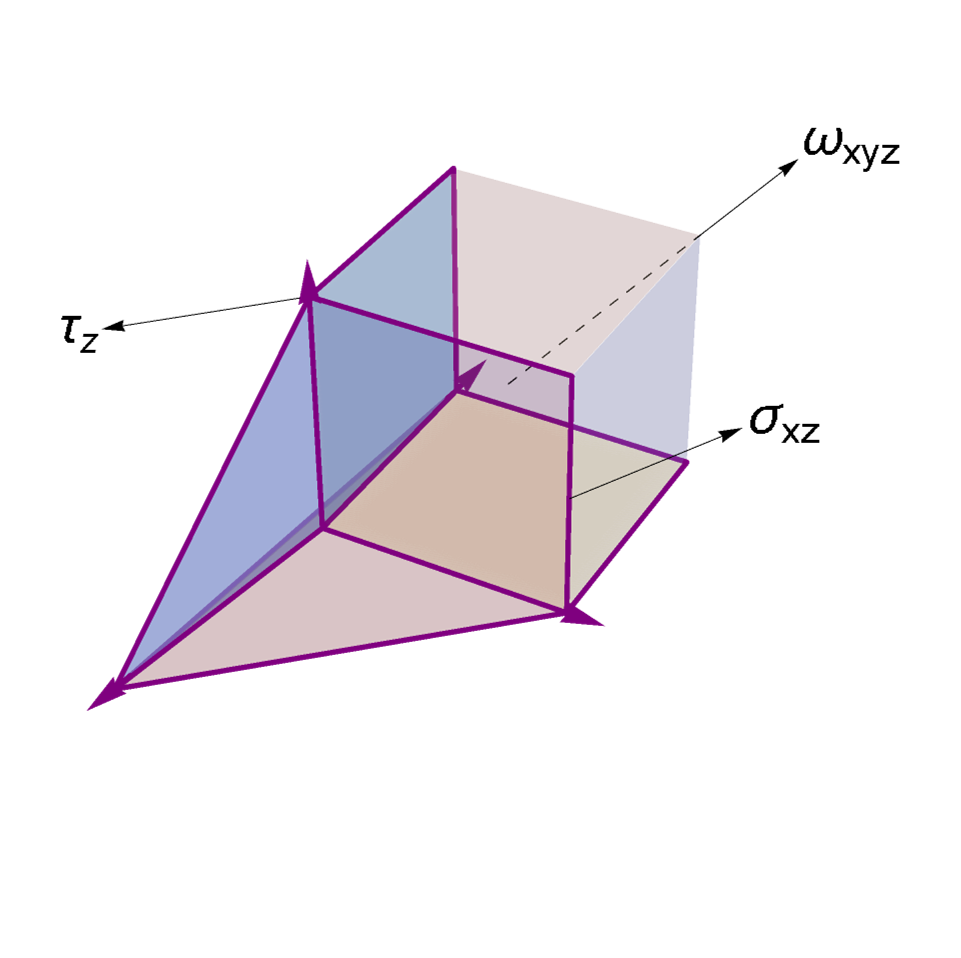}
	\caption{The complete fan $\Sigma_{3}$ in $\mathbb{R}^{3}$.}
	\label{fig:Fan3D}
\end{figure}

% [H]
\begin{figure}
	\centering
	\includegraphics[width=.65\linewidth]{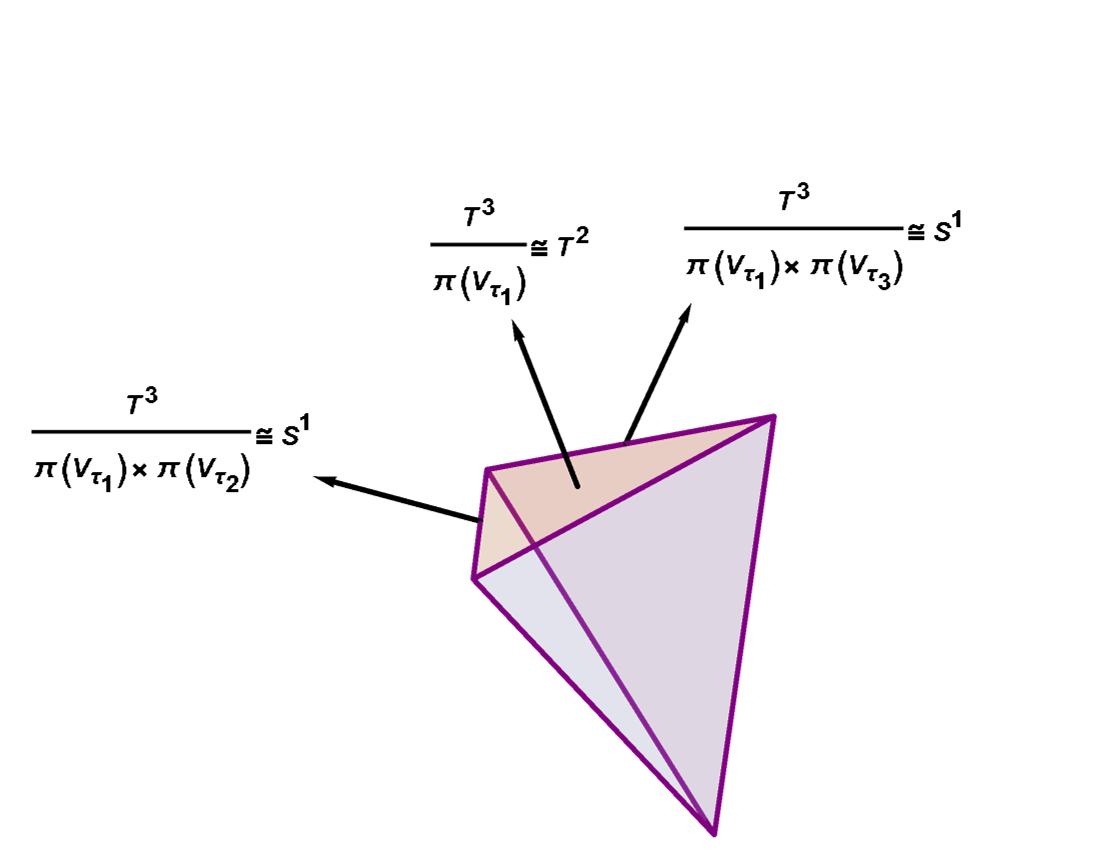}
	\caption{The toric variety $X_{\Sigma_{3}}$ is homeomorphic to $\mathbb{P}^{3}$. }
	\label{fig:Toric3D}
\end{figure}

\section{Topology of Toric Varieties}

In this section, we study toric varieties from the perspective of stratified pseudomanifolds. First of all, we will recall that toric varieties are indeed pseudomanifolds. In the next step, we will verify that the link bundles of toric varieties are topologically trivial. Toric varieties are normal pseudomanifolds. In other words, the link of the stratum with depth 1 is always homeomorphic to a circle $\mathcal{S}^1$.\\

\begin{definition}\label{pseudomanifolds}
	We define a \textbf{topologically stratified space} inductively on dimension. A 0-dimensional topologically stratified space $X$ is a countable set with the discrete topology. For $m > 0$ an \textbf{$m$-dimensional topologically stratified space} is a para-compact Hausdorff topological space $X$ equipped with a filtration
	\begin{align*}
		X=X_{m} \supseteq X_{m-1} \supseteq \dots \supseteq X_{1} \supseteq X_{0} \supseteq X_{-1}= \emptyset
	\end{align*}
	 by closed subsets $X_{j}$ such that if $x \in X_{j}-X_{j-1}$ there exists a neighborhood $\mathcal{N}_{x}$ of $x$ in $X$, a compact $(m-j-1)$-dimensional topologically stratified space $\mathcal{L}$ with filtration
	\begin{align*}
		\mathcal{L}=\mathcal{L}_{m-j-1} \supseteq \dots \supseteq \mathcal{L}_{1} \supseteq  \mathcal{L}_{0} \supseteq \mathcal{L}_{-1} = \emptyset,
	\end{align*}  
	and a homeomorphism $\phi : \mathcal{N}_{x} \longrightarrow \mathbb{R}^{j} \times \mathcal{C}(\mathcal{L}),$
	where $\mathcal{C}(\mathcal{L})$ is the open cone on $\mathcal{L}$, such that $\phi$ takes $\mathcal{N}_{x} \cap X_{j+i+1}$ homeomorphically onto
	\begin{align*}
		\mathbb{R}^{j} \times \mathcal{C}(\mathcal{L}_{i}) \subseteq \mathbb{R}^{j} \times \mathcal{C}(\mathcal{L})
	\end{align*}
	for $m-j-1 \geq i \geq 0$, and $\phi$ takes $\mathcal{N}_{x} \cap X_{j}$ homeomorphically onto
	\begin{align*}
		\mathbb{R}^{j} \times \{ \text{vertex of }\; \mathcal{C}(\mathcal{L}) \}.
	\end{align*}
\end{definition}

\begin{remark}\label{linkandstratum}
	It follows that $X_{j}-X_{j-1}$ is a $j-$dimensional topological manifold. (The empty set is a manifold of any dimension.) We call the connected components of these manifolds the \textbf{strata} of $X$. Any $\mathcal{L}$ that satisfies the above properties is referred to as a \textbf{link} of the stratum at $x$.   
\end{remark}
\begin{definition}\label{PSMFD}
	An \textbf{$m$-dimensional topological pseudomanifold} is a para-compact Hausdorff topological space $X$ which possesses a topological stratification such that $X_{m-1}=X_{m-2}$
	and $X-X_{m-1}$ is dense in X.
\end{definition}
\begin{definition}\label{DefSing}
	We call a stratum (homologically) \textbf{singular} if none of its links is a homology sphere. A stratum is (homologically) \textbf{rationally singular} if none of its links is a rational homology sphere.
\end{definition}
Now, we recall that there is a natural stratification of toric varieties. We also give an algebraic description of the situation. \\
Let $X$ be a toric variety over a polytope $\mathcal{
P}$. Consider the map $X \xrightarrow{\;\;\; p \;\;\;}\mathcal{P}$ introduced earlier. The preimage $X_{2i} = p^{-1}(\mathcal{P}^{i})$ is a $2i$-dimensional topological space. We claim that the filtration
\begin{align}\label{strtTV}
	X_{\mathcal{P}}=X_{2m} \supset X_{2(m-1)} \supset \dots \supset X_{2} \supset X_{0},
\end{align}
 is a stratification of the toric variety $X_{\mathcal{P}}$. The local homeomorphisms will be defined later. But first we need to give a topological description of links.\\

\noindent\textbf{Construction of links}\label{Triv} \phantom{.} \\ Let $\tau$ be a face of $\mathcal{P}$. Let $\mathcal{M}_{\tau}$ be an abstract polytope, geometrically realized as a subspace of $\mathcal{P}$, and defined as follows: \\
Let $\mathcal{S}_{\tau}=\big\{ \sigma \vert \sigma \in \mathcal{P}, \; \sigma \cap \tau \neq \emptyset \; \text{and} \; \dim(\sigma) > \dim(\tau) \big\}$ be the set of all higher dimensional neighboring faces of $\tau$ in $\mathcal{P}$. \\
We shall define an abstract polytope $\mathcal{M}_{\tau}$ with the following properties: 
\begin{enumerate}
	\item $\forall \sigma \in \mathcal{S}_{\tau}: \exists! \; \gamma_{\sigma} \in \mathcal{M}_{\tau} \; \text{such that} \; \operatorname{int}(\gamma_{\sigma}) \cap \operatorname{int}(\sigma) \neq \emptyset \; \text{with} \; \dim(\gamma_{\sigma})= \dim(\sigma)-(1+\dim(\tau))$ and $\operatorname{int}(\gamma_{\sigma}) \cap \operatorname{int}(\sigma^{\prime})= \emptyset$ if $\sigma^{\prime} \in \mathcal{P}$ and $\sigma^{\prime} \neq \sigma$.
	\item If $\omega, \sigma \in \mathcal{S}_{\tau}$ with $\omega \prec \sigma$, then we demand that $\gamma_{\omega} \prec \gamma_{\sigma}$ and $\gamma_{\sigma} \cap \omega = \gamma_{\omega}$.
\end{enumerate}
Note that $\operatorname{int}(\sigma) \cong \operatorname{int}(\mathcal{D}^{\dim(\sigma)})$. So, the first requirement can be satisfied by embedding $\operatorname{int}(\mathcal{D}^{\dim(\gamma_{\sigma})})$ in $\operatorname{int}(\mathcal{D}^{\dim(\sigma)})$. Yet the second condition describes how the boundary of $\mathcal{D}^{\dim(\gamma_{\sigma})}$ meets the boundary of $\mathcal{D}^{\dim(\sigma)}$. 
We refer to $\mathcal{M}_{\tau}$ as a \textbf{base} of the link of $x \in p^{-1}(\operatorname{int}(\tau)) \subset X_{\mathcal{P}}$. \\
\begin{proposition}\label{Existence2}
	Let $\Sigma$ be a complete fan in $\mathbb{R}^n$ and $\mathcal{
	P}$ its dual polytope. Given any face $\tau \in \mathcal{P}$ there exists an abstract polytope $\mathcal{M}_{\tau}$ satisfying the above conditions $(1)$ and $(2)$ and having a geometrical realization $\vert \mathcal{M}_{\tau} \vert$ as a subpolyhedron of a geometrical realization of $\mathcal{P}$. 
\end{proposition}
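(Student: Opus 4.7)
The plan is to build $\mathcal{M}_\tau$ as a polyhedral transverse cross-section of $\vert\mathcal{P}\vert$ at an interior point of $\tau$. Fix $x \in \operatorname{int}(\tau)$ and let $L \subset \mathbb{R}^n$ be the linear span of $\tau - x$. For each face $\sigma$ of $\mathcal{P}$ containing $\tau$, the face $\sigma$ is a convex polytope, so its tangent cone $T_x\sigma \subset \mathbb{R}^n$ is a polyhedral cone of dimension $\dim\sigma$ having $L$ as a lineality direction. The reductions $C_\sigma := T_x\sigma \cap L^{\perp}$ are polyhedral cones in $L^{\perp}$ of dimension $\dim\sigma - \dim\tau$, and together with $C_\tau := \{0\}$ they form a fan $\mathcal{F}_\tau$ in $L^{\perp}$ whose top cone $C_\mathcal{P}$ has full dimension $n - \dim\tau$ and is pointed, because $\tau$ is a proper face of $\mathcal{P}$.

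Next I choose a linear functional $\ell\colon L^{\perp} \to \mathbb{R}$ that is strictly positive on $C_\mathcal{P} \setminus \{0\}$, which exists by the pointedness of $C_\mathcal{P}$, and set $H := \ell^{-1}(1)$. Then $H \cap C_\mathcal{P}$ is a compact convex polytope of dimension $n - \dim\tau - 1$ whose faces are precisely $\gamma_\sigma := H \cap C_\sigma$ for $\sigma \in \mathcal{S}_\tau$, with $\dim\gamma_\sigma = \dim\sigma - \dim\tau - 1$ and face relations inherited from $\mathcal{F}_\tau$. I declare this polytope to be the abstract $\mathcal{M}_\tau$. For $\epsilon > 0$ sufficiently small, the local conical structure of the regular polyhedral complex $\vert\mathcal{P}\vert$ at $x$ identifies $(x + \epsilon H) \cap \sigma$ with $x + \epsilon\gamma_\sigma$ for each $\sigma \in \mathcal{S}_\tau$, while $x + \epsilon H$ misses every face of $\mathcal{P}$ not containing $\tau$. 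Taking $\vert\mathcal{M}_\tau\vert := (x + \epsilon H) \cap \vert\mathcal{P}\vert$ yields the desired geometric realization as a subpolyhedron of $\vert\mathcal{P}\vert$.

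The two conditions then follow with little additional work. For (1), the relative interior of $x + \epsilon\gamma_\sigma$ lies in $\operatorname{int}(\sigma)$ since the slice meets $\sigma$ transversely at a point of its relative interior, while disjointness from $\operatorname{int}(\sigma')$ for $\sigma' \neq \sigma$ is immediate from the pairwise disjointness of open cells of $\mathcal{P}$. For (2), when $\omega \prec \sigma$ in $\mathcal{S}_\tau$ the cone $C_\omega$ is a face of $C_\sigma$, so $\gamma_\omega$ is a face of $\gamma_\sigma$, and $\gamma_\sigma \cap \omega = (x + \epsilon H) \cap \sigma \cap \omega = (x + \epsilon H) \cap \omega = \gamma_\omega$. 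The main technical obstacle is choosing $\epsilon$ uniformly small enough that the slice $x + \epsilon H$ lies within a conical chart at $x$ in which $\vert\mathcal{P}\vert$ is faithfully modelled by its tangent fan $\{C_\sigma\}$; once this identification is in place, the combinatorics of the slice reproduces the interval $(\tau, \mathcal{P}]$ of the face lattice of $\mathcal{P}$ exactly, and the proposition follows.
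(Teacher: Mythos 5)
Your proof is correct, and it takes a genuinely different route from the paper's. You construct $\mathcal{M}_\tau$ directly as a vertex figure: you slice the reduced tangent-cone fan $\{C_\sigma\}_{\sigma \succ \tau}$ in $L^\perp$ by an affine hyperplane $H$, and then identify a small shifted copy $x + \epsilon H$ of this slice with a subpolyhedron of $|\mathcal{P}|$ near $x$, so that the abstract polytope and its geometric realization appear simultaneously. The paper's proof instead passes to the dual side: it defines $\Sigma_{\mathcal{M}_\tau}$ as the cone complex formed by the proper faces of $\delta(\tau) \in \Sigma$, reads off the abstract polytope $\mathcal{M}_\tau$ by reversing inclusions in this complex, and only then builds by hand a separate piecewise map $\theta : \mathcal{C}(|\mathcal{M}_\tau|) \hookrightarrow |\mathcal{P}|$ into the orthogonal affine slice $\mathcal{V}$ through $x$ (essentially your $L^\perp$) to realize $\mathcal{M}_\tau$ inside $|\mathcal{P}|$. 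Your construction is tighter where the paper's explicit $\theta$ is somewhat laborious, and it makes immediately visible why $\mathcal{M}_\tau$ is actually a polytope: it is the bounded cross-section $H \cap C_\mathcal{P}$ of a pointed cone, pointedness being exactly the content of $\tau$ being a proper face. The paper's dualization, on the other hand, sidesteps tangent cones and any convexity considerations about the chosen geometric realization of $\mathcal{P}$, since the combinatorics of $\mathcal{M}_\tau$ are extracted purely from the boundary face lattice of the concrete cone $\delta(\tau) \subset \mathbb{R}^n$. One small point worth flagging: you, like the paper's own proof, implicitly replace the set $\mathcal{S}_\tau$ of the statement — nominally all $\sigma$ with $\sigma \cap \tau \neq \emptyset$ and $\dim\sigma > \dim\tau$ — by the faces with $\tau \prec \sigma$; this is the reading for which conditions $(1)$ and $(2)$ can actually be met near $x \in \operatorname{int}(\tau)$, and it is what both constructions in fact deliver.
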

\begin{proof}
        First, we show the existence of $\mathcal{M}_{\tau}$.
		Let $\Sigma_{\mathcal{P}}$ be a complete fan and $\mathcal{P}$ be the dual polytope associated with $\Sigma_{\mathcal{P}}$. Let $\delta: \mathcal{P} \longrightarrow \Sigma_{\mathcal{P} }$ be the bijection defined in Remark \ref{Bijection}. Let $\Sigma_{\mathcal{S}_{\tau}}$ be the set of all lower dimensional neighboring cones of $\tau \in \mathcal{P}$ in $\Sigma_{\mathcal{P}}$. Dual to the previous construction, we define the dual fan associated with $\mathcal{M}_{\tau}$ to be a fan in $\mathbb{R}^{\dim(\delta(\tau))-1}$ with the following property:
		\begin{itemize}
			\item $\forall \delta(\sigma) \in \Sigma_{\mathcal{S}_{\tau}}: \: \exists! \delta(\gamma_{\sigma}) \in \Sigma_{\mathcal{M}_{\tau}}$ such that $\delta(\gamma_{\sigma}) \subset \delta(\tau)$ as a cone. In other words, $\delta(\gamma_{\sigma})$ lies in the topological boundary of the cone $ \delta(\tau)$.
		\end{itemize}
	One can verify easily that both sets of conditions are dually equivalent. In other words, there is an order-reversing bijection between $\Sigma_{\mathcal{S}_{\tau}}$ and $\mathcal{S}_{\tau}$. The bijection is the restriction of $\delta^{-1}$ to $\Sigma_{\mathcal{S}_{\tau}}$.
		The set $\Sigma_{\mathcal{M}_{\tau}}$ is a set of cones such that
		\begin{align*}
			0 \leq \dim(\delta(\gamma_{\sigma})) < \dim(\delta(\tau))
		\end{align*}
		Hence, $\Sigma_{\mathcal{S}_{\tau}}$ is an $(\dim(\delta(\tau))-1)$-dimensional cone-complex, embedded in $\mathbb{R}^{\dim(\delta(\tau))}$, a rational subspace of $\mathbb{R}^{\dim(\Sigma_{\mathcal{P}})}$. By cone-complex, we mean that $\Sigma_{\mathcal{M}_{\tau}}$ satisfies the conditions (1) and (2) of Definition \ref{ConeComplex}. However, instead of the third axiom, we have
		\begin{align*}
			\bigcup_{\delta(\gamma_{\sigma}) \in \Sigma_{\mathcal{M}_{\tau}}} \delta(\gamma_{\sigma}) \cong \mathbb{R}^{\dim(\delta(\tau))-1}.
		\end{align*}
		But the crucial point to bear in mind is the following. Reversing the inclusions and grading in $\Sigma_{\mathcal{M}_{\tau}}$ will still result in an abstract polytope, which we can geometrically realize in $\mathbb{R}^{\dim(\delta(\tau)-1)}$. Existence of the topological boundary of a cone yields the existence of $\mathcal{M}_{\tau}$.\\
		Now, we embed $\mathcal{C}(\mathcal{M}_{\tau})$ in $\mathcal{P}$, geometrically.
		For a given point $x \in \operatorname{int}(\tau)$, where $\dim(\tau) \geq 1$, let $\mathcal{V} \cong \mathbb{R}^{\dim(\vert \mathcal{M}_{\tau} \vert)+1}$ be an affine subspace of $\mathbb{R}^{\dim(\vert \mathcal{P} \vert)}$ which is orthogonal to $\tau$ in $\mathbb{R}^{\dim(\vert \mathcal{P} \vert)}$ and $x \in \mathcal{V}$. Note that $\dim( \vert \mathcal{M}_{\tau} \vert)=\dim(\vert \mathcal{P} \vert)-(1+\dim(\tau))$, hence, such $\mathcal{V}$ can always be found. Thus, we have $\sigma_{\gamma} \cap \mathcal{V} \neq \emptyset$ for each $\gamma \in \mathcal{M}_{\tau}$, where $\sigma_{\gamma} \in \mathcal{S}_{\tau}$ with $\operatorname{int}(\gamma) \cap \operatorname{int}( \sigma_{\gamma}) \neq \emptyset$. Now, we choose a geometrical realization of $\gamma$ such that $\gamma \subset \sigma_{\gamma} \cap \mathcal{V}$. Note that $\dim( \sigma_{\gamma} \cap \mathcal{V} )= \dim(\gamma)+1$. Thus, we can find such a geometrical realization consistently. Now, let $\mathcal{C}( \vert \mathcal{M}_{\tau} \vert)=\sfrac{\big( \vert \mathcal{M}_{\tau} \vert \times \mathcal{I} \big)}{\vert \mathcal{M}_{\tau} \vert \times \{1\}}$ be the cone of $\vert \mathcal{M}_{\tau} \vert$. We embed $\mathcal{C}( \vert \mathcal{M}_{\tau} \vert)$ into $\vert \mathcal{P} \vert$ as follows: \\
		Let $v \in \mathcal{C}( \vert \mathcal{M}_{\tau} \vert)$ be the vertex of $\mathcal{C}( \vert \mathcal{M}_{\tau} \vert)$ and $\theta: \mathcal{C}( \vert \mathcal{M}_{\tau} \vert) \longrightarrow \vert \mathcal{P} \vert$ a map such that for each $ \gamma \in \mathcal{M}_{\tau},$
		\begin{align*}
			&\theta \big( \operatorname{int}( \gamma) \times [0,1) \big) \subset \operatorname{int}(\sigma_{\gamma}) \cap \mathcal{V},  \\ 
			& \theta \big( \gamma \times [0,1) \big) \subset \Big( \operatorname{int}(\sigma_{\gamma}) \cup \Big( 
			\bigcup_{\substack{\omega_{\gamma} \in \mathcal{S}_{\tau}\\ \omega_\gamma \prec \sigma_{\gamma}}} \operatorname{int}(\omega_{\gamma}) \Big) \Big) \cap \mathcal{V} \\
			&\theta \big( \gamma \times \{0\} \big) \cong \vert \mathcal{M}_{\tau} \vert \cap \operatorname{int}(\sigma_{\gamma}) \cong \operatorname{int}(\gamma).
		\end{align*}
		Note that for each $\eta, \gamma \in \mathcal{M}_{\tau}$ if $\gamma \prec \eta$ then we have $\sigma_{\gamma} \prec \sigma_{\eta}$ in $\mathcal{S}_{\tau}$. We require that 
		\begin{align}\label{thetareq}
			\theta \big( \eta \times [0,1) \big) \cap \sigma_{\gamma}=\theta \big( \gamma \times [0,1) \big). 
		\end{align}
		As the last requirement, we want $\theta\big( \operatorname{int}(\gamma) \times [0,1) \big) \hookrightarrow \operatorname{int}(\sigma_{\gamma})$ to be a topological embedding. Note that this can always be fulfilled because $\gamma \cong \mathcal{D}^{\dim(\gamma)}$ and $\sigma_{\gamma} \cong \mathcal{D}^{\dim(\sigma_{\gamma})}$. At last, one should bear in mind that Equation (\ref{thetareq}) ensures that $\theta$ is also an embedding on the topological boundary of $\sigma_{\gamma} \cap \mathcal{V}$ for each $\sigma_{\gamma} \in \mathcal{S}_{\tau}$. We set $\theta(v)=x$. Note that because of $\gamma \subset \sigma_{\gamma} \cap \mathcal{V}$ and the fact that we can choose $\theta\big( \gamma \times [0,1) \big) \hookrightarrow \operatorname{int}(\sigma_{\gamma})$ as an embedding, it is possible to choose $\theta$ continuous on $\mathcal{C}(\vert \mathcal{M}_{\tau} \vert)$ and thus an embedding of $\mathcal{C}(\vert \mathcal{M}_{\tau} \vert)$ into $\vert \mathcal{P} \vert$. Now let $\mathcal{V}^{\perp} \subset \mathbb{R}^{\dim(\vert \mathcal{P} \vert)}$ be an orthogonal affine subspace to $\mathcal{V}$ such that $\tau \subset \mathcal{V}^{\perp}$. Choose $\mathcal{U}$ a neighborhood of $x$ in $\vert \mathcal{P} \vert$ such that $\mathcal{U} \cap \mathcal{V}^{\perp} \cong \operatorname{int}(\mathcal{D}^{\dim(\tau)}) \cong \mathbb{R}^{\dim(\tau)}\cong \operatorname{int}(\tau)$ and $\mathcal{U} \cap \mathcal{V} \cong \mathcal{C}(\vert \mathcal{M}_{\tau} \vert)$. Thus, we have 
		\begin{align} \label{decom}
			\mathcal{U} \cong \mathbb{R}^{\dim(\tau)} \times \mathcal{C}(\vert \mathcal{M}_{\tau} \vert).
		\end{align}
	
\end{proof}
\begin{remark}
	Consider $\mathcal{M}_{\tau}$ and $\mathcal{M}^{\prime}_{\tau}$ such that they both satisfy the above conditions. Then for each $\sigma \in \mathcal{S}_{\tau}$ and $\gamma_{\sigma} \in \mathcal{M}_{\tau}$ there exists a $\gamma^{\prime}_{\sigma} \in \mathcal{M}^{\prime}_{\tau}$ with $\dim(\gamma_{\sigma})=\dim(\gamma^{\prime}_{\sigma})$. Note that the construction of $\mathcal{S}_{\tau}$ and the uniqueness in the first condition implies that $\operatorname{card}(\mathcal{M}^{i}_{\tau}-\mathcal{M}^{i-1}_{\tau})$ i.e. the number of $i$-dimensional faces of $\mathcal{M}_{\tau}$ is equal to $\operatorname{card}(\mathcal{M^{\prime}}^{i}_{\tau}-\mathcal{M^{\prime}}^{i-1}_{\tau})$. Thus, there is a bijection between $\mathcal{M}_{\tau}$ and $\mathcal{M^{\prime}}_{\tau}$.
\end{remark}
Now, let $\vert \mathcal{M}_{\tau} \vert \subset \vert \mathcal{P} \vert$ be a geometrical realization of $\mathcal{M}_{\tau}$ and $n=\dim(\vert \mathcal{P} \vert)$. Recall the bijection $\delta: \mathcal{P} \longrightarrow \Sigma_{\mathcal{P}}$ that we introduced earlier. For each $\gamma \in \mathcal{M}_{\tau}$ choose $\sigma_{\gamma} \in \mathcal{S}_{\tau}$ such that $\gamma \cap \sigma_{\gamma} \neq \emptyset$. Note $\tau \prec \sigma_{\gamma}$ and thus $\delta (\sigma_{\gamma}) \prec \delta(\tau)$ in $\Sigma_{\mathcal{P}}$. This implies that $\pi(\delta (\sigma_{\gamma})) \subset \pi(\delta(\tau))$ in $T^{n}$. We construct $\mathcal{L}_{\tau}$, the link of a point $x \in p^{-1}(\operatorname{int}(\tau))$, by means of the following map: \\
\begin{align}
	\mathcal{M}_{\tau} \times T^{n} &\longrightarrow \mathcal{L}_{\tau} \nonumber \\
	\{y\} \times T^{n}  &\longmapsto \{y\} \times \scalebox{1.3}{ $\sfrac{T^{n}}{\Big(\pi (\delta(\sigma_{\gamma}))\times \big( \sfrac{T^{n}}{\pi (\delta(\tau))} \big) \Big)}$},
\end{align}\label{Linkmap}
where $y \in \operatorname{int}(\gamma)$. Note that similar to the topological construction of toric varieties, 
\[ T^{n}/\Big(\pi (\delta(\sigma_{\gamma}))\times \big( \sfrac{T^{n}}{\pi (\delta(\tau))} \big) \Big)\] is defined by collapsing $\pi (\delta(\sigma_{\gamma}))$ and each parallel torus to $\pi (\delta(\sigma_{\gamma}))$ in $T^{n}$ to a point and then collapsing each parallel torus to $\sfrac{T^{n}}{\pi (\delta(\tau))}$ in $\scalebox{1}{ $\sfrac{T^{n}}{\pi (\delta(\sigma_{\gamma})) }$}$ to a point. Due to the previous consideration, $\scalebox{1.3}{ $\sfrac{T^{n}}{\Big(\pi (\delta(\sigma_{\gamma}))\times \big( \sfrac{T^{n}}{\pi (\delta(\tau))} \big) \Big)}$}$ is well-defined.
\begin{remark}
	Note that for each $y \in \operatorname{int}(\gamma)$, $\scalebox{1.3}{ $\sfrac{T^{n}}{\Big(\pi (\delta(\sigma_{\gamma}))\times \big( \sfrac{T^{n}}{\pi (\delta(\tau))} \big) \Big)}$} \subseteq \sfrac{T^{n}}{\pi (\delta(\sigma_{\gamma}))} $ and hence $\mathcal{L}_{\tau} \subset X$, where $n=\dim(\vert \mathcal{P} \vert)$.
\end{remark}

\begin{remark}
The dimension relation 
$\dim(\vert \mathcal{M}_{\tau} \vert)=\dim(\vert \mathcal{P} \vert)-(1+\dim(\tau))$ holds. 
This comes from the fact that $\operatorname{int}(\vert \mathcal{P} \vert)$ considered as a face in $\mathcal{P}$, which is dual to $\{0\}\in\Sigma_{\mathcal{P}}$, is a higher dimensional neighboring face of each $\tau \in \mathcal{P}$. So $\dim(\mathcal{L}_{\tau})=\dim(\vert \mathcal{P} \vert)-(1+ \dim(\tau))+\dim(\pi(\delta(\tau))$. Note that $dim(\tau)=\dim(\vert \mathcal{P} \vert) - \dim(\delta(\tau))$. Thus, we have
	\begin{align*}
		\dim(\mathcal{L}_{\tau})= \dim(X_{\mathcal{P}})-(1+2\dim(\tau)).
	\end{align*}
\end{remark}

\begin{remark}\label{StratProof}
	There is a natural projection $\mathcal{L}_{\tau} \xrightarrow{\:\:\: p_{\mathcal{L}} \:\:\:} \mathcal{M}_{\tau}$. Similarly, we define $\big( \mathcal{L}_{\tau} \big)_{2i+1}=p_{\mathcal{L}}^{-1}(\mathcal{M}^{i}_{\tau})$.
\end{remark}
At this point, we want to recall that there is a natural stratification of toric varieties. 
We will also give an algebraic description of the strata. 
We claim that the filtration given in Expression (\ref{strtTV}), 
is a stratification in the sense of Definition \ref{pseudomanifolds}. \\
\begin{proposition}\label{trivlink}
	Let $\Sigma$ be a complete fan and $\mathcal{P}$ the associated dual polytope. Then $X_{\mathcal{P}}$, the associated toric varieties with $\mathcal{P}$, is a topological pseudomanifold with a trivial link bundle. 	
\end{proposition}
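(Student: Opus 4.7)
The plan is to verify both assertions by exploiting the neighborhood product decomposition (\ref{decom}) together with the explicit construction of the link $\mathcal{L}_{\tau}$. First I would establish the pseudomanifold structure by analyzing $p^{-1}(\mathcal{U})$ for the polytope neighborhood $\mathcal{U}$ supplied by Proposition \ref{Existence2}, and then I would observe that all combinatorial inputs to this analysis depend only on $\tau$ and not on the chosen base point, so the local cone trivializations glue to a global one.

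For the first assertion, fix $x \in S_{\tau} := p^{-1}(\operatorname{int}(\tau))$ and recall the chart $\mathcal{U} \cong \mathbb{R}^{\dim \tau} \times \mathcal{C}(|\mathcal{M}_{\tau}|)$ about $p(x)$. The toric collapse over a face $\sigma$ depends only on $\sigma$, so it is constant along the $\mathbb{R}^{\dim \tau}$-factor and yields $\operatorname{int}(\tau) \times T^{n}/\pi(\delta(\tau))$, an open chart on $S_{\tau}$ of real dimension $2 \dim \tau$. Because every rational sublattice of $\mathbb{Z}^{n}$ is saturated, I may choose a splitting $\mathbb{Z}^{n} = (\delta(\tau) \cap \mathbb{Z}^{n}) \oplus L$ and thereby a topological product $T^{n} \cong \pi(\delta(\tau)) \times \pi(L_{\mathbb{R}})$, with $\pi(L_{\mathbb{R}}) \cong T^{n}/\pi(\delta(\tau))$. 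Since $\tau \prec \sigma_{\gamma}$ implies $\delta(\sigma_{\gamma}) \prec \delta(\tau)$, every collapse happening over the cone direction takes place entirely inside the first factor $\pi(\delta(\tau))$; comparing with the link map (\ref{Linkmap}) shows that the resulting quotient of $\pi(\delta(\tau)) \times \mathcal{C}(|\mathcal{M}_{\tau}|)$ is exactly $\mathcal{C}(\mathcal{L}_{\tau})$. Combining,
\begin{align*}
p^{-1}(\mathcal{U}) \;\cong\; \mathbb{R}^{\dim \tau} \times \bigl(T^{n}/\pi(\delta(\tau))\bigr) \times \mathcal{C}(\mathcal{L}_{\tau}) \;\cong\; \mathbb{R}^{2 \dim \tau} \times \mathcal{C}(\mathcal{L}_{\tau}),
\end{align*}
and the induced filtration matches the required model by Remark \ref{StratProof}. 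The residual pseudomanifold conditions are immediate: the filtration (\ref{strtTV}) skips odd indices, so $X_{2m-1} = X_{2m-2}$ by convention, and $X - X_{2m-2}$ is dense because $\operatorname{int}(\mathcal{P})$ is dense in $\mathcal{P}$.

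For global triviality of the link bundle, observe that the only base-point dependence in the construction above is the continuous choice of the transverse affine subspace $\mathcal{V}$ in the proof of Proposition \ref{Existence2}; the polytope $\mathcal{M}_{\tau}$, the lattice splitting of $\mathbb{Z}^{n}$, and the collapse pattern defining $\mathcal{L}_{\tau}$ all depend solely on $\tau$. Therefore the local cone factorizations patch coherently along $\operatorname{int}(\tau)$ and, after multiplication by the fixed torus $T^{n}/\pi(\delta(\tau))$, assemble into a global homeomorphism of an open neighborhood of $S_{\tau}$ in $X_{\mathcal{P}}$ onto $S_{\tau} \times \mathcal{C}(\mathcal{L}_{\tau})$. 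I expect the main technical obstacle to be verifying that the topological splitting of $T^{n}$ genuinely decouples the cone-direction collapses from the complementary factor; this reduces to the single containment $\pi(\delta(\sigma_{\gamma})) \subset \pi(\delta(\tau))$ for all $\sigma_{\gamma} \in \mathcal{S}_{\tau}$, itself an immediate consequence of the order-reversing bijection $\delta$ and $\tau \prec \sigma_{\gamma}$.
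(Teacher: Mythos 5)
Your proposal follows essentially the same route as the paper: construct the tubular neighborhood $\mathcal{U} \cong \mathbb{R}^{\dim\tau} \times \mathcal{C}(|\mathcal{M}_\tau|)$ from Proposition \ref{Existence2}, factor the $n$-torus using the containment $\pi(\delta(\sigma_\gamma)) \subset \pi(\delta(\tau))$ to peel off the fixed factor $T_\tau = T^n/\pi(\delta(\tau))$, and recognize the remaining cone as $\mathcal{C}(\mathcal{L}_\tau)$. Your lattice-splitting step is a slightly more explicit way of realizing the same factorization of $T^n$ that the paper performs via quotient manipulation, and the key ingredient in both is $\delta(\sigma_\gamma) \prec \delta(\tau)$.

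There is, however, an incorrect step in your ``Combining'' display: $T^n/\pi(\delta(\tau))$ is a compact torus of dimension $\dim\tau$, not $\mathbb{R}^{\dim\tau}$, so the asserted homeomorphism $p^{-1}(\mathcal{U}) \cong \mathbb{R}^{2\dim\tau} \times \mathcal{C}(\mathcal{L}_\tau)$ is false as stated. The correct identity is $p^{-1}(\mathcal{U}) \cong (\mathbb{R}^{\dim\tau} \times T_\tau) \times \mathcal{C}(\mathcal{L}_\tau)$, where the first factor is exactly the connected stratum; this is in fact what yields the global trivialization. The local pseudomanifold model $\mathcal{N}_x \cong \mathbb{R}^{2\dim\tau} \times \mathcal{C}(\mathcal{L}_\tau)$ required by Definition \ref{pseudomanifolds} only follows after further restricting the $T_\tau$-factor to a small Euclidean ball around the toral coordinate of $x$. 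Two further remarks: since $\mathcal{U}$ is already a tubular neighborhood of the entire open face $\operatorname{int}(\tau)$, the patching argument in your final paragraph is superfluous — the single chart supplies the global trivialization directly. And the claim that the ``induced filtration matches the required model by Remark \ref{StratProof}'' is only a definition of the filtration on $\mathcal{L}_\tau$; the paper actually verifies the filtration-compatibility condition by computing $p^{-1}(\mathcal{U}) \cap X_{2(\dim\tau + l)}$ explicitly, a step your proposal omits.
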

\begin{proof}
	We can construct $\mathcal{U}$ in the proof of Proposition \ref{Existence2} as follows. We embed $\mathcal{C}(\vert \mathcal{M}_{\tau} \vert)$ in $\mathcal{V}$ as in that proof. Now, we embed $\mathring{\mathcal{D}}^{\dim(\tau)} \times \mathcal{C}(\vert \mathcal{M}_{\tau} \vert)$ in $\mathcal{V} \times \mathcal{V}^{\perp} \cong \mathbb{R}^{\dim(\vert \mathcal{P} \vert)}$ such that
	$
		\big( \mathring{\mathcal{D}}^{\dim(\tau)} \times \mathcal{C}(\vert \mathcal{M}_{\tau} \vert) \big) \cap \mathcal{V}^{\perp} = \mathring{\mathcal{D}}^{\dim(\tau)}.
	$
	Finally, we set
	$
		\mathcal{U} \cong \mathring{\mathcal{D}}^{\dim(\tau)} \times \mathcal{C}(\vert \mathcal{M}_{\tau} \vert) \cong \mathbb{R}^{\dim(\tau)} \times \mathcal{C}(\vert \mathcal{M}_{\tau} \vert). 
	$

For each $\sigma_{\gamma} \in \mathcal{S}_{\tau}$, we have $\mathcal{U} \cap \operatorname{int}(\sigma_{\gamma}) \cong \big(\mathbb{R}^{\dim(\tau)} \cap \operatorname{int}(\sigma_{\gamma}) \big) \times \big(\mathcal{C}(\vert \mathcal{M}_{\tau}\vert) \cap \operatorname{int}(\sigma_{\gamma}) \big)$. Keep also in mind that $\big( \bigcup_{\substack{\gamma \in \mathcal{M}_{\tau}}} \operatorname{int}(\sigma_{\gamma}) \big) \cup \operatorname{int}(\tau)$ is an open cover of $\mathcal{U}$. Thus
\begin{align*}
	p^{-1}(\mathcal{U})=\bigcup_{\gamma \in \mathcal{M}_{\tau}} \Big( (\operatorname{int}(\sigma_{\gamma}) \cap \mathcal{U})\times \sfrac{T^{n}}{\pi(\delta(\sigma_{\gamma}))} \Big) \cup \Big( \big( \operatorname{int}(\tau) \cap \mathcal{U} \big) \times \sfrac{T^{n}}{\pi(\delta(\tau))} \Big).
\end{align*}
Recall that $\tau \prec \sigma_{\gamma}$ in $\mathcal{P}$, hence $\delta(\sigma_{\gamma}) \prec \delta(\tau)$ which implies $\pi(\delta(\sigma_{\gamma})) \subset \pi(\delta(\tau))$ and $\sfrac{T^{n}}{\pi(\delta(\tau))} \subset \sfrac{T^{n}}{\pi(\delta(\sigma_{\gamma}))}$. Now, define $T_{\tau} = \sfrac{ T^{n} }{ \pi(\delta(\tau)) }$. Then, we have 
\begin{align*}
	p^{-1}(\mathcal{U})= T_{\tau} \times \Bigg( \bigcup_{\gamma \in \mathcal{M}_{\tau}} \Big( (\operatorname{int}(\sigma_{\gamma}) \cap \mathcal{U})\times \sfrac{T^{n}}{\big( \pi(\delta(\sigma_{\gamma})) \times T_{\tau} \big)} \Big) \cup  \big( \operatorname{int}(\tau) \cap \mathcal{U} \big) \Bigg) .
\end{align*} 
Using Homeomorphism (\ref{decom}), we arrive at
\begin{align*}
	p^{-1}(\mathcal{U})= \big( T_{\tau} \times \mathbb{R}^{\dim(\tau)} \big) \times \Bigg( \bigcup_{\gamma \in \mathcal{M}_{\tau}} \Big( \theta\big(\operatorname{int}(\gamma) \times [0,1) \big)\times \sfrac{T^{n}}{\big( \pi(\delta(\sigma_{\gamma})) \times T_{\tau} \big)} \Big) \cup  v \Bigg) .
\end{align*}
Using the fact that $\theta$ is a topological embedding gives us
\begin{align*}
	p^{-1}(\mathcal{U}) \cong \big( T_{\tau} \times \mathbb{R}^{\dim(\tau)} \big) \times \Bigg(  [0,1) \times \bigcup_{\gamma \in \mathcal{M}_{\tau}} \Big( \operatorname{int}(\gamma) \times \sfrac{T^{n}}{\big( \pi(\delta(\sigma_{\gamma})) \times T_{\tau} \big)} \Big) \cup  v \Bigg) .
\end{align*}
Thus, we have
\begin{align}
	p^{-1}(\mathcal{U}) \cong \big( T_{\tau} \times \mathbb{R}^{\dim(\tau)} \big) \times  \mathcal{C} \Big(  \bigcup_{\gamma \in \mathcal{M}_{\tau}}  \operatorname{int}(\gamma) \times \sfrac{T^{n}}{\big( \pi(\delta(\sigma_{\gamma})) \times T_{\tau} \big)} \Big)  .
	\label{triva}
\end{align}
This homeomorphism gives us more than just the required \emph{local} triviality. 
Recall that $X_{2j}-X_{2(j-1)}=p^{-1}(\mathcal{P}_{j})-p^{-1}( \mathcal{P}_{j-1})$ is simply the disjoint union of preimages of the interior of all $j-$dimensional faces of $\mathcal{P}$. Hence, we have
\begin{align*}
	X_{2j}-X_{2(j-1)}&= \bigsqcup_{\substack{\tau \in \mathcal{P} \\ \dim(\tau)=j}} p^{-1}(\operatorname{int}(\tau)) 
	  =\bigsqcup_{\substack{\tau \in \mathcal{P} \\ 
	\dim(\tau)=j}} \operatorname{int}(\tau) \times \sfrac{T^{n}}{\pi(\delta(\tau))} \\
	&\cong\bigsqcup_{\substack{\tau \in \mathcal{P} \\ \dim(\tau)=j}} \mathbb{R}^{\dim(\tau)} \times T_{\tau}.
\end{align*}\label{orbit}
This means that $\mathbb{R}^{\dim(\tau)} \times T_{\tau}$ is a connected component of $X_{2j}-X_{2(j-1)}$. Thus, if we show that the given filtration endows $X_{\mathcal{P}}$ with a stratification then the link bundle is trivial. Now, consider the filtration of $\mathcal{L}_{\tau}$ that we introduced earlier
\begin{align*}
	\mathcal{L}=\mathcal{L}_{2m+1} \supset \mathcal{L}_{2(m-1)+1} \supset \dots \supset \mathcal{L}_{1}.
\end{align*}
Let $x \in X_{j}-X_{j-1}$ and $x \in \operatorname{int}(\tau)$. Thus, we can write $j=2\dim(\tau)$. Choose $\mathcal{U}$ as described above. We want to investigate the intersection of $p^{-1}(\mathcal{U})$ with $X_{2\dim(\tau)+i+1}$. Note that in the above filtration of toric varieties we have only even-dimensional\footnote{Note that $X_{2j}$ can be identified with $X_{2j+1}$. This means that $X_{2j+1}-X_{2j}=\emptyset$ and thus the required conditions in Definition \ref{pseudomanifolds} are trivially fulfilled.} topological spaces $X_{j}$. Thus, we can write $i+1=2l$ with $l \in \mathbb{N}_{>0}$. Hence, we have
\begin{align*}
	&p^{-1}(\mathcal{U}) \cap X_{2(\dim(\tau)+l))}= \\
	&\Big(\bigcup_{\substack{\sigma \in \mathcal{P} \\ \dim(\sigma) \leq \dim(\tau)+l}} \big( \operatorname{int}(\sigma) \times \sfrac{T^{n}}{\pi(\delta(\sigma))} \big) \Big) \bigcap \\
	& \Big[ \big(\mathbb{R}^{\dim(\tau)} \times T_{\tau}\big) \times \Big( \big(  [0,1) \times \bigcup_{\substack{\gamma \in \mathcal{M}_{\tau}}} \operatorname{int}(\gamma) \times \sfrac{T^{n}}{\big(\pi(\delta(\sigma_{\gamma})) \times T_{\tau}\big)} \; \big) \cup v \Big) \Big] = \\
	& \big(\mathbb{R}^{\dim(\tau)} \times T_{\tau} \big) \times \Big[ [0,1) \times \Big( \big( \bigcup_{\substack{\sigma \in \mathcal{P} \\ \dim(\sigma) \leq \dim(\tau)+l}} \operatorname{int}(\sigma) \times \sfrac{T^{n}}{\pi(\delta(\sigma))}  \; \big) \\
	& \bigcap \big( \bigcup_{\substack{\gamma \in \mathcal{M}_{\tau} }} \operatorname{int}(\gamma) \times \sfrac{T^{n}}{\big( \pi(\delta(\sigma_{\gamma})) \times T_{\tau} \big)} \; \big) \bigcup v \Big) \Big] = \\
\end{align*}
\begin{align*}
	& \big(\mathbb{R}^{\dim(\tau)} \times T_{\tau} \big) \times \Big[ [0,1) \times \Big( \bigcup_{\substack{\gamma \in \mathcal{M}_{\tau} \\
			\dim(\gamma) \leq l-1 }} \operatorname{int}(\gamma) \times \sfrac{T^{n}}{\big( \pi(\delta(\sigma_{\gamma})) \times T_{\tau} \big)} \; \big) \bigcup v  \Big) \Big]= \\
	& \big(\mathbb{R}^{\dim(\tau)} \times T_{\tau} \big) \times \mathcal{C}\big( (\mathcal{L}_{\tau})_{\underbrace{2l-1}_{=i} } \big).
\end{align*}
Note that if $l=0$ then we have
\begin{align*}
	p^{-1}(\mathcal{U}) \cap X_{2\dim(\tau)}= \big(\mathbb{R}^{\dim(\tau)} \times T_{\tau} \big) \times v.  
\end{align*}
Consider that $\vert \mathcal{P} \vert -\vert \mathcal{P}^{n-1} \vert$ is the interior of $\vert \mathcal{P} \vert$ and it is dense in $\vert \mathcal{P} \vert$. This implies that $X_{\mathcal{P}}-X_{2(n-1)} \cong T^{n} \times \operatorname{int}(\mathcal{P})$ is dense in $X_{\mathcal{P}}$. Now, if $\dim(\tau)=0$ then $\mathcal{V} \cong \mathbb{R}^{\dim(\vert \mathcal{P} \vert)}$ and $\mathcal{U}$ can be chosen as $\mathcal{C}(\vert \mathcal{M}_{\tau} \vert)$. This yields
\begin{align*}
	p^{-1}(\mathcal{U}) \cong  \mathcal{C} \Big(  \bigcup_{\gamma \in \mathcal{M}_{\tau}}  \operatorname{int}(\gamma) \times \sfrac{T^{n}}{ \pi(\delta(\sigma_{\gamma})) } \Big)  .
\end{align*}
\end{proof}
\begin{remark}
	Note that $\mathcal{S}^{1} \times \mathbb{R} \cong \mathbb{C}^{\ast}$. So $p^{-1}(\operatorname{int}(\tau))=(\mathbb{C}^{\ast})^{\dim(\tau)}$ and specially $p^{-1}(\operatorname{int}(\mathcal{P}))=(\mathbb{C}^{\ast})^{\dim(\vert \mathcal{P} \vert)}$. 
There is an algebraic action of $(\mathbb{C}^{\ast})^{n}$ on $X_{\mathcal{P}}$ with finitely many orbits. The preimage $p^{-1}(\operatorname{int}(\mathcal{P}))$ is dense in $X_{\mathcal{P}}$, as mentioned earlier. One should also note that orbits are in one-to-one correspondence with the faces of $\mathcal{P}$ and each stratum can be written as a disjoint union of finitely many orbits.
\end{remark}
\begin{remark}
	Given $\tau , \eta \in \mathcal{P}$ with $\dim(\tau)=\dim(\eta)$, 
	$\mathcal{L}_{\tau}$ and $\mathcal{L}_{\eta}$ are not necessarily even homotopy equivalent.
\end{remark}
\begin{example}[$n=2$]\label{n=2}
	Consider the complete fan introduced in Example \ref{2D}. Let $\tau=\delta^{-1}(\tau_{1})$ be the 1-dimensional dual face to $\tau_{1}$ in $\mathcal{P}$. Consequently, $\mathcal{M}_{\tau}=\big\{ \{0 \}  \big\}$ and $\vert \mathcal{M}_{\tau} \vert \cong \ast$. Note that $\vert \mathcal{M}_{\tau} \vert \subset \operatorname{int}(\mathcal{P})$. Using the map defined in Expression \ref{Linkmap} gives us
	\begin{align*}
	\mathcal{M}_{\tau} \times T^{2} &\longrightarrow \mathcal{L}_{\tau} \\
	\ast \times T^{2}  &\longmapsto \ast \times \scalebox{1.3}{ $\sfrac{T^{2}}{\mathcal{S}^{1}}$}.
	\end{align*}
	Hence, we get $\mathcal{L}_{\tau} \cong \mathcal{S}^{1}$.
	Now, let $\nu_{12} = \delta^{-1}(\sigma_{12})$ be the dual 0-dimensional face to $\sigma_{12}$ in $\mathcal{P}$. The set $\mathcal{M}_{\nu_{12}}$ has 3 elements and $\vert \mathcal{M}_{\nu_{12}} \vert \cong \mathcal{I}$ with $\operatorname{int}(\mathcal{M}_{\nu_{12}}) \subset \operatorname{int}(\mathcal{P})$. We embed $\{0\} \in \vert \mathcal{M}_{\tau} \vert $ in $\vert \mathcal{P} \vert$ such that $\{0\} \subset \operatorname{int}(\delta^{-1}(\tau_{1}))$ and similarly $\{1\} \subset \operatorname{int}(\delta^{-1}(\tau_{2}))$. Accordingly, we can describe $\mathcal{L}_{\nu_{12}}$ as below
	\begin{align*}
	\mathcal{L}_{\nu_{12}} \cong \big( \operatorname{int}(\mathcal{I})\times T^{2} \big) \bigcup \big(\{0\} \times \sfrac{T^{2}}{\pi(\tau_{1})} \big) \bigcup \big( \{1\} \times \sfrac{T^{2}}{\pi(\tau_{2})} \big).
	\end{align*}  
	
	\begin{figure}[H]
		\centering
		\includegraphics[width=.75\linewidth]{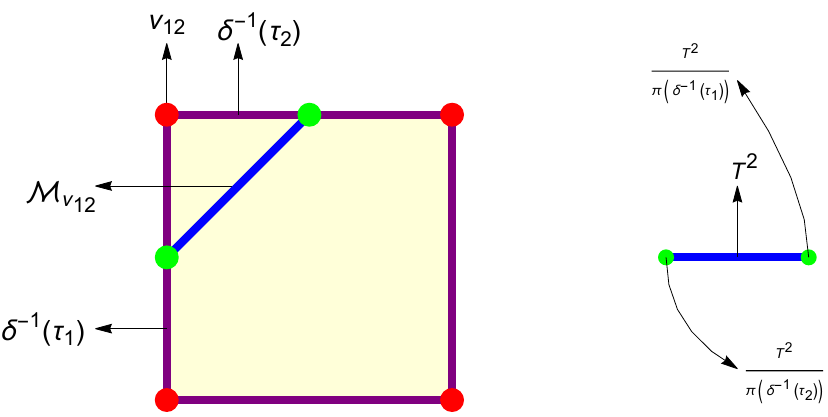}
		\caption{ Link of $\nu_{12}$.  }
		\label{2DLink}
	\end{figure}
\end{example}

\begin{remark}\label{noS1S2}
	Note that $\mathcal{L}_{\nu_{12}} \not\simeq \mathcal{S}^{1} \times \mathcal{S}^{2}$. This can be easily deduced from the structure of $\sigma_{12}$ or simply by comparing the homology groups computed later in this section. However, we can generalize our observation in the following form:
	
Let $\nu$ be a vertex of a 2-dimensional rational convex 
polytope $\mathcal{P}$ with dual fan $\Sigma_{\mathcal{P}}$. 
Then $\nu$ has only 3 higher dimensional neighboring faces, 
namely two 1-dimensional faces $\tau$ and $\eta$ and the 2-dimensional face, 
$\operatorname{int}(\mathcal{P})$. Thus, we have  
\[ \mathcal{L}_{\nu_{12}} \cong \big( \operatorname{int}(\mathcal{I})\times \mathcal{T}^{2} \big) \bigcup \big(\{0\} \times \sfrac{\mathcal{T}^{2}}{\pi(\delta(\eta))} \big) \bigcup \big( \{1\} \times \sfrac{\mathcal{T}^{2}}{\pi(\delta(\tau))} \big).\] 
		and 	$\mathcal{L}_{\nu_{12}} \not\simeq \mathcal{S}^{1} \times \mathcal{S}^{2}$, which comes from the fact that we consider only complete proper fans.
\end{remark}

\begin{remark}\label{LinkTopStra}
Let $\mathcal{P}$ be an $n$-dimensional rational polytope and $\tau$ an $(n-1)$-dimensional face of $\mathcal{P}$. So $\mathcal{M}_{\tau}= \big\{ \{0\} \big\}$, because $\operatorname{int}(\mathcal{P})$ is the only higher dimensional neighboring face of $\tau$. This implies that
$\mathcal{L}_{\tau} \cong \sfrac{T^{n}}{ T^{n-1} } \cong \mathcal{S}^{1}.$
Later, we will use this observation and conclude that: 

		Let $X_{\mathcal{P}}$ be the toric variety associated to $\mathcal{P}$. Then $X_{\mathcal{P}}$ can not have a singular stratum with co-dimension 2.
\end{remark}

\begin{example}[$n=3$]\label{3DLink}
	Consider the complete fan shown in Figure \ref{3DFanEx}. We define the 1-dimensional cones as follows:
	\begin{align*}
	\tau_{1} &= \big\{ x(1,0,1) \; \vert \; x \in \mathbb{R}_{\geq 0} \big\}, \\
	\tau_{2} &= \big\{ x(0,1,1) \; \vert \; x \in \mathbb{R}_{\geq 0} \big\}, \\
	\tau_{3} &= \big\{ x(-1,0,1) \; \vert \; x \in \mathbb{R}_{\geq 0} \big\}, \\
	\tau_{4} &= \big\{ x(0,-1,1) \; \vert \; x \in \mathbb{R}_{\geq 0} \big\}, \\
	\tau_{5} &= \big\{ x(0,0,-1) \; \vert \; x \in \mathbb{R}_{\geq 0} \big\}. \\
	\end{align*}
	\begin{figure}[H]
		\centering
		\includegraphics[width=.50\linewidth]{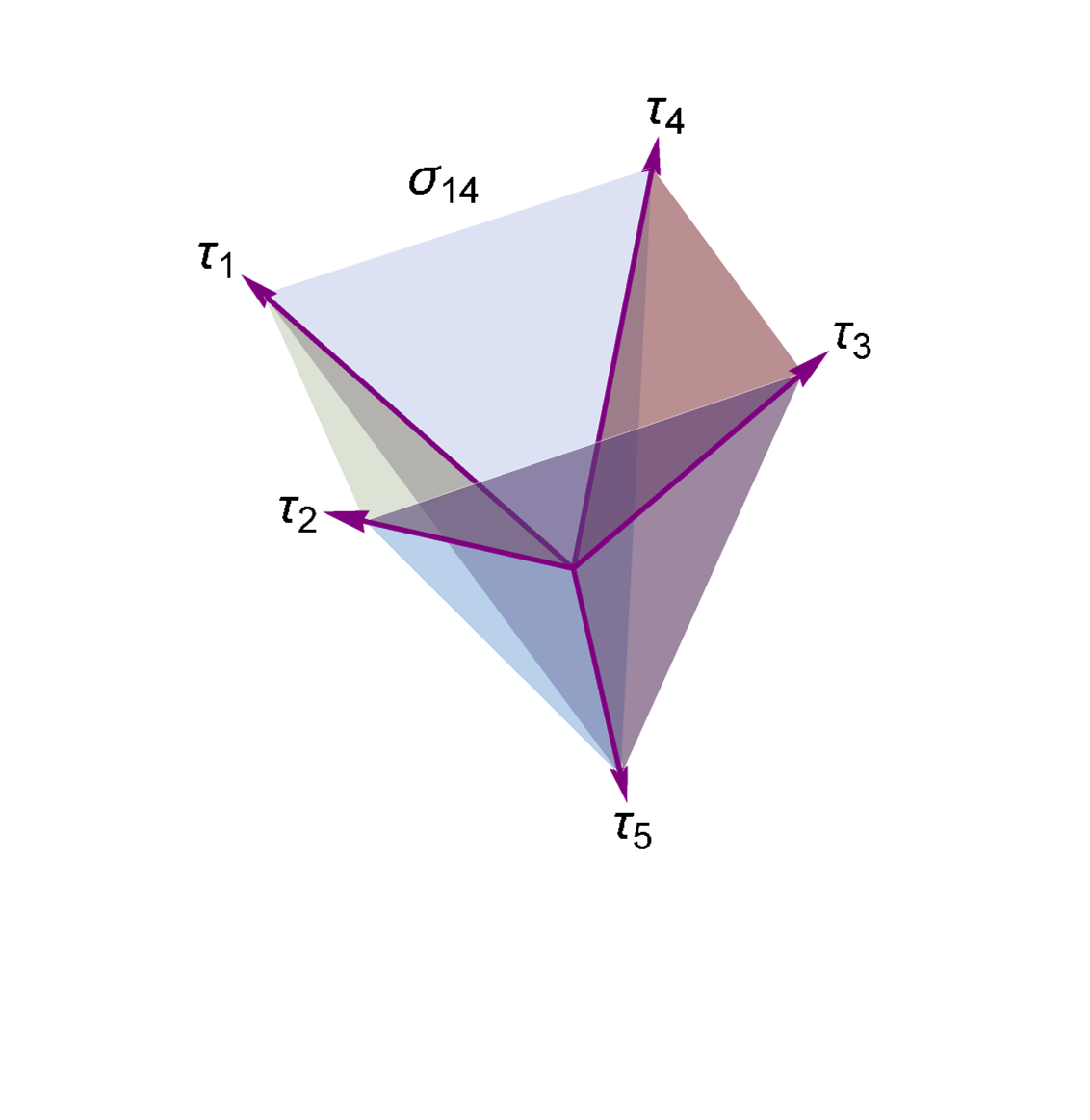}
		\caption{A complete fan $\Sigma$ shown in $\mathbb{R}^3$.    }
		\label{3DFanEx}
	\end{figure}
	Let $\sigma_{14}$, illustrated in the above figure, be the 2-dimensional cone generated by the generators of $\tau_{1}$ and $\tau_{4}$. We define the rest of the 2-dimensional cones similarly. At last, let $\omega_{1234}$ be the 3-dimensional cone which is generated by the generators of $\tau_{1}$, $\tau_{2}$, $\tau_{3}$, and $\tau_{4}$. The cones $\omega_{125}$, $\omega_{235}$, $\omega_{345}$ and $\omega_{145}$ are defined similarly. Consequently, we have
	
	\begin{align*}
	\Sigma=\big\{ \{0\},\tau_{1},\tau_{2},\tau_{3},\tau_{4},\tau_{5},\sigma_{12},\sigma_{23},\sigma_{34},\sigma_{14},\omega_{125},\omega_{235},\omega_{345},\omega_{145},\omega_{1234} \big\}.
	\end{align*} 
The polytope dual to $\Sigma$ is then the pyramid shown in Figure \ref{Poly3DEx}.
	\begin{figure}[H]
		\centering
		\includegraphics[width=.35\linewidth]{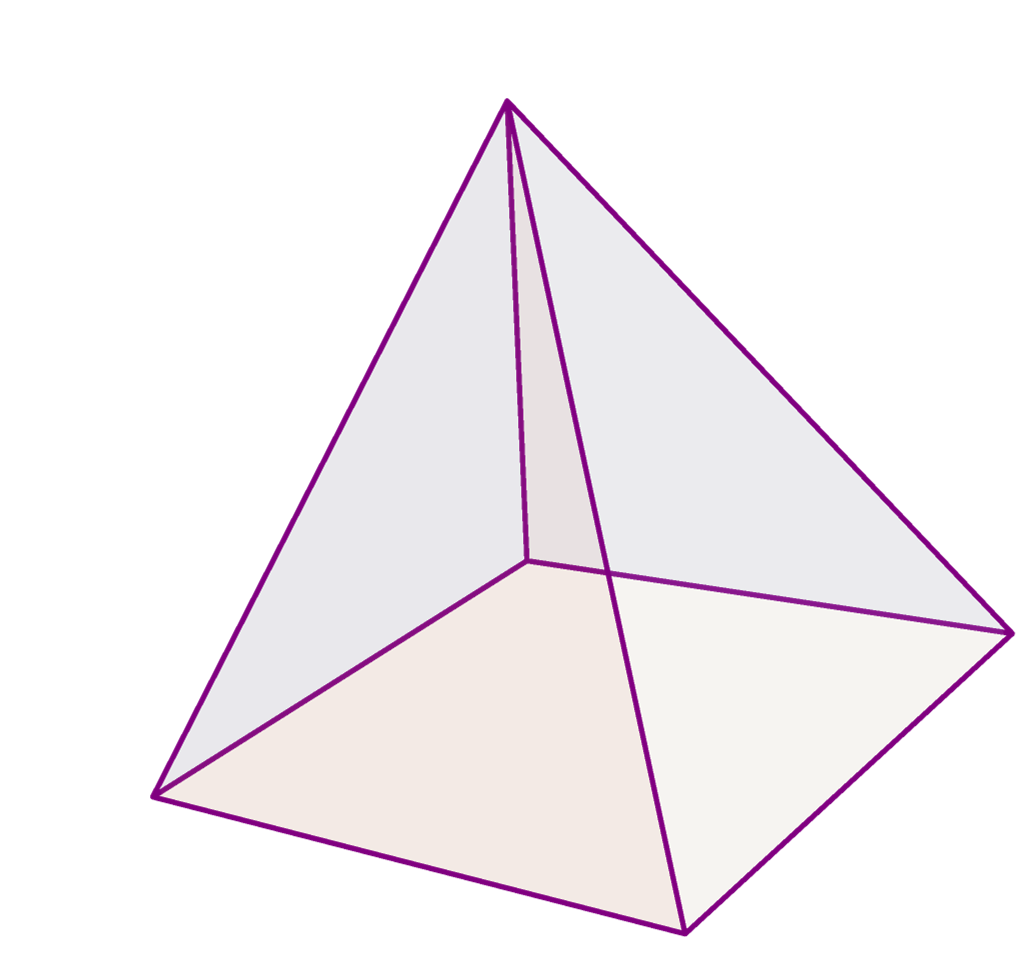}
		\caption{The dual polytope of $\Sigma$ illustrated in Figure \ref{3DFanEx}    .}
		\label{Poly3DEx}
	\end{figure}
	As we discussed earlier, the link of a 2-dimensional face of $\mathcal{P}_{\Sigma}$ is homeomorphic to $\mathcal{S}^{1}$. Now, let $\eta_{12}=\delta^{-1}(\sigma_{12})$ be the 1-dimensional face of $\mathcal{P}$ which is dual to $\sigma_{12}$. It is easy to see that $\mathcal{M}_{\eta_{12}}=\{ \gamma_{\operatorname{int}(\mathcal{P})},\gamma_{\delta^{-1}(\tau_{1})},\gamma_{\delta^{-1}(\tau_{2})} \}$. As in the 2-dimensional case we have $\vert \mathcal{M}_{\eta_{12}} \vert \cong \mathcal{I}$, where $\operatorname{int}( \mathcal{M}_{\eta_{12}} ) \subset \operatorname{int}(\mathcal{P})$, and $\{0\} \subset \operatorname{int}(\delta^{-1}(\tau_{1}))$ considered as a 0-dimensional face of $\mathcal{I}$. Similarly, we have $\{1\} \subset \operatorname{int}(\delta^{-1}(\tau_{2}))$. This gives us 
	\begin{align*}
	\mathcal{L}_{\eta_{12}} \cong \big( \operatorname{int}(\mathcal{I})\times \sfrac{T^{3}}{ \big( \sfrac{T^{3}}{ \pi(\sigma_{12})  }\big)} \big) \cup \big(\{0\} \times 
	\sfrac{T^{3}}{ \big( \sfrac{T^{3}}{ \pi(\sigma_{12})  } \times \pi(\tau_{1}) \big)}
	\cup \big( \{1\} \times \sfrac{T^{3}}{ \big( \sfrac{T^{3}}{ \pi(\sigma_{12})  } \times \pi(\tau_{2}) \big)} \big).
	\end{align*} 
	Note that $\sfrac{T^{3}}{ \big( \sfrac{T^{3}}{ \pi(\sigma_{12}) } \big)} \cong T^{2}$. Hence, we have 
	\begin{align*}
	\mathcal{L}_{\eta_{12}} \cong \big( \operatorname{int}(\mathcal{I})\times T^{2} \big) \cup \big(\{0\} \times \sfrac{T^{2}}{\pi(\tau_{1})} \big) \cup \big( \{1\} \times \sfrac{T^{2}}{\pi(\tau_{2})} \big).
	\end{align*}
	With the same argument as in the 2-dimensional case, we can show that $\mathcal{L}_{\eta_{12}} \not\simeq \mathcal{S}^{1} \times \mathcal{S}^{2}$. \\
	Forthwith, we want to describe a link of the point $v$ at the apex of the pyramid. The set $M_{v}$ is a 2-dimensional abstract polytope with four 1-dimensional faces and hence four 0-dimensional faces. Thus, we have 
	\begin{align*}
	\mathcal{L}_{v} \cong 
	 \big( \operatorname{int}(\mathcal{M}_{v})\times T^{3} \big) \cup \bigcup_{\substack{\gamma_{\tau_{i}} \in \mathcal{M}_{v} \\ \dim(\gamma_{\tau_{i}})=1}} \big( \operatorname{int}(\gamma_{\tau_{i}}) \times 
	 \sfrac{T^{3}}{\pi(\delta(\tau_{i}))} \big) \cup
	 \bigcup_{\substack{\gamma_{\sigma_{i}} \in \mathcal{M}_{v} \\ \dim(\gamma_{\sigma_{i}})=0}} \big( \operatorname{int}(\gamma_{\sigma_{i}}) \times \sfrac{T^{3}}{\pi(\delta(\sigma_{i}))} \big).
	\end{align*}
	Recall that $\sfrac{T^{3}}{\pi(\delta(\sigma_{i}))} \cong \mathcal{S}^{1}$ if $\dim(\sigma_{i})=1$. However, it is easy to show that we can not factor out any $\mathcal{S}^{1}$ in $\mathcal{L}_{v}$. The situation is displayed in Figure \ref{Link3DEx}:
	\begin{figure}[H]
		\centering
		\includegraphics[width=.65\linewidth]{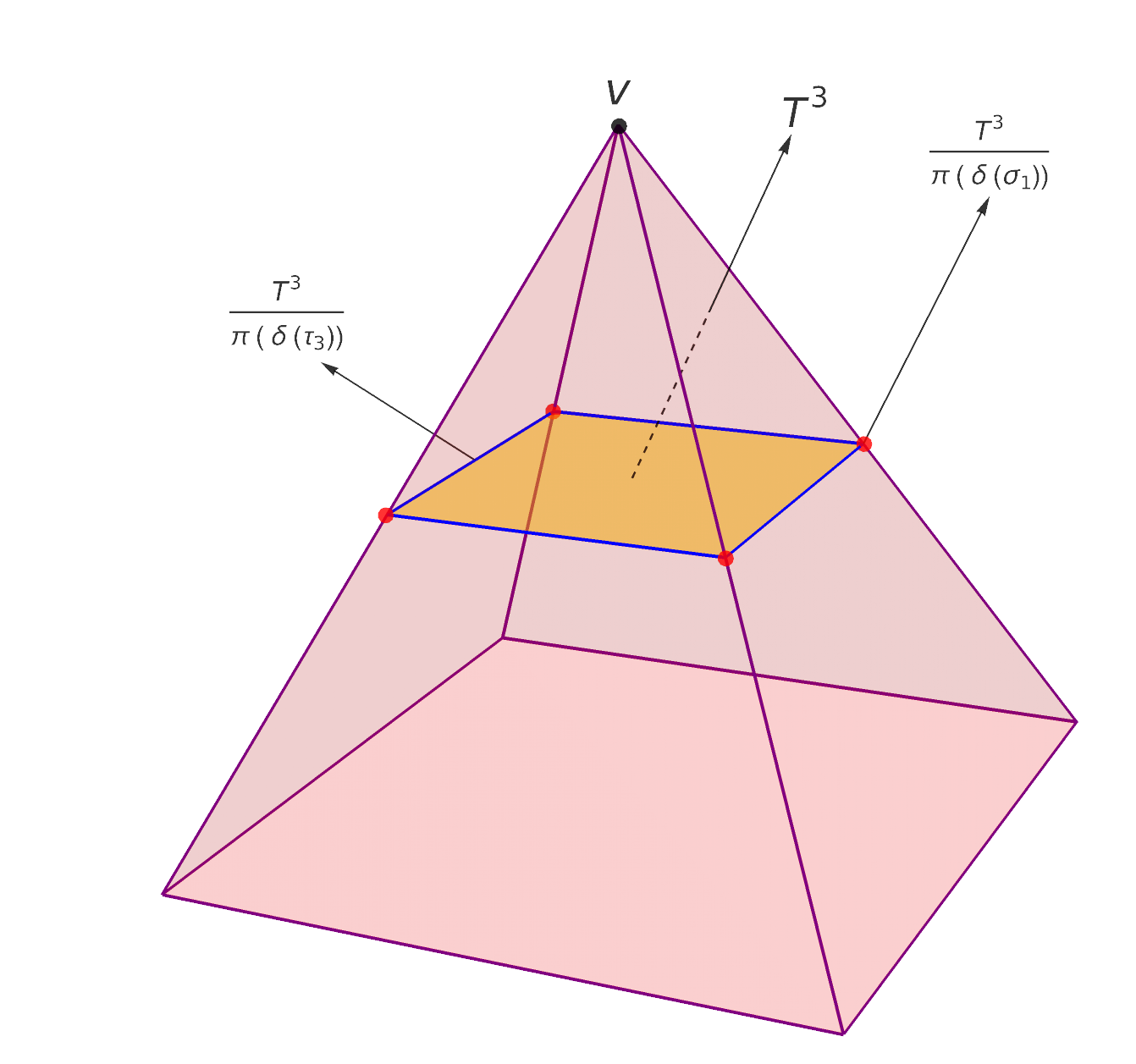}
		\caption{ Link of the rationally singular point $\nu$ in $X_{\Sigma}$.  }
		\label{Link3DEx}
	\end{figure}
\end{example}

\section{CW Structures on Toric Varieties}\label{CW}

In this section, we endow toric varieties with CW structures. In his dissertation \cite{fischli1992toric}, Fischli has described a procedure that yields a 
CW structure of toric varieties. He carried out his method only for real 4-dimensional 
toric varieties. Our primary focus is on real 6-dimensional toric varieties. 
The path that we follow is slightly different from Fischli's, 
and one can use it also for real 6-dimensional compact toric varieties. 
The crucial idea is to ensure that each collapse 
$\sfrac{T^{n} }{ \pi(\tau) } \longrightarrow \sfrac{T^{n} }{ \pi(\sigma) }$ 
for $\tau \prec \sigma$ with $ \sigma, \tau \in \Sigma$, 
an $n$-dimensional complete rational fan, is cellular. 
We will see that in fact, the collapses are automatically cellular for 4-dimensional 
toric varieties. However, a slight modification is needed for 6-dimensional toric varieties.
We will in addition give an idea of how this procedure can inductively be used for arbitrary dimensions. \\
Again, we think of $T^{n}$ as $\sfrac{\mathbb{R}^{n}}{\mathbb{Z}^{n}}$ and
we continue to use the natural projection $\pi: \mathbb{R}^{n} \to \sfrac{\mathbb{R}^{n}}{\mathbb{Z}^{n}}$. Let $\sigma \in \Sigma$ be a $k$-dimensional (rational) cone. 
There is, in addition, the orthogonal projection 
$\psi: \mathbb{R}^{n} \to \sigma^{\perp}$, 
where $\sigma^{\perp}= \{ x \in \mathbb{R}^{d} \vert x.y=0 \; \forall y \in \sigma \}$. Hence, we can write $\sfrac{T^{n}}{\pi(\sigma)} \cong \sfrac{\sigma^{\perp}}{\psi(\mathbb{Z}^{n})}$. Thus, choosing a CW structure on $\sigma^{\perp}$ which is periodic with respect to 
$\psi(\mathbb{Z}^{n})$ induces a finite CW structure on $\sfrac{T^{n}}{\pi(\sigma)}$. Closed $n$-cells homeomorphic to $\mathcal{D}^n$ will be denoted by $e^{n}$.

\subsection{Real 4-dimensional Toric Varieties}\label{RL4D}

Let us start with real 4-dimensional toric varieties. We endow $T^{2}$ with the minimal CW structure with one $0$-cell, two $1$-cells and one $2$-cell:
\begin{align*}
T^{2} = e^{0} \cup \big( e^{1}_{T^{2}_{x}} \cup e^{1}_{T^{2}_{y}} \big) 
   \cup_{f=xyx^{-1}y^{-1}} e^{2}_{T^{2}}
\end{align*}
This structure is induced by the following
CW structure on $\mathbb{R}^{2}$:\\
On $\mathbb{R},$ each interval $[n,n+1 ]$ is considered as a 1-cell and each point $(n) \in \mathbb{R}$ is viewed as a 0-cell, where $n \in \mathbb{Z}$. 
The space $\mathbb{R}^{2}$ is then equipped with the product CW structure on 
$\mathbb{R} \times \mathbb{R}$. Now, let $\tau$ be a 1-dimensional cone in 
$\Sigma$ which is generated by 
$\begin{pmatrix} 
n \\ 
m 
\end{pmatrix}$
where $n$ and $m$ are relatively prime. 
Note that $\mathbb{R}^{2}=\tau \oplus  \tau^{\perp}$. 
Thus, we have then the following decomposition of $\begin{pmatrix} 
1 \\ 
0 
\end{pmatrix}$:
\begin{align*}
\begin{pmatrix} 
1 \\ 
0 
\end{pmatrix}
= (-m)\begin{pmatrix} 
\frac{-m}{m^{2} +n^{2}  } \\ 
\frac{n}{m^{2} +n^{2}  } 
\end{pmatrix} 
+
(n)
\begin{pmatrix} 
\frac{n}{m^{2} +n^{2}  } \\ 
\frac{m}{m^{2} +n^{2}  } 
\end{pmatrix}.
\end{align*}
Since $\begin{pmatrix} 
	1 \\ 
	0 
\end{pmatrix}$ corresponds to the $1$-cell $e^{1}_{T^{2}_{y}}$ of $T^{2}$, the first coefficient, $-m$, of this decomposition will tell us the attaching degree of $\partial (e^{1}_{T^{2}_{y}} \times e^{2}_{\mathcal{
P}} )$ to cells $e^{1}_{\tau} \times e^{1}_{\mathcal{
P}}$. Now, consider the following CW structure on $\tau^{\perp}$:\\
A 1-cell starts at the point $i \cdot 
\begin{pmatrix} 
\frac{-m}{m^{2} +n^{2}  } \\ 
\frac{n}{m^{2} +n^{2}  } 
\end{pmatrix} $ and ends at $(i+1) \cdot
\begin{pmatrix} 
\frac{-m}{m^{2} +n^{2}  } \\ 
\frac{n}{m^{2} +n^{2}  } 
\end{pmatrix} $ for $i \in \mathbb{Z}$. Each point $i \cdot
\begin{pmatrix} 
\frac{-m}{m^{2} +n^{2}  } \\ 
\frac{n}{m^{2} +n^{2}  } 
\end{pmatrix} $ is considered to be a 0-cell. This CW structure of $\tau^{\perp}$ induces a CW structure on $\sfrac{T^{2}}{\pi(\tau)}$. For each $\tau \in \Sigma$ equip $\sfrac{T^{2}}{\pi(\tau)}$ with the above CW structure, which has the form 
$\sfrac{T^{2}}{\pi(\tau)}=e^{0}_{\tau} \cup e^{1}_{\tau}$. 
The CW structure on the polytope $\mathcal{P}$ has been described
in Section \ref{sec.preliminaries}. 
(It is regular and consists of the faces of $\mathcal{P}$.)
Consequently, we have the following cellular chain groups for the toric variety $X_{\Sigma}$:
\begin{align*}
\mathcal{C}_{4}(X) &= \mathbb{Q} \braket{ e^{2}_{T^{2}} \times e^{2}_{\mathcal{P}}}  \\
\mathcal{C}_{3}(X) &= \mathbb{Q} \braket{e^{1}_{T^{2}_{x}} \times e^{2}_{\mathcal{P}} } \oplus  \mathbb{Q} \braket{e^{1}_{T^{2}_{y}} \times e^{2}_{\mathcal{P}} } \\
\mathcal{C}_{2}(X) &=  \mathbb{Q} \braket{e^{0}_{T^{2}} \times e^{2}_{\mathcal{P}} } 
  \oplus
\bigoplus_{\substack{\tau_{i} \in \Sigma \\ \dim(\tau_{i})=1}} \mathbb{Q} \braket{e^{1}_{\tau_{i}} \times e^{1}_{\mathcal{P}_{i}}} \\
\mathcal{C}_{1}(X) &= \bigoplus_{\substack{\tau_{i} \in \Sigma \\ \dim(\tau_{i})=1}} \mathbb{Q} \braket{e^{0}_{\tau_{i}} \times e^{1}_{\mathcal{P}_{i}}}\\ 
\mathcal{C}_{0}(X) &=  \bigoplus_{\substack{\sigma_{i} \in \Sigma \\ \dim(\sigma_{i})=2}} \mathbb{Q} \braket{e^{0}_{\sigma_{i}} \times e^{0}_{\mathcal{P}_{i}}}.
\end{align*} 
We keep in mind that $p^{-1}(\operatorname{int}(\delta^{-1}(\tau_{i}))) \cong \sfrac{T^{2}}{\pi(\tau_{i})} \times \operatorname{int}(\delta^{-1}(\tau_{i}))$ and $\operatorname{int}(\tau_{i})$ represents the interior of a 1-cell, which we will denoted by $e^{1}_{\mathcal{P}_{i}}$ in the above CW structure. \\
It remains to determine the boundary operators in the above CW structure. We can write $ \begin{pmatrix} 
0 \\ 
1 
\end{pmatrix} $ in the following form
\begin{align*}
\begin{pmatrix} 
0 \\ 
1 
\end{pmatrix} = 
(n) \begin{pmatrix} 
\frac{-m}{m^{2} + n^{2}} \\ 
\frac{n}{m^{2} + n^{2}}
\end{pmatrix}
+(m) \begin{pmatrix} 
\frac{n}{m^{2}+n^{2}  } \\ 
\frac{m}{m^{2}+n^{2}}
\end{pmatrix}.
\end{align*}
Since $\begin{pmatrix} 
	0 \\ 
	1 
\end{pmatrix}$ corresponds to the $1$-cell $e^{1}_{T^{2}_{x}}$ of $T^{2}$, the first coefficient, $n$, of this decomposition will tell us the attaching degree of $\partial (e^{1}_{T^{2}_{x}} \times e^{2}_{\mathcal{
			P}} )$ to cells $e^{1}_{\tau} \times e^{1}_{\mathcal{P}}$.
Let $\tau_{i}$ be an 1-dimensional cone in $\Sigma$ with $\begin{pmatrix}
m_{i} \\
n_{i}
\end{pmatrix}$ as the generator. In consequence, we have
\begin{align*}
\partial(e^{1}_{T^{2}_{x}} \times e^{2}_{\mathcal{P}}) &= \sum_{i} n_{i} (e^{1}_{\tau_{i}} \times e^{1}_{\mathcal{P}_{i}}) \\
\partial(e^{1}_{T^{2}_{y}} \times e^{2}_{\mathcal{P}}) &= \sum_{i} -m_{i} (e^{1}_{\tau_{i}} \times e^{1}_{\mathcal{P}_{i}}).
\end{align*}

At this point, we can compute the homology groups of the link of the point $x \in (X_{\Sigma})_{0}$. Let $\sigma_{x}$ be the dual cone to $x$ in $\Sigma$. Let $\tau_{1}$ and $\tau_{2}$ be the 1-dimensional cones with $\tau_{1},\tau_{2} \prec \sigma$. Then, we have the following chain group for the link of $x$, $\mathcal{L}_{x}$.
\begin{align*}
\mathcal{C}_{3}(\mathcal{L}_{x}) &=  \mathbb{Q} \braket{e^{1}_{\mathcal{I}} \times e^{2}_{T^{2}}}, \\
\mathcal{C}_{2}(\mathcal{L}_{x}) &=  \mathbb{Q} \braket{e^{1}_{\mathcal{I}} \times e^{1}_{T^{2}_{x}}} \bigoplus \mathbb{Q} \braket{e^{1}_{\mathcal{I}} \times e^{1}_{T^{2}_{y}}}, \\ 
\mathcal{C}_{1}(\mathcal{L}_{x}) &=  \mathbb{Q} \braket{e^{1}_{\mathcal{I}} \times e^{0}_{T^{2}}} \bigoplus \mathbb{Q} \braket{e^{0}_{\mathcal{I}} \times e^{1}_{\mathcal{S}^{1}_{0}}} \bigoplus \mathbb{Q} \braket{e^{0}_{\mathcal{I}} \times e^{1}_{\mathcal{S}^{1}_{1}}}, \\
\mathcal{C}_{0}(\mathcal{L}_{x}) &=  \mathbb{Q} \braket{e^{0}_{\mathcal{I}} \times e^{0}_{\mathcal{S}^{1}_{0}}} \bigoplus \mathbb{Q} \braket{e^{0}_{\mathcal{I}} \times e^{0}_{\mathcal{S}^{1}_{1}}}.
\end{align*} 
(Here the interval $\mathcal{I}$ is the one defined in the Example \ref{n=2}.) We get the following boundary operators 
\begin{align}
\partial_{3}=\begin{pmatrix}
0 \\
0
\end{pmatrix}, \; \partial_{2}=\begin{pmatrix}
0 & 0 \\
-m_{1} & n_{1} \\
-m_{2} & n_{2}  \\
\end{pmatrix}, \: \partial_{1}=\begin{pmatrix}
+1 & 0 & 0 \\
-1 & 0 & 0 \\
\end{pmatrix},\label{rand2L}
\end{align}
where $\begin{pmatrix}
m_{1} \\
n_{1}
\end{pmatrix}$ and $\begin{pmatrix}
m_{2} \\
n_{2}
\end{pmatrix}$ are the generators of $\tau_{1}$ and $\tau_{2}$, respectively. Hence, we have
\begin{align*}
\operatorname{rk}(H_{3}(\mathcal{L}_{x};\mathbb{Q})) &=1, \\
\operatorname{rk}(H_{2}(\mathcal{L}_{x};\mathbb{Q})) &=0, \\
\operatorname{rk}(H_{1}(\mathcal{L}_{x};\mathbb{Q})) &=0, \\
\operatorname{rk}(H_{0}(\mathcal{L}_{x};\mathbb{Q})) &=1. \\
\end{align*}
As expected, $\mathcal{L}_{x}$ is a rational homology sphere.  But at this point, it is also worthwhile to study the homology groups of $\mathcal{L}_{x}$ with integral coefficients. 
It is easy to see that $H_{1}(\mathcal{L}_{x};\mathbb{Z})=0$ if and only if $\det\begin{pmatrix}
-m_{1} & n_{1} \\
-m_{2} & n_{2} \\
\end{pmatrix}= \pm 1.$ Thus, $\mathcal{L}_{x}$ is an (integral) homology sphere if the previous condition holds. This means that $X_{\sigma}$ is smooth if the generators of all $\sigma \in \Sigma$ with $\dim(\sigma)=2$ satisfy the previous condition. However, there is also an algebraic description of the singularities of toric varieties. Cox, Little, and Schenck in \cite[Theorem~1.3.12]{cox2011toric} show that a toric variety $X_{\sigma}$ is 
smooth as a variety if and only if the minimal generators of $\sigma \subset \mathcal{N} 
\otimes \mathbb{R}$ form a part of an $\mathbb{Z}$-basis of the lattice $\mathcal{N}$ for each $\sigma \in \Sigma$. Note that for real 4-dimensional toric varieties, this translates to the same condition that we obtained from our topological approach.\\

\subsection{Real 6-dimensional Toric Varieties}

In this section, we give a CW structure on real 6-dimensional toric varieties. 
As mentioned earlier, we need a slight refinement of the minimal CW structure of each $T^{2}$ here. The goal is to ensure that each collapse $\sfrac{T^{3}}{\pi(\tau)} \longrightarrow \sfrac{T^{3}}{\pi(\sigma)}$, where $\tau \prec \sigma$ with $\sigma, \tau \in \Sigma$, is cellular. Here again, $\Sigma$ is a complete fan. \\
At first, we consider the case of a 1-dimensional cone $\tau$ in $\Sigma$ which is generated by $\begin{pmatrix}
n \\
m\\
l
\end{pmatrix}$, where $n,m$ and $l$ are all nonzero and $\operatorname{gcd}(n,m,l)=1$. Thus, we have $\tau = \operatorname{span}\begin{pmatrix}
n \\
m \\
l
\end{pmatrix}$ and $\tau^{\perp}=\operatorname{span}\begin{pmatrix}
\begin{pmatrix}
-m \\
n \\
0
\end{pmatrix} , \begin{pmatrix}
-l \\
0 \\
n
\end{pmatrix} \\
\end{pmatrix}$. This gives us the following decomposition of $\begin{pmatrix}
1 \\
0 \\
0
\end{pmatrix}$
\begin{align*}
\begin{pmatrix}
1 \\
0 \\
0
\end{pmatrix} = (-m) \begin{pmatrix}
\frac{-m}{\Delta} \\
\frac{n}{\Delta} \\
0
\end{pmatrix} +
(-l) \begin{pmatrix}
\frac{-l}{\Delta} \\
0 \\
\frac{n}{\Delta}
\end{pmatrix} + (n)
\begin{pmatrix}
\frac{n}{\Delta} \\
\frac{m}{\Delta} \\
\frac{l}{\Delta}
\end{pmatrix},
\end{align*}
where $\Delta=l^2+n^2+m^2$. We can write the above decomposition as
\begin{align*}
\begin{pmatrix}
1 \\
0 \\
0
\end{pmatrix} = (lm) \begin{pmatrix}
\frac{m^2+l^2}{lm} \frac{1}{\Delta}\\
-\frac{mn}{lm} \frac{1}{\Delta} \\
-\frac{nl}{ml} \frac{1}{\Delta}
\end{pmatrix} + (n) \begin{pmatrix}
\frac{n}{\Delta} \\
\frac{m}{\Delta} \\
\frac{l}{\Delta}
\end{pmatrix}.
\end{align*} 
Similarly, we have
\begin{align*}
\begin{pmatrix}
0 \\
1 \\
0
\end{pmatrix} &= (nl) \begin{pmatrix}
-\frac{nm}{nl} \frac{1}{\Delta}\\
\frac{n^2+l^2}{nl} \frac{1}{\Delta} \\
-\frac{ml}{nl} \frac{1}{\Delta}
\end{pmatrix} + (m) \begin{pmatrix}
\frac{n}{\Delta} \\
\frac{m}{\Delta} \\
\frac{l}{\Delta}
\end{pmatrix} \\
\begin{pmatrix}
0 \\
0 \\
1
\end{pmatrix} &= (nm) \begin{pmatrix}
-\frac{nl}{nm} \frac{1}{\Delta}\\
-\frac{ml}{nm} \frac{1}{\Delta} \\
\frac{n^2+m^2}{nm} \frac{1}{\Delta}
\end{pmatrix} + (l) \begin{pmatrix}
\frac{n}{\Delta} \\
\frac{m}{\Delta} \\
\frac{l}{\Delta}
\end{pmatrix}.
\end{align*}
With these in hand, we endow $\tau^{\perp}$ with the following CW structure. The points $i \begin{pmatrix}
\frac{m^2+l^2}{lm} \frac{1}{\Delta}\\
-\frac{mn}{lm} \frac{1}{\Delta} \\
-\frac{nl}{ml} \frac{1}{\Delta}
\end{pmatrix},$ $i \begin{pmatrix}
-\frac{nm}{nl} \frac{1}{\Delta}\\
\frac{n^2+l^2}{nl} \frac{1}{\Delta} \\
-\frac{ml}{nl} \frac{1}{\Delta}
\end{pmatrix} $, and $i \begin{pmatrix}
-\frac{nl}{nm} \frac{1}{\Delta}\\
-\frac{ml}{nm} \frac{1}{\Delta} \\
\frac{n^2+m^2}{nm} \frac{1}{\Delta}
\end{pmatrix}$ are considered to be 0-cells where $i \in \mathbb{Z}.$ A 1-cell starts at $i \begin{pmatrix}
\frac{m^2+l^2}{lm} \frac{1}{\Delta}\\
-\frac{mn}{lm} \frac{1}{\Delta} \\
-\frac{nl}{ml} \frac{1}{\Delta}
\end{pmatrix}$ and ends at  $(i+1) \begin{pmatrix}
\frac{m^2+l^2}{lm} \frac{1}{\Delta}\\
-\frac{mn}{lm} \frac{1}{\Delta} \\
-\frac{nl}{ml} \frac{1}{\Delta}
\end{pmatrix}$  (similarly for $\begin{pmatrix}
-\frac{nm}{nl} \frac{1}{\Delta}\\
\frac{n^2+l^2}{nl} \frac{1}{\Delta} \\
-\frac{ml}{nl} \frac{1}{\Delta}
\end{pmatrix}$ and $\begin{pmatrix}
-\frac{nl}{nm} \frac{1}{\Delta}\\
-\frac{ml}{nm} \frac{1}{\Delta} \\
\frac{n^2+m^2}{nm} \frac{1}{\Delta}
\end{pmatrix}$).
Note that 
\begin{align*}
\begin{pmatrix}
\frac{m^2+l^2}{lm} \frac{1}{\Delta}\\
-\frac{mn}{lm} \frac{1}{\Delta} \\
-\frac{nl}{ml} \frac{1}{\Delta}
\end{pmatrix} + \begin{pmatrix}
-\frac{nm}{nl} \frac{1}{\Delta}\\
\frac{n^2+l^2}{nl} \frac{1}{\Delta} \\
-\frac{ml}{nl} \frac{1}{\Delta}
\end{pmatrix} + \begin{pmatrix}
-\frac{nl}{nm} \frac{1}{\Delta}\\
-\frac{ml}{nm} \frac{1}{\Delta} \\
\frac{n^2+m^2}{nm} \frac{1}{\Delta}
\end{pmatrix} =0.
\end{align*}
The above relation ensures that we can form two $2$-simplices with the above vectors. 
We consider these $2$-simplices as 2-cells of $\tau^{\perp}$. 
From the construction, it is clear that the previous CW structure 
is periodic with respect to $\psi(\mathbb{Z}^{3})$, 
where $\psi$ denotes the natural projection from 
$\mathbb{R}^{3}$ onto $\tau^{\perp}$. 
The induced CW structure on $\sfrac
{T^{3}}{\pi (\tau)} \cong \sfrac{ \tau^{\perp} }{ \psi(\mathbb{Z}^{3})  } \cong T^{2}$ 
can be described as follows.
\begin{align}\label{T2w2Cell}
T^{2} =(e^{2}_{T^{2}_{1} } \cup e^{2}_{T^{2}_{2}} ) \cup 
    (e^{1}_{T^{2}_{x} } \cup e^{1}_{T^{2}_{y}} \cup e^{1}_{T^{2}_{z}}) 
    \cup  e^{0}_{T^{2}}.
\end{align}
With an appropriate orientation we have the following boundary operators.
\begin{align*}
\partial_{2}^{T^{2}}=\begin{pmatrix}
-1 & 1 \\
-1 & 1 \\
-1 & 1 
\end{pmatrix} , \:\: \partial^{T^{2}}_{1} = \begin{pmatrix}
0 & 0 & 0 \\
\end{pmatrix}.
\end{align*}
Schematically, we have
\begin{figure}[H]
	\centering
	\includegraphics[width=.55\linewidth]{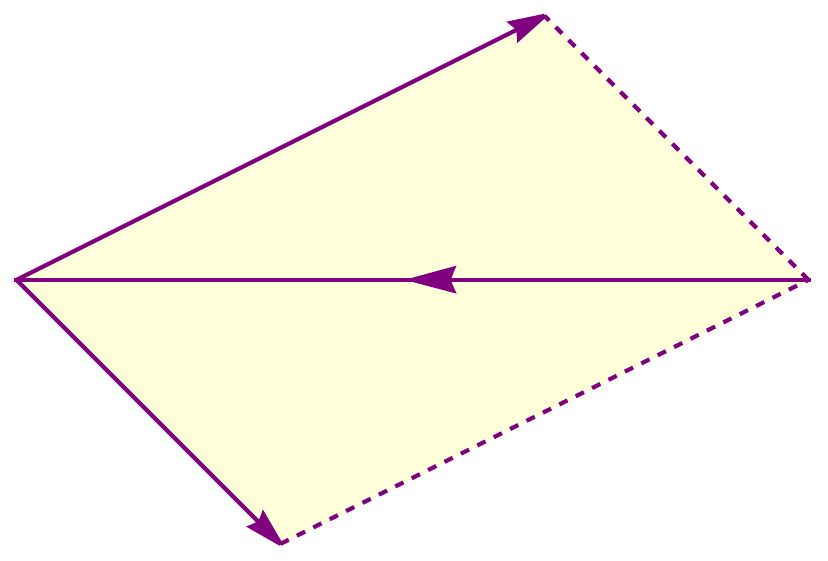}
	\caption{CW structure on $T^{2}$ with two 2-Cells. Note that we identify the opposite boundary edges as in the minimal CW structure on $T^{2}$. }
	\label{T2CW}
\end{figure}
There is yet another case that we need to consider and study, namely when one or two of $n,m,l$ are zero. Without loss of generality assume that $\tau=\operatorname{span}\begin{pmatrix}
n \\
m \\
0
\end{pmatrix}$. This yields 
\begin{align*}
\tau^{\perp} = \operatorname{span}\begin{pmatrix}
\begin{pmatrix}
-m \\
n\\
0
\end{pmatrix},
\begin{pmatrix}
0 \\
0\\
1
\end{pmatrix}
\end{pmatrix}.
\end{align*}
Applying the previous construction induces the following CW structure on 
$\sfrac{T^{3}}{\pi(\tau)} \cong T^{2}$.
\[
T^{2}=e^{2}_{T^{2}} \cup (e^{1}_{T^{2}_{z} } \cup e^{1}_{T^{2}_{y}} ) \cup e^{0}_{T^{2}},
\]
which is the minimal CW structure on $T^{2}$ with the well-known vanishing boundary operators. From the previous section, we know that collapses of the form $T^{2} \longrightarrow \mathcal{S}^1$ are automatically cellular. Thus to ensure cellularity, it is enough to equip each $\sfrac{T^{3}}{\pi(\tau)} \cong T^{2} $ with the appropriate CW structure. To determine the boundary operators of $X_{\Sigma}$ associated with this CW structure, one could also go further and investigate the collapses $\sfrac{T^{3}}{\pi(\tau)} \longrightarrow \sfrac{T^{3}}{\pi(\sigma)}$, such that $\tau \prec \sigma$, $\dim(\tau)=1$, $\dim(\sigma)=2$ and $\tau, \sigma \in \Sigma$. Since we are primarily interested in rational homology, it will turn out that we do not need these collapsing data explicitly for our purposes. \\

For the sake of simplicity, we introduce the following notation that we will use for some of our matrix representations. Let $\mathbf{A}$ be an $l  p \times k  h$ matrix where $l,p,k,h \in \mathbb{N}$. For our purposes, it is sometimes practical to study only a specific part of a matrix. Let us now consider $\mathbf{A}$ as an $p \times h$ matrix where each element (or block) of $\mathbf{A}$ is an $l \times k$ matrix. From the context it will be clear how one can obtain the rest of the matrix from an arbitrary block. Then we represent $\mathbf{A}$ as
\begin{align*}
\mathbf{A} =
\begin{pmatrix}
l \overbrace{ \begin{Bmatrix}
	\phantom{1} & \phantom{1}  & \phantom{1}\\
	\phantom{1} &  \phantom{1}  & \phantom{1}\\
	\phantom{1} &  \phantom{1}  & \phantom{1}\\
	\end{Bmatrix}}^{k}
\end{pmatrix}_{p \times h}.
\end{align*}
\\
At last, we aim to compute the homology groups of an arbitrary real 6-dimensional toric variety. \\

\subsubsection{The Homology of real 6-dimensional Toric Varieties}\label{6DHoGr}

Let $\Sigma$ be a complete fan and $\mathcal{P}$ the associated dual polytope to $\Sigma$. Let $f_1$, $f_2$ and $f_{3}$ denote the number of 1-dimensional, 2-dimensional and 3-dimensional cones of $\Sigma$, respectively. With the above considerations, we can endow  $X_{\mathcal{P}}$, the toric variety associated to $\mathcal{P}$, with the following cellular chain groups:
	
	\begin{align*}
	\mathcal{C}_{6} (X_\mathcal{P}) &= \mathbb{Q} \braket{e^{3}_{T^{3}} \times e^{3}_{\mathcal{P} } } \\
	\mathcal{C}_{5} (X_\mathcal{P}) &= \bigoplus_{i=1}^{3} \mathbb{Q} \braket{e^{2}_{T^{3}_{i} } \times e^{3}_{\mathcal{P} } } \\
	\mathcal{C}_{4} (X_\mathcal{P}) &= \bigoplus_{i=1}^{3} \mathbb{Q} \braket{e^{1}_{T^{3}_{i} } \times e^{3}_{\mathcal{P} } } \bigoplus_{i=1}^{\gamma} \big( \mathbb{Q} \braket{ e^{2}_{(T^{2}_{\gamma_{i}})_{1} } \times e^{2}_{\mathcal{P}_{\gamma_i} }  } \oplus \mathbb{Q} \braket{ e^{2}_{(T^{2}_{\gamma_{i}})_{2} } \times e^{2}_{\mathcal{P}_{\gamma_i} }  } \big) \\&\phantom{=\;\:} \bigoplus_{j=1}^{\omega} \mathbb{Q} \braket{ e^{2}_{T^{2}_{\omega_{j}} } \times e^{2}_{\mathcal{P}_{\omega_j} }  } \\
	\mathcal{C}_{3} (X_\mathcal{P}) &= \mathbb{Q} \braket{e^{0}_{T^{3} } \times e^{3}_{\mathcal{P} } } \bigoplus_{i=1}^{\gamma} \bigoplus_{l=1}^{3}  \mathbb{Q} \braket{ e^{1}_{(T^{2}_{\gamma_{i}})_{l} } \times e^{2}_{\mathcal{P}_{\gamma_{i} } }  }  \bigoplus_{j=1}^{\omega} \bigoplus^{2}_{l=1} \mathbb{Q} \braket{ e^{1}_{(T^{2}_{\omega_{j}})_{l}   } \times e^{2}_{\mathcal{P}_{\omega_{j}  } }  } \\
	\mathcal{C}_{2} (X_\mathcal{P}) &= \bigoplus_{i=1}^{\gamma}   \mathbb{Q} \braket{ e^{0}_{(T^{2}_{\gamma_{i}}) } \times e^{2}_{\mathcal{P}_{\gamma_{i}   }}  }  \bigoplus_{j=1}^{\omega}   \mathbb{Q} \braket{ e^{0}_{(T^{2}_{\omega_{j}}) } \times e^{1}_{\mathcal{P}_{ \omega_{j} } }  } \bigoplus_{l=1}^{f_{2}} \mathbb{Q} \braket{ e^{1}_{\mathcal{S}_{l}^{1}} \times e^{1}_{\mathcal{P}_{l}} } \\
	\mathcal{C}_{1} (X_{\mathcal{P}}) &= \bigoplus_{i=0}^{f_{1}} \mathbb{Q} \braket{e^{0}_{\mathcal{S}^{1}_{i}} \times e^{1}_{\mathcal{P}_{i}    }   } \\
	\mathcal{C}_{0} (X_{\mathcal{P}}) &= \bigoplus_{i=0}^{f_{3}} \mathbb{Q} \braket{e^{0}_{\mathcal{P}_{i}    }   }, 
	\end{align*} 
	where $\gamma$ and $\omega$ are the number of 2-dimensional tori with three and two 1-cells, respectively. The cell $e^{2}_{\gamma_{i}}$ is the 2-cell of $\mathcal{P}$, endowed with the regular CW structure, which is attached to $T^{2}_{\gamma_{i}}$, where $i=1,...,\gamma$, and similarly for $e^{2}_{\mathcal{P}_{\omega_{j}}}$. Hence, we have $\gamma + \omega=f_{1}$.\\
	
	\begin{remark}\label{GammaOmega}
		In the following discussion, without loss of generality and for the sake of simplicity, we only consider 
		$\begin{pmatrix}
			n_{\omega_{j}} \\
			m_{\omega_{j}} \\
			0 \\
		\end{pmatrix}$ as generators of 1-dimensional faces in $\Sigma$ with at least one zero-entry.
		Considering 	$\begin{pmatrix}
			0 \\
			m_{\omega_{j}} \\
			l_{\omega_{j}} \\
		\end{pmatrix}$, 	$\begin{pmatrix}
			n_{\omega_{j}} \\
			0 \\
			l_{\omega_{j}} \\
		\end{pmatrix}$, 	$\begin{pmatrix}
			n_{\omega_{j}} \\
			0 \\
			0 \\
		\end{pmatrix}$, 	$\begin{pmatrix}
			0 \\
			m_{\omega_{j}} \\
			0 \\
		\end{pmatrix}$, and 	$\begin{pmatrix}
			0 \\
			0 \\
			l_{\omega_{j}} \\
		\end{pmatrix}$ will merely change the position of zero elements in the rows labeled by $\omega$ in the boundary operators $\partial_{5}$ and $\partial_{4}$. The computation goes along the same lines.
	\end{remark}
	Let $\begin{pmatrix}
	n_{\gamma_{i}} \\
	m_{\gamma_{i}} \\
	l_{\gamma_{i}} \\
	\end{pmatrix} $ and $\begin{pmatrix}
	n_{\omega_{i}} \\
	m_{\omega_{i}} \\
	0 \\
	\end{pmatrix} $, with $i=1,..., \gamma$ and $j=1,...,\omega$, be the generator of 1-dimensional cones in $\Sigma$ dual to $e^{2}_{\mathcal{P}_{\gamma_{i}}}$ and $e^{2}_{\mathcal{P}_{\omega_{j}}}$, respectively, where we consider the 2-cells as 2-dimensional faces of $\mathcal{P}$. Consider the 2-cell, $e^{2}_{T^{3}_{3}}$, in $T^3 \cong \sfrac{\mathbb{R}^{3}}{\mathbb{Z}^{3}}$ with $e^1_{T^3_1}$ and $e^1_{T^3_2}$ in its topological boundary, where $e^1_{T^3_1}$ is the 1-cell of $T^3$ induced by $\left( \begin{array}{c}
    	1 \\ 0 \\ 0
    \end{array} \right)$ under the map $\mathbb{R}^{3} \longrightarrow \sfrac{\mathbb{R}^{3}}{\mathbb{Z}^{3}}$ and $e^1_{T^3_2}$ induced by $\left( \begin{array}{c}
    	0 \\ 1 \\ 0
    \end{array} \right)$. Under the projection $\mathbb{R}^{3} \longrightarrow \operatorname{span}( \left( \begin{array}{c}
    n_{\gamma_{i}} \frac{1 }{\Delta} \\ m_{\gamma_{i}} \frac{1 }{\Delta}\\ l_{\gamma_{i}} \frac{1 }{\Delta}
\end{array} \right)^{\bot})$, we map the vectors $\left( \begin{array}{c}
1 \\ 0 \\ 0
\end{array} \right)$ and $\left( \begin{array}{c}
0 \\ 1 \\ 0
\end{array} \right)$ to $\left( \begin{array}{c}
(m^{2}_{\gamma_{i}}+l^{2}_{\gamma_{i}}) \frac{1}{\Delta}\\ -m_{i}n_{i} \frac{1}{\Delta} \\ -n_{i}l_{i} \frac{1}{\Delta}
\end{array} \right)$ and $\left( \begin{array}{c}
-n_{\gamma_{i}}m_{\gamma_{i}} \frac{1}{\Delta} \\ (n_{\gamma_{i}}^{2}+l_{\gamma_{i}}^{2})\frac{1}{\Delta} \\ -m_{\gamma_{i}}l_{\gamma_{i}}
\frac{1}{\Delta} \end{array} \right)$, respectively. Hence, the area enclosed by the vectors $\left( \begin{array}{c}
1 \\ 0 \\ 0
\end{array} \right)$ and $\left( \begin{array}{c}
0 \\ 1 \\ 0
\end{array} \right)$ maps to the 2-cell enclosed by $\left( \begin{array}{c}
(m^{2}_{\gamma_{i}}+l^{2}_{\gamma_{i}})\frac{1}{\Delta} \\ -m_{\gamma_{i}}n_{\gamma_{i}}\frac{1}{\Delta} \\ -n_{\gamma_{i}}l_{\gamma_{i}}\frac{1}{\Delta}
\end{array} \right)$ and $\left( \begin{array}{c}
-n_{\gamma_{i}}m_{\gamma_{i}}\frac{1}{\Delta} \\ (n_{\gamma_{i}}^{2}+l_{\gamma_{i}}^{2})\frac{1}{\Delta} \\ -m_{\gamma_{i}}l_{\gamma_{i}}\frac{1}{\Delta}
\end{array} \right)$ in $\operatorname{span}(\left( \begin{array}{c}
 n_{\gamma_{i}}\frac{1 }{\Delta} \\  m_{\gamma_{i}}\frac{1 }{\Delta} \\  l_{\gamma_{i}}\frac{1 }{\Delta}
\end{array} \right)^{\bot})$. We have 
$ \| \left( \begin{array}{c}
	(m^{2}_{\gamma_{i}}+l^{2}_{\gamma_{i}})\frac{1}{\Delta} \\ -m_{\gamma_{i}}n_{\gamma_{i}}\frac{1}{\Delta} \\ -n_{\gamma_{i}}l_{\gamma_{i}}\frac{1}{\Delta}
\end{array} \right) \times \left( \begin{array}{c}
	-n_{\gamma_{i}}m_{\gamma_{i}}\frac{1}{\Delta} \\ (n_{\gamma_{i}}^{2}+l_{\gamma_{i}}^{2})\frac{1}{\Delta} \\ -m_{\gamma_{i}}l_{\gamma_{i}}\frac{1}{\Delta}
\end{array} \right) \|  = \frac{l_{\gamma_{i}}}{\sqrt{\Delta}}$. Note that the area enclosed by the two generators in $\operatorname{span}(\left( \begin{array}{c}
1 \\ 0 \\ 0
\end{array} \right),\left( \begin{array}{c}
0 \\ 1 \\ 0
\end{array} \right))$ is $\| \left( \begin{array}{c}
1 \\ 0 \\ 0
\end{array} \right) \times \left( \begin{array}{c}
0 \\ 1 \\ 0
\end{array} \right) \| =1$. However, the area enclosed by the generators of a 2-cell in $\operatorname{span}(\left( \begin{array}{c}
n_{\gamma_{i}}  \frac{1}{\Delta} \\ m_{\gamma_{i}}  \frac{1}{\Delta} \\ l_{\gamma_{i}}    \frac{1}{\Delta}
\end{array} \right)^{\bot}) $, since $n_{\gamma_{i}}$, $m_{\gamma_{i}}$, and $l_{\gamma_{i}}$ are coprime, is $\| \left( \begin{array}{c}
n_{\gamma_{i}} \frac{1}{\Delta} \\ m_{\gamma_{i}} \frac{1}{\Delta} \\  l_{\gamma_{i}} \frac{1}{\Delta}
\end{array} \right) \| = \frac{1}{\sqrt{\Delta}}$. It follows that the degree of the map from the boundary of $e^{2}_{T^{3}_{3}} \times e^{3}_{\mathcal{P}}$ to the boundaries of $	e^{2}_{(T^{2}_{\gamma_{i}})_{1} } \times e^{2}_{\mathcal{P}_{\gamma_{i} } }$ and $	e^{2}_{(T^{2}_{\gamma_{i}})_{2} } \times e^{2}_{\mathcal{P}_{\gamma_{i} } }$ is $\frac{\sfrac{l_{\gamma_{i}}}{\sqrt{\Delta}}}{\sfrac{1}{\sqrt{\Delta}}}=l_{\gamma_{i}}$. Note that $\omega+\gamma \geq 4$. Hence, we get the following boundary operators. If the generator has at least one zero entry, the treatment goes also along the same lines.
	\begin{align*}
	\partial_{6}&=\begin{pmatrix}
	0 \\
	0 \\
	0 \\
	\end{pmatrix} \\
	\partial_{5} &= \kbordermatrix{
		& 
		e^{2}_{T^{3}_{1}} \times e^{3}_{\mathcal{P}} & 	e^{2}_{T^{3}_{2}} \times e^{3}_{\mathcal{P}} & 	e^{2}_{T^{3}_{3}} \times e^{2}_{\mathcal{P}}	 \\
		e^{1}_{T^{3}_{1}} \times e^{3}_{\mathcal{P}} &   0  &  0  &  0 \\
		e^{1}_{T^{3}_{2}} \times e^{3}_{\mathcal{P}} &   0  &  0  &  0 \\
		e^{1}_{T^{3}_{3}} \times e^{3}_{\mathcal{P}} &   0  &  0  & 0 \\
		e^{2}_{(T^{2}_{\gamma_{i}})_{1} } \times e^{2}_{\mathcal{P}_{\gamma_{i} } }  \gamma \Big\{ & 
		n_{\gamma_{i}}  & m_{\gamma_{i}}   &   l_{\gamma_{i}}   \\
		e^{2}_{(T^{2}_{\gamma_{i}})_{2} } \times e^{2}_{\mathcal{P}_{\gamma_{i } } } \gamma \Big\{ &  
		n_{\gamma_{i}} &m_{\gamma_{i}} &l_{\gamma_{i}}  \\
		e^{2}_{T^{2}_{\omega_{j}} } \times e^{2}_{\mathcal{P}_{\omega_{j } } } \; \omega\Big\{ &   n_{\omega_{j}}  &   m_{\omega_{j}} 	  &    	0  \\
	}
	\\
	\partial_{4} &= \kbordermatrix{
		& 
		e^{1}_{T^{3}_{1}} \times e^{3}_{\mathcal{P}} & 		e^{1}_{T^{3}_{2}} \times e^{3}_{\mathcal{P}} & 		e^{1}_{T^{3}_{3}} \times e^{3}_{\mathcal{P}} & e^{2}_{(T^{2}_{\gamma_{i}})_{1} } \times e^{2}_{\mathcal{P}_{\gamma_{i } } } & 
		e^{2}_{(T^{2}_{\gamma_{i}})_{2} } \times e^{2}_{\mathcal{P}_{\gamma_{i } } } &
		e^{2}_{T^{2}_{\omega_{j}} } \times e^{2}_{\mathcal{P}_{\omega_{j } } }
		\\
		e^{0}_{T^{3}} \times e^{3}_{\mathcal{P}} &   0  &  0  &  0 &  \overbrace{0 }^{ \gamma } & \overbrace{0 }^{ \gamma } & \overbrace{0}^{\omega} \\
		e^{1}_{(T^{2}_{\gamma_{i}})_{1} } \times e^{2}_{\mathcal{P}_{\gamma_{i } } } \gamma \Big\{ &m_{\gamma_{i}} l_{\gamma_{i}} & 0 & 0 & 1 & -1 & 0  \\	
		e^{1}_{(T^{2}_{\gamma_{i}})_{2} } \times e^{2}_{\mathcal{P}_{\gamma_{i } } } \gamma \Big\{ &0 & n_{\gamma_{i}} l_{\gamma_{i}} & 0 & 1 & -1 & 0  \\
		e^{1}_{(T^{2}_{\gamma_{i}})_{3} } \times e^{2}_{\mathcal{P}_{\gamma_{i } } } \gamma \Big\{ &0 & 0 & n_{\gamma_{i}} m_{\gamma_{i}} & 1 & -1 & 0  \\
		e^{1}_{(T^{2}_{\omega_{j}})_{1} } \times e^{2}_{\mathcal{P}_{\omega_{j } } } \omega \Big\{ &-m_{\omega_{j}} & 0 & 0 & 0 & 0 & 0  \\
		e^{1}_{(T^{2}_{\omega_{j}})_{2} } \times e^{2}_{\mathcal{P}_{\omega_{j } } } \omega \Big\{ &0 & n_{\omega_{j}} & 0 & 0 & 0 & 0  \\
	}
	\end{align*} 
	\begin{small}
		\begin{align*}
		\partial_{3} = \kbordermatrix{
			& 
			e^{0}_{T^{3}_{1}} \times e^{3}_{\mathcal{P}} & 		 e^{1}_{(T^{2}_{\gamma_{i}})_{1} } \times e^{2}_{\mathcal{P}_{\gamma_{i } } } & 	
			e^{1}_{(T^{2}_{\gamma_{i}})_{2} } \times e^{2}_{\mathcal{P}_{\gamma_{i } } } & 
			e^{1}_{(T^{2}_{\gamma_{i}})_{3} } \times e^{2}_{\mathcal{P}_{\gamma_{i } } }& 
			e^{1}_{(T^{2}_{\omega_{j}})_{1} } \times e^{2}_{\mathcal{P}_{\omega_{j } } } &
			e^{1}_{(T^{2}_{\omega_{j}})_{2} } \times e^{2}_{\mathcal{P}_{\omega_{j } } }
			\\
			e^{0}_{T^{2}_{\gamma_{i}} } \times e^{2}_{\mathcal{P}_{\gamma_{i } } } \gamma \Big\{  &   1  &  \overbrace{0 }^{ \gamma }  &  \overbrace{0 }^{ \gamma } &  \overbrace{0 }^{ \gamma } & \overbrace{0 }^{ \omega } & \overbrace{0}^{\omega} \\
			e^{0}_{T^{2}_{\omega_{j}} } \times e^{2}_{\mathcal{P}_{\omega_{j} } } \omega \Big\{ &1 & 0 & 0 & 0 & 0 & 0  \\	
			e^{1}_{\mathcal{S}^{1}_{i} } \times e^{1}_{\mathcal{P}_{ l} } \;\;\; f_{2} \Big\{ &0 &  \mathsf{C}_{1} &  \mathsf{C}_{2} &  \mathsf{C}_{3} &  \mathsf{C}_{4}  &  \mathsf{C}_{5} \\
		}
		\end{align*}
	\end{small}
	
	\begin{align*}
	\partial_{2} = \kbordermatrix{
		& e^{0}_{T^{2}_{\gamma_{i}}} \times e^{2}_{\mathcal{P}_{\gamma_{i}}} & 		e^{0}_{T^{2}_{\omega_{j}}} \times e^{2}_{\mathcal{P}_{\omega_{j}}} & 	
		e^{1}_{\mathcal{S}^{1}_{l}} \times e^{1}_{\mathcal{P}_{l}}  \\
		e^{0}_{\mathcal{S}^{1}_{l}} \times e^{1}_{\mathcal{P}_{l} }  \: f_{2} \Big\{  &   \overbrace{  \mathsf{P}_{1} }^{ \gamma }  &   \overbrace{ \mathsf{P}_{2} }^{  \omega }  &  \overbrace{0}^{f_{2}}   \\
	},
	\end{align*}
	where $ \mathsf{C}_{i}, \: i=1,...,5$ are collapsing data that we do not need explicitly. The values in the blocks $ \mathsf{P}_{i}$ for $i=1,2$ are purely determined by the regular CW structure on $\mathcal{P}$.  
	Then, one can easily deduce that
	\begin{align*}
	\operatorname{rk}(H_{6}(X_\mathcal{P};\mathbb{Q})) &=1, \\
	\operatorname{rk}(H_{5}(X_\mathcal{P};\mathbb{Q})) &=0. \\
	\end{align*}
	Moreover, we have
	\begin{align*}
	\operatorname{rk}(\operatorname{Im}(\partial_{5})) &=3,\\
	\operatorname{rk}(\operatorname{ker}(\partial_{4})) &=\gamma+\omega, \\
	\operatorname{rk}(\operatorname{Im}(\partial_{4})) &=\gamma+3, \\
	\operatorname{rk}(\operatorname{ker}(\partial_{2})) &=\operatorname{rk}(\operatorname{ker}(\partial_{2}^{\mathcal{P}}))+f_{2}, \\
	\operatorname{rk}(\operatorname{Im}(\partial_{2}))&=\operatorname{rk}(\operatorname{Im}(\partial_{2}^{\mathcal{P}}))\\ 
	\partial_{1}&=\partial_{1}^{\mathcal{P}}
	\end{align*}
	where $\partial_{i}^{\mathcal{P}}$ denote the boundary operators of $\mathcal{P} \cong \mathcal{D}^{3}$. Thus, we have
	\begin{align*}
	\operatorname{rk}(H_{4}(X_{\mathcal{P}};\mathbb{Q}) )=f_{1}-3.
	\end{align*}
	Define $b^{\prime}$ by the equation

		\begin{align}\label{b1}
		\operatorname{rk}(\operatorname{Im}(\partial_{3}))=f_{2}-b^{\prime}+\operatorname{rk}(\operatorname{Im}(\partial^{\mathcal{P}}_{3}))
	\end{align}
	and $b$ by the equation
		\begin{align}\label{b2}
		b^{\prime}=f_{1}-3-b.
	\end{align}
	In other words, $b^{\prime}$ is the number of linearly dependent rows of $\partial_{3}$.
Straightforward calculation then shows that
	\begin{align*}
	\operatorname{rk}(\operatorname{Im}(\partial_{3}))&=-(f_{1}-f_{2}-b-3)+\operatorname{rk}(\operatorname{Im}(\partial_{3}^{\mathcal{P}})), \\
	\operatorname{rk}(\operatorname{ker}(\partial_{3}))&= (f_{1}-f_{2}-b-3)+(3\gamma+2\omega).
	\end{align*}
	This implies that
	\begin{align*}
	\operatorname{rk}(H_{3}(X_{\mathcal{P}};\mathbb{Q}) )=3f_{1}-f_{2}-b-6.
	\end{align*}
and
	\begin{align*}
	\operatorname{rk}(H_{2}(X_{\mathcal{P}};\mathbb{Q}) )=f_{1}-3-b.
	\end{align*}
		 If $X_{\mathcal{P}}$ is non-singular then Poincar\'e duality implies $b=0$ and hence $b^{\prime}=f_{1}-3$.
The previous considerations lead to the following proposition.
\begin{proposition}\label{OrdiHom}
	Let $X_{\mathcal{P}}$ be a 6-dimensional toric variety associated to a complete rational fan $\Sigma$, which is dual to the polytope $\mathcal{P}$. Let $f_{2}$ and $f_{1}$ be the number of 2- and 1-dimensional cones in $\Sigma$, respectively. Then, we have
	\begin{align*}
	\operatorname{rk}(H_{6}(X_{\mathcal{P}};\mathbb{Q}))&=1,\\
	\operatorname{rk}(H_{5}(X_{\mathcal{P}};\mathbb{Q}))&=0,\\
	\operatorname{rk}(H_{4}(X_{\mathcal{P}};\mathbb{Q}))&=f_{1}-3,\\ \operatorname{rk}(H_{3}(X_{\mathcal{P}};\mathbb{Q}))&=3f_{1}-f_{2}-b-6,\\
	\operatorname{rk}(H_{2}(X_{\mathcal{P}};\mathbb{Q}) )&=f_{1}-3-b,\\
	\operatorname{rk}(H_{1}(X_{\mathcal{P}};\mathbb{Q}) )&=0,\\
	\operatorname{rk}(H_{0}(X_{\mathcal{P}};\mathbb{Q}) )&=1,\\
	\end{align*}
	where the parameter $b$ is determined by Equations \ref{b1} and \ref{b2}.
\end{proposition}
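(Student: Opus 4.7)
The plan is to read off each rank from the cellular chain complex whose boundary operators $\partial_6, \ldots, \partial_1$ have just been assembled, by applying the standard formula $\operatorname{rk}(H_k) = \operatorname{rk}(\ker \partial_k) - \operatorname{rk}(\operatorname{im} \partial_{k+1})$ term by term. The verification splits into three regimes: the top degrees ($k=5,6$), the middle degrees ($k=2,3,4$) where the invariant $b$ enters, and the bottom degrees ($k=0,1$) which reduce to the polytope $\mathcal{P}$.

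First I would handle the top of the complex. Since $\partial_6 = 0$ and $\dim C_6 = 1$, we get $\operatorname{rk}(H_6) = 1$. For $H_5$, the displayed $\partial_5$ has only its last $\gamma + \omega = f_1$ rows nonzero, and these rows are precisely the generators $(n_i, m_i, l_i)$ of the $1$-dimensional cones of $\Sigma$; completeness of $\Sigma$ in $\mathbb{R}^3$ forces these vectors to span $\mathbb{R}^3$, so $\operatorname{rk}(\operatorname{im}\partial_5) = 3 = \dim C_5$, whence $H_5 = 0$. For $H_4$ I would tally $\dim C_4 = 3 + 2\gamma + \omega$ and use $\operatorname{rk}(\operatorname{im}\partial_4) = \gamma + 3$ to obtain $\operatorname{rk}(\ker\partial_4) = \gamma + \omega = f_1$, giving $\operatorname{rk}(H_4) = f_1 - 3$.

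The middle degrees are where the parameter $b$ appears. I would take Equations \ref{b1} and \ref{b2} as the definition of $b'$ and $b$, which packages the rank of $\partial_3$ as
\begin{align*}
\operatorname{rk}(\operatorname{im}\partial_3) = -(f_1 - f_2 - b - 3) + \operatorname{rk}(\operatorname{im}\partial_3^{\mathcal{P}}).
\end{align*}
Since $\mathcal{P} \cong \mathcal{D}^3$ has trivial higher reduced homology, $\partial_3^{\mathcal{P}}$ is injective on its one-dimensional domain, a fact I would invoke to make the $\mathcal{P}$-contributions telescope away. Combining with $\operatorname{rk}(\ker\partial_3) = (f_1 - f_2 - b - 3) + 3\gamma + 2\omega$ and subtracting $\operatorname{rk}(\operatorname{im}\partial_4) = \gamma + 3$ yields $\operatorname{rk}(H_3) = 3f_1 - f_2 - b - 6$ after using $\gamma + \omega = f_1$. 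For $H_2$, the $f_2$ columns of $\partial_2$ indexed by $e^1_{\mathcal{S}^1_l} \times e^1_{\mathcal{P}_l}$ are zero and contribute $f_2$ to the kernel, while the remaining columns reproduce $\partial_2^{\mathcal{P}}$; the residual polytopal terms again cancel and produce $\operatorname{rk}(H_2) = f_1 - 3 - b$.

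For the bottom of the complex I would exploit that $\partial_1 = \partial_1^{\mathcal{P}}$ and $\operatorname{im}\partial_2 = \operatorname{im}\partial_2^{\mathcal{P}}$ on the nose, reducing the computation of $H_0$ and $H_1$ to that of the contractible polytope $\mathcal{P}\cong\mathcal{D}^3$, giving $1$ and $0$ respectively. The main obstacle is not a computational one but an interpretive one: the rank of $\partial_3$ genuinely depends on the arithmetic of the $1$-cone generators rather than on the $f$-vector alone, which is precisely why $b$ has to be \emph{introduced} by definition rather than extracted from $(f_1,f_2)$. This matches the paper's overall thesis that intersection-space homology retains non-combinatorial information of the fan.
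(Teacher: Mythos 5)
Your computation is correct and follows exactly the route the paper takes: Proposition~\ref{OrdiHom} in the text is in fact the summary of the rank bookkeeping carried out immediately beforehand from the cellular boundary operators $\partial_6,\ldots,\partial_1$, and your term-by-term application of $\operatorname{rk}(H_k)=\operatorname{rk}(\ker\partial_k)-\operatorname{rk}(\operatorname{im}\partial_{k+1})$, together with the observation that $b$ is introduced by definition to absorb the non-combinatorial rank of $\partial_3$, reproduces that argument faithfully. The one point where you supply slightly more detail than the paper is the justification that $\operatorname{rk}(\operatorname{im}\partial_5)=3$ via the completeness of the fan forcing the $1$-cone generators to span $\mathbb{R}^3$, which is a correct and welcome amplification of the paper's ``one can easily deduce.''
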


\begin{remark}
	In the next section, we give a geometric description of $b$, at least for some special cases. There is yet another approach by McConnell in \cite{mcconnell1989rational}, which computes the homology groups of 6-dimensional toric varieties, using spectral sequences.
\end{remark}
\begin{remark}
	As observed by McConnell in \cite{mcconnell1989rational}, $b$ is not combinatorially invariant, i.e. $b$ is not merely determined by the number of cones.
\end{remark}

\subsection{Singularities of 6-dimensional Toric Varieties}\label{Sing6D}

Let $\Sigma$ be a complete fan in $\mathbb{R}^{3}$ and $\mathcal{P}$ be the associated dual polytope. As mentioned in Remark \ref{LinkTopStra}, the link of a point $x \in X_{4}-X_{2}$ is simply homeomorphic to $\mathcal{S}^{1}$. For $x \in X_{2}-X_{0}$, we employ the construction of links introduced earlier. Let $\tau \in \Sigma$ such that $p(x) \in \operatorname{int}(\delta^{-1}(\tau))$, where $p$ is the natural projection $p:X_{\Sigma} \longrightarrow \vert \mathcal{P} \vert$. Then, we have $\mathcal{S}_{\tau}=\{\sigma_{1}, \sigma_{2}, \operatorname{int}(\vert \mathcal{P} \vert) \}$, where $\sigma_{1}$ and $\sigma_{2}$ are the two 2-dimensional neighboring faces of $\tau$. Hence as in Example \ref{n=2}, we have $\mathcal{M}_{\tau} \cong \mathcal{I}$. However, in contrast to Example \ref{n=2}, here we have $\sfrac{T^{3}}{\pi(\delta(\tau))} \cong \mathcal{S}^{1}$. But in the end, $\mathcal{L}_{\tau}$, the link of $x$, has the same CW structure as in Example \ref{n=2},
\begin{align*}
\mathcal{L}_{\tau} \cong \big( \operatorname{int}(\mathcal{I})\times \sfrac{T^{3}}{(\sfrac{T^{3}}{\pi(\delta(\tau))})} \big) \bigcup \big(\{0\} \times \sfrac{(\sfrac{T^{3}}{\delta(\pi(\sigma_{1}))})}{(\sfrac{T^{3}}{\pi(\delta(\tau))})} \big) \bigcup \big(\{1\} \times \sfrac{(\sfrac{T^{3}}{\delta(\pi(\sigma_{2}))})}{(\sfrac{T^{3}}{\pi(\delta(\tau))})} \big).
\end{align*}
Note that with the same argument as in Remark \ref{noS1S2}, we conclude that $\mathcal{L} \not\simeq \mathcal{S}^{1} \times \mathcal{S}^{2}$. The computational method goes along the same line as in Section \ref{RL4D} and yields

\begin{align}\label{LinkHomRnk}
\begin{split}
\operatorname{rk}(H_{3}(\mathcal{L}_{\tau};\mathbb{Q})) &=1, \\
\operatorname{rk}(H_{2}(\mathcal{L}_{\tau};\mathbb{Q})) &=0, \\
\operatorname{rk}(H_{1}(\mathcal{L}_{\tau};\mathbb{Q})) &=0, \\
\operatorname{rk}(H_{0}(\mathcal{L}_{\tau};\mathbb{Q})) &=1. 
\end{split}
\end{align}

\begin{remark}
	Studying the homology groups of $\mathcal{L}_{\tau}$ with integral coefficients yields the same result for the smoothness of a 2-dimensional stratum in $X_{\Sigma}$ as in the previous case. This means that if the two generators of $\delta(\tau)$ in $\Sigma = \mathcal{N} \otimes \mathbb{R}$ do not form a part of a $\mathbb{Z}$-basis of the lattice $\mathcal{N}$ then the stratum is singular in the sense of Definition \ref{DefSing}. 
\end{remark}
In Example \ref{3DLink}, we constructed the link of a connected component of $X_{0}$ (a point) in a specific real 6-dimensional toric variety. Here, we generalize this Example to an arbitrary real 6-dimensional compact toric variety. \\
Let $x \in X_{0}$. We consider $x$ as a 0-dimensional face of $\mathcal{P}$. The set $\mathcal{S}_{x}$ consists of $\operatorname{int}(\mathcal{P})$ and all 1- and 2-dimensional faces of $\mathcal{P}$, $\tau$, with $x \prec \tau$. Following the introduced construction of links, $\vert \mathcal{M}_{x} \vert$ is a 2-dimensional polytope with $f_{x_{1}}$ vertices and $f_{x_{2}}$ 1-dimensional faces, where $f_{x_{1}}$ and $f_{x_{2}}$ denote the number of 1- and 2-dimensional neighboring faces of $x$ in $\Sigma$, respectively. Since $\vert \mathcal{M}_{x} \vert \cong \mathcal{D}^2$, its boundary is homeomorphic to a circle. As the Euler characteristic of a circle vanishes, $f_{x_{1}} = f_{x_{2}}$. Hence, we have the following relations.
\begin{align*}
p^{-1}_{\mathcal{L}_{x}}(\operatorname{int}(\mathcal{M}_{x}))&=T^{3}\times \operatorname{int}(\mathcal{M}_{x}) \\
p^{-1}_{\mathcal{L}_{x}}(\operatorname{int}(\tau)) &=T^{2} \times \operatorname{int}(\tau) \:\:\:\:\:\:\:\: \text{for} \: \tau \in \mathcal{M}_{x} \: \text{with}\: \dim(\tau)=1 \\ 
p^{-1}_{\mathcal{L}_{x}}(\nu) &=\mathcal{S}^{1} \times \nu \:\:\:\:\:\:\:\:\:\:\:\:\:\:\:\:\:\:\:\, \text{for} \: \nu \in \mathcal{M}_{x} \: \text{with}\: \dim(\nu)=0.
\end{align*}     
At this point, we can endow $\mathcal{L}_{x}$ with a CW structure and compute the corresponding homology groups. As before, we equip $T^{3}$ with the minimal CW structure and each $T^{2}$ with an appropriate CW structure such that each collapse map becomes cellular as we discussed earlier. Each 1-dimensional face of $\mathcal{M}_{x}$, $\tau_{i}$, is associated to a 2-dimensional face of $\mathcal{P}$, which is dual to a 1-dimensional cone in $\Sigma$ such that the dual cones lie in $\mathcal{S}_{x}$. Let $a$ and $b$ be the numbers of such 1-dimensional cones in $\Sigma$, whose generators have no zero entry and at least one zero-entry, respectively. Take into consideration that $a+b \geq 3$. Let $\begin{pmatrix}
n_{a_{i}} \\
m_{a_{i}} \\
l_{a_{i}} \\
\end{pmatrix}$ and $\begin{pmatrix}
n_{b_{j}} \\
m_{b_{j}} \\
0 \\
\end{pmatrix}$ with $i=1,\dots,a$ and $j=1, \dots , b$ be the generators of these 1-dimensional cones in $\Sigma$ where $n_{a_{i}}, m_{a_{i}}$ and $l_{a_{i}}$ are non-zero and at least one of the entries $n_{b_{j}}$ or $m_{b_{j}}$ is non-zero.

Accordingly, we get the following chain groups for $\mathcal{L}_{x}$, where we use the regular cellular chain groups on $\mathcal{M}_{x}$:
\begin{align*}
\mathcal{C}_{5} (\mathcal{L}_{x}) &= \mathbb{Q} \braket{e^{3}_{T^{3}} \times e^{2}_{\mathcal{M}_{x} } } \\
\mathcal{C}_{4} (\mathcal{L}_{x}) &= \bigoplus_{i=1}^{3} \mathbb{Q} \braket{e^{2}_{T^{3}_{i} } \times e^{2}_{\mathcal{M}_{x} } } \\
\mathcal{C}_{3} (\mathcal{L}_{x}) &= \bigoplus_{i=1}^{3} \mathbb{Q} \braket{e^{1}_{T^{3}_{i} } \times e^{2}_{\mathcal{M}_{x} } } \bigoplus_{i=1}^{a} \big( \mathbb{Q} \braket{ e^{2}_{(T^{2}_{a_{i}})_{1} } \times e^{1}_{(\mathcal{M}_{x_{a } })_{i} }  } \oplus \mathbb{Q} \braket{ e^{2}_{(T^{2}_{a_{i}})_{2} } \times e^{1}_{(\mathcal{M}_{x_{a } })_{i} }  } \big) \\&\phantom{=\;\:} \bigoplus_{j=1}^{b} \mathbb{Q} \braket{ e^{2}_{(T^{2}_{b_{j}}) } \times e^{1}_{(\mathcal{M}_{x_{b } })_{i} }  } \\
\mathcal{C}_{2} (\mathcal{L}_{x}) &= \mathbb{Q} \braket{e^{0}_{T^{3} } \times e^{2}_{\mathcal{M}_{x} } } \bigoplus_{i=1}^{a} \bigoplus_{l=1}^{3}  \mathbb{Q} \braket{ e^{1}_{(T^{2}_{a_{i}})_{l} } \times e^{1}_{(\mathcal{M}_{x_{a } })_{i} }  }  \bigoplus_{j=1}^{b} \bigoplus^{2}_{l=1} \mathbb{Q} \braket{ e^{1}_{(T^{2}_{b_{j}})_{l}   } \times e^{1}_{(\mathcal{M}_{x_{b } })_{j} }  } \\
\mathcal{C}_{1} (\mathcal{L}_{x}) &= \bigoplus_{i=1}^{a}   \mathbb{Q} \braket{ e^{0}_{(T^{2}_{a_{i}}) } \times e^{1}_{(\mathcal{M}_{x_{a } })_{i} }  }  \bigoplus_{j=1}^{b}   \mathbb{Q} \braket{ e^{0}_{(T^{2}_{b_{j}}) } \times e^{1}_{(\mathcal{M}_{x_{b } })_{j} }  } \\
\mathcal{C}_{0} (\mathcal{L}_{x}) &= \bigoplus_{l=0}^{f_{x_{1}}} \mathbb{Q} \braket{e^{0}_{\mathcal{M}_{x_{l}}    }   }. 
\end{align*}
\begin{remark}
	In the above chain complex, $e^{1}_{(\mathcal{M}_{x_{a}})_{i}  }$ and $e^{1}_{(\mathcal{M}_{x_{b}})_{j}  }$ are those 1-cells of $\mathcal{M}_{x}$, whose associated $T^{2}$ in $\mathcal{L}_{x}$ has three and two 1-cells, respectively. Similarly, we have labeled each $T^{2}$ with either $a_{i}$ or $b_{j}$ according to the generator of the collapsed 1-dimensional cone in $\Sigma$.	
\end{remark}
\noindent Consequently, we obtain the following boundary operators for the above CW structure on $\mathcal{L}_{x}$.
\begin{align*}
\partial_{5}&=\begin{pmatrix}
0 \\
0 \\
0 \\
\end{pmatrix} \\
\end{align*}

\begin{align}
\partial_{4} &= \kbordermatrix{
	& 
	e^{2}_{T^{3}_{1}} \times e^{2}_{\mathcal{M}_{x}} & 	e^{2}_{T^{3}_{2}} \times e^{2}_{\mathcal{M}_{x}} & 	e^{2}_{T^{3}_{3}} \times e^{2}_{\mathcal{M}_{x}}	 \\
	e^{1}_{T^{3}_{1}} \times e^{2}_{\mathcal{M}_{x}} &   0  &  0  &  0 \\
	e^{1}_{T^{3}_{2}} \times e^{2}_{\mathcal{M}_{x}} &   0  &  0  &  0 \\
	e^{1}_{T^{3}_{3}} \times e^{2}_{\mathcal{M}_{x}} &   0  &  0  & 0 \\
	e^{2}_{(T^{2}_{a_{i}})_{1} } \times e^{1}_{(\mathcal{M}_{x_{a } })_{i} }  a \Big\{ & 
		n_{a_{i}} 	 &   m_{a_{i}}  &   l_{a_{i}}  \\
	e^{2}_{(T^{2}_{a_{i}})_{2} } \times e^{1}_{(\mathcal{M}_{x_{a } })_{i} } a \Big\{ &  
		n_{a_{i}} 	 &  m_{a_{i}}  &   l_{a_{i}}  \\
	e^{2}_{(T^{2}_{b_{j}}) } \times e^{1}_{(\mathcal{M}_{x_{b } })_{j} } \; b\Big\{ &   n_{b_{j}}   &   m_{b_{j}} 	  &    	0 \\
}
\end{align}
\begin{footnotesize}
	\begin{align}\label{Link6D}
	\partial_{3} &= \kbordermatrix{
		& 
		e^{1}_{T^{3}_{1}} \times e^{2}_{\mathcal{M}_{x}} & 		e^{1}_{T^{3}_{2}} \times e^{2}_{\mathcal{M}_{x}} & 		e^{1}_{T^{3}_{3}} \times e^{2}_{\mathcal{M}_{x}} & e^{2}_{(T^{2}_{a_{i}})_{1} } \times e^{1}_{(\mathcal{M}_{x_{a } })_{i} } & 
		e^{2}_{(T^{2}_{a_{i}})_{2} } \times e^{1}_{(\mathcal{M}_{x_{a } })_{i} } &
		e^{2}_{(T^{2}_{b_{j}}) } \times e^{1}_{(\mathcal{M}_{x_{b } })_{j} }
		\\
		e^{0}_{T^{3}} \times e^{2}_{\mathcal{M}_{x}} &   0  &  0  &  0 &  \overbrace{0 }^{ a } & \overbrace{0 }^{ a } & \overbrace{0}^{b} \\
		e^{1}_{(T^{2}_{a_{i}})_{1} } \times e^{1}_{(\mathcal{M}_{x_{a } })_{i} } a \Big\{ &m_{a_{i}} l_{a_{i}} & 0 & 0 & 1 & -1 & 0  \\	
		e^{1}_{(T^{2}_{a_{i}})_{2} } \times e^{1}_{(\mathcal{M}_{x_{a } })_{i} } a \Big\{ &0 & n_{a_{i}} l_{a_{i}} & 0 & 1 & -1 & 0  \\
		e^{1}_{(T^{2}_{a_{i}})_{3} } \times e^{1}_{(\mathcal{M}_{x_{a } })_{i} } a \Big\{ &0 & 0 & n_{a_{i}} m_{a_{i}} & 1 & -1 & 0  \\
		e^{1}_{(T^{2}_{b_{j}})_{1} } \times e^{1}_{(\mathcal{M}_{x_{b } })_{j} } b \Big\{ &-m_{b_{j}} & 0 & 0 & 0 & 0 & 0  \\
		e^{1}_{(T^{2}_{b_{j}})_{2} } \times e^{1}_{(\mathcal{M}_{x_{b } })_{j} } b \Big\{ &0 & n_{b_{j}} & 0 & 0 & 0 & 0  \\
	}
	\end{align}
\end{footnotesize}
In order to avoid any ambiguity on $\partial_{3}$, we describe the explicit form of $\partial_{3}$ in more details. Let $l\in \{1,2,3\}$. The rows associated to $ e^{1}_{(T^{2}_{a_{i}})_{l} } \times e^{1}_{(\mathcal{M}_{x_{a } })_{i} }$ have only non-zero entries on the columns labeled by $e^{1}_{T^{3}_{l}} \times e^{2}_{\mathcal{M}_{x}}$, $e^{2}_{(T^{2}_{a_{i}})_{1} } \times e^{1}_{(\mathcal{M}_{x_{a } })_{i} }$ and $e^{2}_{(T^{2}_{a_{i}})_{2} } \times e^{1}_{(\mathcal{M}_{x_{a } })_{i} }$. Similarly, the columns labeled by $e^{1}_{T^{3}_{l}} \times e^{2}_{\mathcal{M}_{x}}$ have only non-zero entires on the rows labeled by $ e^{1}_{(T^{2}_{a_{i}})_{l} } \times e^{1}_{(\mathcal{M}_{x_{a } })_{i} }$ and possibly on  $e^{1}_{(T^{2}_{b_{j}})_{1} } \times e^{1}_{(\mathcal{M}_{x_{b } })_{j} }$ or $ e^{1}_{(T^{2}_{b_{j}})_{2} } \times e^{1}_{(\mathcal{M}_{x_{b } })_{j} }$. Lastly, the columns associated to $e^{2}_{(T^{2}_{a_{i}})_{1} } \times e^{1}_{(\mathcal{M}_{x_{a } })_{i} }$ and $e^{2}_{(T^{2}_{a_{i}})_{1} } \times e^{1}_{(\mathcal{M}_{x_{a } })_{i} }$ have only non-zero entries on the rows labeled by $   e^{1}_{(T^{2}_{a_{i}})_{1} } \times e^{1}_{(\mathcal{M}_{x_{a } })_{i} }$, $   e^{1}_{(T^{2}_{a_{i}})_{2} } \times e^{1}_{(\mathcal{M}_{x_{a } })_{i} }$ and $   e^{1}_{(T^{2}_{a_{i}})_{3} } \times e^{1}_{(\mathcal{M}_{x_{a } })_{i} }$. For instance, let us start with $i=1$. Consider the following part of $\partial_{3}$:
\begin{align*}
\begin{pmatrix}
m_{a_{1}} l_{a_{1}} & 0 & 0 &1&-1 &0 \\
0& n_{a_{1}} l_{a_{1}} & 0 & 1&-1&0 \\
0&0& n_{a_{1}} m_{a_{1}}&1&-1&0\\
\end{pmatrix}.
\end{align*}
Adding $e^{2}_{(T^{2}_{a_{i}})_{1} } \times e^{1}_{(\mathcal{M}_{x_{a } })_{i} }$ and $e^{2}_{(T^{2}_{a_{i}})_{2} } \times e^{1}_{(\mathcal{M}_{x_{a } })_{i} }$ to columns, and $   e^{1}_{(T^{2}_{a_{i}})_{1} } \times e^{1}_{(\mathcal{M}_{x_{a } })_{i} }$, $   e^{1}_{(T^{2}_{a_{i}})_{2} } \times e^{1}_{(\mathcal{M}_{x_{a } })_{i} }$ and $   e^{1}_{(T^{2}_{a_{i}})_{3} } \times e^{1}_{(\mathcal{M}_{x_{a } })_{i} }$ to rows gives us
\begin{align*}
\begin{pmatrix}
m_{a_{1}} l_{a_{1}} & 0 & 0 &1&-1&0&0 &0 \\
0& n_{a_{1}} l_{a_{1}} & 0 & 1&-1&0&0&0 \\
0&0& n_{a_{1}} m_{a_{1}}&1&-1&0&0&0\\
m_{a_{2}} l_{a_{2}} & 0 & 0 &0&0&1&-1 &0 \\
0& n_{a_{2}} l_{a_{2}} & 0 & 0&0&1&-1&0 \\
0&0& n_{a_{2}} m_{a_{2}}&0&0&1&-1&0\\
\end{pmatrix}.
\end{align*}  
Adding more columns and rows goes along the same lines. Computing the ker and Im of the above boundary operators is straightforward. We find
\begin{align*}
\operatorname{rk(ker(}\partial_{5}))&=1, \; \operatorname{rk(Im(}\partial_{5}))=0, \\ 
\operatorname{rk(ker(}\partial_{4}) )&=0, \; \operatorname{rk(Im(}\partial_{4}) )=3, \\
\operatorname{rk(ker(}\partial_{3}) )&=a+b=f_{x_{1} },  
\end{align*}
which yield
\begin{align*}
\operatorname{rk}(H_{5}(\mathcal{L}_{x}; \mathbb{Q}))&=1 \\
\operatorname{rk}(H_{4}(\mathcal{L}_{x}; \mathbb{Q}))&=0 \\
\operatorname{rk}(H_{3}(\mathcal{L}_{x}; \mathbb{Q}))&=f_{x_{1} }-3.
\end{align*}

The link $\mathcal{L}_{x}$ is equipped with a stratification 
$\mathcal{L}_{x}=(\mathcal{L}_{x})_{5} \supset (\mathcal{L}_{x})_{3} \supset (\mathcal{L}_{x})_{1}$ and a link of $(\mathcal{L}_{x)_{1}}$ in $\mathcal{L}_{x}$ is a link of $(X_{\Sigma})_{2}$ in $X_{\Sigma}$. As those links are rational homology spheres \ref{LinkHomRnk}, $\mathcal{L}_{x}$ is a rational homology manifold. In particular, the homology groups below the middle degree of $\mathcal{
L}_{x}$ can be obtained using rational Poincaré duality. 

\begin{remark}
	There is yet another way for computing the homology groups below the middle degree. We can use our previous method of orthogonal decomposition and compute $\partial_{2}$. The boundary operator $\partial_{2}$ will have the following form.
	\begin{tiny}
		\begin{align*}
		\partial_{2} = \kbordermatrix{
			& 
			e^{0}_{T^{3}_{1}} \times e^{2}_{\mathcal{M}_{x}} & 		 e^{1}_{(T^{2}_{a_{i}})_{1} } \times e^{1}_{(\mathcal{M}_{x_{a } })_{i} } & 	
			e^{1}_{(T^{2}_{a_{i}})_{2} } \times e^{1}_{(\mathcal{M}_{x_{a } })_{i} } & 
			e^{1}_{(T^{2}_{a_{i}})_{3} } \times e^{1}_{(\mathcal{M}_{x_{a } })_{i} }& 
			e^{1}_{(T^{2}_{b_{j}})_{1} } \times e^{1}_{(\mathcal{M}_{x_{b } })_{j} } &
			e^{1}_{(T^{2}_{b_{j}})_{2} } \times e^{1}_{(\mathcal{M}_{x_{b } })_{j} }
			\\
			e^{0}_{T^{2}_{a_{i}} } \times e^{1}_{(\mathcal{M}_{x_{a } })_{i} } a \Big\{  &   1  &  \overbrace{0 }^{ a }  &  \overbrace{0 }^{ a } &  \overbrace{0 }^{ a } & \overbrace{0 }^{ b } & \overbrace{0}^{b} \\
			e^{0}_{T^{2}_{b_{j}} } \times e^{1}_{(\mathcal{M}_{x_{b} })_{j} } b \Big\{ &1 & 0 & 0 & 0 & 0 & 0  \\	
			e^{1}_{\mathcal{S}^{1}_{i} } \times e^{0}_{(\mathcal{M}_{ x})_{i} } a+b \Big\{ &0 & \Big[ \phantom{---}  &  &  &  & \phantom{---} \Big]  \\
		}
		\end{align*}
	\end{tiny}
	It can be shown that rows associated to the cells of the form $ e^{1}_{\mathcal{S}^{1}_{i} } \times e^{0}_{(\mathcal{M}_{ x})_{i} }$ are linearly independent. This implies
	\begin{align*}
	\operatorname{rk(ker(}\partial_{2}))&=(3a+2b)-(a+b)=2a+b \\
	\operatorname{rk(Im(}\partial_{2}))&=a+b+1.
	\end{align*}
	Keep in mind that the upper left part of $\partial_{2}$ is merely determined by the orientation of $\vert \mathcal{M}_{x} \vert$. Similarly, for $\partial_{1}$ we have 
	\begin{align*}
	\partial_{1} = \kbordermatrix{
		& 
		e^{0}_{T^{2}_{a_{i}} } \times e^{1}_{(\mathcal{M}_{x_{a } })_{i} } & 	
		e^{0}_{T^{2}_{b_{j}} } \times e^{1}_{(\mathcal{M}_{x_{b} })_{j} } & 	
		e^{1}_{\mathcal{S}^{1}_{i} } \times e^{0}_{(\mathcal{M}_{ x})_{i} } & 
		\\
		e^{0}_{\mathcal{S}^{1}_{i} } \times e^{0}_{(\mathcal{M}_{ x})_{i} } a+b \Big\{  &   \overbrace{z_{i}}^{a}  & \overbrace{y_{j}}^{b}   &  \overbrace{0 }^{ a+b }  \\
	},
	\end{align*}
	where again the $y_{j}$ and the $z_{i}$ are solely determined by the orientation of $\vert \mathcal{M}_{x} \vert$.
	Using $\vert \mathcal{M}_{x} \vert \cong \mathcal{D}^{2}$ 
	yields
	\begin{align*}
	\operatorname{rk(ker(}\partial_{1}))&=a+b+1,\\
	\operatorname{rk(Im(}\partial_{1}))&=f_{x_{1}}-1, \\
	\operatorname{rk(ker(}\partial_{0}) )&=f_{x_{1}}.
	\end{align*}
\end{remark}
In any case, we have shown the following proposition. 
\begin{proposition}
	\label{Link3D}
	Let $\Sigma$ be a complete 3-dimensional fan, $X_{\Sigma}$ the associated toric variety and $\mathcal{L}_{x}$ be the link of $x \in (X_{\Sigma})_{0}$. Let $f_{x_{1}}$ be the number of 1-dimensional faces of the dual polyhedron $\mathcal{P}$ to $\Sigma$ with $x$ as a proper face. Then the Betti numbers of $\mathcal{L}_{x}$ are given by
	\begin{align*}
	\operatorname{rk(}H_{5}(\mathcal{L}_{x};\mathbb{Q}))&=1,\\
	\operatorname{rk(}H_{4}(\mathcal{L}_{x};\mathbb{Q}))&=0,\\
	\operatorname{rk(}H_{3}(\mathcal{L}_{x};\mathbb{Q}) )&=f_{x_{1}}-3,\\
	\operatorname{rk(}H_{2}(\mathcal{L}_{x};\mathbb{Q}))&=f_{x_{1}}-3,\\
	\operatorname{rk(}H_{1}(\mathcal{L}_{x};\mathbb{Q}))&=0,\\
	\operatorname{rk(}H_{0}(\mathcal{L}_{x};\mathbb{Q}))&=1.\\
	\end{align*} 
\end{proposition}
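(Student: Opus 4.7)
My plan is to assemble the CW-complex calculation that has been set up just before the proposition statement, then invoke rational Poincaré duality for the bottom half. The link $\mathcal{L}_x$ is a $5$-dimensional stratified space fibered over the disc $\mathcal{M}_x$, whose boundary circle has Euler characteristic zero, so $\mathcal{M}_x$ has exactly $f_{x_1}$ vertices and $f_{x_1}$ edges. I would endow $\mathcal{L}_x$ with the CW structure already displayed: the regular cellulation of $\mathcal{M}_x$ on the base, the minimal structure on $T^3$ over $\operatorname{int}(\mathcal{M}_x)$, the refined structure with two $2$-cells and three $1$-cells on each $T^2 \cong T^3/\pi(\delta(\tau_{a_i}))$ (where the generator has no zero entry) or the minimal structure with one $2$-cell and two $1$-cells on each $T^2 \cong T^3/\pi(\delta(\tau_{b_j}))$ (where the generator has a zero entry), and the circles over vertices. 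By the analysis of Section \ref{6DHoGr}, this structure makes every collapse $T^3 \to T^2$ and $T^2 \to \mathcal{S}^1$ cellular.

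Next I would carry out the rank computation of $\partial_5,\partial_4,\partial_3$ exactly as displayed. The degrees $n_{a_i}, m_{a_i}, l_{a_i}$ (resp.\ $n_{b_j}, m_{b_j}$) in $\partial_4$ come from the orthogonal-projection computation of Section \ref{6DHoGr}, using coprimality of the coordinates of the generator of $\delta(\tau_i)$. Since $\partial_5 = 0$, we read off $\operatorname{rk} H_5(\mathcal{L}_x;\mathbb{Q}) = 1$. The three rows of $\partial_4$ labeled by $e^1_{T^3_i} \times e^2_{\mathcal{M}_x}$ vanish, while completeness of $\Sigma$ forces the remaining rows, indexed over at least three distinct $1$-dimensional cones in $\Sigma$ with $x$ in the closure of their dual faces, to have column span equal to all of $\mathbb{Q}^3$; hence $\operatorname{rk}\partial_4 = 3$ and $H_4 = 0$. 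For $\partial_3$, inspection of the block form (as spelled out after the matrix) shows $\ker\partial_3$ has rank $a+b = f_{x_1}$, so $\operatorname{rk} H_3(\mathcal{L}_x;\mathbb{Q}) = f_{x_1} - 3$.

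For the remaining groups $H_2, H_1, H_0$ I would invoke rational Poincaré duality rather than redo elementary-row-reduce $\partial_2$ and $\partial_1$. The only possibly singular stratum of $\mathcal{L}_x$ is $(\mathcal{L}_x)_1$, and by the description of links just before \eqref{LinkHomRnk} a link of $(\mathcal{L}_x)_1$ in $\mathcal{L}_x$ coincides with the link of the corresponding point of $(X_\Sigma)_2$ in $X_\Sigma$. By the rank table \eqref{LinkHomRnk} these links are rational homology $3$-spheres, so $\mathcal{L}_x$ is a closed oriented $\mathbb{Q}$-homology manifold of dimension $5$. Poincaré duality then gives $\operatorname{rk} H_i(\mathcal{L}_x;\mathbb{Q}) = \operatorname{rk} H_{5-i}(\mathcal{L}_x;\mathbb{Q})$, yielding $f_{x_1}-3, 0, 1$ for $i = 2,1,0$ respectively.

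The main obstacle is the rank computation of $\partial_3$: one must show that despite the $a$-entries $m_{a_i}l_{a_i}, n_{a_i}l_{a_i}, n_{a_i}m_{a_i}$ being possibly all nonzero, the cancellation forced by the $\pm 1$ columns against the $e^2_{(T^2_{a_i})_{1,2}}$ cells collapses the image to the expected rank, leaving exactly $f_{x_1}$ free kernel elements independent of the numerical values of the generators. The alternative direct computation of $\partial_2,\partial_1$ presented in the remark serves as a consistency check. Once the top-half Betti numbers are in place the Poincaré-duality shortcut reduces the remaining work to the homological verification that $(\mathcal{L}_x)_1$ has only $\mathbb{Q}$-sphere links, which is immediate from \eqref{LinkHomRnk}.
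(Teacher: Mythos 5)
Your proposal reproduces the paper's argument: set up the CW structure with the refined $T^2$ cells over the edges of $\mathcal{M}_x$, read off $\operatorname{rk}H_5=1$, $\operatorname{rk}H_4=0$, $\operatorname{rk}H_3=f_{x_1}-3$ from the displayed $\partial_5,\partial_4,\partial_3$, then observe that all links in $\mathcal{L}_x$ are $\mathbb{Q}$-homology $3$-spheres by (\ref{LinkHomRnk}) so that $\mathcal{L}_x$ is a closed oriented $\mathbb{Q}$-homology manifold, and finish with rational Poincar\'e duality. This is exactly the route the paper takes (including the remark that the $\partial_2,\partial_1$ computation is an alternative check), so the proposal is correct and essentially identical to the published proof.
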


\begin{corollary}
	A point $x \in (X_{\Sigma})_{0}$ is rationally singular if $x$ in $\mathcal{P}$ has more than three 1-dimensional neighboring faces.
\end{corollary}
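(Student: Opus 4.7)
The plan is to deduce this directly from Proposition \ref{Link3D} together with Definition \ref{DefSing}. Since $x$ is an isolated point in $(X_{\Sigma})_0$, the stratum $\{x\}$ has, up to homeomorphism, a single link $\mathcal{L}_x$, so by Definition \ref{DefSing} the point $x$ is rationally singular precisely when $\mathcal{L}_x$ fails to be a rational homology sphere.

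By Proposition \ref{Link3D}, the rational Betti numbers of $\mathcal{L}_x$ are
\begin{align*}
\operatorname{rk} H_0 = \operatorname{rk} H_5 = 1, \quad \operatorname{rk} H_1 = \operatorname{rk} H_4 = 0, \quad \operatorname{rk} H_2 = \operatorname{rk} H_3 = f_{x_1}-3.
\end{align*}
The link has real dimension $5$, so to be a rational homology sphere it must have the rational Betti numbers of $\mathcal{S}^5$, namely $1,0,0,0,0,1$. Comparing with the displayed ranks, this happens if and only if $f_{x_1}-3=0$. Hence, as soon as $x$ has more than three $1$-dimensional neighboring faces in $\mathcal{P}$, the middle Betti numbers $\operatorname{rk} H_2(\mathcal{L}_x;\mathbb{Q})$ and $\operatorname{rk} H_3(\mathcal{L}_x;\mathbb{Q})$ are strictly positive, $\mathcal{L}_x$ is not a rational homology sphere, and so $x$ is rationally singular.

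There is essentially no obstacle here: the statement is an immediate corollary of the Betti number computation in Proposition \ref{Link3D}, once one unwinds the definition of rational singularity. The only thing worth noting is that the threshold $f_{x_1}=3$ corresponds precisely to the case where the dual polytope $\vert \mathcal{M}_x \vert$ of the link base is a triangle, which is the combinatorial analogue of the smooth case in one dimension lower (cf.\ the discussion after \eqref{rand2L}).
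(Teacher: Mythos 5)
Your proof is correct and takes precisely the route the paper intends: the corollary is stated as an immediate consequence of Proposition \ref{Link3D} together with Definition \ref{DefSing}, and your unwinding of the Betti numbers is exactly what is needed. One small imprecision: you assert that the stratum $\{x\}$ has a single link "up to homeomorphism," which is stronger than what the paper establishes or needs; the paper only shows (in the discussion preceding Theorem \ref{Duality}) that the \emph{homology} of a link is well-defined independently of the choice, and that suffices here, since Definition \ref{DefSing} asks that no link be a rational homology sphere, and all links share the Betti numbers computed in Proposition \ref{Link3D}.
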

\begin{proposition}\label{SmoothLink}
	Let $X_{\Sigma}$ be a 4-dimensional toric variety associated to a complete fan $\Sigma$ with the stratification $X_{\Sigma}=X_{4} \supset X_{0}$. Let $\nu \in X_{0}$ be a point whose link is an integral homology sphere. Then the link of $\nu$, $\mathcal{L}_{\nu}$, is in fact homeomorphic to $\mathcal{S}^{3}$.
\end{proposition}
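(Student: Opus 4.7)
The plan is to identify $\mathcal{L}_\nu$ explicitly as a lens space via a genus-$1$ Heegaard splitting and then invoke the classification of lens spaces by first homology. Let $\sigma = \delta(\nu) \in \Sigma$ be the $2$-cone dual to $\nu$, and let $\tau_1, \tau_2$ be its two $1$-dimensional faces with primitive generators $(n_1, m_1)$ and $(n_2, m_2)$. Directly from Example~\ref{n=2},
\[
\mathcal{L}_\nu \cong (\operatorname{int}(\mathcal{I}) \times T^2) \cup (\{0\} \times T^2/\pi(\tau_1)) \cup (\{1\} \times T^2/\pi(\tau_2)).
\]

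I would first cut $\mathcal{L}_\nu$ along the middle $T^2$-fiber over $\{1/2\} \in \mathcal{I}$ and verify that each half is a solid torus. For the left half $p_\mathcal{L}^{-1}([0, 1/2])$, I parametrize the cosets of $\pi(\tau_1)$ in $T^2$ by $T^2/\pi(\tau_1) \cong \mathcal{S}^1$. Over each such coset $c \cong \mathcal{S}^1$, the preimage is $[0, 1/2] \times c$ with $\{0\} \times c$ collapsed to a point, namely the cone on $\mathcal{S}^1$, a $2$-disk. Thus the left half is a $\mathcal{D}^2$-bundle over $\mathcal{S}^1$, hence a solid torus whose meridian class in the central $T^2$ is $[\pi(\tau_1)] \in H_1(T^2)$. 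Symmetrically, the right half is a solid torus with meridian $[\pi(\tau_2)]$.

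These two solid tori are glued along $T^2$ to form $\mathcal{L}_\nu$, and the intersection number of the meridians in $H_1(T^2;\mathbb{Z})$ equals $d := |n_1 m_2 - m_1 n_2|$, which is nonzero since $\tau_1, \tau_2$ span the proper $2$-cone $\sigma$ and are therefore linearly independent. By the standard classification of closed $3$-manifolds admitting a genus-$1$ Heegaard splitting, $\mathcal{L}_\nu$ is homeomorphic to the lens space $L(d, q)$ for some $q$ coprime to $d$. Since $H_1(L(d,q);\mathbb{Z}) = \mathbb{Z}/d$ and $\mathcal{L}_\nu$ is assumed to be an integral homology sphere, we must have $d=1$, whence $\mathcal{L}_\nu \cong L(1,q) = \mathcal{S}^3$.

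The main technical point is the careful verification that each half of the splitting is genuinely a solid torus and that its meridian gets identified with $[\pi(\tau_i)]$; this requires unwinding how the collapses at the endpoints interact with the $T^2$-bundle structure over $\mathcal{I}$. Once that is in place, the appeal to the genus-$1$ Heegaard classification delivers the conclusion immediately.
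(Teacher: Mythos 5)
Your proof is correct, and it takes a genuinely different route from the paper's. The paper's argument first uses the hypothesis to force the two primitive generators of $\sigma = \delta(\nu)$ to form a $\mathbb{Z}$-basis of $\mathbb{Z}^2$ (the determinant condition), then writes $T^2 \cong \mathcal{S}^1_1 \times \mathcal{S}^1_2$ using that basis and reads off the link directly as the join $\mathcal{S}^1_1 \ast \mathcal{S}^1_2 \cong \mathcal{S}^3$. You instead split $\mathcal{L}_\nu$ along the middle $T^2$-fiber into two solid tori with meridian classes $[\pi(\tau_1)], [\pi(\tau_2)] \in H_1(T^2;\mathbb{Z})$, identify $\mathcal{L}_\nu$ as the lens space $L(d,q)$ with $d = |n_1 m_2 - m_1 n_2|$, and only at the end invoke $H_1 = 0$ to get $d = 1$. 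Your route is in some ways more informative: it gives the unconditional statement that the link of any vertex of a $4$-dimensional toric variety is a lens space, with $H_1$ of the link cyclic of order equal to the index $[\mathbb{Z}^2 : \langle (n_1,m_1),(n_2,m_2)\rangle]$ — which is exactly the determinant criterion implicit in the paper's CW computation near (\ref{rand2L}). Two small points of care: first, your phrase ``classification of lens spaces by first homology'' overstates the classification (lens spaces with the same $p$ need not be homeomorphic); you only use that $H_1(L(d,q)) \cong \mathbb{Z}/d$ forces $d=1$, and that $L(1,q) \cong \mathcal{S}^3$, which is all that is needed. Second, when you verify that each half is a solid torus, you should note that the $\mathcal{D}^2$-bundle over $\mathcal{S}^1$ you obtain is trivial (e.g.\ because its boundary is the torus fiber, not a Klein bottle, or because the underlying $\mathcal{S}^1$-bundle $T^2 \to T^2/\pi(\tau_i)$ is trivial); with that, the argument is complete.
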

\begin{proof}
	Let $\mathcal{P}$ be the dual polytope to $\Sigma$. Let $\sigma \in \Sigma$ be the cone dual to $\nu$. The assumption on the link of $\nu$ implies that $H_{1}(\mathcal{L}_{\nu}; \mathbb{Z}) = 0$. Hence, the determinant of the two generators of $\sigma$ vanishes, and they form a basis for $\mathbb{Z}^{2}$. It follows that we can consider the images of the generators of $\sigma$ under the projection map $\pi : \mathbb{R}^{2} \longrightarrow \sfrac{\mathbb{R}^{2}}{\mathbb{Z}^{2}}$ to be 1-cells of the minimal CW-structure of $T^{2}$, which is attached to the interior of $\mathcal{I} \cong \vert \mathcal{M}_{\nu} \vert$. Let $\mathcal{S}^1_1$ and $\mathcal{S}^1_2$ be the image of the first and second generators of $\sigma$, respectively. Since the generators form a basis, we can set $T^{2} \cong \mathcal{S}^1_1 \times \mathcal{S}^1_2$ to be the torus attached to $\operatorname{int}(\mathcal{I})$. Let $p_{\mathcal{L}_{\nu}}: \mathcal{L}_{\nu} \longrightarrow \mathcal{M}_{\nu}$ be the projection defined earlier. From the construction of the link, it follows that we have either $p_{\mathcal{L}_{\nu}}^{-1}(\{0\})=\mathcal{S}^{1}_{1}$
	and $p_{\mathcal{L}_{\nu}}^{-1}(\{1\})=\mathcal{S}^{1}_{2}$ or $p_{\mathcal{L}_{\nu}}^{-1}(\{0\})=\mathcal{S}^{1}_{2}$
	and $p_{\mathcal{L}_{\nu}}^{-1}(\{1\})=\mathcal{S}^{1}_{1}$. Hence, the link $\mathcal{L}_{\nu}$ is homeomorphic to the join $ \mathcal{S}^{1}_{1} \ast \mathcal{S}^{1}_{2} \cong S^{3}$, which proves our claim.
\end{proof}

\begin{proposition}
	Let $X_{\Sigma}$ be a 6-dimensional toric variety associated with a complete fan $\Sigma$ with the stratification $X_{\Sigma}=X_{6} \supset X_{2} \supset X_{0}$. If $\nu \in X_{2}- X_{0}$ is a point whose link is an integral homology sphere, then the link is in fact homeomorphic to $S^3$. If $\nu \in X_{0}$ is a point whose link is an integral homology sphere, then the link of $\nu$, $\mathcal{L}_{\nu}$, is in fact homeomorphic to $S^5$.
\end{proposition}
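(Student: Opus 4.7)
The plan is to follow the template of Proposition \ref{SmoothLink}, handling each link dimension in turn. In both cases the integral homology sphere hypothesis will force the primitive generators of the cone $\delta(\nu) \in \Sigma$ to form (part of) a $\mathbb{Z}$-basis of $N = \mathbb{Z}^3$; an $SL(3,\mathbb{Z})$ change of coordinates then normalizes these generators to standard basis vectors, after which $\mathcal{L}_\nu$ is identified with an iterated topological join of circles.

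For $\nu \in X_2 - X_0$, the cone $\tau = \delta(\nu)$ is $2$-dimensional with primitive generators $v_1, v_2 \in N$. The 3-dimensional link $\mathcal{L}_\nu$ is the $T^2$-bundle over $\mathcal{M}_\tau \cong \mathcal{I}$ with one circle factor collapsing over each endpoint, as in Section \ref{Sing6D}. The integral analog of the $H_1$ computation carried out for the 4-dimensional case in (\ref{rand2L}) identifies $H_1(\mathcal{L}_\nu;\mathbb{Z})$ with the cokernel of the $2\times 2$ integer matrix whose rows are the coordinates of $v_1, v_2$ in a basis of the saturated sublattice $N \cap \operatorname{span}_{\mathbb{R}}(v_1, v_2)$. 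The vanishing $H_1(\mathcal{L}_\nu;\mathbb{Z}) = 0$ therefore forces this matrix to have determinant $\pm 1$, so $v_1, v_2$ extend to a $\mathbb{Z}$-basis of $N$. After the corresponding change of coordinates $v_i = e_i$, the collapsing data at the two endpoints realizes $\mathcal{L}_\nu$ as the standard genus-one Heegaard splitting $\mathcal{S}^1_1 \ast \mathcal{S}^1_2 \cong S^3$, by exactly the argument at the end of the proof of Proposition \ref{SmoothLink}.

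For $\nu \in X_0$, the cone $\omega = \delta(\nu)$ is $3$-dimensional. Proposition \ref{Link3D} and the rational homology sphere hypothesis together force $f_{\nu_1} = 3$, so $\omega$ is simplicial with primitive generators $v_1, v_2, v_3$ and $\mathcal{M}_\nu$ is a triangle. Executing the integral version of the $\partial_{2}$-$\partial_{1}$ computation displayed in Section \ref{Sing6D} (starting from the boundary operator (\ref{Link6D}) and the $\partial_2, \partial_1$ in the remark immediately following it) will yield
\[
H_1(\mathcal{L}_\nu;\mathbb{Z}) \;\cong\; \mathbb{Z}^3 / \langle v_1, v_2, v_3 \rangle,
\]
so the integral homology sphere hypothesis is equivalent to $|\det(v_1, v_2, v_3)| = 1$, i.e.\ to $v_1, v_2, v_3$ being a $\mathbb{Z}$-basis of $N$. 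An $SL(3,\mathbb{Z})$ change of coordinates sends $v_i$ to $e_i$, and the standardized link is then the moment-map fibration of the unit sphere $S^5 \subset \mathbb{C}^3$ over the standard $2$-simplex, whose $T^3$-, $T^2$-, and $S^1$-fibers over the interior, edge-interiors, and vertices match exactly those of $\mathcal{L}_\nu \to \mathcal{M}_\nu$ in the normalized coordinates. Equivalently, $\mathcal{L}_\nu$ becomes the iterated join $\mathcal{S}^1_1 \ast \mathcal{S}^1_2 \ast \mathcal{S}^1_3 \cong S^5$.

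The main obstacle is the integral $H_1$ computation in the $\nu \in X_0$ case: Section \ref{Sing6D} records only the rational ranks of the relevant boundary operators, so one must carefully track the elementary divisors of $\partial_2$ through $\ker(\partial_1)$ to isolate the single $3 \times 3$ determinantal condition on $(v_1, v_2, v_3)$ and verify that no further torsion survives once $|\det(v_1,v_2,v_3)| = 1$. Granted this, the reductions in both cases to the standard Heegaard splitting of $S^3$ and to the moment-map fibration of $S^5$ are routine, and complete the argument.
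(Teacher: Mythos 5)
Your strategy runs parallel to the paper's: reduce the integral homology sphere hypothesis to a unimodularity condition on the primitive generators $v_1, v_2, v_3$ of $\delta(\nu)$, normalize by $SL(3,\mathbb{Z})$, and then identify the normalized link with an iterated join of spheres. The $\nu \in X_2 - X_0$ case matches the paper exactly (both delegate to the proof of Proposition \ref{SmoothLink}), and your final identification $S^1_1 \ast S^1_2 \ast S^1_3 \cong S^5$ is equivalent to the paper's $S^1 \ast S^3 \cong S^5$. The substantive divergence is in how you propose to extract the determinant condition for $\nu \in X_0$: the paper uses $H_3(\mathcal{L}_\nu;\mathbb{Z})=0$ and the boundary operator $\partial_4$, whereas you use $H_1(\mathcal{L}_\nu;\mathbb{Z})=0$ and $\partial_2$, $\partial_1$.

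This choice is not a matter of taste, and it is where your proposal has a genuine gap. In the CW model of Section \ref{Sing6D}, $\partial_4$ is completely explicit (its columns simply stack the coordinates of the $v_i$), $\partial_5 = 0$, and the paired $2$-cells of each $T^2_{a_i}$ visibly contribute elements $e^2_{(T^2_{a_i})_1} + e^2_{(T^2_{a_i})_2} \in \ker(\partial_3)$, so the integral $H_3$ computation collapses at once to the $3\times 3$ determinant of $(v_1,v_2,v_3)$. By contrast, the operator $\partial_2$ in the remark after (\ref{Link6D}) contains a block of collapsing data that the paper deliberately leaves unspecified, recording only its rational rank. The elementary-divisor analysis you yourself flag as the ``main obstacle'' therefore cannot be carried out from the information available in the paper, and your sketch does not close that gap. (The identity $H_1(\mathcal{L}_\nu;\mathbb{Z}) \cong \mathbb{Z}^3/\langle v_1,v_2,v_3 \rangle$ is true --- it is the fundamental group of the link of the simplicial quotient singularity $\mathbb{C}^3/(\mathbb{Z}^3/\langle v_1,v_2,v_3\rangle)$ --- but establishing it requires an argument external to the given chain complex.) The remedy is simple: extract the unimodularity condition from $H_3 = 0$ and $\partial_4$, as the paper does, after which the remaining steps you outline go through.
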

\begin{proof}
	If $\nu \in X_{2}-X_{0}$, the statement follows from the proof of Proposition \ref{SmoothLink}. Now, let $\nu \in X_{0}$ be a point whose link is an integral homology sphere. From the assumption, it follows that the base of the link, $\vert \mathcal{M}_{\nu} \vert$, is a 2-simplex. Let $\left( \begin{array}{c}
		n_{1} \\ m_{1} \\ l_{1}
	\end{array} \right)$, $\left( \begin{array}{c}
		n_{2} \\ m_{2} \\ l_{2}
	\end{array} \right)$, and $\left( \begin{array}{c}
		n_{3} \\ m_{3} \\ l_{3}
	\end{array} \right)$ be the generators of the dual cone to $\nu$. For the moment, we assume that all entries are non-zero. Since $H_{3}(\mathcal{L}_{\nu},\mathbb{Z})=0$, the image of the boundary operator 
	\begin{align*}
		\partial_{4}= \left( \begin{array}{ccc}
			0 & 0 & 0  \\
			0 & 0 & 0  \\
			0 & 0 & 0  \\
			n_{1} & m_{1} & l_{1} \\
			n_{1} & m_{1} & l_{1} \\
			n_{2} & m_{2} & l_{2} \\
			n_{2} & m_{2} & l_{2} \\	
			n_{3} & m_{3} & l_{3} \\
			n_{3} & m_{3} & l_{3} \\
		\end{array} \right)
	\end{align*}
	is isomorphic to $\mathbb{Z}^3$. It is the case if and only if 
	\begin{align*}
		\operatorname{det} \left( \begin{array}{ccc}
			n_{1} & m_{1} & l_{1} \\
			n_{2} & m_{2} & l_{2} \\
			n_{3} & m_{3} & l_{3} \\
		\end{array}  \right)=1.
	\end{align*}
	This means that the generators of the dual cone form a basis of $\mathbb{Z}^{3}$. We attach the torus $T^3 \cong S_{1}^{1} \times S^{1}_{2} \times S_{3}^{1} $ to the interior of $\vert \mathcal{M}_{\nu} \vert$, where $S^{1}_{i}$ is the image of $\left( \begin{array}{c}
		n_{i} \\ m_{i} \\ l_{i} \\
	\end{array} \right)$ under the map $\mathbb{R}^{3} \longrightarrow \sfrac{\mathbb{R}^{3}}{\mathbb{Z}^{3}}$ for $i=1,2,3$. On each 1-dimensional face of $\vert \mathcal{M}_{\nu} \vert$, we collapse one of the $S^1_{i}$ in $T^3$ to a point. To each vertex of $\vert \mathcal{M}_{\nu} \vert$, we attach one of the $S^1_{i}$. From the construction, it follows that $\mathcal{L}_{\nu} \cong S^{1} \ast S^{3} \cong S^{5}$. We illustrate the above considerations in Figure \ref{LinkSphere}.
	\begin{figure}[H]
		\centering
		\includegraphics[width=.55\linewidth]{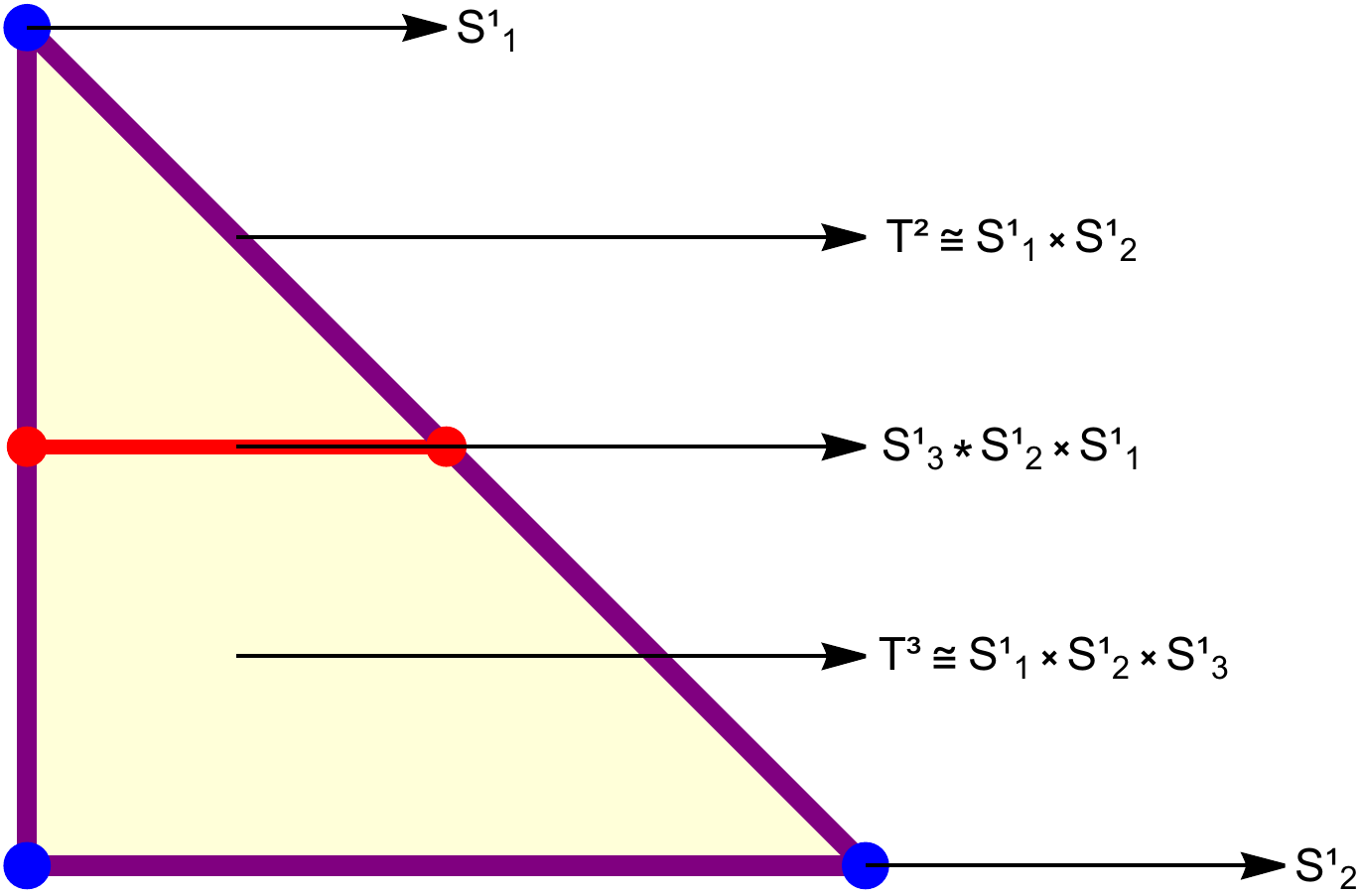}
		\caption{The link of a smooth point $\nu \in X_{0}$ is homeomorphic to $S^{5}$}
		\label{LinkSphere}
	\end{figure}
	Now, let one of the generators have at least one zero entry. The boundary operator $\partial_{5}$ has five rows with at least one non-zero entry. In this case, the rest of the proof goes along the same lines as above.
\end{proof}

\section{\texorpdfstring{$\mathbb{Q}$}--Homology Pseudomanifolds, Lefschetz Duality and Intersection Homology}\label{Qpsmfd}

We have seen in previous sections that the link of a connected component 
of a  4-co-dimensional stratum in a toric variety is a rational homology 3-sphere. 
For instance, consider a 4-dimensional toric variety. 
As described earlier, the link of each isolated singularity is a 
rational homology 3-sphere. Such a toric variety satisfies Poincaré duality rationally. 
This suggests to consider strata with rational homology spherical links
as homologically regular.
We will thus introduce the notion of $\mathbb{Q}$-homology pseudomanifolds 
(or $\mathbb{Q}$-pseudomanifold for short), which is related to the 
concept of homology stratification considered by 
Rourke and Sanderson in \cite{rourkesanderson}. 

\begin{definition}[$\mathbb{Q}$-pseudomanifold]
	We call a topological space $X$ with filtration
	\begin{align}\label{Qstra}
	X \supset X_{i} \supset  X_{i-1} \supset \dots \supset X_{0} \supset X_{-1}= \emptyset
	\end{align}
	a $\mathbb{Q}$-\textbf{pseudomanifold} if 
	(\ref{Qstra}) can be augmented to a filtration of the form
	\begin{align*}
	X \supset X_{i+k}  \supset \dots \supset X_{i+1} \supset  X_{i} \supset \dots \supset X_{0} \supset X_{-1}= \emptyset,
	\end{align*} 
such that $X$ is a pseudomanifold with respect to it and the 
link of each connected component of $X_{i+(j+1)}-X_{i+j}$ for 
$j=0, \dots, k-1$ in $X$ is a rational homology sphere. 
We call the filtration (\ref{Qstra}) a 
\textbf{$\mathbb{Q}$-homology stratification} 
(or \textbf{$\mathbb{Q}$-stratification} for short) 
of $X$.
\end{definition}

\begin{example}
A 6-dimensional toric variety $X$ with $X=X_{6} \supset X_{0}$
is a $\mathbb{Q}$-pseudomanifold, but generally not a 
stratified pseudomanifold with the respect to this stratification, since $X_6 - X_0$ is usually not a manifold,
only a $\mathbb{Q}$-homology manifold.
\end{example}

Definition \ref{PSMFD} covers pseudomanifolds without boundary. 
We need to extend that definition to incorporate boundaries.
The following definition is based on the corresponding PL-theoretic
definition as formulated by Goresky and MacPherson in \cite[Section~5.2, Page~155]{goresky1980intersection}.
\begin{definition}[Pseudomanifold with boundaries]\label{PSMwB}
	An \textbf{$n$-dimensional stratified pseudomanifold with boundary} 
	is a pair 
	$(X,\mathcal{B})$ of topological spaces such that $\mathcal{B}$ is closed in $X$,
	together with a filtration $\{ X_i \}$ on $X$ such that:
	\begin{enumerate}
		\item The space $X-\mathcal{B}$ with the induced filtration $(X-\mathcal{B})_i = (X-\mathcal{B})\cap X_i$ is an $n$-dimensional stratified pseudomanifold.
		\item The space $\mathcal{B}$ with the induced filtration 
		$\mathcal{B}_{i-1}=\mathcal{B} \cap X_i$ is an $(n-1)$-dimensional stratified pseudomanifold.
		\item The topological space $\mathcal{B}$ has an \textit{open filtered collar neighborhood} in $X$, i.e. there exists an open neighborhood 
		$\mathcal{N}$ of $\mathcal{B}$ and a filtered homeomorphism $\mathcal{N} \longrightarrow [0,1) \times \mathcal{B}$ (where $[0,1)$ is given the trivial filtration) that takes $\mathcal{B}$ to $\{0\} \times \mathcal{B}$ by the identity map.
	\end{enumerate}
	The topological space $\mathcal{B}$ is called the 
	\textbf{boundary} of $X$ and is also denoted by $\partial X$.
\end{definition}
The question arises to what extent the boundary $\mathcal{B}$ is intrinsic 
to the topology of the space. The following proposition from 
Friedman-McClure \cite{friedman2013cup} ensures that when there are 
no one-co-dimensional strata in $X$, the boundary $\mathcal{B}$ depends only on the underlying space $X$ and not the choice of a specific filtration 
(among those without one co-dimensional stratum).
\begin{proposition}
	Let $(X, \partial X)$ and $(X^\prime, \partial X^\prime)$ be 
	equidimensional stratified pseudomanifolds with boundaries and 
	no one-co-dimensional strata, 
	and let $h: X \longrightarrow X^\prime$ be a homeomorphism 
	(which is not required to be filtration preserving). 
	Then $h$ takes $\partial X$ onto $\partial X^\prime$.
\end{proposition}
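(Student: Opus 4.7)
The plan is to extract the boundary $\partial X$ from $X$ as an intrinsic topological invariant, so that any homeomorphism $h$ must respect it. Declare $x \in X$ to be a \emph{manifold boundary point} if $x$ has an open neighborhood in $X$ homeomorphic to the half-space $\mathbb{R}^n_+$ via a map sending $x$ to the bounding hyperplane, and let $B(X)$ denote the set of all such points. By invariance of domain, $B(X)$ is determined purely by the underlying topology of $X$, and so $h(B(X)) = B(X')$. I aim to prove $\partial X = \overline{B(X)}$, from which the analogous identity for $X'$ and the homeomorphism $h$ immediately yield $h(\partial X) = \overline{h(B(X))} = \overline{B(X')} = \partial X'$.

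The inclusion $\partial X \subseteq \overline{B(X)}$ is the easier half. The boundary $\partial X$ is itself an $(n-1)$-dimensional stratified pseudomanifold with no one-co-dimensional strata, and its top stratum is open and dense in $\partial X$. By Definition \ref{PSMwB} there is a filtered collar $\mathcal{N} \cong [0,1) \times \partial X$, so every point of this top stratum has a neighborhood in $X$ homeomorphic to $[0,1) \times \mathbb{R}^{n-1} \cong \mathbb{R}^n_+$ and thus belongs to $B(X)$. Taking closures yields the desired containment.

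For the reverse inclusion, since $\partial X$ is closed in $X$ it suffices to check $B(X) \subseteq \partial X$. Given $x \in X - \partial X$ lying in a stratum $X_i - X_{i-1}$, the distinguished neighborhood identifies a neighborhood of $x$ with $\mathbb{R}^i \times c\mathcal{L}$, where $\mathcal{L}$ is a compact $(n-i-1)$-pseudomanifold. If $i = n$, then $x$ already has a Euclidean neighborhood and so is not a manifold boundary point. The no-one-co-dimensional hypothesis forces the only remaining case to be $i \leq n-2$; here either $\mathcal{L}$ is homeomorphic to $S^{n-i-1}$, so that $c\mathcal{L} \cong \mathbb{R}^{n-i}$ and again $x$ has a Euclidean neighborhood, or $\mathcal{L}$ is not a sphere, in which case the local-homology computation
\[
H_n(X, X-\{x\};\mathbb{Z}) \;\cong\; \widetilde{H}_{n-i-1}(\mathcal{L};\mathbb{Z})
\]
together with invariance of domain obstructs the existence of a half-space chart at $x$. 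In every case, $x \notin B(X)$.

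The main obstacle lies in the final case: ruling out singular interior points from $B(X)$ when the link is not a topological sphere. The cleanest route is to combine invariance of domain with the local-homology computation above and, where necessary, pass to rational or $\mathbb{Z}/2$ coefficients in order to accommodate non-orientable links. The hypothesis that $X$ has no one-co-dimensional strata is essential precisely here, because it propagates to the link and guarantees that $\mathcal{L}$ is itself a genuine pseudomanifold with a well-behaved top-degree fundamental class after normalization; this is what makes the local-homology invariant truly distinguish a half-space chart from a Euclidean chart or from a non-manifold singularity.
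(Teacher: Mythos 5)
The paper does not actually prove this statement; it cites Friedman--McClure \cite{friedman2013cup}, where it appears as an early lemma. Your proposed argument is, however, essentially the standard one and is on the right track: characterize the boundary intrinsically (via half-space charts, which is equivalent to a local-homology characterization of the interior) and invoke invariance of that characterization under homeomorphism. Both inclusions are set up correctly, and the excision computation $H_n(X,X-x)\cong\widetilde{H}_{n-i-1}(\mathcal{L})$ is right.

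There is, though, one genuine unproved step on which the whole argument rests, and you have flagged it without closing it: you need that for every compact $d$-dimensional topologically stratified pseudomanifold $\mathcal{L}$ with no codimension-one strata one has $H_d(\mathcal{L};\mathbb{Z}/2)\neq 0$. The phrase about a ``well-behaved top-degree fundamental class after normalization'' is not a proof, and in the \emph{topological} (as opposed to PL) category one cannot just sum the top simplices. One workable route: let $U=\mathcal{L}-\mathcal{L}_{d-2}$ be the dense open top stratum, note $H^{BM}_d(U;\mathbb{Z}/2)\neq 0$ by $\mathbb{Z}/2$-Poincar\'e duality on the $d$-manifold $U$, and use the long exact sequence in Borel--Moore homology for the closed pair $(\mathcal{L},\mathcal{L}_{d-2})$ together with the fact that $\mathcal{L}_{d-2}$ has cohomological dimension $\leq d-2$ to conclude $H^{BM}_d(\mathcal{L};\mathbb{Z}/2)\cong H^{BM}_d(U;\mathbb{Z}/2)\neq 0$; for compact $\mathcal{L}$ Borel--Moore agrees with singular homology. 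Without some such argument the proof does not close.

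Two smaller points. First, ``rational or $\mathbb{Z}/2$ coefficients'' is misleading: rational coefficients do \emph{not} accommodate non-orientable links. If $\mathcal{L}\cong\mathbb{RP}^{2}$, say, then $\widetilde{H}_{2}(\mathcal{L};\mathbb{Q})=0$, so rational local homology at the cone point vanishes in top degree and fails to distinguish it from a half-space boundary point; $\mathbb{Z}/2$ is the coefficient ring that works uniformly, and you should commit to it. Second, your case split into ``$\mathcal{L}$ is a sphere'' versus ``$\mathcal{L}$ is not a sphere'' is unnecessary once you have the $\mathbb{Z}/2$ nonvanishing in hand: in every case with $i\leq n-2$ (and also $i=n$) the group $H_n(X,X-x;\mathbb{Z}/2)$ is nonzero, whereas it vanishes at a half-space boundary point, so the single local-homology comparison already rules out a boundary chart. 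Streamlining in this way also sidesteps any temptation to reason about whether $c\mathcal{L}$ is a manifold (double-suspension phenomena), which is irrelevant to the boundary-vs-interior distinction.
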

A \textbf{closed} pseudomanifold is a compact pseudomanifold with empty boundary.\\

In a 6-dimensional toric variety $X$, let $\mathcal{C}_{x}$ be conical neighborhoods 
of $x \in X_{0}$, such that 
$\mathcal{C}_{x} \cap \mathcal{C}_{x^\prime} = \varnothing$ for all 
$x,x^\prime \in X_{0},$ $x\not= x'$. 
Removing somewhat smaller open cones in all $\mathcal{C}_{x}$ from $X$ gives us a pseudomanifold with boundary 
$(\mathcal{M},\partial\mathcal{M})$, where now all links are rational homology 
spheres. 

\begin{definition}\label{Cout}
We call this process of removing a disjoint union of
cone-like neighborhoods of $X_{0}$ \textbf{cutting out} the 0-dimensional stratum of $X$.
\end{definition}

We shall prove sheaf-theoretically that compact oriented pseudomanifolds
with boundary whose interior is a rational homology manifold satisfy
Lefschetz duality for ordinary rational cohomology.
We begin with some basic facts on direct and inverse limits.
Let $(I,\leq)$ be an (upwards) directed set, indexing direct systems
$\{ G_i \}_{i\in I},$ $\{ H_i \}_{i\in I}$
of abelian groups.
Let $\{ f_i \}: \{ G_i \} \to \{ H_i \},$ 
$f_i: G_i \to H_i,$ be a map of direct systems.
If every $f_i$ is an isomorphism, then
\[ \underset{\longrightarrow}{\lim}_I f_i:
   \underset{\longrightarrow}{\lim}_I G_i 
   \longrightarrow
   \underset{\longrightarrow}{\lim}_I H_i  \]
is an isomorphism. (This is a consequence of 
the direct limit being a functor.)
If $J\subset I$ is a cofinal subset, then
the inclusion induces an isomorphism
\[ \underset{\longrightarrow}{\lim}_J G_j
   \stackrel{\simeq}{\longrightarrow}
   \underset{\longrightarrow}{\lim}_I G_i \]
of direct limits. The diagram
\[ \xymatrix{
\underset{\longrightarrow}{\lim}_J G_j 
  \ar[d]_{\underset{\longrightarrow}{\lim}_J f_j} 
   \ar[r]^\simeq & \underset{\longrightarrow}{\lim}_I G_i 
   \ar[d]^{\underset{\longrightarrow}{\lim}_I f_i} \\
\underset{\longrightarrow}{\lim}_J H_j 
  \ar[r]^\simeq & \underset{\longrightarrow}{\lim}_I H_i  
} \]
commutes. This has the following consequence.
If every $f_j,$ $j\in J,$ is an isomorphism, then 
their direct limit is an isomorphism, and, by commutativity of
the diagram, the direct limit of all $f_i$, $i\in I,$ is an
isomorphism as well. We make use of this principle at various
points in the ensuing arguments.

Similar remarks apply to inverse limits.
Let $(I,\leq)$ be an (upwards) directed set, indexing inverse systems
$\{ G_i \}_{i\in I},$ $\{ H_i \}_{i\in I}$
of abelian groups.
Let $\{ f_i \}: \{ G_i \} \to \{ H_i \},$ 
$f_i: G_i \to H_i,$ be a map of inverse systems.
If every $f_i$ is an isomorphism, then
\[ \underset{\longleftarrow}{\lim}_I f_i:
   \underset{\longleftarrow}{\lim}_I G_i 
   \longrightarrow
   \underset{\longleftarrow}{\lim}_I H_i  \]
is an isomorphism. (This is a consequence of 
the inverse limit being a functor.)
If $J\subset I$ is a cofinal subset, then
projection to components indexed by $J$ induces an isomorphism
\[ \underset{\longleftarrow}{\lim}_I G_i
   \stackrel{\simeq}{\longrightarrow}
   \underset{\longleftarrow}{\lim}_J G_j \]
of inverse limits.
The diagram
\[ \xymatrix{
\underset{\longleftarrow}{\lim}_I G_i 
  \ar[d]_{\underset{\longleftarrow}{\lim}_I f_i} 
   \ar[r]^\simeq & \underset{\longleftarrow}{\lim}_J G_j 
   \ar[d]^{\underset{\longleftarrow}{\lim}_J f_j} \\
\underset{\longleftarrow}{\lim}_I H_i 
  \ar[r]^\simeq & \underset{\longleftarrow}{\lim}_J H_j  
} \]
commutes. Again, we deduce that
if every $f_j,$ $j\in J,$ is an isomorphism, then 
their inverse limit is an isomorphism, and, by commutativity of
the diagram, the inverse limit of all $f_i$, $i\in I,$ is an
isomorphism as well.

The \emph{cohomological dimension} of a topological space $X$ over
a commutative Noetherian ring $R$ is the smallest
$n\in \mathbb{N} \cup \{ \infty \}$ such that
$H^i_c (U;A)=0$ for all open $U\subset X$, all sheaves $A$ 
of $R$-modules on $X$, and
all $i>n$. Topologically stratified pseudomanifolds of dimension $n$
have cohomological dimension $n$ (Borel \cite[p. 60]{borel}).
Recall that a topological space is called \emph{first-countable}
if each point has a countable neighborhood basis.
Pseudomanifolds with boundary are first-countable, as can be
seen by shrinking distinguished neighborhoods appropriately.

For the purposes of the present paper, we shall adopt the following
concept of rational homology manifold with boundary. 

\begin{definition}
A \textbf{rational homology $n$-manifold with boundary} is
an $n$-dimensional topologically stratified pseudomanifold 
$X$ with boundary $\partial X$ such that
$X-\partial X$ is a rational homology $n$-manifold. We will also refer to the pair $(X, \partial X)$ as a $\mathbb{Q}$-\textbf{manifold with boundary}.
\end{definition}

In particular, rational homology manifolds $(X,\partial X)$ with boundary
in the sense of this definition are paracompact Hausdorff spaces
which are locally compact and (strongly) locally contractible.
(A space is \emph{strongly locally contractible} if every point possesses
a neighborhood basis consisting of contractible sets.)
Since they are topologically stratified pseudomanifolds,
rational homology manifolds with boundary have finite cohomological 
dimension over $\rat$, and their cohomological dimension agrees with
their pseudomanifold dimension.
Note that in the above definition, we do not explicitly require $\partial X$
to be a rational homology $(n-1)$-manifold. This turns out to be true,
as we shall see in Proposition \ref{prop.bndryofrathomolmfdisrathomolmfd}.

The cohomology sheaf in degree $k\in \mathbb{Z}$ of 
a differential graded sheaf $S$ on a space $X$ will be denoted by $\mathbf{H}^k (S)$,
the hypercohomology by $\mathcal{H}^k (X;S)$.
If $S$ is concentrated in degree $0$, then 
$\mathcal{H}^k (X;S)$ is the sheaf cohomology group $H^k (X;S)$.
The constant sheaf with stalk $\rat$ on a space $X$ will be written
as $\rat_X$ and will frequently be considered as a differential graded
sheaf concentrated in degree $0$.

\begin{lemma} \label{lem.ratxadjiso}
Let $(X,\partial X)$ be a pseudomanifold with boundary and let
$i$ be the open inclusion $i: X-\partial X \hookrightarrow X$.
Then the canonical adjunction map
\[ \rat_X \longrightarrow Ri_* i^* \rat_X   \]
is a quasi-isomorphism.
\end{lemma}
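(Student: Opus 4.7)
The plan is to verify that the adjunction is a quasi-isomorphism stalkwise. For a point $x \in X - \partial X$, since $i$ is the open inclusion of the complement of the closed set $\partial X$, the map $\rat_X \to Ri_* i^* \rat_X$ restricts on $X-\partial X$ to the identity, so it is already an isomorphism on stalks at interior points. The substantive case is $x \in \partial X$.

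At a boundary point $x$ I would compute
\[ \mathbf{H}^k(Ri_* i^* \rat_X)_x \cong \underset{\longrightarrow}{\lim}_{U \ni x} H^k(U \cap (X-\partial X); \rat), \]
using the first-countability and paracompactness of $X$ to express the stalk as a direct limit over any neighborhood basis of $x$. Condition (3) of Definition \ref{PSMwB} provides an open filtered collar $\mathcal{N} \cong [0,1) \times \partial X$, so it suffices to use the cofinal family of basis sets $U_{\epsilon,V} := [0,\epsilon) \times V$ with $0<\epsilon<1$ and $V$ running over a neighborhood basis of $x$ in $\partial X$. Because $\partial X$ is itself a stratified pseudomanifold, $x$ admits a basis of distinguished neighborhoods $V \cong \mathbb{R}^j \times \mathcal{C}(\mathcal{L})$, which are contractible. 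Therefore
\[ U_{\epsilon,V} \cap (X-\partial X) = (0,\epsilon) \times V \simeq V, \]
and by homotopy invariance $H^0(U_{\epsilon,V} \cap (X-\partial X); \rat) = \rat$ while higher rational cohomology vanishes. Passing to the direct limit over this cofinal subfamily yields $\mathbf{H}^0(Ri_* i^* \rat_X)_x \cong \rat$ and $\mathbf{H}^k(Ri_* i^* \rat_X)_x = 0$ for $k \geq 1$, matching the stalks of $\rat_X$; a quick check on $\mathbf{H}^0$ shows the adjunction map is the identity, completing the verification.

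The main obstacle is really bookkeeping: confirming that the collar-compatible neighborhoods $[0,\epsilon)\times V$ form a cofinal subfamily among all neighborhoods of $x$ in $X$, which follows from the filtered homeomorphism in Definition \ref{PSMwB}(3), and invoking the standard stalk-computation formula for $Ri_*$, which applies thanks to the paracompactness and first-countability of $X$ recalled just before the statement.
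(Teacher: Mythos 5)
Your proof is correct and follows essentially the same route as the paper's: verify the adjunction on stalks, use the collar of Definition~\ref{PSMwB}(3) to produce a cofinal neighborhood basis of the form $[0,\epsilon)\times V$ with $V\subset\partial X$ open, and conclude by homotopy invariance of cohomology on $U\cap(X-\partial X)\cong(0,\epsilon)\times V$. The only noteworthy variations are that the paper does not require $V$ to be contractible (it instead shows the restriction map $\mathcal{H}^k([0,\epsilon)\times V;\rat_X)\to\mathcal{H}^k((0,\epsilon)\times V;\rat_X)$ is an isomorphism by projecting both spaces to $V$, which avoids the separate degree-$0$ check that your argument needs), and the paper explicitly cites Sella's theorem to identify sheaf cohomology with singular cohomology before invoking homotopy invariance, a step you use implicitly.
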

\begin{proof}
We recall that the property of being a quasi-isomorphism can
be checked point by point on cohomology stalks.
Let $S$ be any differential sheaf on $X$.
The adjunction $S\to Ri_* i^* S$ restricts to the identity
$i^* S \to i^* S$ on $X-\partial X$ and is therefore a quasi-isomorphism at every 
point $x\in X-\partial X$. It remains to check that it is a quasi-isomorphism
at points $x$ in $\partial X$.

Let $U$ be an open neighborhood of $x\in\partial X$ in $X$.
By definition of a pseudomanifold with boundary,
$\partial X$ is collared in $X$. Thus there exists an open
neighborhood $N$ of $\partial X$ and a (filtered) homeomorphism
$\phi: N \to [0,1) \times \partial X$ which restricts to the identity 
$\partial X \rightarrow \{ 0 \} \times \partial X$.
Note that $\phi$ restricts to a homeomorphism
$\phi: N-\partial X \to (0,1) \times \partial X$.
The intersection $U \cap N$ is an open neighborhood of $x$ in $X$.
Thus $\phi (U\cap N)$ is an open neighborhood of $\phi (x) = (0,x)$
in $[0,1) \times \partial X$. By definition of the product topology,
there exists an open neighborhood $V\subset \partial X$ of $x$ in $\partial X$
and $\epsilon >0$ such that
$[0,\epsilon) \times V \subset \phi(U\cap N)$.
Set
\[ U' := \phi^{-1} ([0,\epsilon) \times V). \]
Then $U' \subset X$ is an open neighborhood of $\phi^{-1} (0,x)=x$ in $X$
and $U' \subset U$, for
\[ U' = \phi^{-1} ([0,\epsilon) \times V)
    \subset \phi^{-1} (\phi (U\cap N)) = U\cap N \subset U. \]
We have thus shown that every point $x\in \partial X$ has a neighborhood
basis consisting of open sets of the form $U'$.
We note that $U' -\partial X \subset U-\partial X$.

The sheafification $\mathbf{A}$ of a presheaf $U\mapsto A(U)$
has stalks
\[  \mathbf{A}_x = \underset{\longrightarrow}{\lim}~ A(U), \]
where the direct limit ranges over all open neighborhoods $U$ of $x$.
Since, by definition, $\mathbf{H}^k (S)$ is the sheafification of the
presheaf $U \mapsto H^k \Gamma (U;S)$ 
(Iversen \cite[p. 89]{iversen}),
the cohomology sheaf has stalks
\[ \mathbf{H}^k (S)_x = \underset{\longrightarrow}{\lim}~ H^k \Gamma (U;S). \]
We note that $\mathbf{H}^k (S)_x = H^k (S_x)$, as restriction to
stalks is an exact functor.
Thus for the constant sheaf $S=\rat_X$ on $X$,
\[ \mathbf{H}^0 (\rat_X)_x = H^0 (\rat_{X,x}) = \rat \]
and
\[ \mathbf{H}^k (\rat_X)_x = H^k (\rat_{X,x}) =0,~ k\not= 0. \]

An injective resolution $S\to I$ of $S$ is a quasi-isomorphism
and thus induces an isomorphism
$\mathbf{H}^k (S) \cong \mathbf{H}^k (I)$.
Therefore, the cohomology stalks may also be computed as
\[ \mathbf{H}^k (S)_x = \underset{\longrightarrow}{\lim}~ H^k \Gamma (U;I). \]
Now the groups $H^k \Gamma (U;I)$ are precisely the hypercohomology groups
$\mathcal{H}^k (U;S)$. We obtain the formula
\[ \mathbf{H}^k (S)_x = \underset{\longrightarrow}{\lim}~ \mathcal{H}^k (U;S). \]
The restriction of an injective sheaf to an open subset is injective
(\cite[p. 109, Cor. 6.10]{iversen}).
Therefore, $i^* S \to i^* I$ is an injective resolution of $i^* S$.
We deduce that $Ri_* i^* S = i_* i^* I$ and the
canonical adjunction morphism $S \to Ri_* i^* S$ is given by the composition
\[ S \longrightarrow I \longrightarrow i_* i^* I = Ri_* i^* S. \]
The cohomology stalks of $Ri_* i^* S$ are given by
\begin{align*}
\mathbf{H}^k (Ri_* i^* S)_x
&= \mathbf{H}^k (i_* i^* I)_x = 
  \underset{\longrightarrow}{\lim}~ H^k \Gamma (U; i_* i^* I) \\
&= \underset{\longrightarrow}{\lim}~ H^k \Gamma (U-\partial X; i^* I)
  = \underset{\longrightarrow}{\lim}~ \mathcal{H}^k (U-\partial X; S).
\end{align*}
At $x\in \partial X$, the map
\[ \mathbf{H}^k (S)_x \longrightarrow \mathbf{H}^k (Ri_* i^* S)_x \]
is hence given by the map
\[ \underset{\longrightarrow}{\lim}~ \mathcal{H}^k (U;S) \longrightarrow
   \underset{\longrightarrow}{\lim}~ \mathcal{H}^k (U-\partial X;S) \]
induced on direct limits by the restriction maps
\begin{equation} \label{equ.restrfulldirectsystem} 
\mathcal{H}^k (U;S) \longrightarrow
   \mathcal{H}^k (U-\partial X;S). 
\end{equation}   
Since an arbitrary neighborhood $U$ contains one of the form $U'$,
neighborhoods of the latter type are cofinal in the directed set of
all open neighborhoods. So the direct limit can be computed on neighborhoods
of type $U'$.
When $S=\rat_X$ is the constant sheaf, the collar homeomorphism $\phi$ identifies 
the restriction map
\[ \mathcal{H}^k (U';\rat_X) \longrightarrow
   \mathcal{H}^k (U'-\partial X; \rat_X) \] 
with the restriction map
\begin{equation} \label{equ.sheafcoheps} 
\mathcal{H}^k ([0,\epsilon)\times V; \rat_X) \longrightarrow
   \mathcal{H}^k ((0,\epsilon)\times V; \rat_X). 
\end{equation}   
Consider the commutative diagram
\[ \xymatrix{
(0,\epsilon)\times V \ar[rd] \ar@{^{(}->}[rr] & & 
        [0,\epsilon)\times V \ar[ld] \\
& V, &
} \]
where the maps to $V$ are the second factor projections.
These projections have contractible fibers and are homotopy equivalences.
It follows that the horizontal inclusion is a homotopy equivalence.
On singular cohomology, which is homotopy invariant, we get an induced
restriction isomorphism
\begin{equation} \label{equ.singcoheps}
H^k ([0,\epsilon)\times V; \rat) 
   \stackrel{\simeq}{\longrightarrow}
   H^k ((0,\epsilon)\times V; \rat). 
\end{equation}   
For semi-locally contractible topological spaces, there
is a natural isomorphism between singular cohomology $H^* (-;\rat)$
and sheaf cohomology $H^* (-;\rat_X)$ (Sella \cite{sella}).
Locally contractible spaces are in particular semi-locally contractible.
The space $[0,\epsilon)\times V$ is locally contractible, since $V$ is 
an open subset of the locally contractible space $\partial X$.
The natural isomorphism between singular and sheaf cohomology thus
identifies the maps (\ref{equ.singcoheps}) and (\ref{equ.sheafcoheps}).
Therefore, the latter is an isomorphism as well.
The basic principle on direct limits recalled earlier
implies that the direct limit of the maps
(\ref{equ.restrfulldirectsystem}) is an isomorphism.
\end{proof}

We shall use the term \emph{cohomologically constructible}, a property
of differential graded sheaves, in the sense of Borel
\cite[p. 69]{borel}. We will not recall the definition here, but point
out that this notion does not require a stratification.
According to \cite[p. 79, Cor. 3.11 (i)]{borel},
the constant sheaf on a stratified pseudomanifold without boundary
is cohomologically constructible. We need to extend this statement
to pseudomanifolds with boundary:
\begin{lemma} \label{lem.constcohomconstr}
Let $(X,\partial X)$ be a topologically stratified pseudomanifold
with boundary. Then the constant sheaf $\rat_X$ on $X$ is
cohomologically constructible.
\end{lemma}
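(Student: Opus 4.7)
The plan is to verify cohomological constructibility pointwise, separating interior and boundary cases. Cohomological constructibility is a local property, so at any $x \in X - \partial X$ a sufficiently small open neighborhood of $x$ in $X$ is a distinguished neighborhood for the induced stratification of the pseudomanifold (without boundary) $X - \partial X$; Borel's \cite[p.~79, Cor.~3.11(i)]{borel} then applies directly at $x$.

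At a boundary point $x \in \partial X$, I plan to combine the filtered collar $\phi : \mathcal{N} \to [0,1) \times \partial X$ from Definition \ref{PSMwB} with a distinguished stratified neighborhood $\mathbb{R}^{j} \times \mathcal{C}(\mathcal{L})$ of $x$ in $\partial X$; the latter exists because $\partial X$ is itself a topologically stratified pseudomanifold without boundary, of one dimension lower. This yields a cofinal system of open neighborhoods of $x$ in $X$ of the form
\[
U_\epsilon \;\cong\; [0,\epsilon) \times \mathbb{R}^{j} \times \mathcal{C}(\mathcal{L}),
\]
where $\mathcal{L}$ is the link in $\partial X$ of the stratum through $x$. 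Each cohomology stalk of $\rat_X$ is $\rat$ or $0$, hence trivially of finite type. For the direct-limit condition on $\underset{\longrightarrow}{\lim}_\epsilon \mathcal{H}^k(U_\epsilon;\rat_X)$, contractibility of each $U_\epsilon$, together with Sella's identification of singular with sheaf cohomology on locally contractible spaces (already invoked in Lemma \ref{lem.ratxadjiso}), shows that this limit equals $\rat$ for $k=0$ and vanishes otherwise, matching the stalk.

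The step I expect to be the main obstacle is the dual condition involving compactly supported hypercohomology. On the product neighborhood $U_\epsilon$ I plan to apply the K\"unneth formula, reducing the computation to three factors: $H^{\ast}_c([0,\epsilon);\rat)$, which is one-dimensional in degree $1$ and stable in the inverse system under shrinking $\epsilon$; $H^{\ast}_c(\mathbb{R}^{j};\rat)$, which is $\rat$ in degree $j$ and zero elsewhere; and $H^{\ast}_c(\mathcal{C}(\mathcal{L});\rat)$, which is finite-dimensional because $\mathcal{L}$ is a compact stratified pseudomanifold without boundary, to which Borel's constructibility already applies. Finite generation of the inverse limits $\underset{\longleftarrow}{\lim}_\epsilon \mathcal{H}^{k}_c(U_\epsilon;\rat_X)$ in each degree, and the Mittag--Leffler condition along the half-open collar direction, should then follow from explicit bookkeeping of restriction maps in the product. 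Combining this boundary verification with the immediate interior case concludes the proof.
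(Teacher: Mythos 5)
Your interior-point argument matches the paper's. At boundary points you take a different cofinal neighborhood system, $U_\epsilon \cong [0,\epsilon) \times \mathbb{R}^{j} \times \mathcal{C}(\mathcal{L})$, whereas the paper only needs $U_i \cong [0,\epsilon_i) \times V_i$ with $V_i \subset \partial X$ open and \emph{contractible}; either is cofinal, but the paper's avoids invoking the full distinguished-neighborhood structure of $\partial X$ and makes both the direct and inverse systems visibly constant.

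The genuine error is your claim that $H^{\ast}_c\bigl([0,\epsilon);\rat\bigr)$ is one-dimensional in degree $1$. In fact $H^{\ast}_c\bigl([0,\epsilon);\rat\bigr)=0$ in all degrees: the compactly supported cohomology of a closed halfspace vanishes (Iversen \cite[p.~189, 8.4]{iversen}), which one can also see from the long exact sequence for the closed point $\{0\}$ and its open complement $(0,\epsilon)$, where the connecting map $\rat = H^0_c(\{0\}) \to H^1_c((0,\epsilon)) = \rat$ is an isomorphism. This vanishing is exactly the point of the paper's argument: by K\"unneth the factor $H^{\ast}_c([0,\epsilon);\rat)=0$ kills $\mathcal{H}^{\ast}_c(U;\rat_X)$ entirely, so the inverse system over your cofinal neighborhoods is identically zero — essentially constant with finitely generated (indeed zero) limit — and there is nothing left to verify. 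Your planned ``explicit bookkeeping of restriction maps'' and Mittag--Leffler check are not just unnecessary: as written they are chasing a nonzero answer that does not exist. Once you replace your incorrect computation by the vanishing, the boundary case collapses to the paper's argument (modulo your heavier choice of neighborhood basis), and you should also drop the appeal to finite-dimensionality of $H^{\ast}_c(\mathcal{C}(\mathcal{L});\rat)$, which you had not actually justified and which the vanishing renders irrelevant.
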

\begin{proof}
Since the interior $X-\partial X$ is a pseudomanifold without boundary,
$\rat_X$ is cohomologically constructible at every point
of the interior by \cite[p. 79, Cor. 3.11 (i)]{borel}.
It remains to verify that $\rat_X$ is cohomologically constructible
at boundary points $x\in \partial X$.
Using a collar as in the proof of Lemma \ref{lem.ratxadjiso}, there exists a
neighborhood basis $\{ U_i \}_{i\in I}$ of $x$ in $X$ consisting of
open sets $U_i$ that are homeomorphic to $[0,\epsilon_i) \times V_i$,
$V_i \subset \partial X$, $V_i$ open and contractible, $\epsilon_i >0$.
If $i<j$ so that $U_i \supset U_j$, then the restriction map
\[ \mathcal{H}^k (U_i;\rat_X) \longrightarrow
   \mathcal{H}^k (U_j;\rat_X) \]
is an isomorphism, since in the commutative diagram
\[ \xymatrix{
U_j \ar[rd] \ar@{^{(}->}[rr] & & 
        U_i \ar[ld] \\
& \pt &
} \]
the constant maps to the point are homotopy equivalences, and thus
the horizontal inclusion is a homotopy equivalence.   
(Here, as earlier, we may identify singular and sheaf cohomology.)
Thus, the direct system $\{ \mathcal{H}^k (U_i;\rat_X) \}_{i\in I}$
is constant, hence also essentially constant in the sense of
Borel's definition \cite[p. 68]{borel}.
Since $\{ U_i \}_{i\in I}$ is cofinal in the directed set of 
all open neighborhoods of $x$, the direct system
$\{ \mathcal{H}^k (U;\rat_X) \}$ over all neighborhoods of $x$
is also essentially constant (\cite[p. 69]{borel}).
We observe furthermore that the direct limit is finitely generated
as it is one-dimensional for $k=0$ and $0$ otherwise. 
This proves condition CC2 of Borel. 
 
We shall next treat the dual case of inverse systems given by
compactly supported cohomology.
Thus we consider the inverse system with groups
$\mathcal{H}^k_c (U_i; \rat_X),$ $i\in I,$ where $\{ U_i \}$
is the same neighborhood basis of $x\in \partial X$ as before. For $i<j$,
so that $U_i \supset U_j$, the transition maps are given by
extension by zero
\[  \mathcal{H}^k_c (U_j;\rat_X) \longrightarrow
   \mathcal{H}^k_c (U_i;\rat_X).   \]
We claim that in fact all of these groups vanish.
Indeed, we may use the K\"unneth formula
\[ R\Gamma_c^\bullet (A\times B;\rat) =
   R\Gamma_c^\bullet (A;\rat) \otimes_\rat
   R\Gamma_c^\bullet (B;\rat) \] 
valid for finite dimensional locally compact spaces $A,B$,
\cite[p. 323]{iversen}.
The spaces $[0,\epsilon_i)$ and $V_i$ satisfy these assumptions
and therefore
\[ \mathcal{H}^*_c (U_i;\rat_X) \cong
   \mathcal{H}^*_c ([0,\epsilon_i) \times V_i;\rat_X) \cong
   \mathcal{H}^*_c ([0,\epsilon_i);\rat) \otimes 
      \mathcal{H}^*_c (V_i;\rat). \]
Now, the compactly supported cohomology with constant coefficients 
of a closed halfspace vanishes, \cite[p. 189, 8.4]{iversen}.
So $\mathcal{H}^*_c ([0,\epsilon_i);\rat)=0$ and hence
$\mathcal{H}^*_c (U_i;\rat_X)=0$ as claimed.
The inverse system
$\{ \mathcal{H}^k_c (U_i; \rat_X) \}_{i\in I}$ is thus constant,
hence also essentially constant.
Since $\{ U_i \}_{i\in I}$ is cofinal in the directed set of 
all open neighborhoods of $x$, the inverse system
$\{ \mathcal{H}^k_c (U;\rat_X) \}$ over all open neighborhoods of $x$
is also essentially constant (\cite[p. 69]{borel}).
The inverse limit is zero, so in particular
finitely generated. This proves condition CC1 of Borel.

We have verified Borel's conditions CC1 and CC2. These imply the
remaining conditions CC3 and CC4, as pointed out by Borel.
\end{proof}

Let $(X,\partial X)$ be a rational homology $n$-manifold with boundary.
By definition, $X-\partial X$ is a rational homology $n$-manifold
(without boundary). Thus the dualizing complex 
$\mathbb{D}_{X-\partial X}[-n]$ is naturally quasi-isomorphic to
the orientation sheaf $\operatorname{or}_{X-\partial X}$ of $X-\partial X$.
\begin{definition}
The homology manifold $(X,\partial X)$ with boundary is called \textbf{orientable} if there
exists an isomorphism $\operatorname{or}_{X-\partial X} \cong \rat_{X-\partial X}$.
A choice of such an isomorphism, if it exists, is called an
\textbf{orientation}. 
\end{definition}

Let $(X,\partial X)$ be an oriented rational homology $n$-manifold
with boundary.
The orientation induces a quasi-isomorphism
\[ \mathbb{D}_{X-\partial X}[-n] \cong
   \operatorname{or}_{X-\partial X} \cong \rat_{X-\partial X}. \]
Using Verdier's dualizing functor $\mathcal{D}$, we may express the
dualizing complex as
$\mathbb{D}_{X-\partial X} = \mathcal{D} \rat_{X-\partial X}$
(\cite[p. 90]{gmih2}).
The orientation therefore yields a self-duality isomorphism
\[ (\mathcal{D} \rat_{X-\partial X}) [-n] \cong \rat_{X-\partial X}.  \]

\begin{lemma} \label{lem.switchdandristar}
(Borel \cite[p. 137, Prop. 8.8 (3)]{borel}.)
Let $Y$ be a first-countable locally compact space.
Let $i:U\subset Y$ be an open inclusion and
let $B$ be a differential graded sheaf on $U$ such that
$Ri_* B$ is cohomologically constructible. Then
\[ \mathcal{D} Ri_* B = i_! \mathcal{D} B.  \] 
\end{lemma}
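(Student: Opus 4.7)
The plan is to combine the fundamental Verdier duality identity for $i_!$ with biduality on cohomologically constructible complexes. The starting point is the formula
\[
\mathcal{D}_Y \circ i_! \;\simeq\; Ri_* \circ \mathcal{D}_U,
\]
which is the $f=i$ instance of the standard Verdier duality $\mathcal{D}_Y Rf_! \simeq Rf_* \mathcal{D}_X$ combined with the identity $Ri_! = i_!$ for open inclusions. No constructibility hypothesis is needed for this step; it comes out of the adjunction $Rf_! \dashv f^!$ applied to the dualizing complex.

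First I would observe that the hypothesis that $Ri_* B$ is cohomologically constructible on $Y$ forces $B$ itself to be cohomologically constructible on $U$. Indeed, for an open inclusion the counit $i^* Ri_* B \to B$ is a quasi-isomorphism (compare Lemma~\ref{lem.ratxadjiso}), and restriction of a cc complex to an open subset preserves the property, since Borel's conditions CC1 and CC2 are checked on cofinal neighborhood bases and such bases in $U$ coincide with the corresponding bases in $Y$. Biduality therefore applies to $B$, giving $\mathcal{D}_U^2 B \simeq B$. Specializing the basic duality formula at $A = \mathcal{D}_U B$ yields
\[
\mathcal{D}_Y(i_! \mathcal{D}_U B) \;\simeq\; Ri_* \mathcal{D}_U^2 B \;\simeq\; Ri_* B.
\]
Applying $\mathcal{D}_Y$ gives $\mathcal{D}_Y^2(i_! \mathcal{D}_U B) \simeq \mathcal{D}_Y Ri_* B$, and composing with the biduality natural transformation $i_! \mathcal{D}_U B \to \mathcal{D}_Y^2(i_! \mathcal{D}_U B)$ produces a canonical morphism $\alpha: i_! \mathcal{D}_U B \to \mathcal{D}_Y Ri_* B$. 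The lemma reduces to showing that $\alpha$ is a quasi-isomorphism.

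The main obstacle is verifying $\alpha$ on stalks, and the delicate case is a boundary point $y \in \overline{U} \setminus U$. At $y \in U$ both stalks agree with $(\mathcal{D}_U B)_y$: use $i^* i_! \simeq \mathrm{id}$ on the left and $i^* \mathcal{D}_Y \simeq \mathcal{D}_U i^*$ on the right, the latter because $i^! = i^*$ for an open inclusion. At $y \in Y \setminus \overline{U}$ both stalks vanish trivially. For $y \in \overline{U} \setminus U$ the left stalk is zero, so the content of the lemma is the vanishing of $(\mathcal{D}_Y Ri_* B)_y$. I would establish this via Borel's cc stalk formula, which writes the stalks of $\mathcal{D}_Y F$ (for cc $F$) as derived limits of compactly supported hypercohomology over a neighborhood basis of $y$, applied here to $F = Ri_* B$. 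An argument parallel to the proof of Lemma~\ref{lem.constcohomconstr} then applies: sufficiently small collar-type neighborhoods $V$ of $y$ admit a K\"unneth decomposition isolating a factor of $\mathcal{H}^*_c$ on a half-open interval, which vanishes by \cite[p. 189, 8.4]{iversen}, forcing the desired stalkwise vanishing. Cofinality of these neighborhoods in the directed system at $y$ then yields $(\mathcal{D}_Y Ri_* B)_y = 0$ and completes the proof that $\alpha$ is a quasi-isomorphism.
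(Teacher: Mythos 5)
The paper does not prove this lemma at all; it is imported verbatim from Borel's book with a page citation. So there is no in-paper argument to compare against, and your proposal has to stand on its own.

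Your Verdier-duality set-up is fine: the identity $\mathcal{D}_Y i_! \simeq Ri_*\mathcal{D}_U$, the observation that $B = i^* Ri_* B$ inherits cohomological constructibility from $Ri_* B$ by restriction to the open $U$, the resulting isomorphism $\mathcal{D}_Y(i_!\mathcal{D}_U B) \simeq Ri_* B$, and the reduction to showing that the comparison map $\alpha$ is a stalkwise quasi-isomorphism with the only nontrivial case being a point $y\in\overline{U}\setminus U$. Where the argument breaks down is precisely at that last step. To show $(\mathcal{D}_Y Ri_*B)_y = 0$ you invoke ``collar-type neighborhoods'' and a K\"unneth factorization through $\mathcal{H}^*_c$ of a half-open interval, transplanted from the proof of Lemma~\ref{lem.constcohomconstr}. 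But the lemma is stated for an arbitrary first-countable locally compact space $Y$ and an arbitrary $B$ on $U$ subject only to $Ri_*B$ being cohomologically constructible; there is no collar around $y$ in this generality, and even where a collar exists (as in the paper's application, $Y = X$ a pseudomanifold with boundary, $U = X - \partial X$) the K\"unneth calculation requires $Ri_*B$ to restrict to a (quasi-)constant sheaf on the collar. That is true in the application because $B = \rat_U$ and Lemma~\ref{lem.ratxadjiso} gives $Ri_*B \simeq \rat_X$, but it is not part of the hypotheses of the lemma you are proving. So your argument establishes the conclusion only in the special geometric situation the paper later applies it to, not the lemma as stated.

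The correct general argument stays entirely inside Borel's homological framework: having identified $\mathcal{D}_Y(i_!\mathcal{D}_U B)\simeq Ri_*B$, one observes that the right-hand side is cohomologically constructible by hypothesis, and then invokes Borel's biduality theorem, for which this is exactly the relevant input, to conclude that $i_!\mathcal{D}_U B \to \mathcal{D}_Y\mathcal{D}_Y(i_!\mathcal{D}_U B) = \mathcal{D}_Y Ri_*B$ is a quasi-isomorphism. No stalk computation at boundary points, and a fortiori no collar or K\"unneth structure, is needed. If you want to retain the stalk formulation, you would have to prove the vanishing $(\mathcal{D}_Y Ri_*B)_y = 0$ for $y\notin U$ from the cc hypothesis alone, not from geometric structure on $Y$.
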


We apply this lemma to $i:X-\partial X \hookrightarrow X$, observing that 
$X$ is locally compact and first-countable. 
We take $B=\rat_{X-\partial X}$.
By Lemma \ref{lem.ratxadjiso},
\[ Ri_* B = Ri_* i^* \rat_X \cong \rat_X, \]
the constant sheaf on $X$, which is cohomologically constructible
by Lemma \ref{lem.constcohomconstr}.
By Lemma \ref{lem.switchdandristar},
\[ \mathcal{D} Ri_* \rat_{X-\partial X} 
  = i_! \mathcal{D} \rat_{X-\partial X}. \]
Using the above self-duality isomorphism,
\begin{align*}
i_! i^* \rat_X
&= i_! \rat_{X-\partial X} \cong
   i_! \mathcal{D} \rat_{X-\partial X} [-n] \\
&= \mathcal{D} Ri_* \rat_{X-\partial X} [-n]
  = \mathcal{D} Ri_* i^* \rat_X [-n] \\
&= \mathcal{D} \rat_X [-n].    
\end{align*}
On hypercohomology, there is an induced isomorphism
\[ \mathcal{H}^k (X; i_! i^* \rat_X) \cong
   \mathcal{H}^{k-n} (X; \mathcal{D} \rat_X).  \]
The global effect of dualizing a differential graded sheaf is
\[ \mathcal{H}^{k-n} (X; \mathcal{D} \rat_X)
  = \Hom (\mathcal{H}^{n-k}_c (X;\rat_X), \rat). \]
(This holds on any locally compact space which has finite
cohomological dimension over $\rat$.)
For compact $X$, 
$\mathcal{H}^{n-k}_c (X;\rat_X) = \mathcal{H}^{n-k} (X;\rat_X),$
so we obtain an isomorphism
\[ \mathcal{H}^{k-n} (X; \mathcal{D} \rat_X)
  = \Hom (\mathcal{H}^{n-k} (X;\rat_X), \rat). \]
Composing, we receive a duality isomorphism
\[ \mathcal{H}^k (X; i_! i^* \rat_X) \cong
  \Hom (\mathcal{H}^{n-k} (X;\rat_X), \rat) \]
for compact $X$.
The left hand group is nothing but the relative group
$H^k (X,\partial X;\rat)$
(Iversen \cite[p. 249]{iversen}), 
while the universal coefficient theorem
identifies the right hand group with the homology group
$H_{n-k} (X;\rat)$. The resulting isomorphism
\[ H^k (X,\partial X;\rat) \cong H_{n-k} (X;\rat) \]
is Lefschetz duality for
an oriented compact rational homology $n$-manifold
$(X,\partial X)$ with boundary.
We have proved:
\begin{theorem}\label{LFduality} \label{thm.lefschetzdualhomolmfdbndry}
Let $(X,\partial X)$ be an oriented compact rational homology $n$-manifold
with boundary. Then $(X,\partial X)$ has a Lefschetz duality isomorphism
\[ H^k (X,\partial X;\rat) \cong H_{n-k} (X;\rat).  \]
\end{theorem}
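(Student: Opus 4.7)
The plan is to deduce Lefschetz duality sheaf-theoretically, propagating the self-duality of the constant sheaf on the interior across the boundary by combining the lemmas already proved with Borel's exchange formula between $\mathcal{D}$ and $Ri_*$. Let $i\colon X-\partial X \hookrightarrow X$ denote the open inclusion of the interior. The first step is to extract, from the assumed orientation, the quasi-isomorphism
\[
(\mathcal{D}\rat_{X-\partial X})[-n] \;\simeq\; \operatorname{or}_{X-\partial X} \;\simeq\; \rat_{X-\partial X},
\]
which is the usual reformulation of Poincar\'e duality on an oriented rational homology $n$-manifold without boundary.

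The second step is to push this isomorphism forward to $X$. Lemma~\ref{lem.switchdandristar} gives $\mathcal{D} Ri_* B = i_! \mathcal{D} B$ whenever $Ri_* B$ is cohomologically constructible; I would apply this with $B=\rat_{X-\partial X}$, using Lemma~\ref{lem.ratxadjiso} to identify $Ri_* i^* \rat_X \simeq \rat_X$ and Lemma~\ref{lem.constcohomconstr} to secure cohomological constructibility of the latter. Combining the interior self-duality with Borel's exchange then yields the sheaf-level identity
\[
i_! i^* \rat_X \;\simeq\; i_! \mathcal{D} \rat_{X-\partial X}[-n]
  \;\simeq\; \mathcal{D} Ri_* \rat_{X-\partial X}[-n]
  \;\simeq\; \mathcal{D} \rat_X [-n].
\]

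The third step is to pass to hypercohomology and interpret both sides. On the left, $\mathcal{H}^k(X; i_! i^* \rat_X) = H^k(X,\partial X;\rat)$, since $i_! i^* \rat_X$ is the standard sheaf-theoretic representative of the relative constant sheaf on $(X,\partial X)$. On the right, the global effect of Verdier duality for a locally compact space of finite cohomological dimension reads
\[
\mathcal{H}^{k-n}(X; \mathcal{D}\rat_X) \;=\; \Hom\bigl(\mathcal{H}^{n-k}_c(X;\rat_X),\,\rat\bigr),
\]
and compactness of $X$ allows us to drop the subscript $c$; the universal coefficient theorem, together with the natural identification of sheaf and singular cohomology on the locally contractible space $X$, then rewrites the right-hand side as $H_{n-k}(X;\rat)$. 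Composing the displayed isomorphisms delivers the claimed pairing.

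The principal obstacle lies entirely at boundary points, and it is exactly what the two lemmas preceding the theorem address: one must know both that $\rat_X \to Ri_* i^* \rat_X$ is a quasi-isomorphism and that $\rat_X$ is cohomologically constructible. Both facts rest on the collar structure of $\partial X$, which produces a cofinal neighbourhood basis of points $x\in\partial X$ consisting of sets of the form $[0,\epsilon)\times V$ with $V\subset\partial X$ open and contractible. On such sets, homotopy invariance of singular cohomology (together with its natural identification with sheaf cohomology on locally contractible spaces) handles the adjunction, while the K\"unneth formula combined with the vanishing of the compactly supported cohomology of a closed half-line supplies the necessary CC1 condition. Once these two sheaf-theoretic inputs are available, the remainder of the argument is formal.
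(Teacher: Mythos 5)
Your proposal is correct and follows essentially the same route as the paper's proof: you invoke the interior self-duality coming from the orientation, apply Lemma~\ref{lem.switchdandristar} with $B=\rat_{X-\partial X}$ (secured by Lemmas~\ref{lem.ratxadjiso} and~\ref{lem.constcohomconstr}) to obtain $i_!i^*\rat_X \cong \mathcal{D}\rat_X[-n]$, and then pass to hypercohomology, using compactness to drop compact supports, the identification $\mathcal{H}^k(X;i_!i^*\rat_X)=H^k(X,\partial X;\rat)$, and the universal coefficient theorem. The paper's argument is the same chain of identifications in the same order, so your plan reproduces it faithfully.
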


\begin{corollary}\label{LefDualityCor}
Let $X$ be an $n$-dimensional compact oriented $\mathbb{Q}$-pseudomanifold with 
$\mathbb{Q}$-stratification
	\begin{align*}
	X=X_{n}  \supset X_{0}.
	\end{align*}
Let $(\mathcal{M}, \partial \mathcal{M})$ be the pseudomanifold with boundary
obtained by cutting out the 0-dimensional stratum in the $\mathbb{Q}$-stratification. 
Then $(\mathcal{M}, \partial \mathcal{M})$ satisfies Lefschetz duality 
with respect to ordinary rational homology.
\end{corollary}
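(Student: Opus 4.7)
The plan is to deduce this from Theorem \ref{thm.lefschetzdualhomolmfdbndry} by showing that $(\mathcal{M},\partial \mathcal{M})$ is an oriented compact rational homology $n$-manifold with boundary. First I would establish compactness: since $X$ is compact and $X_0$ is closed and discrete, $X_0$ consists of finitely many points, so the conical neighborhoods $\mathcal{C}_x$ of Definition \ref{Cout} can be chosen pairwise disjoint, and $\mathcal{M}$ is the complement of a finite disjoint union of open cones in the compact space $X$, hence compact.

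Next I would verify that $(\mathcal{M},\partial \mathcal{M})$ is a topologically stratified pseudomanifold with boundary in the sense of Definition \ref{PSMwB}. Fix an augmentation $X=X_{n}\supset X_{n-1}\supset\cdots\supset X_{0}$ realizing the $\mathbb{Q}$-stratification, which exists by definition of a $\mathbb{Q}$-pseudomanifold. The filtration $\mathcal{M}_{i}:=\mathcal{M}\cap X_{i}$ makes $\mathcal{M}-\partial\mathcal{M}$ into a stratified pseudomanifold, since distinguished neighborhoods of points in $X_{j}-X_{j-1}$ with $j>0$ restrict to distinguished neighborhoods in $\mathcal{M}-\partial\mathcal{M}$. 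The boundary $\partial\mathcal{M}$ is the disjoint union of the links $\mathcal{L}_{x}$ for $x\in X_{0}$, each of which is a compact $(n-1)$-dimensional stratified pseudomanifold, and the required open filtered collar in condition (3) of Definition \ref{PSMwB} is furnished by the difference of two nested concentric cone neighborhoods used when cutting out $X_{0}$.

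The crucial step is to show that $\mathcal{M}-\partial\mathcal{M}$ is a rational homology $n$-manifold. For a point $y\in\mathcal{M}-\partial\mathcal{M}$ lying in $X_{j}-X_{j-1}$ with $j>0$, Definition \ref{pseudomanifolds} supplies a distinguished neighborhood $\mathcal{N}_{y}\cong\mathbb{R}^{j}\times\mathcal{C}(\mathcal{L})$ with a compact $(n-j-1)$-dimensional link $\mathcal{L}$. By the defining hypothesis of a $\mathbb{Q}$-stratification $X\supset X_{0}$, every connected component of every stratum $X_{i}-X_{i-1}$ with $0<i<n$ has a rational homology sphere link in $X$. The link $\mathcal{L}$ inherits a filtration whose strata are links of deeper strata in $X$, so an induction on stratum depth using the cone formula for local rational homology shows that $\mathcal{L}$ is itself a rational homology $(n-j-1)$-sphere. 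Therefore the local rational homology of $\mathcal{N}_{y}$ at $y$ agrees with that of $\mathbb{R}^{n}$, so $\mathcal{M}-\partial\mathcal{M}$ is a rational homology $n$-manifold. The orientation of $X$ (restricted to the dense top manifold stratum) extends uniquely to an orientation of the locally constant orientation sheaf on $\mathcal{M}-\partial\mathcal{M}$.

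With these verifications in place, Theorem \ref{thm.lefschetzdualhomolmfdbndry} delivers the Lefschetz duality isomorphism
\[
H^{k}(\mathcal{M},\partial\mathcal{M};\rat)\cong H_{n-k}(\mathcal{M};\rat).
\]
The principal obstacle is the inductive verification in the third paragraph: that removing neighborhoods of $X_{0}$ produces a space with uniformly rational-homology-manifold local structure. This rests precisely on the controlled $\mathbb{Q}$-stratification hypothesis, which ensures that every link appearing in the augmented stratification of $\mathcal{M}-\partial\mathcal{M}$, not just those directly adjacent to the top stratum, is a rational homology sphere.
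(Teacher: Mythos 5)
Your proposal is correct and follows essentially the same route as the paper, which simply asserts that $(\mathcal{M},\partial\mathcal{M})$ is a compact oriented rational homology manifold with boundary and then invokes Theorem \ref{thm.lefschetzdualhomolmfdbndry}. One minor remark: the inductive step in your third paragraph is superfluous, since the defining hypothesis of the $\mathbb{Q}$-stratification already yields directly that the link $\mathcal{L}$ at $y\in X_{j}-X_{j-1}$ (for $0<j<n$) is a rational homology $(n-j-1)$-sphere, and a single application of the cone formula to the distinguished neighborhood $\mathbb{R}^{j}\times\mathcal{C}(\mathcal{L})$ then gives the required local rational homology at $y$ without any induction on stratum depth.
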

\begin{proof}
	The pair $(\mathcal{M}, \partial \mathcal{M})$ is a compact oriented rational homology manifold with boundary. The result follows from Theorem \ref{thm.lefschetzdualhomolmfdbndry}.
\end{proof}
\begin{corollary}\label{PDRationally}
Let $X$ be an $n$-dimensional closed oriented $\mathbb{Q}$-pseudomanifold 
with $\mathbb{Q}$-stratification $X=X_{n}$. (Such an $X$ is a rational homology manifold.) Then $X$ satisfies Poincar\'e duality rationally. 
\end{corollary}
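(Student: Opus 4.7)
The plan is to show that $X$ is a compact oriented rational homology $n$-manifold and then to invoke Theorem \ref{thm.lefschetzdualhomolmfdbndry} with $\partial X = \emptyset$. I first unpack the $\mathbb{Q}$-pseudomanifold hypothesis: the trivial $\mathbb{Q}$-stratification $X = X_n$ admits by definition an augmentation to a topological pseudomanifold filtration $X = X_n \supset X_{n-1} \supset \cdots \supset X_0 \supset X_{-1} = \emptyset$ (some strata possibly empty) in which every newly introduced stratum has rational homology sphere links. I then check that this alone forces $X$ to be a rational homology $n$-manifold: at a point $x \in X_j - X_{j-1}$ with link $\mathcal{L}$ of dimension $n-j-1$, a distinguished neighborhood is $\mathbb{R}^j \times \mathcal{C}(\mathcal{L})$, and excision together with the long exact sequence of the pair yields
\[
H_{\ast}(X, X - \{x\}; \mathbb{Q}) \cong \widetilde{H}_{\ast-1}(\mathcal{S}^{j-1} \ast \mathcal{L}; \mathbb{Q}),
\]
where $\mathcal{S}^{-1}$ is understood as $\emptyset$. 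The rational join formula $\widetilde{H}_k(A \ast B; \mathbb{Q}) \cong \bigoplus_{p+q=k-1} \widetilde{H}_p(A; \mathbb{Q}) \otimes \widetilde{H}_q(B; \mathbb{Q})$ then endows $\mathcal{S}^{j-1} \ast \mathcal{L}$ with the rational homology of $\mathcal{S}^{n-1}$, whether $\mathcal{L}$ is a rational homology $(n-j-1)$-sphere ($j < n$) or empty ($j=n$). Hence $X$ has the local rational homology of $\mathbb{R}^n$ at every point.

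Next I promote the given pseudomanifold orientation of $X$ to an orientation in the sheaf-theoretic sense used in Theorem \ref{thm.lefschetzdualhomolmfdbndry}. The orientation supplies a trivialization of $\operatorname{or}$ on the dense open top stratum $X - X_{n-1}$; since $\operatorname{or}_X$ is a locally constant sheaf with stalks $\mathbb{Q}$ on the rational homology manifold $X$, this trivialization extends uniquely to an isomorphism $\operatorname{or}_X \cong \mathbb{Q}_X$ on all of $X$, so that $X$ becomes an oriented compact rational homology $n$-manifold with empty boundary.

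Finally, applying Theorem \ref{thm.lefschetzdualhomolmfdbndry} to $(X, \partial X) = (X, \emptyset)$ produces
\[
H^k(X; \mathbb{Q}) = H^k(X, \emptyset; \mathbb{Q}) \cong H_{n-k}(X; \mathbb{Q}),
\]
which is rational Poincar\'e duality for $X$. The only step demanding technical attention is the local-homology calculation at non-regular strata, and this reduces cleanly to the rational join formula invoked above.
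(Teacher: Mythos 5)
Your proof is correct and follows the route the paper intends: show that $X$ with the trivial $\mathbb{Q}$-stratification is a compact oriented rational homology $n$-manifold with $\partial X = \emptyset$ and then invoke Theorem \ref{thm.lefschetzdualhomolmfdbndry}. The paper gives no explicit proof of this corollary — it simply states ``such an $X$ is a rational homology manifold'' parenthetically — so your local-homology computation via the join formula over $\mathbb{Q}$ is the content that makes that parenthetical rigorous. One small caveat on the orientation step: the fact that $\operatorname{or}_X$ is locally constant with stalk $\mathbb{Q}$ does not by itself guarantee that a trivialization on the dense open top stratum extends to $X$; what makes it work is that the singular set has codimension $\geq 2$, hence $\pi_1(X - X_{n-1}) \to \pi_1(X)$ is surjective, so trivial monodromy over the top stratum forces trivial monodromy over all of $X$.
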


\begin{proposition} \label{prop.bndryofrathomolmfdisrathomolmfd}
If $(X,\partial X)$ is a rational homology $n$-manifold with boundary,
then $\partial X$ is a rational homology $(n-1)$-manifold (without boundary).
\end{proposition}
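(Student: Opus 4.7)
Fix $y \in \partial X$. Since $\partial X$ is, by Definition \ref{PSMwB}, already an $(n-1)$-dimensional topologically stratified pseudomanifold (without boundary in the sense of Definition \ref{PSMFD}), it is enough to verify that the local rational homology of $\partial X$ at $y$ agrees with that of $\mathbb{R}^{n-1}$, i.e.\ that $H_k(\partial X, \partial X - \{y\}; \mathbb{Q})$ is $\mathbb{Q}$ for $k = n-1$ and vanishes otherwise. The plan is to deduce this from the known local homology of $X - \partial X$ by exploiting the product structure of the collar.

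By the collar condition, an open neighborhood of $\partial X$ in $X$ is identified with $[0,1) \times \partial X$ via a homeomorphism fixing $\partial X$. Choose a small open neighborhood $V \subset \partial X$ of $y$, any $t \in (0,1)$, and let $p$ denote the image of $(t,y)$ in $X - \partial X$. Excision supplies the isomorphisms
\begin{align*}
H_k(X - \partial X, X - \partial X - \{p\}; \mathbb{Q}) &\cong H_k((0,1) \times V, (0,1) \times V - \{(t,y)\}; \mathbb{Q}), \\
H_k(\partial X, \partial X - \{y\}; \mathbb{Q}) &\cong H_k(V, V - \{y\}; \mathbb{Q}),
\end{align*}
and by hypothesis the first group is $\mathbb{Q}$ for $k=n$ and $0$ otherwise. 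A Künneth-type isomorphism for local homology at the product point then yields
\begin{align*}
& H_k((0,1) \times V, (0,1) \times V - \{(t,y)\}; \mathbb{Q}) \\
& \qquad \cong \bigoplus_{i+j=k} H_i((0,1),(0,1)-\{t\}; \mathbb{Q}) \otimes H_j(V, V-\{y\}; \mathbb{Q}),
\end{align*}
and since the first tensor factor equals $\mathbb{Q}$ in degree $1$ and vanishes otherwise, the sum collapses to $H_{k-1}(V, V-\{y\}; \mathbb{Q})$. Combining the two displays gives $H_{k-1}(\partial X, \partial X - \{y\}; \mathbb{Q}) = \mathbb{Q}$ for $k=n$ and $0$ otherwise, which is exactly the required local homology of $\mathbb{R}^{n-1}$.

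The main obstacle is the justification of the Künneth identification at the product point $(t,y)$. Singularly, the pair $\bigl((0,1),(0,1)-\{t\}\bigr)$ is a cofibration pair, and $V$ is paracompact Hausdorff and locally contractible as an open subset of the stratified pseudomanifold $\partial X$, so the classical relative Künneth theorem over the field $\mathbb{Q}$ applies with no Tor contribution. Alternatively, one can argue sheaf-theoretically via the product formula $\mathbb{D}_{(0,1) \times \partial X} \simeq \mathbb{D}_{(0,1)} \boxtimes \mathbb{D}_{\partial X}$ for Verdier dualizing complexes on first-countable locally compact spaces of finite cohomological dimension: the hypothesis that $(0,1) \times \partial X \subset X - \partial X$ is a $\mathbb{Q}$-homology $n$-manifold amounts, locally and after orienting, to $\mathbb{D}_{(0,1) \times \partial X} \simeq \mathbb{Q}_{(0,1) \times \partial X}[n]$, and combined with $\mathbb{D}_{(0,1)} \simeq \mathbb{Q}_{(0,1)}[1]$ this forces $\mathbb{D}_{\partial X} \simeq \mathbb{Q}_{\partial X}[n-1]$ locally, i.e.\ the desired rational local Poincar\'e duality on $\partial X$.
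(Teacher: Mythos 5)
Your proof is correct, but it takes a genuinely different route from the paper's. The paper disposes of this proposition in two sentences: it invokes the sheaf-theoretic characterization of rational homology manifolds from \cite[Chapter 4]{banaglmem} together with Lemma \ref{lem.ratxadjiso}, and observes that the stratum-preserving collar lets one identify links of points in $\partial X$ with links of points in the interior $X - \partial X$, which are already known to be $\rat$-homology spheres of the right dimension. This keeps the argument inside the sheaf-theoretic framework that the surrounding section is building toward Lefschetz duality. You instead argue directly on local singular homology: excise to an open set of the form $(0,1)\times V$ inside the collar, feed the hypothesis on $H_*(X-\partial X,\,X-\partial X - \{p\};\rat)$ into the relative K\"unneth theorem over $\rat$, and read off $H_*(\partial X,\partial X - \{y\};\rat)$ with a degree shift of one coming from the $(0,1)$ factor. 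This is elementary and self-contained, and has the merit of not relying on the external reference \cite{banaglmem}. The sheaf-theoretic alternative you sketch at the end, via $\mathbb{D}_{(0,1)\times \partial X}\simeq \mathbb{D}_{(0,1)}\boxtimes\mathbb{D}_{\partial X}$, is in spirit essentially the paper's argument.

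One small imprecision to fix: $\bigl((0,1),(0,1)-\{t\}\bigr)$ is not a cofibration pair, since $(0,1)-\{t\}$ is not closed in $(0,1)$. The K\"unneth isomorphism you need is legitimate, but the right justification is that $\bigl\{(0,1)\times(V-\{y\}),\ ((0,1)-\{t\})\times V\bigr\}$ is an excisive couple in $(0,1)\times V$ because both sets are open, together with the identity
\[
(0,1)\times V - \{(t,y)\} \;=\; \bigl((0,1)-\{t\}\bigr)\times V \,\cup\, (0,1)\times\bigl(V-\{y\}\bigr),
\]
which is what the relative K\"unneth theorem actually addresses; the paracompactness and local contractibility of $V$ play no role here. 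With that adjustment the argument is complete and valid.
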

\begin{proof}
Sheaf-theoretically, this is a direct consequence of
\cite[Chapter 4]{banaglmem} and Lemma \ref{lem.ratxadjiso}.
The subspace $\partial X$ has a stratum preserving collar neighborhood in $X$. This implies that links in $\partial X$ can be taken to be links in $X - \partial X$. 
\end{proof}

\begin{example}
	Let $X$ be a 4-dimensional toric variety. Then $X$ satisfies Poincar\'e duality rationally.
\end{example}
\begin{example}\label{6DimTV}
	Let $X$ be a 6-dimensional toric variety. The pseudomanifold obtained by cutting out $X_{0}$ satisfies Lefschetz duality, rationally.
\end{example}

Concerning intersection homology, we shall use the following standard notation.
\begin{definition}(Goresky-MacPherson.)	
A \textbf{perversity} 
	\begin{align*}
	\bar{p}: \mathbb{Z}_{ \geq 2} \longrightarrow \mathbb{Z}
	\end{align*}
	is a function such that
	$\bar{p}(2)=0 \; \text{and} \;
	\bar{p}(k+1)- \bar{p}(k) \in \{1,0\}.	$
	The \textbf{complementary perversity} $\bar{q}$ of $\bar{p}$ 
	is the one with $\bar{p}(k)+\bar{q}(k)=k-2$.
\end{definition}

We denote the $i$-th intersection homology group of $X$ with coefficients in $A$ and the perversity $\bar{p}$ with
$I^{\bar{p}} H_{i}(X;A).$
Similarly, for the $i$-th intersection cohomology group, we write
$I_{\bar{p}}H^{i}(X;A).$
For an introduction to the theory of intersection homology, the reader may consult \cite{friedman2020singular} by Friedman, \cite{kirwan2006introduction} by 
Kirwan and Woolf, or \cite{BanaglTISS}.

\section{Intersection Spaces and Isolated Singularities}

In this section, first, we briefly recall the theory of intersection spaces introduced by the first named author in \cite{banagl2010intersection}. For toric varieties, we only consider the duality of Betti numbers for the middle perversity. Throughout this section, we work with rational coefficients unless otherwise specified.

\begin{definition}\label{nseq}
	Let $n$ be a natural number. A CW complex $\mathcal{K}$ is called \textbf{rationally $n$-segmented} if it contains a sub-complex $\mathcal{K}_{<n} \subset \mathcal{K}$ such that
	$
	H_{r}(\mathcal{K}_{<n})=0 \;\; \text{for} \; r \geq n \;
	\text{and} \;
	i_{\ast}:H_{r}(\mathcal{K}_{<n}) \xrightarrow{\;\;\cong\;\;} H_{r}(\mathcal{K}) \; \text{for} \; r <n,
	$
	where $i$ is the inclusion of $\mathcal{K}_{<n}$ into $\mathcal{K}$.
\end{definition}
Given any CW complex $\mathcal{K}$ and natural number $n$, there exists a \textbf{homology $n$-truncation} (Moore approximation) $f:\mathcal{K}_{<n} \to\mathcal{K}$, i.e. a continuous map from a CW complex $\mathcal{K}_{<n}$ to $\mathcal{K}$ such that $f_{\ast}:H_{\ast}(\mathcal{K}_{<n}; \mathbb{Z}) \to H_{\ast}(\mathcal{K}; \mathbb{Z})$ is an isomorphism for $\ast <n$ and $H_{\ast}(\mathcal{K}_{<n}; \mathbb{Z}) = 0$ for $\ast \geq n$, see \cite{banagl2010intersection} (for the simply connected case) and Wrazidlo \cite{WrazidloMA} (in general). The map $f$ cannot in general be taken to be a sub-complex inclusion. We shall prove in Proposition \ref{Ltrunc} that the 5-dimensional links in a 6-dimensional toric variety are rationally 3-segmented.

Let $X \supset X_{0}$ be an $n$-dimensional compact oriented topological pseudomanifold with isolated singularities. For $x_{i} \in X_{0}$, let $\mathcal{L}_{i}$ be an associated link. Although two different links associated with the same isolated singularity need not be homeomorphic, we shall justify that the homology of links is well-defined. Let $\mathcal{L}_{i}$ be as above and $\mathcal{U}$ an open distinguished neighborhood of $x_{i}$. Then there exists a homeomorphism $\phi: \mathcal{U} \xrightarrow{\;\;\cong\;\;} {\mathcal{C}}(\mathcal{L})$ which sends $x_{i}$ to the cone point, $c$. We define ${\mathcal{C}}_{1/2}(\mathcal{L}_{i}) \subset {\mathcal{C}}(\mathcal{L}_{i})$ as the open cone over $\mathcal{L}_{i}$ by considering the interval $[0,\frac{1}{2}) \subset [0,1)$. Let $\mathcal{U}_{1/2} \subset \mathcal{U}$ be the preimage of ${\mathcal{C}}_{1/2}(\mathcal{L}_{i})$ under the homeomorphism $\phi$. By employing the excision axiom, we have
$
	H_{\ast}(X, X-x_{i}) \cong H_{\ast}(X-(X-\mathcal{U}_{1/2},(X-x_{i})-(X-\mathcal{U}_{1/2}))=H_{\ast}(\mathcal{U}_{1/2},\mathcal{U}_{1/2}-x_{i}).
$
By using the homeomorphism $\phi$ we arrive at $H_{\ast}(X,X - x_{i}) \cong H_{\ast}({\mathcal{C}}_{1/2}(\mathcal{L}_{i}),{\mathcal{C}}_{1/2}(\mathcal{L}_{i})-\{c\})$. As ${\mathcal{C}}_{1/2}(\mathcal{L}_{i}) \simeq \ast$ and ${\mathcal{C}}_{1/2}(\mathcal{L}_{i})-\{c\} \simeq \mathcal{L}_{i}$, the connecting homomorphism in the long exact sequence of the relative homology groups of the pair $({\mathcal{C}}_{1/2}(\mathcal{L}_{i}),{\mathcal{C}}_{1/2}(\mathcal{L}_{i})-\{c\})$ is an isomorphism. If $\mathcal{L}^{\prime}_{i}$ is another link of $x_{i}$, then the above procedure yields $H_{\ast}(\mathcal{L}_{i}) \cong H_{\ast}(\mathcal{L}^{\prime}_{i})$. Hence the homology of links is well-defined.

 Assume that all links $\mathcal{L}_{i}$ can be equipped with CW structures such that they are rationally $k$-segmented, where $k=n-1-\bar{p}(n),$ for the perversity $\bar{p}$. Let $(\mathcal{M}, \partial \mathcal{M} )$ be the manifold with boundary obtained by cutting out all isolated singularities of $X$. Let $(\mathcal{L}_{i})_{<k}$ be a sub-complex of $\mathcal{L}_{i}$ that truncates the homology. Then, we have
$
\partial \mathcal{M}= \bigsqcup_{i} \mathcal{L}_{i}.
$
Let
$
\mathcal{L}_{<k}= \bigsqcup_{i} (\mathcal{L}_{i})_{<k}
$
and define a homotopy class $g$ by the composition
$
g: \mathcal{L}_{<k} \xhookrightarrow{\phantom{-}\phantom{-}} \partial \mathcal{M} \xhookrightarrow{\phantom{-} \operatorname{incl.}  \phantom{-}} \mathcal{M}$.
For the purposes of the present paper we adopt the following definition:   
\begin{definition}
	The perversity $\bar{p}$ rational intersection space $I^{\bar{p}}X$ of $X$ is defined to be
	\begin{align*}
	I^{\bar{p}}X=\operatorname{cone}(g)=\mathcal{M} \bigcup_{g} \mathcal{C}(\mathcal{L}_{<k}).
	\end{align*} 
\end{definition}
Due to the use of rational coefficients, $I^{\bar{p}}X$ as defined above need not be integrally homotopy equivalent to the construction of \cite{banagl2010intersection}, but the rational homology groups are isomorphic.

\begin{theorem}\label{Duality}
	Let $X$ be an $n$-dimensional compact oriented topological pseudomanifold with only isolated singularities. Let $\bar{p}$ and $\bar{q}$ be complementary perversities. Then, we have the duality isomorphism
	\begin{align*}
	d: \widetilde{H}_{r}(I^{\bar{p}}X)^{\ast} \xrightarrow{\phantom{-} \cong \phantom{-}} \widetilde{H}_{n-r}(I^{\bar{q}}X),
	\end{align*}
	where
	\begin{align*}
	\widetilde{H}_{r}(I^{\bar{p}}X)^{\ast} = \operatorname{Hom}(\widetilde{H}_{r}(I^{\bar{p}}X),\mathbb{Q}).
	\end{align*}	
\end{theorem}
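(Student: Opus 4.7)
The plan is to compute $\widetilde{H}_{\ast}(I^{\bar{p}}X)$ via the cofiber sequence $\mathcal{L}_{<k} \xrightarrow{g} \mathcal{M} \to I^{\bar{p}}X$ with $k = n-1-\bar{p}(n)$, and to match the resulting long exact sequence, after dualization, with the analogous sequence for the complementary perversity $\bar{q}$ via Lefschetz duality on $(\mathcal{M},\partial\mathcal{M})$ and Poincar\'e duality on the closed oriented $(n-1)$-manifold $\mathcal{L} = \partial \mathcal{M}$. The assumption of only isolated singularities is essential at this point: it guarantees that $(\mathcal{M},\partial\mathcal{M})$ is an honest manifold with boundary (not merely rational homology) and that $\mathcal{L}$ is an honest closed manifold.

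First I would extract from the cofiber sequence the Puppe-type long exact sequence
\begin{equation*}
\cdots \to H_r(\mathcal{L}_{<k}) \xrightarrow{g_{\ast}} H_r(\mathcal{M}) \to \widetilde{H}_r(I^{\bar{p}}X) \to H_{r-1}(\mathcal{L}_{<k}) \to \cdots
\end{equation*}
and use the defining property of the Moore approximation (Definition \ref{nseq}) — namely $H_r(\mathcal{L}_{<k}) \cong H_r(\mathcal{L})$ for $r<k$ and $0$ for $r\geq k$ — to describe $\widetilde{H}_r(I^{\bar{p}}X)$ as an extension whose outer terms are the cokernel and kernel of $g_{\ast}: H_{\ast}(\mathcal{L}) \to H_{\ast}(\mathcal{M})$ in the appropriate degrees, degenerating to $\widetilde{H}_r(I^{\bar{p}}X) \cong H_r(\mathcal{M})$ whenever $r \geq k+1$. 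The same analysis applied to $\bar{q}$ with truncation degree $k' = n-1-\bar{q}(n)$ produces the analogous description of $\widetilde{H}_{n-r}(I^{\bar{q}}X)$, with the key numerical input being the complementarity $k+k' = n$.

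Next I would assemble a ladder diagram identifying the short exact sequence for $\widetilde{H}_r(I^{\bar{p}}X)$ with the dual of the one for $\widetilde{H}_{n-r}(I^{\bar{q}}X)$. The vertical isomorphisms would be built from Lefschetz duality $H^{n-r}(\mathcal{M},\partial\mathcal{M}) \cong H_r(\mathcal{M})$ (Theorem \ref{thm.lefschetzdualhomolmfdbndry}, applied to $(\mathcal{M},\partial\mathcal{M})$) and Poincar\'e duality $H^{n-1-r}(\mathcal{L}) \cong H_r(\mathcal{L})$ on the link, inserted into the long exact sequence of the pair $(\mathcal{M},\partial\mathcal{M})$. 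The five lemma would then produce the asserted isomorphism, after a dimension shift of $-1$ is absorbed by the connecting homomorphism of the cofiber sequence.

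The main obstacle is verifying that the image $\operatorname{Im}(H_r(\mathcal{L}_{<k}) \to H_r(\mathcal{L}))$ is, under the Poincar\'e duality of $\mathcal{L}$, the annihilator of $\operatorname{Im}(H_{n-1-r}(\mathcal{L}_{<k'}) \to H_{n-1-r}(\mathcal{L}))$, so that the kernels and cokernels of $g_{\ast}$ at degree $r$ pair correctly against those of the corresponding map at degree $n-1-r$. Since Moore approximations are not in general homotopy unique nor realized by subcomplexes, this Poincar\'e complementarity must be arranged by hand; over $\rat$ it can be achieved by replacing the link by a CW model in which truncation and cotruncation are realized by genuinely complementary subcomplexes, as in the treatment of middle perversity duality in \cite[Chapter~2]{banagl2010intersection}.
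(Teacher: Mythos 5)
Your outline follows the same blueprint as the paper's proof, which is essentially a pointer to the braid argument of \cite[Theorem~2.12]{banagl2010intersection}: compute $\widetilde{H}_{\ast}(I^{\bar{p}}X) \cong H_{\ast}(\mathcal{M},\mathcal{L}_{<k})$ from the cofiber sequence, combine the complementarity $k+k'=n$ with Lefschetz duality for $(\mathcal{M},\partial\mathcal{M})$ and Poincar\'e duality on $\mathcal{L}=\partial\mathcal{M}$, and run a ladder plus five lemma; the paper organizes the same maps as a braid of the triple $\mathcal{L}_{<k}\subset\mathcal{L}\subset\mathcal{M}$. Your observations that $(\mathcal{M},\partial\mathcal{M})$ is an honest manifold with boundary in the isolated-singularity case and that $k+k'=n$ is the decisive numerical input are exactly the right things to emphasize.

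However, the ``main obstacle'' you raise in your final paragraph is not an obstacle in this paper's framework, and the remedy you propose (replacing the link by a CW model with complementary truncation and cotruncation subcomplexes) is unnecessary. The intersection space here is only defined once the links are rationally $k$-segmented (Definition~\ref{nseq}), which explicitly posits a \emph{subcomplex} $\mathcal{L}_{<k}\subset\mathcal{L}$ whose inclusion induces an isomorphism on $H_r$ for $r<k$ and which has $H_r=0$ for $r\geq k$. Consequently $\operatorname{Im}\bigl(H_r(\mathcal{L}_{<k})\to H_r(\mathcal{L})\bigr)$ is all of $H_r(\mathcal{L})$ when $r<k$ and zero when $r\geq k$; over $\mathbb{Q}$ there is no ``partial truncation'' at the cut-off degree. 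The annihilator identity you want then follows from pure arithmetic with no coordination needed between $\mathcal{L}_{<k}$ and $\mathcal{L}_{<k'}$: since $k+k'=n$, one has $r<k$ if and only if $n-1-r\geq k'$, so in each dual pair of degrees one image is the whole group exactly when the other is trivial, and the Poincar\'e pairing on $\mathcal{L}$ realizes them as annihilators tautologically. The subtlety you gesture at---a choice of truncating subgroup in the middle degree---is the delicate point in the integral spatial homology truncation theory of \cite[Chapter~2]{banagl2010intersection}; it does not arise in the rational Moore-approximation setting of the present theorem. With that misplaced concern removed, your sketch is a correct unpacking of the argument the paper cites.
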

\begin{proof}
	The detailed construction of the above duality isomorphism can be found in the proof of \cite[Theorem~2.12]{banagl2010intersection} by the first named author. However, as mentioned, $I^{\bar{p}}X$, as defined above need not be homotopy equivalent to the construction of \cite{banagl2010intersection}. Hence, we need to adjust the previous proof slightly.
	In our setting we have the inclusion $\mathcal{L}_{<k} \hookrightarrow \mathcal{L}=\partial \mathcal{M}$. We consider the same braid as in the proof in loc. cit. One can then proceed along the same line as in \cite{banagl2010intersection}.
\end{proof}
\begin{remark}\label{IsowithMfd}
	Let $k=n-1-\bar{p}(n)$. Then, we have the following isomorphisms.\\
	For $r>k$,
	\begin{align*}
	H_{r}(\mathcal{M}) &\xrightarrow{\phantom{-} \cong \phantom{-}} \widetilde{H}_{r}(I^{\bar{p}}X) \\
	\widetilde{H}_{n-r}(I^{\bar{q}}X) &\xrightarrow{\phantom{-} \cong \phantom{-}} H_{n-r}(\mathcal{M}, \partial \mathcal{M}).
	\end{align*}
	For $r<k$,
	\begin{align*}
	\widetilde{H}_{r}(I^{\bar{p} }X)&\xrightarrow{\phantom{-} \cong \phantom{-}} H_{r}(\mathcal{M},\partial \mathcal{M}) \\
	H_{n-r}(\mathcal{M}) &\xrightarrow{\phantom{-} \cong \phantom{-}} \widetilde{H}_{n-r}(I^{\bar{q}}X ).
	\end{align*}
\end{remark}

\begin{corollary}\label{EulerCh}
	If $n=\dim(X)$ is even, then the difference between the Euler characteristic of $\widetilde{H}_{\ast}(I^{\bar{p}}X)$ and $I^{\bar{p}}H_{\ast}(X)$ is given by
	\begin{align*}
	\chi(\widetilde{H}_{\ast}(I^{\bar{p}}X))-\chi(I^{\bar{p}}H_{\ast}(X))=-2 \chi_{<n-1-\bar{p}(n)}(\mathcal{L}),
	\end{align*}
	where $\mathcal{L}$ is the disjoint union of the links of the isolated singularities of $X$. \\
	If $n=\dim(X)$ is odd, then 
	\begin{align*}
	\chi(\widetilde{H}_{\ast}(I^{\bar{n}}X))-\chi(I^{\bar{n}}H_{\ast}(X))=(-1)^{\frac{n-1}{2}} b_{\frac{n-1}{2}}(\mathcal{L}),
	\end{align*}
	where $b_{\frac{n-1}{2}}(\mathcal{L})$ is the middle dimensional Betti number of $\mathcal{L}$ and $\bar{n}$ is the upper middle perversity. \\
	Regardless of the parity of $n$, the identity
	\begin{align*}
	\operatorname{rk}(\widetilde{H}_{k}(I^{\bar{p}}X))+ \operatorname{rk}(I^{\bar{p}}H_{k}(X))=\operatorname{rk}(H_{k}(\mathcal{M}))+\operatorname{rk}(H_{k}(\mathcal{M},\mathcal{L}))
	\end{align*}
	always holds in degree $k=n-1-\bar{p}(n)$, where $\mathcal{M}$ is the exterior of the singular set of $X$.
\end{corollary}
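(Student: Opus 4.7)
The plan is to establish all three claims by reducing each side of the equalities to Euler characteristics of $\mathcal{M}$, $\mathcal{L}$ and $\mathcal{L}_{<k}$ via two long exact sequences, and then invoking rational Poincar\'e duality on $\mathcal{L}$ to resolve the parity.

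First I would compute $\chi(I^{\bar{p}}H_\ast(X))$ by Mayer--Vietoris applied to the decomposition $X=\mathcal{M}\cup V$, where $V$ is a disjoint union of small open cones on the links. On $\mathcal{M}$ the intersection homology equals ordinary homology, the cone formula gives $I^{\bar{p}}H_r(\mathcal{C}^{\circ}(\mathcal{L}_i))=H_r(\mathcal{L}_i)$ for $r<k:=n-1-\bar{p}(n)$ and zero otherwise, and the overlap $\mathcal{M}\cap V$ deformation retracts onto $\mathcal{L}$. This yields
\[
\chi(I^{\bar{p}}H_\ast(X))=\chi(\mathcal{M})+\chi_{<k}(\mathcal{L})-\chi(\mathcal{L}).
\]
Since $I^{\bar{p}}X$ is by definition the mapping cone of $g\colon \mathcal{L}_{<k}\to\mathcal{M}$, the Puppe sequence
\[
\cdots\to\widetilde{H}_r(\mathcal{L}_{<k})\to\widetilde{H}_r(\mathcal{M})\to\widetilde{H}_r(I^{\bar{p}}X)\to\widetilde{H}_{r-1}(\mathcal{L}_{<k})\to\cdots,
\]
together with $\chi(\widetilde{H}_\ast(Y))=\chi(Y)-1$ for non-empty $Y$ and the fact that $H_\ast(\mathcal{L}_{<k})$ agrees with $H_\ast(\mathcal{L})$ below $k$ and vanishes from $k$ on, gives
\[
\chi(\widetilde{H}_\ast(I^{\bar{p}}X))=\chi(\mathcal{M})-\chi_{<k}(\mathcal{L}).
\]
Subtracting produces the master identity
\[
\chi(\widetilde{H}_\ast(I^{\bar{p}}X))-\chi(I^{\bar{p}}H_\ast(X))=\chi(\mathcal{L})-2\chi_{<k}(\mathcal{L}).
\]

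Next I would specialize. Because $X-X_{0}$ is a manifold and $X$ is oriented, $\mathcal{L}$ is a closed oriented $(n-1)$-dimensional manifold. For $n$ even the link is odd-dimensional so Poincar\'e duality forces $\chi(\mathcal{L})=0$, giving the first formula. For $n$ odd and $\bar{p}=\bar{n}$, the cutoff $k=(n-1)/2$ equals half of $\dim\mathcal{L}$. Decomposing
\[
\chi(\mathcal{L})=\chi_{<k}(\mathcal{L})+(-1)^{k}b_{k}(\mathcal{L})+\chi_{>k}(\mathcal{L})
\]
and using the substitution $s=n-1-r$ combined with Poincar\'e duality $b_r(\mathcal{L})=b_{n-1-r}(\mathcal{L})$ and the parity $(-1)^{n-1}=1$ identifies $\chi_{>k}(\mathcal{L})$ with $\chi_{<k}(\mathcal{L})$, so the master identity collapses to $(-1)^{(n-1)/2}b_{(n-1)/2}(\mathcal{L})$.

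Finally, for the degree-$k$ rank identity, the cofiber sequence at $r=k$ collapses into the short exact sequence
\[
0\to H_k(\mathcal{M})\to\widetilde{H}_k(I^{\bar{p}}X)\to\ker\bigl(H_{k-1}(\mathcal{L})\to H_{k-1}(\mathcal{M})\bigr)\to 0,
\]
using $H_k(\mathcal{L}_{<k})=0$ and $H_{k-1}(\mathcal{L}_{<k})\cong H_{k-1}(\mathcal{L})$. The Mayer--Vietoris calculation of the first paragraph also yields the isomorphism $I^{\bar{p}}H_k(X)\cong\operatorname{Im}(H_k(\mathcal{M})\to H_k(\mathcal{M},\partial\mathcal{M}))$, and the long exact sequence of the pair $(\mathcal{M},\partial\mathcal{M})$ identifies the kernel above with $H_k(\mathcal{M},\partial\mathcal{M})/I^{\bar{p}}H_k(X)$. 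Adding ranks produces the claimed identity. The main obstacle I anticipate is the bookkeeping of reduced versus unreduced Euler characteristics across the two theories and verifying that the critical-degree description of $I^{\bar{p}}H_k(X)$ emerges correctly from the connecting map in the Mayer--Vietoris sequence; once these are handled, the argument reduces to a routine chase through standard long exact sequences.
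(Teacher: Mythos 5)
Your argument is correct. The paper offers no proof of its own here, merely referring to \cite{banagl2010intersection}; your Mayer--Vietoris computation of $\chi(I^{\bar{p}}H_\ast(X))=\chi(\mathcal{M})+\chi_{<k}(\mathcal{L})-\chi(\mathcal{L})$, the cofiber-sequence computation of $\chi(\widetilde{H}_\ast(I^{\bar{p}}X))=\chi(\mathcal{M})-\chi_{<k}(\mathcal{L})$, the master identity $\chi(\mathcal{L})-2\chi_{<k}(\mathcal{L})$, and the resolution by Poincar\'e duality on the odd- or even-dimensional link $\mathcal{L}$ are exactly the ingredients underlying the cited result, and your derivation of the degree-$k$ rank identity by splitting both long exact sequences at the critical degree and eliminating $\operatorname{rk}\ker\bigl(H_{k-1}(\mathcal{L})\to H_{k-1}(\mathcal{M})\bigr)$ is correct. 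One small caveat worth recording explicitly: your Mayer--Vietoris cover $\{\mathcal{M},V\}$ is not literally open, so one should replace $\mathcal{M}$ by a slightly larger open collar neighborhood and note that the overlap is a product collar $\mathcal{L}\times(0,1)$ on which intersection homology agrees with ordinary homology of $\mathcal{L}$ by stratified homotopy invariance; this is the same device used in the paper's proof of Proposition \ref{IHDu} and does not affect the Euler-characteristic bookkeeping. Likewise, in the edge case $k=1$ the reduced versus unreduced distinction in $\widetilde{H}_0$ drops out of the rank count because the map $H_0(\mathcal{L})\to H_0(\mathcal{M})$ is surjective, so your kernel identification still produces the right rank.
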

\begin{proof}
	A proof of the above corollary can be found in \cite{banagl2010intersection} by the first named author.
\end{proof}

\begin{lemma}
	The rational homology of  $I^{\bar{p}}X$ is independent of the choice of $(\mathcal{L}_{i})_{<k}$.
\end{lemma}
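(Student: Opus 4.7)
\smallskip

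The plan is to extract a long exact sequence for $\tilde{H}_{\ast}(I^{\bar{p}}X;\mathbb{Q})$ in which the groups and maps depending on $(\mathcal{L}_i)_{<k}$ can be replaced, canonically, by groups and maps that depend only on $\mathcal{L}$, $\mathcal{M}$ and the collar inclusion $\iota:\mathcal{L}\hookrightarrow\mathcal{M}$. Since the intersection space is by construction the mapping cone of $g:\mathcal{L}_{<k}\to\mathcal{M}$, the first step is to invoke the standard cofibre sequence
\[
\mathcal{L}_{<k}\xrightarrow{\;g\;}\mathcal{M}\longrightarrow I^{\bar{p}}X
\]
and its associated long exact sequence in reduced rational homology
\[
\cdots\to H_r(\mathcal{L}_{<k};\mathbb{Q})\xrightarrow{g_\ast} H_r(\mathcal{M};\mathbb{Q})\to\widetilde{H}_r(I^{\bar{p}}X;\mathbb{Q})\to H_{r-1}(\mathcal{L}_{<k};\mathbb{Q})\to\cdots.
\]

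The second step is to use the factorization $g=\iota\circ f$, where $f:\mathcal{L}_{<k}\to\mathcal{L}$ is the Moore approximation map, so that $g_\ast=\iota_\ast\circ f_\ast$. By the defining property of a Moore approximation, $f_\ast$ is an isomorphism for $r<k$ while $H_r(\mathcal{L}_{<k};\mathbb{Q})=0$ for $r\geq k$. Substituting these identifications, the long exact sequence above is canonically isomorphic (as a long exact sequence) to
\[
\cdots\to \widehat{H}_r(\mathcal{L};\mathbb{Q})\xrightarrow{\hat{\iota}_\ast} H_r(\mathcal{M};\mathbb{Q})\to\widetilde{H}_r(I^{\bar{p}}X;\mathbb{Q})\to\widehat{H}_{r-1}(\mathcal{L};\mathbb{Q})\to\cdots,
\]
where $\widehat{H}_r(\mathcal{L};\mathbb{Q})$ denotes $H_r(\mathcal{L};\mathbb{Q})$ for $r<k$ and $0$ for $r\geq k$, and $\hat{\iota}_\ast$ is induced by the inclusion. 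Neither these groups nor the maps between them depend on the choice of truncation.

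The third step is then a direct comparison: given two Moore approximations $f:\mathcal{L}_{<k}\to\mathcal{L}$ and $f':\mathcal{L}'_{<k}\to\mathcal{L}$, the compositions $(f')^{-1}_\ast\circ f_\ast$ (in degrees below $k$) and the zero map (in degrees $\geq k$) provide an isomorphism between the two boundary maps $g_\ast$ and $g'_\ast$ in the substituted sequences. Applying the five lemma to the resulting ladder of long exact sequences, where the columns at $H_\ast(\mathcal{M};\mathbb{Q})$ are the identity, yields isomorphisms $\widetilde{H}_r(I^{\bar{p}}X;\mathbb{Q})\cong\widetilde{H}_r(I^{\bar{p}}X';\mathbb{Q})$ in every degree.

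The only subtle point, and the one requiring some care, is that the isomorphisms $H_r(\mathcal{L}_{<k};\mathbb{Q})\xrightarrow{f_\ast}H_r(\mathcal{L};\mathbb{Q})$ (for $r<k$) are not specified by naturality of a chosen universal construction but arise from the choice of $f$. Nevertheless, because $g_\ast=\iota_\ast\circ f_\ast$ on the nose and $\iota_\ast$ does not depend on $f$, the composed map $g_\ast$ expressed through $f_\ast$ and $\iota_\ast$ is precisely $\iota_\ast$ after identification. Hence the comparison ladder commutes, and the five lemma applies without further adjustment.
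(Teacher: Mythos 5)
Your proof takes a genuinely different route from the paper's. The paper disposes of the degrees $r\neq k$ by quoting the identifications of Remark \ref{IsowithMfd} (which express $\widetilde{H}_r(I^{\bar{p}}X)$ as $H_r(\mathcal{M})$ for $r>k$ and as $H_r(\mathcal{M},\partial\mathcal{M})$ for $r<k$, both manifestly independent of the truncation), and then settles the remaining degree $r=k$ by the rank identity from Corollary \ref{EulerCh}. You instead work directly with the cofibre long exact sequence of $g:\mathcal{L}_{<k}\to\mathcal{M}$ and exploit the factorization $g=\iota\circ f$, which gives a uniform argument across all degrees and makes the mechanism very transparent: the groups $H_r(\mathcal{L}_{<k})$ and the composite $g_*$ are, up to isomorphism, completely determined by $\iota_*$. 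This is a legitimate alternative and arguably more illuminating than the paper's piecewise treatment.

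There is, however, a technical slip in the last step. The five lemma cannot be invoked as you state it, because it presupposes an existing map in the column $\widetilde{H}_r(I^{\bar{p}}X)\to\widetilde{H}_r(I^{\bar{p}}X')$ fitting the ladder --- but no such map is given, and producing one would require a comparison of the cones at the space or chain level, which you have not carried out. What you actually have are isomorphisms in the two outer columns commuting with $g_*$ and $g'_*$; from the rows alone this does not formally determine the middle column because of the extension problem. The fix is cheap since you work over $\mathbb{Q}$: the long exact sequence yields a short exact sequence
\[
0\longrightarrow\operatorname{coker}\bigl(g_{*}:H_r(\mathcal{L}_{<k})\to H_r(\mathcal{M})\bigr)\longrightarrow\widetilde{H}_r(I^{\bar{p}}X)\longrightarrow\ker\bigl(g_{*}:H_{r-1}(\mathcal{L}_{<k})\to H_{r-1}(\mathcal{M})\bigr)\longrightarrow 0,
\]
which splits over a field. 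Using $g_*=\iota_*\circ f_*$ with $f_*$ an isomorphism below degree $k$ and zero in degrees $\geq k$, one sees that $\operatorname{coker}(g_{*,r})$ is $\operatorname{coker}(\iota_{*,r})$ for $r<k$ and $H_r(\mathcal{M})$ for $r\geq k$, while $\ker(g_{*,r})\cong\ker(\iota_{*,r})$ for $r<k$ and vanishes for $r\geq k$; none of these depend on the chosen truncation. With this substituted for the five lemma appeal, your argument is complete. Note that this is also precisely where the rational-coefficient hypothesis is used, which is worth making explicit.
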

\begin{proof}
	From Remark \ref{IsowithMfd} it follows immediately that $H_{r}(I^{\bar{p}}X)$ does not depend of the choice of $(\mathcal{L}_{i})_{<k}$ for $r<k$ and $r>k$. For the case $r=k$ the claim follows from the last identity in Corollary \ref{EulerCh}.  
\end{proof}

\section{Construction of Intersection Spaces for \texorpdfstring{$\mathbb{Q}$}--Pseudomanifolds}\label{QPseudoGen}

 In this section, we generalize the theory of intersection spaces to $\mathbb{Q}$-pseudomanifolds with $\mathbb{Q}$-isolated singularities. To do this, we revisit the proof of Theorem \ref{Duality} introduced by the first named author in \cite{banagl2010intersection}. The central idea is to indicate that the main ingredients of that proof remain intact, even for $\mathbb{Q}$-pseudomanifold with isolated singularities.
Before proving our main theorem, we want to compute the intersection homology of $\mathbb{Q}$-pseudomanifolds with $\mathbb{Q}$-isolated singularities. But first of all, we need to generalize the previous definition of intersection spaces to the $\mathbb{Q}$-isolated setting. \\
Let $X$ be an $n$-dimensional compact oriented $\mathbb{Q}$-pseudomanifold with $\mathbb{Q}$-isolated singularities, which means $X$ has a $\mathbb{Q}$-stratification of the form
\begin{align*}
X=X_{n} \supset X_{0}.
\end{align*}
For $x_{i} \in X_{0}$, let $\mathcal{L}_{i}$ be an associated link. Assume that all links are rationally $k$-segmented, where $k=n-1-\bar{p}(n),$ for the perversity $\bar{p}$. Let $\mathcal{M}$ be the compact $\mathbb{Q}$-manifold with boundary obtained by cutting out all $\mathbb{Q}$-isolated singularities of $X$. Then, we have
$
\partial \mathcal{M}= \bigsqcup_{i} \mathcal{L}_{i}.
$
Let
$
\mathcal{L}_{<k}= \bigsqcup_{i} (\mathcal{L}_{i})_{<k},
$
and define a homotopy class 
$
g: \mathcal{L}_{<k} \xhookrightarrow{\phantom{-}\phantom{-}} \partial \mathcal{M} \xhookrightarrow{\phantom{-} \operatorname{incl.}  \phantom{-}} \mathcal{M}
$.

\begin{definition}\label{DefGenIX}
	The perversity $\bar{p}$ \textbf{generalized intersection space} $I^{\bar{p}}X$ of $X$ is defined to be
	\begin{align*}
	I^{\bar{p}}X=\operatorname{cone}(g)=\mathcal{M} \bigcup_{g} \mathcal{C}(\mathcal{L}_{<k}).
	\end{align*} 
\end{definition}
We shall use the standard cone computation for intersection homology: 
	\begin{proposition}\label{ConeIH}
	Suppose $Y$ is a compact topological pseudomanifold of dimension $m-1 \geq 0$. Then for a perversity $\bar{p}$,
	\begin{align*}
		I^{\bar{p}}H_{r}(\mathcal{C}(Y)) = \begin{cases}
			I^{\bar{p}}H_{r}(Y) ,& \phantom{-- } r<m-1-\bar{p}(m) \\
			0,              & \phantom{--} \text{otherwise.}
		\end{cases}
	\end{align*}
\end{proposition}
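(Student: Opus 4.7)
The plan is to adapt the classical cone computation of Goresky--MacPherson, working directly at the level of $\bar{p}$-allowable chains on the natural stratification of $\mathcal{C}(Y)$. I equip $\mathcal{C}(Y)$ with the stratification in which the cone point $c$ is an isolated stratum of codimension $m$, while $\mathcal{C}(Y)\setminus\{c\}\cong Y\times(0,1)$ inherits its stratification from $Y$ via the second-factor projection. The allowability condition at $c$ reads $\dim(|\xi|\cap\{c\})\leq r-m+\bar{p}(m)$ for an $r$-chain $\xi$; in particular, for $r<m-\bar{p}(m)$ every allowable chain must miss $c$ entirely, while for $r\geq m-\bar{p}(m)$ isolated incidences with $c$ are permitted.

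The first half of the proof handles the vanishing range $r\geq m-1-\bar{p}(m)$. The key device is the coning operation $\xi\mapsto\bar{c}\xi$, sending an $r$-chain in $Y$ (viewed as sitting inside $Y\times\{t_{0}\}\subset\mathcal{C}(Y)$ for some small $t_{0}$) to the $(r+1)$-chain obtained by joining every simplex of $\xi$ to $c$. A direct computation gives $\partial(\bar{c}\xi)=\xi-\bar{c}(\partial\xi)$, so $\partial(\bar{c}\xi)=\xi$ whenever $\xi$ is a cycle. The intersection of $|\bar{c}\xi|$ with $\{c\}$ is a single point, so allowability at $c$ requires $0\leq(r+1)-m+\bar{p}(m)$, which is exactly $r\geq m-1-\bar{p}(m)$; allowability off $c$ follows from that of $\xi$ together with the fact that the product stratification on $Y\times(0,1)$ does not change codimensions. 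Given an arbitrary allowable $r$-cycle $\zeta$ on $\mathcal{C}(Y)$ in this range, I first use a general-position (PL) or barycentric subdivision (singular) argument to replace $\zeta$ by a homologous allowable cycle supported away from $c$, and then apply the coning construction to bound it.

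The second half handles the range $r<m-1-\bar{p}(m)$. Here the allowability condition at $c$ forces every allowable $r$-chain, and every allowable $(r+1)$-chain bounding it, to miss $c$; thus representatives and their nullhomologies can be chosen in the open subpseudomanifold $\mathcal{C}(Y)\setminus\{c\}\cong Y\times(0,1)$. I then combine two stratum-preserving homotopy equivalences: the inclusion $Y\times\{1/2\}\hookrightarrow Y\times(0,1)$ and its retraction given by the second-factor projection, both of which induce isomorphisms on intersection homology by stratified homotopy invariance. The composition $Y\xhookrightarrow{\ \cong\ } Y\times\{1/2\}\hookrightarrow \mathcal{C}(Y)\setminus\{c\}\hookrightarrow\mathcal{C}(Y)$ therefore realizes the claimed isomorphism $I^{\bar{p}}H_{r}(Y)\xrightarrow{\cong}I^{\bar{p}}H_{r}(\mathcal{C}(Y))$.

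The principal technical obstacle is the push-off argument used at the boundary of both ranges: making precise that an allowable chain whose codimensional intersection with $\{c\}$ is within the permitted bound can, up to allowable homology, be replaced by one whose support avoids a prescribed neighborhood of $c$. In the PL simplicial setting of Goresky--MacPherson this is handled by iterated barycentric subdivision followed by general position, and in the singular chain framework by Friedman's deformation and subdivision results for intersection chains; either reference may be invoked to close the argument cleanly.
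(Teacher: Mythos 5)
The paper gives no proof of this proposition; it invokes it as "the standard cone computation for intersection homology" and refers for background to the cited references (Friedman; Kirwan--Woolf; Banagl). Your sketch is a correct outline of the classical chain-level argument, with the right allowability cut-offs at the cone point. Two small remarks on the one step you flag. First, at the boundary degree $r=m-1-\bar{p}(m)$ the allowability bound $\dim(|\zeta|\cap\{c\})\le r-m+\bar{p}(m)=-1$ already forces any $\bar{p}$-allowable $r$-cycle to miss $c$, so your push-off is only needed for $r\ge m-\bar{p}(m)$, where cycles may meet $\{c\}$ in isolated points. Second, the push-off can be dispensed with entirely: the prism operator $P$ associated to the radial contraction $\rho_t(y,s)=(y,ts)$ produces, from any allowable $r$-cycle $\zeta$, an $(r+1)$-chain $P(\zeta)$ whose support is the cone on $|\zeta|$ toward $c$; it meets $\{c\}$ in at most a point and each stratum $S\times(0,1)$ in at most one dimension more than $|\zeta|$ did, so $P(\zeta)$ is $\bar{p}$-allowable for $r\ge m-1-\bar{p}(m)$ even if $\zeta$ already touches $c$, and $\partial P(\zeta)=\zeta-\rho_{0,*}\zeta-P(\partial\zeta)=\zeta$ for $r\ge 1$. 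Your reduction for $r<m-1-\bar{p}(m)$ via excision of $c$ together with stratified homotopy invariance of the projection $Y\times(0,1)\to Y$ is exactly the standard argument and is fine.
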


\begin{proposition}\label{IHDu}
	Let $X$ be an $n$-dimensional $\mathbb{Q}$-pseudomanifold with $\mathbb{Q}$-isolated singularities, where $n \geq  1$. Then for perversity $\bar{p}$, we have
	\begin{align*}
	I^{\bar{p}}H_{r}(X) \cong \begin{cases}
	H_{r}(\mathcal{M},\mathcal{L}) ,& \phantom{-- } r>k \\
	H_{r}(\mathcal{M}),              & \phantom{--} r<k,
	\end{cases}
	\end{align*}
	where $\mathcal{M}$ is again the $\mathbb{Q}$-manifold obtained by cutting out the $\mathbb{Q}$-isolated singularities and $\mathcal{L}=\bigsqcup \mathcal{L}_{x_{i}}$, where for each $x_{i} \in X_{0}$, $\mathcal{L}_{x_{i}}$ is a link associated to $x_{i}$, and $k=n-1-\bar{p}(n)$.
\end{proposition}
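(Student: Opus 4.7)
The plan is to apply the Mayer--Vietoris sequence for intersection homology to a cover tailored to the $\mathbb{Q}$-isolated singularities of $X$, then exploit two inputs: the cone formula (Proposition \ref{ConeIH}) and the identification of rational intersection homology with rational singular homology on a $\mathbb{Q}$-manifold. First, augment the $\mathbb{Q}$-stratification $X=X_{n}\supset X_{0}$ to a genuine topological stratification so that intersection homology is defined. For each $x_{i}\in X_{0}$ choose a small open cone neighborhood $\mathcal{U}_{i}\cong \mathcal{C}(\mathcal{L}_{i})$; set $U=\bigsqcup_{i}\mathcal{U}_{i}$ and $V=X-X_{0}$. Then $V$ deformation retracts to $\mathcal{M}$, while $U\cap V\simeq \bigsqcup_{i}\mathcal{L}_{i}=\mathcal{L}$. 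The cone formula gives $I^{\bar{p}}H_{r}(U)\cong H_{r}(\mathcal{L})$ for $r<k$ and $I^{\bar{p}}H_{r}(U)=0$ for $r\geq k$ (with $k=n-1-\bar{p}(n)$), while $V$, $\mathcal{L}$, and each $\mathcal{L}_{i}$ are $\mathbb{Q}$-manifolds, so their rational intersection homology agrees with their rational singular homology.

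The Mayer--Vietoris sequence reads
\[
\cdots \to I^{\bar{p}}H_{r}(U\cap V) \to I^{\bar{p}}H_{r}(U)\oplus I^{\bar{p}}H_{r}(V) \to I^{\bar{p}}H_{r}(X) \to I^{\bar{p}}H_{r-1}(U\cap V) \to \cdots.
\]
For $r>k$ both cone contributions $I^{\bar{p}}H_{r}(U)$ and $I^{\bar{p}}H_{r-1}(U)$ vanish, and the sequence reduces to
\[
H_{r}(\mathcal{L})\to H_{r}(\mathcal{M})\to I^{\bar{p}}H_{r}(X)\to H_{r-1}(\mathcal{L})\to H_{r-1}(\mathcal{M}),
\]
which coincides with the long exact sequence of the pair $(\mathcal{M},\mathcal{L})$; the five-lemma yields $I^{\bar{p}}H_{r}(X)\cong H_{r}(\mathcal{M},\mathcal{L})$. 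For $r<k$, the inclusion $\mathcal{L}\hookrightarrow \mathcal{C}(\mathcal{L})$ induces the isomorphism of the cone formula, so the Mayer--Vietoris map $H_{r}(\mathcal{L})\to H_{r}(\mathcal{L})\oplus H_{r}(\mathcal{M})$ is the graph $\alpha\mapsto(\alpha,i_{\ast}\alpha)$ of the inclusion $i\colon \mathcal{L}\hookrightarrow \mathcal{M}$, which is split injective with cokernel canonically $H_{r}(\mathcal{M})$. The same analysis in degree $r-1$ shows that the connecting homomorphism out of $I^{\bar{p}}H_{r}(X)$ vanishes, and one concludes $I^{\bar{p}}H_{r}(X)\cong H_{r}(\mathcal{M})$.

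The main obstacle is the input that rational intersection homology coincides with rational singular homology on a $\mathbb{Q}$-manifold, applied to $\mathcal{M}$, $\mathcal{L}_{i}$, and $X-X_{0}$. This is precisely where the $\mathbb{Q}$-pseudomanifold framework of Section \ref{Qpsmfd} enters: because every stratum of a $\mathbb{Q}$-manifold $Y$ has a rationally spherical link, the constant sheaf $\mathbb{Q}_{Y}[\dim Y]$ verifies the Goresky--MacPherson--Deligne intersection cohomology axioms, so $IC_{\bar{p}}(Y;\mathbb{Q})$ is quasi-isomorphic to $\mathbb{Q}_{Y}[\dim Y]$ in the derived category. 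The desired identification of the groups then follows on hypercohomology.
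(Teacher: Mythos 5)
Your proposal is essentially correct and follows the same broad strategy as the paper (Mayer--Vietoris for intersection homology, the cone formula of Proposition \ref{ConeIH}, and the identification of rational intersection homology with ordinary rational homology on $\mathbb{Q}$-manifolds), but the detailed execution differs in two places worth noting. For $r>k$ the paper builds an explicit five-column commutative ladder between the Mayer--Vietoris sequence for $I^{\bar p}H_*$ and the long exact sequence of the pair $(\mathcal{M},\partial\mathcal{M})$, carefully constructing each vertical map (including a middle map $I^{\bar p}H_*(X)\to H_*(X)\xrightarrow{\cong}H_*(\mathcal{M}/\partial\mathcal{M})\cong H_*(\mathcal{M},\partial\mathcal{M})$ whose well-definedness and commutativity require separate verification) before invoking the five-lemma. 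You assert that the reduced five-term sequence ``coincides with'' the long exact sequence of the pair and then appeal to the five-lemma; but if the sequences literally coincide, no five-lemma is needed, and if they merely run parallel, one still must exhibit the comparison morphism and check the squares commute. That step is exactly where the paper spends most of its effort, and you would need to supply it. For $r<k$ your argument is actually cleaner than the paper's: instead of comparing with a second Mayer--Vietoris ladder and applying the five-lemma, you observe directly that the Mayer--Vietoris map $H_r(\mathcal{L})\to H_r(\mathcal{L})\oplus H_r(\mathcal{M})$ is the graph of $i_*$, hence split injective with cokernel $H_r(\mathcal{M})$, and that injectivity in the adjacent degree kills the connecting map. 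This is a legitimate shortcut. Your sheaf-theoretic justification of $I^{\bar p}H_*\cong H_*$ on a $\mathbb{Q}$-manifold (constant sheaf satisfies the Deligne axioms when all links are rationally spherical) is a valid alternative to what the paper implicitly uses, and is in the spirit of the sheaf-theoretic Lefschetz duality arguments in Section \ref{Qpsmfd}. In short: the strategy matches the paper, the $r<k$ half is a genuine and welcome simplification, the $r>k$ half has a presentational gap (the missing comparison morphism) that needs to be filled in as the paper does.
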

\begin{proof}
 We cut out cone-like neighborhoods of $\mathbb{Q}$-isolated singularities. The obtained topological space is a pseudomanifold with boundary, $(\mathcal{M},\partial\mathcal{M})$. In fact $(\mathcal{M},\partial\mathcal{M})$ is a $\mathbb{Q}$-manifold with boundary. Let $\mathcal{C}(\mathcal{L}_{x_{i}})$ be the removed open cone-like neighborhood of $x_{i}$. Let  $\mathcal{V}$ be an open neighborhood of $\mathcal{M}$ in $X$ that deformation retracts in a stratum preserving way to $\mathcal{M}$ and set $\mathcal{U}=\bigsqcup_{i} \mathcal{C}(\mathcal{L}_{x_{i}})$. Thus, we have $\cap := \mathcal{U} \cap \mathcal{V} \cong \bigsqcup_{i} \mathcal{L}_{x_{i}} \times (\frac{1}{2},1)$. The Mayer–Vietoris sequence for intersection homology is a long exact sequence
	\begin{align*}
	\dots \longrightarrow I^{\bar{p}}H_{r}(\cap) \longrightarrow I^{\bar{p}}H_{r}(\mathcal{U}) \oplus I^{\bar{p}}H_{r}(\mathcal{V}) \longrightarrow I^{\bar{p}}H_{r}(X) \longrightarrow \dots.
	\end{align*}
	For $r>k$, we have $I^{\bar{p}}H_{r}(\mathcal{U})=0$ by Proposition \ref{ConeIH}. Consider the long exact sequence for relative ordinary homology
	\begin{align*}
	\dots \longrightarrow H_{r}(\partial \mathcal{M}) \longrightarrow H_{r}(\mathcal{\mathcal{M}}) \longrightarrow H_{r}(\mathcal{M}, \partial \mathcal{M}) \longrightarrow \dots.
	\end{align*}  
    We construct a commutative diagram as follows: 
\begin{align}\label{COMMDIA}
	\xymatrix{
		\dots  \ar[r] &  I^{\bar{p}}H_{r}(\cap) \ar[d] \ar[r]  &  I^{\bar{p}}H_{r}(\mathcal{V}) \ar[d]  \ar[r] & I^{\bar{p}}H_{r}(X) \ar[r] \ar[d]  & I^{\bar{p}}H_{r-1}(\cap) \ar[d] \ar[r] &I^{\bar{p}}H_{r-1}(\mathcal{V}) \ar[d]  \ar[r]  &\dots \\
		\dots \ar[r] & H_{r}(\partial \mathcal{M}) \ar[r] & H_{r}(\mathcal{\mathcal{M}})  \ar[r] & H_{r}(\mathcal{M}, \partial \mathcal{M}) \ar[r] & H_{r-1}(\partial \mathcal{M}) \ar[r]  & H_{r-1}(\mathcal{\mathcal{M}})  \ar[r] & \dots
	} .
\end{align}

We look at the morphism $I^{\bar{p}}C_{\ast}(\partial \mathcal{M}) \hookrightarrow C_{\ast}(\partial \mathcal{M})$ of singular ordinary and intersection chain complexes. Since $\mathcal{M}$ is a $\mathbb{Q}$-manifold, this morphism induces an isomorphism $I^{\bar{p}}H_{\ast}(\partial \mathcal{M}) \xrightarrow{\cong} H_{\ast}(\partial \mathcal{M})$. The outer left vertical homomorphism, which turns out to be an isomorphism, is the composition $I^{\bar{p}}H_{\ast}(\cap) \xrightarrow{\cong} I^{\bar{p}}H_{\ast}(\partial \mathcal{M}) \xrightarrow{\cong} H_{\ast}(\partial \mathcal{M})$. The stratum preserving deformation retraction $\mathcal{V} \rightarrow \mathcal{M}$, induces an isomorphism $I^{\bar{p}}H_{\ast}(\mathcal{V}) \xrightarrow{\cong} I^{\bar{p}} H_{\ast}( \mathcal{M})$. The morphism $I^{\bar{p}}C_{\ast}(\mathcal{M}) \hookrightarrow C_{\ast}(\mathcal{M})$ induces an isomorphism $I^{\bar{p}}H_{\ast}( \mathcal{M}) \xrightarrow{\cong} H_{\ast}( \mathcal{M})$ since $\mathcal{M}$ is a $\mathbb{Q}$-manifold. The second left vertical homomorphism is the composition $I^{\bar{p}}H_{\ast}(\mathcal{V}) \xrightarrow{\cong}  I^{\bar{p}}H_{\ast}( \mathcal{M}) \xrightarrow{\cong} H_{\ast}( \mathcal{M})$. The construction of the two right hand vertical homomorphisms goes along the same lines.
Describing the middle homomorphism requires a bit more effort. First, note that $H_{\ast}(\mathcal{M}, \partial \mathcal{M}) \cong \tilde{H}_{\ast}(\sfrac{\mathcal{M}}{\partial \mathcal{M}})$. We show that $H_{j}(X) \cong H_{j}(\sfrac{\mathcal{M}}{\partial \mathcal{M}})$ for $j \geq 2$. We use the Mayer-Vietories sequence for ordinary homology groups. Let $A_{\sfrac{\mathcal{M}}{\partial \mathcal{M}}} \cong \mathcal{C}(\bigsqcup_{i} \mathcal{L}_{x_{i}}) \subset \sfrac{\mathcal{M}}{\partial \mathcal{M}}$ and $B_{\sfrac{\mathcal{M}}{\partial \mathcal{M}}} \cong \mathcal{M} \subset \sfrac{\mathcal{M}}{\partial \mathcal{M}}$ such that $A_{\sfrac{\mathcal{M}}{\partial \mathcal{M}}} \cap B_{\sfrac{\mathcal{M}}{\partial \mathcal{M}}} \cong \bigsqcup_{i} \mathcal{L}_{x_{i}} \times (0,1)$. We also set $A_{X} \cong \bigsqcup_{i} \mathcal{C}(\mathcal{L}_{x_{i}}) \subset X$ and $B_{X} \cong \mathcal{M} \subset X$ such that $A_{X} \cap B_{X} \cong \bigsqcup_{i} \mathcal{L}_{x_{i}} \times (0,1)$. For $j \geq 1$, we have $H_{j}(A_{X}) \oplus H_{j}(B_{X}) \cong H_{j}(A_{\sfrac{\mathcal{M}}{\partial \mathcal{M}}}) \oplus H_{j}(B_{\sfrac{\mathcal{M}}{\partial \mathcal{M}}}) \cong H_{j}(\mathcal{M})$. The claim follows by using 5-lemma. Note that $k \geq 1$. Let $I^{\bar{p}}H_{\ast}(X) \rightarrow H_{\ast}(X)$ be the homomorphism induced by the morphism $I^{\bar{p}} C_{\ast} (X) \hookrightarrow C_{\ast}(X)$. The middle vertical homomorphism is the composition 
\begin{align*}
	I^{\bar{p}}H_{\ast}(X) \rightarrow H_{\ast}(X) \xrightarrow{\cong}H_{\ast}(\sfrac{\mathcal{M}}{\partial \mathcal{M}}) \xrightarrow{\cong} H_{\ast}(\mathcal{M}, \partial \mathcal{M}), \; \ast \geq 2.
\end{align*}
We still need to check the commutativity of the above diagram.
Consider the commutative diagram
\begin{align*}
	\xymatrix{ \partial \mathcal{M} \times (\frac{1}{2},1)   \ar@{^{(}->}[r] \ar@{^{(}->}[d] & \mathcal{V} \\
		\partial \mathcal{M} \ar@{^{(}->}[r]	& \mathcal{M} \ar@{^{(}->}[u]
	}.
\end{align*} 
Each inclusion induces a morphism between intersection chain complexes, and the induced diagram on intersection chain complexes commutes. Hence, the induced diagram on intersection homology groups
\begin{align*}
	\xymatrix{ I^{\bar{p}}H_{\ast}( \partial \mathcal{M} \times (\frac{1}{2},1) )   \ar[r]  & I^{\bar{p}}H_{\ast}( \mathcal{V}) \\
		I^{\bar{p}}H_{\ast}( \partial \mathcal{M}) \ar[r] \ar[u]^{\cong}	& I^{\bar{p}}H_{\ast}( \mathcal{M} ) \ar[u]_{\cong}
	}.
\end{align*} 
commutes. Recall that vertical homomorphisms are isomorphisms. Furthermore, the diagram
\begin{align*}
	\xymatrix{ I^{\bar{p}}H_{\ast}( \partial \mathcal{M} )   \ar[r] \ar[d]_{\cong}  & I^{\bar{p}}H_{\ast}( \mathcal{M}) \ar[d]^{\cong} \\
		H_{\ast}( \partial \mathcal{M}) \ar[r] 	& H_{\ast}( \mathcal{M})
	}
\end{align*}
commutes. Note that the vertical morphisms induced by inclusions of singular intersection chain complexes into ordinary singular chain complexes are isomorphisms, as mentioned. Hence, the outer left square commutes. Using the same procedure, one can show that the outer right square commutes. Consider the commutative diagram
\begin{align*}
	\xymatrix{ \mathcal{V} \ar@{^{(}->}[r] & X \\
	            \mathcal{M}    \ar@{^{(}->}[u] \ar@{^{(}->}[ru]       &   }.
\end{align*}
Each inclusion induces a morphism between intersection chain complexes, and the induced diagram on intersection chain complexes commutes. Hence, the induced diagram on intersection homology groups
\begin{align*}
	\xymatrix{ I^{\bar{p}}H_{\ast} (\mathcal{V}) \ar[r] & I^{\bar{p}}H_{\ast}(X) \\
		I^{\bar{p}}H_{\ast}(\mathcal{M})    \ar[u] \ar[ru]       &   }.
\end{align*} 
commutes.
The commutativity of
 \begin{align*}
 	\xymatrix{ I^{\bar{p}}C_{\ast}( \mathcal{M} )   \ar[r] \ar@{^{(}->}[d]  & I^{\bar{p}}C_{\ast}( X) \ar@{^{(}->}[d] \\
 		C_{\ast}( \mathcal{M}) \ar[r] 	& C_{\ast}( X)
 	}
 \end{align*}
ensures that 
 \begin{align*}
	\xymatrix{ I^{\bar{p}}H_{\ast}( \mathcal{M} )   \ar[r] \ar[d]  & I^{\bar{p}}H_{\ast}( X) \ar[d] \\
		H_{\ast}( \mathcal{M}) \ar[r] 	& H_{\ast}( X)
	}
\end{align*}
commutes. Thus, the second left square commutes.
Consider the short exact sequence
\begin{align*}
	0 \longrightarrow I^{\bar{p}}C_{i}(\cap) \longrightarrow I^{\bar{p}}C_{i}(\mathcal{U}) \oplus I^{\bar{p}}C_{i}(\mathcal{V}) \longrightarrow I^{\bar{p}}C_{i}(\mathcal{U}) +I^{\bar{p}}C_{i}(\mathcal{V}) \longrightarrow 0.
\end{align*}
Note that $I^{\bar{p}}C_{i}(\mathcal{U}) +I^{\bar{p}}C_{i}(\mathcal{V})$ is the sub-complex of $I^{\bar{p}}C_{i}(X)$ generated by allowable chains supported in $\mathcal{U}$ or in $\mathcal{V}$. The commutativity of the diagram
\begin{align*}
	\xymatrix{
		0 \ar[r] & I^{\bar{p}}C_{i}(\cap) \ar[r] \ar@{^{(}->}[d] & I^{\bar{p}}C_{i}(\mathcal{U}) \oplus I^{\bar{p}}C_{i}(\mathcal{V}) \ar[r] \ar@{^{(}->}[d]  & I^{\bar{p}}C_{i}(\mathcal{U}) +I^{\bar{p}}C_{i}(\mathcal{V}) \ar[r] \ar@{^{(}->}[d]  & 0 \\
	 0 \ar[r] & C_{i}(\cap) \ar[r] & C_{i}(\mathcal{U}) \oplus C_{i}(\mathcal{V}) \ar[r] & C_{i}(\mathcal{U}) +C_{i}(\mathcal{V}) \ar[r] &0 }.
\end{align*}
implies that the diagram
\begin{align*}
	\xymatrix{
		\dots \ar[r] & I^{\bar{p}}H_{r}(\mathcal{V}) \ar[r] \ar[d] & I^{\bar{p}}H_{r}(X)  \ar[r] \ar[d]  & I^{\bar{p}}H_{r-1}(\cap) \ar[r] \ar[d]  & \dots \\
		\dots \ar[r] & H_{r}(\mathcal{V}) \ar[r] & H_{r}(X) \ar[r] & H_{r-1}(\cap) \ar[r] &\dots }
\end{align*}
commutes. The commutativity of the right square in the above diagram ensures that the second right square in Diagram \ref{COMMDIA} commutes. The desired result follows from 5-lemma. Thus, we have
	\begin{align*}
	I^{\bar{p}}H_{r}(X)  \cong H_{r}(\mathcal{M}, \partial \mathcal{M}),\text{for} \phantom{..} r>k.   \end{align*} 

For $r<k$, the proof goes as follows. Let $\mathcal{U}$ and $\mathcal{V}$ be as above. Employing Proposition \ref{ConeIH} yields $I^{\bar{p}}H_{r}(\mathcal{U}) \cong I^{\bar{p}}H_{r}(\mathcal{L})$. Recall that $\mathcal{L}$ is a $\mathbb{Q}$-manifold. Hence, we have the following diagram.
\begin{small}
\begin{align*}
		\xymatrix@C=10pt{
		\dots  \ar[r] &  I^{\bar{p}}H_{r}(\cap) \ar[d] \ar[r]  &  I^{\bar{p}}H_{r}(\mathcal{M}) \oplus I^{\bar{p}}H_{r}(\mathcal{L}) \ar[d]  \ar[r] & I^{\bar{p}}H_{r}(X) \ar[r] \ar[d]  & I^{\bar{p}}H_{r-1}(\cap) \ar[d] \ar[r] &I^{\bar{p}}H_{r-1}(\mathcal{M}) \oplus I^{\bar{p}}H_{r-1}(\mathcal{L}) \ar[d]  \ar[r]  &\dots \\
		\dots \ar[r] & H_{r}(\cap) \ar[r] & H_{r}(\mathcal{\mathcal{M}}) \oplus H_{r}(\mathcal{\mathcal{L}})  \ar[r] & H_{r}(\mathcal{M}) \ar[r] & H_{r-1}(\cap) \ar[r]  & H_{r-1}(\mathcal{\mathcal{M}}) \oplus H_{r-1}(\mathcal{\mathcal{L}}) \ar[r] & \dots
	} .
\end{align*}
\end{small}

 The commutativity of the left hand and right hand squares follows from the commutativity of
  \begin{align*}
 	\xymatrix{ I^{\bar{p}}C_{\ast}( \cap )   \ar[r] \ar@{^{(}->}[d]  & I^{\bar{p}}C_{\ast}( \mathcal{L}) \oplus I^{\bar{p}}C_{\ast}(\mathcal{M}) \ar@{^{(}->}[d] \\
 		C_{\ast}( \cap) \ar[r] 	& C_{\ast}( \mathcal{L}) \oplus C_{\ast}( \mathcal{M})
 	}.
 \end{align*}
 
 The vertical monomorphisms induce isomorphisms between the intersection and ordinary homology groups. By \cite[Lemma 2.46]{banagl2010intersection}, we can construct the middle vertical homomorphism such that the diagram commutes. Finally, by 5-lemma, we have
	\begin{align*}
	I^{\bar{p}}H_{r}(X)  \cong H_{r}(\mathcal{M}),\text{for} \phantom{..} r<k.   \end{align*} 
	This concludes the proof.
\end{proof}
We are now at the point where we can start with the proof of the main theorem of this section. In \cite{banagl2010intersection}, the first named author shows more than just the duality of Betti numbers of intersection spaces. Although we are only interested in the duality of the Betti numbers of intersection spaces, we can generalize \cite[Theorem~2.12]{banagl2010intersection} to $\mathbb{Q}$-pseudomanifolds. For a brief introduction to reflective algebra, the reader may consult \cite[Section 2.1]{banagl2010intersection}.
\begin{remark}
	As mentioned earlier, we only consider homology with rational coefficients. Note also that because the underlying space, $X$, is a $\mathbb{Q}$-pseudomanifold with $\mathbb{Q}$-isolated singularities, $I^{\bar{p}}X$ refers to the generalized intersection space of $X$ in the sense of Definition \ref{DefGenIX}.
\end{remark}
\begin{theorem}\label{Duality1}
	Let $X$ be an $n$-dimensional compact oriented $\mathbb{Q}$-pseudomanifold with only $\mathbb{Q}$-isolated singularities. Let $\bar{p}$ and $\bar{q}$ be complementary perversities. Assume that the link of each $\mathbb{Q}$-isolated singularity is rationally $(n-1-\bar{p}(n))$-segmented. \\
	Then
	\begin{enumerate}
		\item The pair $(\widetilde{H}_{\ast}(I^{\bar{p}}X),I^{\bar{p}}H_{\ast}(X))$ is $(n-1-\bar{p}(n))$-reflective across the homology of the links and
		\item $(\widetilde{H}_{\ast}(I^{\bar{p}}X),I^{\bar{p}}H_{\ast}(X))$ and $(\widetilde{H}_{\ast}(I^{\bar{q}}X),I^{\bar{q}}H_{\ast}(X))$ are $n$-dual reflective pairs.
	\end{enumerate}
\end{theorem}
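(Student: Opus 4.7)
The plan is to follow the proof architecture of \cite[Theorem~2.12]{banagl2010intersection}, verifying that each step survives the passage from smooth strata to rational homology manifold strata. Throughout, let $\mathcal{M}$ denote the compact oriented $\mathbb{Q}$-manifold with boundary obtained from $X$ by cutting out cone-like neighborhoods of the $\mathbb{Q}$-isolated singularities in the sense of Definition \ref{Cout}, so that $\partial \mathcal{M} = \mathcal{L} = \bigsqcup_i \mathcal{L}_i$, and write $k = n-1-\bar{p}(n)$.

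The first step is to identify the relevant groups in degrees away from $k$. Remark \ref{IsowithMfd} and Proposition \ref{IHDu} supply the required comparisons: for $r>k$ one has $\widetilde{H}_r(I^{\bar{p}}X) \cong H_r(\mathcal{M})$ and $I^{\bar{p}}H_r(X) \cong H_r(\mathcal{M}, \partial \mathcal{M})$, while for $r<k$ the roles are exchanged. Thus the nontrivial part of the reflective structure is concentrated in the middle degree $k$, where the truncation data $(\mathcal{L}_i)_{<k}$ enters. I would construct there a braid of interlocking long exact sequences coming from (i) the cofiber sequence $\mathcal{L}_{<k} \to \mathcal{M} \to I^{\bar{p}}X$ defining the generalized intersection space, (ii) the pair sequence of $(\mathcal{M}, \partial\mathcal{M})$, and (iii) the pair sequence of $(\mathcal{M}, \mathcal{L}_{<k})$. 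From this braid the reflective structure across $H_*(\mathcal{L})$ is extracted exactly as in the isolated-singularity proof of \cite{banagl2010intersection}, using that by rational $k$-segmentation the map $\mathcal{L}_{<k} \hookrightarrow \mathcal{L}$ is a rational isomorphism on $H_{<k}$ and is zero on $H_{\geq k}$.

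The second claim, the $n$-duality of the reflective pairs, rests on two duality statements that are now available in our setting. By Theorem \ref{LFduality}, the compact oriented $\mathbb{Q}$-manifold with boundary $(\mathcal{M}, \partial \mathcal{M})$ satisfies rational Lefschetz duality, $H^r(\mathcal{M}, \partial \mathcal{M}) \cong H_{n-r}(\mathcal{M})$, and by Proposition \ref{prop.bndryofrathomolmfdisrathomolmfd} together with Corollary \ref{PDRationally}, $\partial \mathcal{M}$ is a closed rational homology $(n-1)$-manifold satisfying rational Poincar\'e duality. Because $\bar{p}$ and $\bar{q}$ are complementary, the truncation degrees satisfy $k + k' = n-1$, so that Poincar\'e duality on each link $\mathcal{L}_i$ pairs $H_{<k}(\mathcal{L}_i)$ with $H_{>k'}(\mathcal{L}_i)$, providing the duality between $\mathcal{L}_{<k}$ and the complementary co-truncation used to build $I^{\bar{q}}X$. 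Combining these dualities with the braid from the previous step, exactly as in loc.\ cit., produces the duality isomorphism between the two reflective pairs.

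The main obstacle will be verifying that the naturality squares used in \cite[Theorem~2.12]{banagl2010intersection} between the truncation-cofiber sequence, the pair sequence of $(\mathcal{M},\partial\mathcal{M})$, and the Lefschetz duality isomorphism remain commutative once $\mathcal{M}$ is allowed to be only a rational homology manifold. The key point is that the only manifold-theoretic inputs used in the original argument are precisely the rational Poincar\'e--Lefschetz dualities for $\mathcal{M}$ and for $\partial\mathcal{M}$, which the sheaf-theoretic development in the previous section furnishes; since all computations take place with $\mathbb{Q}$ coefficients there is no obstruction coming from torsion. Once naturality is checked, the argument then runs identically to the isolated-singularity case, completing the proof of both (1) and (2).
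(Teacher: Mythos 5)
Your proposal is correct and follows essentially the same route as the paper: both arguments mimic the braid-of-a-triple proof of \cite[Theorem~2.12]{banagl2010intersection}, substituting Proposition \ref{IHDu} for the computation of intersection homology, Theorem \ref{LFduality} for Lefschetz duality of $(\mathcal{M},\partial\mathcal{M})$, and Proposition \ref{prop.bndryofrathomolmfdisrathomolmfd} together with Corollary \ref{PDRationally} for Poincar\'e duality of $\partial\mathcal{M}$. One small slip: your items (i) and (iii) describe the same long exact sequence (the cofiber sequence of $\mathcal{L}_{<k}\to\mathcal{M}\to I^{\bar p}X$ \emph{is} the sequence of the pair $(\mathcal{M},\mathcal{L}_{<k})$); the missing third strand of the braid is the sequence of the pair $(\mathcal{L},\mathcal{L}_{<k})$, which the paper makes explicit by saying the braid is that of the triple $\mathcal{L}_{<k}\hookrightarrow\mathcal{L}\hookrightarrow\mathcal{M}$.
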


\begin{proof}
	We mainly mimic the proof of \cite[Theorem~2.12]{banagl2010intersection}. By the above considerations, the main ingredients of the proof remain intact for $\mathbb{Q}$-Pseudomanifolds with $\mathbb{Q}$-isolated singularities.\\
	We start with the homology braid of the triple
	\begin{align*}
	\mathcal{L}_{<k} \xhookrightarrow{ \phantom{-f} \phantom{-}  } \mathcal{L} \xrightarrow{ \phantom{-}j \phantom{-}  } \mathcal{M}.
	\end{align*}
	By Proposition \ref{IHDu}, the same argument as in the proof of \cite[Theorem~2.12]{banagl2010intersection} can be applied. The rest of the proof relies on Lefschetz duality for $(\mathcal{M}, \partial \mathcal{M})$ and Poincaré duality for $\partial \mathcal{M}$. For Lefschetz duality, we employ Theorem \ref{LFduality}, The topological space $\partial \mathcal{M}$ is a $\mathbb{Q}$-manifold by Proposition \ref{prop.bndryofrathomolmfdisrathomolmfd}, and therefore it satisfies Poincaré duality rationally, using Corollary \ref{PDRationally}. This concludes our proof.
\end{proof}
\begin{corollary}\label{Duality2}
	Let $X$ be an $n$-dimensional, compact, oriented, topological $\mathbb{Q}$-pseudomanifold with only $\mathbb{Q}$-isolated singularities, which means there is a $\mathbb{Q}$-stratification of the form $X_{n} \supset X_{0}.$ Let $\bar{p}$ and $\bar{q}$ be complementary perversities. Then, we have the duality isomorphism
	\begin{align*}
	d: \widetilde{H}_{r}(I^{\bar{p}}X)^{\ast} \xrightarrow{\phantom{-} \cong \phantom{-}} \widetilde{H}_{n-r}(I^{\bar{q}}X),
	\end{align*}
	where
	\begin{align*}
	\widetilde{H}_{r}(I^{\bar{p}}X)^{\ast} = \operatorname{Hom}(\widetilde{H}_{r}(I^{\bar{p}}X),\mathbb{Q}).
	\end{align*}	
\end{corollary}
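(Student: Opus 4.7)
The plan is to extract the desired duality isomorphism directly from the stronger structural statement established in Theorem \ref{Duality1}. Since $X$ is a compact oriented $n$-dimensional $\mathbb{Q}$-pseudomanifold with $\mathbb{Q}$-isolated singularities, and the very formation of the generalized intersection space $I^{\bar{p}}X$ (Definition \ref{DefGenIX}) presupposes that each link is rationally $(n-1-\bar{p}(n))$-segmented, the hypotheses of Theorem \ref{Duality1} are in force. That theorem asserts that $(\widetilde{H}_{\ast}(I^{\bar{p}}X), I^{\bar{p}}H_{\ast}(X))$ and $(\widetilde{H}_{\ast}(I^{\bar{q}}X), I^{\bar{q}}H_{\ast}(X))$ constitute $n$-dual reflective pairs across the homology of the links.

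Unwinding the notion of $n$-duality of reflective pairs as developed in \cite[Section~2.1]{banagl2010intersection}, one of the constituent data packaged in such a duality is a non-degenerate bilinear pairing
\[ \widetilde{H}_r(I^{\bar{p}}X) \otimes \widetilde{H}_{n-r}(I^{\bar{q}}X) \longrightarrow \mathbb{Q} \]
for every $r$. Passing to adjoints, this yields the desired isomorphism $d: \widetilde{H}_r(I^{\bar{p}}X)^{\ast} \xrightarrow{\cong} \widetilde{H}_{n-r}(I^{\bar{q}}X)$. In other words, the corollary is obtained by projecting the reflective-pair duality of Theorem \ref{Duality1} onto its intersection-space component.

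Consequently, no new hard work remains at this stage; all of the substantive effort has been absorbed into the preceding theorem. For the record, the principal obstacle in that proof, and therefore morally in this corollary, was to confirm that the argument scheme of \cite[Theorem~2.12]{banagl2010intersection}, originally formulated for honest pseudomanifolds with isolated singularities, continues to function in the broader $\mathbb{Q}$-pseudomanifold setting. This hinged on three rational-coefficient inputs: the cutting-out computation of intersection homology in complementary ranges (Proposition \ref{IHDu}), Lefschetz duality for the $\mathbb{Q}$-manifold with boundary $(\mathcal{M},\partial\mathcal{M})$ obtained by removing cone-like neighborhoods of the singular locus (Theorem \ref{LFduality}), and rational Poincar\'e duality for $\partial\mathcal{M}$, which is itself a closed $\mathbb{Q}$-manifold by Proposition \ref{prop.bndryofrathomolmfdisrathomolmfd} and therefore qualifies via Corollary \ref{PDRationally}. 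Since all three ingredients were already secured in Section \ref{Qpsmfd}, the corollary follows at once.
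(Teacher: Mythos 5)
Your proposal is correct and follows essentially the same route as the paper: the duality isomorphism is exactly the intersection-space column of the $n$-dual reflective pair structure asserted by Theorem~\ref{Duality1}, whose proof is the rational-coefficient adaptation of \cite[Theorem~2.12]{banagl2010intersection} via Corollary~\ref{PDRationally} and Theorem~\ref{LFduality}. You are also right that the segmentation hypothesis, though absent from the corollary's statement, is implicit in Definition~\ref{DefGenIX}.
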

\begin{proof}
The	construction of the duality isomorphism goes along the same line as in the proof of \cite[Theorem~2.12]{banagl2010intersection} and using Corollary \ref{PDRationally} and Theorem \ref{LFduality}.
\end{proof}
Corollary \ref{EulerCh} continues to hold in the $\mathbb{Q}$-isolated setting for the generalized intersection spaces: 
\begin{corollary}\label{GenCor}
	Let $X$ be an $n$-dimensional, compact, oriented $\mathbb{Q}$-pseudomanifold with only $\mathbb{Q}$-isolated singularities. Let $I^{\bar{p}}X$ be the associated generalized intersection spaces. If $n=\dim(X)$ is even, then the difference between the Euler characteristic of $\widetilde{H}_{\ast}(I^{\bar{p}}X)$ and $I^{\bar{p}}H_{\ast}(X)$ is given by
	\begin{align*}
	\chi(\widetilde{H}_{\ast}(I^{\bar{p}}X))-\chi(I^{\bar{p}}H_{\ast}(X))=-2 \chi_{<n-1-\bar{p}(n)}(\mathcal{L}),
	\end{align*}
	where $\mathcal{L}$ is the disjoint union of the links of the $\mathbb{Q}$-isolated singularities of $X$. \\
	If $n=\dim(X)$ is odd, then 
	\begin{align*}
	\chi(\widetilde{H}_{\ast}(I^{\bar{n}}X))-\chi(I^{\bar{n}}H_{\ast}(X))=(-1)^{\frac{n-1}{2}} b_{\frac{n-1}{2}}(\mathcal{L}),
	\end{align*}
	where $b_{\frac{n-1}{2}}(\mathcal{L})$ is the middle dimensional Betti number of $\mathcal{L}$ and $\bar{n}$ is the upper middle perversity. \\
	Regardless of the parity of $n$, the identity
	\begin{align*}
	\operatorname{rk}(\widetilde{H}_{k}(I^{\bar{p}}X))+ \operatorname{rk}(I^{\bar{p}}H_{k}(X))=\operatorname{rk}(H_{k}(\mathcal{M}))+\operatorname{rk}(H_{k}(\mathcal{M},\mathcal{L}))
	\end{align*}
	always holds in degree $k=n-1-\bar{p}(n)$, where $\mathcal{M}$ is the $\mathbb{Q}$-manifold obtained by cutting out the $\mathbb{Q}$-isolated singularities of $X$.
\end{corollary}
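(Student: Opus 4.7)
The plan is to adapt the proof of Corollary \ref{EulerCh} (originally given in \cite{banagl2010intersection} in the pseudomanifold setting) to the $\mathbb{Q}$-pseudomanifold setting. That proof only uses three structural inputs, each of which is now available in our broader framework: the reflective-pair duality between $\widetilde{H}_{\ast}(I^{\bar p}X)$ and $I^{\bar p}H_{\ast}(X)$ across the link homology, Lefschetz duality for $(\mathcal{M},\partial\mathcal{M})$, and Poincar\'e duality for $\partial\mathcal{M}=\mathcal{L}$. The first is Theorem \ref{Duality1}; the second is Theorem \ref{LFduality} applied to the oriented compact rational homology manifold $(\mathcal{M},\partial\mathcal{M})$ obtained by cutting out the $\mathbb{Q}$-isolated singularities (Definition \ref{Cout}); the third follows from Proposition \ref{prop.bndryofrathomolmfdisrathomolmfd} together with Corollary \ref{PDRationally}.

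First I would extract from Theorem \ref{Duality1}, combined with Proposition \ref{IHDu}, the analogue of Remark \ref{IsowithMfd}: setting $k=n-1-\bar p(n)$, there are isomorphisms
\[
\widetilde{H}_{r}(I^{\bar p}X)\cong H_{r}(\mathcal{M}),\qquad
I^{\bar p}H_{r}(X)\cong H_{r}(\mathcal{M},\partial\mathcal{M})\qquad (r>k),
\]
\[
\widetilde{H}_{r}(I^{\bar p}X)\cong H_{r}(\mathcal{M},\partial\mathcal{M}),\qquad
I^{\bar p}H_{r}(X)\cong H_{r}(\mathcal{M})\qquad (r<k).
\]
The third (parity-free) identity in the statement is then immediate: in the reflective pair $(\widetilde{H}_{\ast}(I^{\bar p}X),I^{\bar p}H_{\ast}(X))$, the total rank at the distinguished degree $k$ must equal the total rank of the ``source'' pair $(H_k(\mathcal{M}),H_k(\mathcal{M},\partial\mathcal{M}))$, which is precisely the asserted equality.

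Next I would compute $\chi(\widetilde{H}_{\ast}(I^{\bar p}X))-\chi(I^{\bar p}H_{\ast}(X))$ by splitting the sum into the ranges $r<k$, $r=k$, and $r>k$. In the ranges $r\neq k$, the isomorphisms above turn the alternating sum into
\[
\sum_{r<k}(-1)^{r}\bigl(\operatorname{rk}H_{r}(\mathcal{M},\partial\mathcal{M})-\operatorname{rk}H_{r}(\mathcal{M})\bigr)+\sum_{r>k}(-1)^{r}\bigl(\operatorname{rk}H_{r}(\mathcal{M})-\operatorname{rk}H_{r}(\mathcal{M},\partial\mathcal{M})\bigr),
\]
plus the contribution at $r=k$, which vanishes upon using the third identity and the long exact sequence of the pair. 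Then Lefschetz duality $\operatorname{rk}H_{r}(\mathcal{M},\partial\mathcal{M})=\operatorname{rk}H_{n-r}(\mathcal{M})$ converts the $r>k$ sum into a sum over $r<k$, and the long exact sequence of $(\mathcal{M},\partial\mathcal{M})$ replaces differences $\operatorname{rk}H_{r}(\mathcal{M},\partial\mathcal{M})-\operatorname{rk}H_{r}(\mathcal{M})$ by contributions coming from $\operatorname{rk}H_{\ast}(\partial\mathcal{M})=\operatorname{rk}H_{\ast}(\mathcal{L})$, with the duality $\operatorname{rk}H_{r}(\mathcal{L})=\operatorname{rk}H_{n-1-r}(\mathcal{L})$ (Corollary \ref{PDRationally}) folding the two halves together. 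For even $n$ this telescopes to $-2\chi_{<k}(\mathcal{L})$; for odd $n$ and the upper-middle perversity $\bar n$, the duality forces $k=(n-1)/2$ and the only uncancelled summand is $(-1)^{(n-1)/2}b_{(n-1)/2}(\mathcal{L})$, as claimed.

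The main technical point is verifying that the various isomorphisms and long exact sequences are compatible in the $\mathbb{Q}$-setting, i.e.\ that the Lefschetz duality and Poincar\'e duality statements used are actually valid for our $\mathcal{M}$ and $\mathcal{L}$. This, however, has already been secured by Theorem \ref{thm.lefschetzdualhomolmfdbndry}, Proposition \ref{prop.bndryofrathomolmfdisrathomolmfd}, and Corollary \ref{PDRationally}. With these in place, the bookkeeping of signs is formally identical to that in the proof of Corollary \ref{EulerCh}, so no new calculation is required beyond substituting $\mathbb{Q}$-manifold duality wherever the original argument invokes manifold duality.
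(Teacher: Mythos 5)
Your proposal matches the paper's (implicit) proof: the paper simply asserts that Corollary~\ref{EulerCh} continues to hold in the $\mathbb{Q}$-isolated setting, and the justification is exactly what you identify, namely that the original Euler characteristic bookkeeping from \cite{banagl2010intersection} rests only on the reflective-pair structure (now Theorem~\ref{Duality1} and Proposition~\ref{IHDu}), Lefschetz duality for $(\mathcal{M},\partial\mathcal{M})$ (now Theorem~\ref{LFduality}), and Poincar\'e duality for $\partial\mathcal{M}$ (now Proposition~\ref{prop.bndryofrathomolmfdisrathomolmfd} with Corollary~\ref{PDRationally}), all of which have been extended to the $\mathbb{Q}$-setting.

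One caution on your sketch of the bookkeeping: the contribution at $r=k$ to $\chi(\widetilde H_\ast(I^{\bar p}X))-\chi(I^{\bar p}H_\ast(X))$ does \emph{not} vanish. The third identity pins down only the sum $\operatorname{rk}\widetilde H_k(I^{\bar p}X)+\operatorname{rk}I^{\bar p}H_k(X)$, not the difference, and one can check, using the cofiber sequence for $IX=\mathcal{M}\cup_g\mathcal{C}(\mathcal{L}_{<k})$ together with the long exact sequence of $(\mathcal{M},\partial\mathcal{M})$, that
\[
\operatorname{rk}\widetilde H_k(I^{\bar p}X)-\operatorname{rk}I^{\bar p}H_k(X)
=\operatorname{rk}\operatorname{im}\bigl(H_k(\mathcal{L})\to H_k(\mathcal{M})\bigr)
+\operatorname{rk}\ker\bigl(H_{k-1}(\mathcal{L})\to H_{k-1}(\mathcal{M})\bigr),
\]
which is generically nonzero (in the $6$-dimensional toric case it equals $3f_1-f_2-b-6$). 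This term is what, combined with the $r\neq k$ ranges and the dualities, assembles into $-2\chi_{<k}(\mathcal{L})$. Since you defer the sign bookkeeping to the original proof of Corollary~\ref{EulerCh}, this does not derail your plan, but the literal claim that the degree-$k$ contribution vanishes should be dropped.
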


\section{Intersection Spaces of 6-dimensional Toric Varieties}

As mentioned in Section \ref{Sing6D}, a 6-dimensional toric variety is a $\mathbb{Q}$-pseudomanifold with $\mathbb{Q}$-isolated singularities. Such varieties are therefore amenable to the theory developed in Section \ref{QPseudoGen}. Although we will not compute the homology groups of the intersection spaces directly via CW structures, we will show that considering the middle perversity, links of $\mathbb{Q}$-isolated singularities are rationally 3-segmented. Using Lefschetz duality, we then compute the rational homology groups of generalized intersection spaces of 6-dimensional toric varieties. As a side remark, we will also show that Corollary \ref{GenCor} yields the well-known Euler formula for 3-dimensional polytypes.\\
Let $X_{\mathcal{P}}$ be a 6-dimensional toric variety associated with $\mathcal{P}$, the underlying polytope. We can endow $X_{\mathcal{P}}$ with a $\mathbb{Q}$-stratification of the form $
X_{\mathcal{P}}=X_{6} \supset X_{0}.$
We have shown in Section \ref{Sing6D} that links of $\mathbb{Q}$-isolated singularities are $\mathbb{Q}$-manifolds and satisfy Poincaré duality rationally.
\begin{proposition}\label{Ltrunc}
	Let $X$ be a 6-dimensional toric variety with a $\mathbb{Q}$-stratification of the form $X_{6} \supset X_{0}$.	For each $x_{i} \in X_{0}$, let $\mathcal{L}_{i}$ be the associated link. Then $\mathcal{L}_{i}$ is rationally 3-segmented.
\end{proposition}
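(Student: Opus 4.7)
The plan is to exhibit an explicit rational $3$-truncation of $\mathcal{L}_i$ as a CW subcomplex by exploiting the detailed description of the boundary operator $\partial_3$ on $\mathcal{L}_x$ given by matrix (\ref{Link6D}) in Section \ref{Sing6D}. The argument will be almost entirely combinatorial, using the CW structure on $\mathcal{L}_x$ already constructed.

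First I would define $(\mathcal{L}_x)_{<3}$ to be the $2$-skeleton $(\mathcal{L}_x)^{(2)}$ together with a specific subset $S$ of the $3$-cells, namely the three ``toroidal'' cells $e^{1}_{T^{3}_{k}} \times e^{2}_{\mathcal{M}_{x}}$ for $k=1,2,3$ and, for each $i=1,\dots,a$, exactly one of the two cells attached over $e^{1}_{(\mathcal{M}_{x_{a}})_{i}}$, say $e^{2}_{(T^{2}_{a_{i}})_{1}} \times e^{1}_{(\mathcal{M}_{x_{a}})_{i}}$. None of the $b$-type cells $e^{2}_{(T^{2}_{b_{j}})} \times e^{1}_{(\mathcal{M}_{x_{b}})_{j}}$ nor the second $a$-type cells $e^{2}_{(T^{2}_{a_{i}})_{2}}\times e^{1}_{(\mathcal{M}_{x_{a}})_{i}}$ is included. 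Since every chosen $3$-cell is attached along a map factoring through the $2$-skeleton, this is a genuine CW subcomplex of $\mathcal{L}_{x}$, containing $|S| = 3+a$ cells in dimension $3$ and no cells above.

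Next I would read off a basis of $\ker \partial_{3}$ directly from the zero-pattern in matrix (\ref{Link6D}): the column corresponding to each $e^{2}_{(T^{2}_{b_{j}})} \times e^{1}_{(\mathcal{M}_{x_{b}})_{j}}$ is identically zero, so every such cell is a $3$-cycle; and for each $i=1,\dots,a$ the columns $e^{2}_{(T^{2}_{a_{i}})_{1}}$ and $e^{2}_{(T^{2}_{a_{i}})_{2}}$ are negatives of one another, so the sum $e^{2}_{(T^{2}_{a_{i}})_{1}} + e^{2}_{(T^{2}_{a_{i}})_{2}}$ is a $3$-cycle. The resulting $a+b$ cycles are manifestly linearly independent and match the rank $\operatorname{rk}(\ker\partial_{3}) = a+b$ recorded in Section \ref{Sing6D}, so they form a basis. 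Because every basis element involves at least one cell outside $S$, the intersection $\operatorname{span}(S) \cap \ker\partial_{3}$ is trivial. Thus $\partial_{3}$ restricted to $\operatorname{span}(S)$ is injective, so $H_{3}((\mathcal{L}_{x})_{<3}) = 0$, and trivially $H_{r}((\mathcal{L}_{x})_{<3}) = 0$ for $r>3$.

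Finally, rank-nullity applied to $\partial_{3}|_{S}$ gives $\dim \operatorname{Im}(\partial_{3}|_{S}) = |S| = 3+a = \operatorname{rk}(\operatorname{Im}\partial_{3})$, and since $\operatorname{Im}(\partial_{3}|_{S}) \subseteq \operatorname{Im}(\partial_{3})$ with equal dimensions the two subspaces coincide. Consequently $H_{2}((\mathcal{L}_{x})_{<3}) = \ker(\partial_{2})/\operatorname{Im}(\partial_{3}|_{S}) = H_{2}(\mathcal{L}_{x})$, while the inclusion induces isomorphisms on $H_{0}$ and $H_{1}$ automatically since all cells of dimension $\le 2$ of $\mathcal{L}_{x}$ are present in the subcomplex. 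The only real work is the combinatorial verification that the selected $S$ simultaneously spans $\operatorname{Im}(\partial_{3})$ and meets $\ker\partial_{3}$ only in $0$; I expect no conceptual obstacle beyond this bookkeeping, since the full boundary matrix (\ref{Link6D}) has already been tabulated.
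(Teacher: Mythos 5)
Your proposal is correct and coincides with the paper's proof: the subcomplex you build (the full $2$-skeleton together with the three cells $e^{1}_{T^{3}_{k}} \times e^{2}_{\mathcal{M}_{x}}$ and, for each $i$, the single cell $e^{2}_{(T^{2}_{a_{i}})_{1}} \times e^{1}_{(\mathcal{M}_{x_a})_{i}}$) is exactly the subcomplex $\mathcal{L}_i'$ of the paper, obtained by deleting from each torus $T^2$ over a $1$-cell of $\mathcal{M}_x$ the $2$-cell carrying the $-1$ coefficient (for the $a$-type tori) or the unique $2$-cell (for the $b$-type tori). Your verification that $\ker\partial_3$ misses $\operatorname{span}(S)$ by exhibiting the explicit basis $\{e^{2}_{(T^{2}_{b_j})}\}_{j} \cup \{e^{2}_{(T^{2}_{a_i})_1} + e^{2}_{(T^{2}_{a_i})_2}\}_{i}$ is a bit more explicit than the paper's one-line assertion that $\ker\partial_3^{\mathcal{L}'_i}=0$ and $\operatorname{Im}\partial_3^{\mathcal{L}'_i}=\operatorname{Im}\partial_3^{\mathcal{L}_i}$, but the argument is the same in substance.
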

\begin{proof}
	We endow each $\mathcal{L}_{i}$ with the CW structure described in Section \ref{Sing6D}. Recall that we have computed $\partial_{3}$ of the $\mathcal{L}_{i}$ in Equation \ref{Link6D}. From each $T^2$ with one 2-cell, we remove this 2-cell. From each $T^2$ with two 2-cells, we omit the 2-cell with the negative sign in the boundary operator. We denote the obtained 3-dimensional CW sub-complex by $\mathcal{L}_{i}^{\prime}$. Despite the removal of the 2-cells from the tori, the 2-skeleton of the link has not changed, i.e $\mathcal{L}_{i}$ and $\mathcal{L}_{i}^{\prime}$ have the same 2-skeleton.
	Hence, we get 
	\begin{align}
	\partial_{3}^{\mathcal{L}_{i}^{\prime}} &= \kbordermatrix{
		& 
		e^{1}_{T^{3}_{1}} \times e^{2}_{\mathcal{M}_{x}} & 		e^{1}_{T^{3}_{2}} \times e^{2}_{\mathcal{M}_{x}} & 		e^{1}_{T^{3}_{3}} \times e^{2}_{\mathcal{M}_{x}} & e^{2}_{(T^{2}_{a_{i}})_{1} } \times e^{1}_{(\mathcal{M}_{x_{a } })_{i} }   \\
		e^{0}_{T^{3}} \times e^{2}_{\mathcal{M}_{x}} &   0  &  0  &  0 &  \overbrace{0 }^{ \tilde{a}  }  \\
		e^{1}_{(T^{2}_{a_{i}})_{1} } \times e^{1}_{(\mathcal{M}_{x_{a } })_{i} } \tilde{a} \Big\{ &m_{a_{i}} l_{a_{i}} & 0 & 0 & 1   \\	
		e^{1}_{(T^{2}_{a_{i}})_{2} } \times e^{1}_{(\mathcal{M}_{x_{a } })_{i} } \tilde{a}  \Big\{ &0 & n_{a_{i}} l_{a_{i}} & 0 & 1  \\
		e^{1}_{(T^{2}_{a_{i}})_{3} } \times e^{1}_{(\mathcal{M}_{x_{a } })_{i} } \tilde{a}  \Big\{ &0 & 0 & n_{a_{i}} m_{a_{i}} & 1  \\
		e^{1}_{(T^{2}_{b_{j}})_{1} } \times e^{1}_{(\mathcal{M}_{x_{b } })_{j} } \tilde{b} \Big\{ &-m_{b_{j}} & 0 & 0 & 0  \\
		e^{1}_{(T^{2}_{b_{j}})_{2} } \times e^{1}_{(\mathcal{M}_{x_{b } })_{j} } \tilde{b} \Big\{ &0 & n_{b_{j}} & 0 & 0   \\
	}.
	\end{align}
	We use the same notation as in Section \ref{Sing6D}. Recall that each 1-dimensional face of $\mathcal{M}_{x}$, is associated to a 2-dimensional face of $\mathcal{P}$, which is dual to a 1-dimensional cone in $\Sigma$ such that the dual cones lie in $\mathcal{S}_{x}$. Here $\tilde{a}$ and $\tilde{b}$ are the numbers of such 1-dimensional cones in $\Sigma$, whose generators have no zero entry and at least one zero-entry, respectively. Recall also that  $e^{1}_{(\mathcal{M}_{x_{a}})_{i}  }$ and $e^{1}_{(\mathcal{M}_{x_{b}})_{j}  }$ are those 1-cells of $\mathcal{M}_{x}$, whose associated $T^{2}$ in $\mathcal{L}_{x}$ has three and two 1-cells, respectively.
	Clearly, we have
	\begin{align*}
	\operatorname{Im}(\partial_{3}^{\mathcal{L}_{i}^{\prime}})=\operatorname{Im}(\partial_{3}^{\mathcal{L}_{i}}) \:\: \text{and} \:\: \operatorname{ker}(\partial_{3}^{\mathcal{L}_{i}^{\prime}})=0,
	\end{align*}
	and the inclusion $incl:\mathcal{L}_{i}^{\prime} \xhookrightarrow{\phantom{---}} \mathcal{L}_{i} $ induces
	\begin{align*}
	incl_{j}: H_{j}(\mathcal{L}_{i}^{\prime}) \xrightarrow{\phantom{-}\cong \phantom{-}} H_{j}(\mathcal{L}_{i}) \:\: \text{for} \:\: j \leq 2.
	\end{align*}
	Obviously,
	\begin{align*}
	H_{j}(\mathcal{L}_{i}^{\prime})=0 \:\:\text{for} \:\: j > 2.
	\end{align*}
\end{proof}

\begin{theorem}\label{IntSecBetti}
	Let $X_{\mathcal{P}}$ be an arbitrary compact 6-dimensional toric variety with $m$ $\mathbb{Q}$-isolated singularities, $m \geq 1$, where $\mathcal{P}$ is the underlying polytope. Let $\Sigma$ be the dual fan to $\mathcal{P}$. We denote the number of 1-dimensional and 2-dimensional cones of $\Sigma$ with $f_{1}$ and $f_{2}$, respectively. Then,
	\begin{align*}
	\operatorname{rk}(\widetilde{H}_{6}(I^{\bar{n}}X))&=0 \\
	\operatorname{rk}(\tilde{H}_{5}(I^{\bar{n}}X))&=m-1 \\
	\operatorname{rk}(\tilde{H}_{4}(I^{\bar{n}}X))&= f_{1}-3-b\\
	\operatorname{rk}(\tilde{H}_{3}(I^{\bar{n}}X))&=2(3f_{1}-f_{2}-b-6) \\
	\operatorname{rk}(\tilde{H}_{2}(I^{\bar{n}}X))&=f_{1}-3-b \\
	\operatorname{rk}(\tilde{H}_{1}(I^{\bar{n}}X))&=m-1\\
	\operatorname{rk}(\tilde{H}_{0}(I^{\bar{n}}X))&=0,
	\end{align*}
	where $I^{\bar{n}}X$ is the generalized intersection space associated to the $\mathbb{Q}$-pseudomanifold $X_{\mathcal{P}}$, and the parameter $b$ is introduced and has been studied in Section \ref{6DHoGr}.
\end{theorem}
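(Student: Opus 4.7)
The plan is to apply the machinery developed in Section \ref{QPseudoGen} to the compact oriented $\mathbb{Q}$-pseudomanifold $X=X_{\mathcal{P}}$ equipped with the $\mathbb{Q}$-stratification $X\supset X_0$. For the middle perversity $\bar{n}$ one has $\bar{n}(6)=2$, so the truncation degree is $k=n-1-\bar{n}(n)=3$. Proposition \ref{Ltrunc} guarantees that each of the $5$-dimensional links of the $\mathbb{Q}$-isolated singularities is rationally $3$-segmented, so the generalized intersection space $IX=I^{\bar{n}}X$ of Definition \ref{DefGenIX} is well-defined.

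First, I would compute the ranks of $H_\ast(\mathcal{M},\partial\mathcal{M})$, where $(\mathcal{M},\partial\mathcal{M})$ is obtained from $X$ by cutting out the $0$-dimensional stratum as in Definition \ref{Cout}. By excision applied to the cone-like neighborhoods of the $m$ singular points, $H_\ast(\mathcal{M},\partial\mathcal{M})\cong H_\ast(X,X_0)$, and the long exact sequence of the pair $(X,X_0)$, combined with the ranks of $H_\ast(X;\mathbb{Q})$ from Proposition \ref{OrdiHom} and the fact that $X_0$ is $m$ discrete points, immediately produces
\begin{equation*}
\operatorname{rk} H_j(\mathcal{M},\partial\mathcal{M}) = \begin{cases} 1,\; 0,\; f_1{-}3,\; 3f_1{-}f_2{-}b{-}6,\; f_1{-}3{-}b & (j=6,5,4,3,2), \\ m-1 & (j=1), \\ 0 & (j=0).\end{cases}
\end{equation*}
Since $(\mathcal{M},\partial\mathcal{M})$ is a compact oriented rational homology manifold with boundary (Section \ref{Sing6D}), Corollary \ref{LefDualityCor} together with the universal coefficient theorem over $\mathbb{Q}$ gives $\operatorname{rk} H_j(\mathcal{M})=\operatorname{rk} H_{6-j}(\mathcal{M},\partial\mathcal{M})$ for every $j$.

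For the degrees $r\neq 3$ the isomorphisms of Remark \ref{IsowithMfd} identify $\widetilde{H}_r(IX)$ with $H_r(\mathcal{M},\partial\mathcal{M})$ when $r<3$ and with $H_r(\mathcal{M})$ when $r>3$, and substituting the numbers above yields all six of the stated ranks outside the middle dimension. For the remaining degree $r=k=3$, I would invoke the identity of Corollary \ref{GenCor},
\begin{equation*}
\operatorname{rk}\widetilde{H}_3(IX)+\operatorname{rk} I^{\bar{n}}H_3(X)=\operatorname{rk} H_3(\mathcal{M})+\operatorname{rk} H_3(\mathcal{M},\partial\mathcal{M})=2(3f_1-f_2-b-6),
\end{equation*}
and finish by citing Stanley's theorem (\cite{stanley1987generalized}, mentioned in the Introduction) which asserts that all odd intersection Betti numbers of a complete toric variety vanish, so that $I^{\bar{n}}H_3(X)=0$. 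The main conceptual obstacle is not any single computation, but rather that the middle degree is invisible to the direct isomorphisms of Remark \ref{IsowithMfd}; everything hinges on Corollary \ref{GenCor} and on invoking Stanley's vanishing externally, since the methods of Section \ref{QPseudoGen} alone do not pin down $I^{\bar{n}}H_k(X)$ at the critical degree.
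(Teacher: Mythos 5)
Your proposal is correct and follows essentially the same route as the paper: cut out the singularities, compute $H_\ast(\mathcal M,\partial\mathcal M)$ and $H_\ast(\mathcal M)$ from Proposition~\ref{OrdiHom} plus Lefschetz duality, transfer to $\widetilde H_\ast(I^{\bar n}X)$ away from the middle degree via the duality isomorphisms, and then handle degree $3$ via the Euler-characteristic identity of Corollary~\ref{GenCor} together with Stanley's vanishing of odd-degree intersection homology. The only (inessential) difference is that you obtain $H_\ast(\mathcal M,\partial\mathcal M)\cong H_\ast(X,X_0)$ directly by excision and read off $H_1$ from the long exact sequence of $(X,X_0)$, whereas the paper routes this through $\mathcal M/\partial\mathcal M$ and the long exact sequence of $(\mathcal M,\partial\mathcal M)$ using $\operatorname{rk}H_5(\partial\mathcal M)=m$; both yield the same numbers.
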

\begin{proof}
	We start by cutting out the $\mathbb{Q}$-isolated singularities of $X_{\mathcal{P}}$, in the sense of Definition \ref{Cout}. We denote the resulting $\mathbb{Q}$-manifold with boundary by ($\mathcal{M},\partial \mathcal{M}$). It satisfies Lefschetz duality rationally by Corollary \ref{LefDualityCor}. Now, let $\sfrac{\mathcal{M}}{\partial \mathcal{M}}$ be the topological pseudomanifold obtained by coning off the boundary of $(\mathcal{M}, \partial \mathcal{M})$. As shown, we have
	\begin{align*}
	H_{i}(X) \cong H_{i}(\sfrac{\mathcal{M}}{\partial \mathcal{M}}) \cong H_{i}(\mathcal{M}, \partial \mathcal{M}) \:\: \text{for} \: i \geq 2.
	\end{align*}
	Hence by Proposition \ref{OrdiHom}, we get 
	\begin{align*}
	\operatorname{rk}(H_{6}(\mathcal{M}, \partial \mathcal{M}))=\operatorname{rk}(H_{0}(\mathcal{M}))&=1\\
	\operatorname{rk}(H_{5}(\mathcal{M}, \partial \mathcal{M}))=\operatorname{rk}(H_{1}(\mathcal{M}))&=0\\
	\operatorname{rk}(H_{4}(\mathcal{M}, \partial \mathcal{M}))=\operatorname{rk}(H_{2}(\mathcal{M}))&=f_{1}-3\\
	\operatorname{rk}(H_{3}(\mathcal{M}, \partial \mathcal{M}))=\operatorname{rk}(H_{3}(\mathcal{M}))&=3f_{1}-f_{2}-b-6 \\
	\operatorname{rk}(H_{2}(\mathcal{M}, \partial \mathcal{M}))=\operatorname{rk}(H_{4}(\mathcal{M}))&=f_{1}-3-b.
	\end{align*}
The group $H_{6}(\mathcal{M}))$ vanishes since by Lefschetz duality $\operatorname{rk}(H_{6}(\mathcal{M}))= \operatorname{rk}(H_{0}(\mathcal{M},\partial \mathcal{M}))=0$ (Note that $\partial \mathcal{M}$ is not empty as $m \geq 1$.) Consequently, by considering the long exact sequence of the pair $(\mathcal{M}, \partial \mathcal{M})$, we arrive at the short exact sequence 
	\begin{align*}
	0 \longrightarrow H_{6}(\mathcal{M},\partial \mathcal{M}) \longrightarrow H_{5}(\partial \mathcal{M}) \longrightarrow H_{5}(\mathcal{M}) \longrightarrow 0.
	\end{align*}
	Since $\partial \mathcal{M} = \bigsqcup_{i=1}^{m} \mathcal{L}_{i}$ and each $\mathcal{L}_{i}$ is orientable, connected and 5-dimensional, $\operatorname{rk}(H_{5}(\partial \mathcal{M})) = m$. This implies
	\begin{align*}
	\operatorname{rk}(H_{1}(\mathcal{M}, \partial \mathcal{M}))=\operatorname{rk}(H_{5}(\mathcal{M}))&= m -1.
	\end{align*}
	Before dealing with the homology groups of the generalized intersection space, we need to compute the intersection homology groups of $X_{\mathcal{P}}$. In \cite{stanley1987generalized}, Stanley shows that the odd degree intersection homology groups of toric varieties vanish. We merely need this result for the middle degree. The relevant cut off degree is $k=6-1-\bar{n}(6)=3$. Using Proposition \ref{IHDu}, we finally get
	\begin{align*}
	\operatorname{rk}(I^{\bar{n}} H_{6}(X))&=1 \\
	\operatorname{rk}(I^{\bar{n}} H_{5}(X))&=0 \\
	\operatorname{rk}(I^{\bar{n}} H_{4}(X))&=f_{1}-3\\
	\operatorname{rk}(I^{\bar{n}} H_{3}(X))&=0 \\
	\operatorname{rk}(I^{\bar{n}} H_{2}(X))&=f_{1}-3\\
	\operatorname{rk}(I^{\bar{n}} H_{1}(X))&=0\\
	\operatorname{rk}(I^{\bar{n}} H_{0}(X))&=1.
	\end{align*}
	Proposition \ref{Ltrunc} ensures the existence of the associated generalized intersection space of $X_{\mathcal{P}}$.
	By using the duality isomorphisms of Theorem \ref{Duality1}, we can read off the homology groups of $I^{\bar{n}}X$ above and below the middle degree:
	\begin{align*}
	&H_{r}(\mathcal{M}) \xrightarrow{\phantom{-- } \cong \phantom{-- }} \widetilde{H}_{r}(I^{\bar{n}}X) \:\: \text{for} \:\: r>k=3 \\
	&H_{r}(\mathcal{M},\partial \mathcal{M}) \xrightarrow{\phantom{-- }\cong \phantom{-- }} \widetilde{H}_{r}(I^{\bar{n}}X) \:\: \text{for} \:\: r<k=3. \\
	\end{align*} 
	Finally, we use Proposition \ref{Link3D} and the last identity in Corollary \ref{GenCor} and compute $\operatorname{rk}(\widetilde{H}_{3}(I^{\bar{n}}X))$.
\end{proof}

\begin{corollary}\label{AllSing}
		Let $X_{\mathcal{P}}$ be an arbitrary compact 6-dimensional toric variety, where $\mathcal{P}$ is the underlying polytope. Let $\Sigma$ be the fan dual to $\mathcal{P}$. We denote the 1-dimensional, 2-dimensional, and 3-dimensional cones of $\Sigma$ with $f_{1}$, $f_{2}$, and $f_{3}$, respectively. We further assume that all vertices in $\mathcal{P}$ are $\mathbb{Q}$-isolated singularities. Then,
		\begin{align*}
			\operatorname{rk}(\widetilde{H}_{6}(I^{\bar{n}}X))&=0 \\
			\operatorname{rk}(\tilde{H}_{5}(I^{\bar{n}}X))&=f_{3}-1 \\
			\operatorname{rk}(\tilde{H}_{4}(I^{\bar{n}}X))&= f_{1}-3-b\\
			\operatorname{rk}(\tilde{H}_{3}(I^{\bar{n}}X))&=2(3f_{1}-f_{2}-b-6) \\
			\operatorname{rk}(\tilde{H}_{2}(I^{\bar{n}}X))&=f_{1}-3-b \\
			\operatorname{rk}(\tilde{H}_{1}(I^{\bar{n}}X))&=f_{3}-1\\
			\operatorname{rk}(\tilde{H}_{0}(I^{\bar{n}}X))&=0.
		\end{align*}
\end{corollary}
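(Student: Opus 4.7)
The plan is to reduce Corollary \ref{AllSing} to a direct application of Theorem \ref{IntSecBetti}, the only issue being to identify the parameter $m$ of that theorem with $f_3$ under the hypothesis that every vertex of $\mathcal{P}$ is a $\mathbb{Q}$-isolated singularity. The duality $\delta:\mathcal{P}\longrightarrow\Sigma$ from Remark \ref{Bijection} sends $k$-dimensional faces of $\mathcal{P}$ bijectively to $(3-k)$-dimensional cones of $\Sigma$; in particular the $0$-dimensional faces (vertices) of $\mathcal{P}$ are in one-to-one correspondence with the $3$-dimensional cones of $\Sigma$, of which there are $f_3$.

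From the topological description of $X_\mathcal{P}$ recalled in Section \ref{sec.preliminaries}, $p^{-1}(\operatorname{int}(\tau))\cong\operatorname{int}(\tau)\times T^n/\pi(\delta(\tau))$; so for a vertex $\tau$ of $\mathcal{P}$ the preimage is a single point, and these are precisely the points forming the stratum $X_0\subset X_\mathcal{P}$. Thus the vertices of $\mathcal{P}$ are in canonical bijection with the points of $X_0$, and under the assumption of the corollary, every such point is a $\mathbb{Q}$-isolated singularity. Hence the number $m$ of $\mathbb{Q}$-isolated singularities appearing in the hypothesis of Theorem \ref{IntSecBetti} coincides with $f_3$.

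I would then simply substitute $m=f_3$ into the seven formulas of Theorem \ref{IntSecBetti}; the degrees $0,2,3,4,6$ do not involve $m$ and hence are unchanged, while the degree-$1$ and degree-$5$ ranks become $f_3-1$, which is exactly the content of Corollary \ref{AllSing}. Since all of the work (the rational $3$-segmentedness of the links via Proposition \ref{Ltrunc}, the existence of the generalized intersection space via Section \ref{QPseudoGen}, and the Betti-number computation itself) has already been carried out in the proof of Theorem \ref{IntSecBetti}, there is in fact no obstacle here; the only delicate point worth being explicit about is that $f_3\geq 1$, which follows because the fan $\Sigma$ is complete and $3$-dimensional so it must contain at least one top-dimensional cone, ensuring that the hypothesis $m\geq 1$ of Theorem \ref{IntSecBetti} is automatically satisfied.
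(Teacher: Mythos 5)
Your proposal is correct and follows essentially the same route as the paper: identify $m = f_3$ via the duality bijection (vertices of $\mathcal{P}\leftrightarrow$ $3$-dimensional cones of $\Sigma$) and the hypothesis that all vertices are $\mathbb{Q}$-isolated singularities, then substitute into Theorem \ref{IntSecBetti}. Your added observation that completeness of $\Sigma$ forces $f_3\geq 1$, so the hypothesis $m\geq 1$ is automatic, is a nice bit of diligence that the paper leaves implicit.
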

\begin{proof}
	The number of vertices in $\mathcal{P}$ is equal to $f_{3}$, which equals $m$ by the assumption.
\end{proof}

\begin{remark}
    For $X_{\mathcal{P}}$ an arbitrary compact 6-dimensional toric variety with $m$ $\mathbb{Q}$-isolated singularities, $m \geq 1$, we employ the first identity on the Euler characteristic in Corollary \ref{GenCor} to obtain
	\begin{align*}
	&\big( 0-(m -1)+(f_{1}-3-b)-2(3f_{1}-f_{2}-b-6)+(f_{1}-3-b)\\
	&-(m-1)+0 \big)-\big(1+(f_{1}-3)+(f_{1}-3)+1 \big)\\
	&=-2\big(\sum_{i=1}^{m}1-0+\sum_{i=1}^{m}(f_{1}^{i}-3)\big),
	\end{align*}
	where $f_{1}^{i}$ denotes the number of 1-dimensional neighboring faces of the $i$-th vertex of $\mathcal{P}$.
	We can also interpret $f_{1}^{i}$ as the number of 1-dimensional faces of $\mathcal{M}_{i}$, where $\mathcal{M}_{i}$ is the underlying 2-dimensional polygon of the link of the $i$-th vertex. Note that for a $\mathbb{Q}$-isolated non-singular point $x_{j}$, we have $f_{1}^{j}=3$. 
	This implies $\sum_{i=1}^{m}(f_{1}^{i}-3)=	\sum_{i=1}^{f_{3}}(f_{1}^{i}-3)$. Counting the 1-dimensional faces of $\mathcal{M}_{i}$ for each vertex $x_{i}$ equals counting the neighboring 1-dimensional faces of each vertex in the underlying polytope $\mathcal{P}$. Since each 1-dimensional face of the polytype has precisely two vertices in its topological boundary, we have $\sum_{i=1}^{f_{3}}f_{1}^{i}=2 f_{2}$. Thus, we get
\begin{align*}
	\sum_{i=1}^{f_{3}}(f_{1}^{i}-3)=2f_{2}-3f_{3}.
\end{align*}	
	Then, we have
	$
	\big(-2 m -4 f_{1}+2 f_{2}+8 \big) 
	-\big(2 f_{1}-4 \big) 
	=-2 \big(m+ 2 f_{2}-3 f_{3} \big).
	$
	Finally, we get
	\begin{align*}
	f_{3}-f_{2} +f_{1}=2,
	\end{align*}
	which is the well-known Euler formula for 3-dimensional polytopes. 
\end{remark}
\begin{corollary}\label{AllSing2}
	We assume that all vertices in $\mathcal{P}$ are $\mathbb{Q}$-isolated singularities. Considering the above remark, we arrive at
	\begin{align*}
	\operatorname{rk}(\widetilde{H}_{6}(I^{\bar{n}}X))&=0 \\
	\operatorname{rk}(\widetilde{H}_{5}(I^{\bar{n}}X))&=f_{2}-f_{1}+1 \\
	\operatorname{rk}(\widetilde{H}_{4}(I^{\bar{n}}X))&= f_{1}-3-b\\
	\operatorname{rk}(\widetilde{H}_{3}(I^{\bar{n}}X))&=2(3f_{1}-f_{2}-b-6) \\
	\operatorname{rk}(\widetilde{H}_{2}(I^{\bar{n}}X))&=f_{1}-3-b \\
	\operatorname{rk}(\widetilde{H}_{1}(I^{\bar{n}}X))&=f_{2}-f_{1}+1\\
	\operatorname{rk}(\widetilde{H}_{0}(I^{\bar{n}}X))&=0.
	\end{align*}
\end{corollary}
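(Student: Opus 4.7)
The plan is to deduce Corollary \ref{AllSing2} from Corollary \ref{AllSing} by a single algebraic substitution, using the Euler relation $f_{3}-f_{2}+f_{1}=2$ for the boundary complex of the underlying $3$-polytope $\mathcal{P}$. Under the standing hypothesis that every vertex of $\mathcal{P}$ is a $\mathbb{Q}$-isolated singularity, the number $m$ of $\mathbb{Q}$-isolated singularities of $X_{\mathcal{P}}$ equals $f_{3}$, because by Remark \ref{Bijection} the vertices of $\mathcal{P}$ are in bijection with the top-dimensional cones of $\Sigma$. Thus, starting from Corollary \ref{AllSing}, I would immediately observe that the formulas for $\operatorname{rk}(\widetilde{H}_{6}),\operatorname{rk}(\widetilde{H}_{4}),\operatorname{rk}(\widetilde{H}_{3}),\operatorname{rk}(\widetilde{H}_{2}),\operatorname{rk}(\widetilde{H}_{0})$ already depend only on $f_{1},f_{2},b$ and are therefore copied verbatim into Corollary \ref{AllSing2}.

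The remaining step is to rewrite $\operatorname{rk}(\widetilde{H}_{5}(I^{\bar{n}}X))=\operatorname{rk}(\widetilde{H}_{1}(I^{\bar{n}}X))=f_{3}-1$ using the Euler relation. Substituting $f_{3}=f_{2}-f_{1}+2$ gives $f_{3}-1=f_{2}-f_{1}+1$, which is precisely the value asserted in Corollary \ref{AllSing2}. This disposes of the two remaining entries and completes the proof.

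The one point requiring justification is the validity of $f_{3}-f_{2}+f_{1}=2$ in our setting. Classically this is the Euler formula for the boundary complex of a convex $3$-polytope (since $\partial\mathcal{P}\cong S^{2}$), and the dual polytope $\mathcal{P}$ of the complete fan $\Sigma$ is such a convex polytope by construction. Alternatively, the remark immediately preceding Corollary \ref{AllSing2} derives this identity intrinsically by applying the Euler-characteristic identity of Corollary \ref{GenCor} to the formulas of Corollary \ref{AllSing}, so either route is available.

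In short, the proof is purely cosmetic: there is no new topology to carry out once Corollary \ref{AllSing} and the Euler relation are in hand, and the main (mild) obstacle is simply choosing which derivation of Euler's formula to cite. I would prefer to cite the classical fact $\chi(\partial\mathcal{P})=\chi(S^{2})=2$ to avoid a circular appearance, reserving the remark's derivation as an internal consistency check.
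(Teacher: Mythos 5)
Your argument is correct and matches the paper's intended route: Corollary \ref{AllSing2} is obtained from Corollary \ref{AllSing} by substituting $m = f_3$ and applying the Euler relation $f_3 - f_2 + f_1 = 2$, exactly as the phrase ``considering the above remark'' indicates. Your aside about preferring the classical derivation of Euler's formula over the remark's internal one is sensible but not logically necessary, since the remark is presented as a consistency check rather than as the only available source of that identity.
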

\begin{remark}
	In Theorem \ref{IntSecBetti}, we have
	\begin{align*}
	\chi(\tilde{H}_{\ast}(I^{\bar{n}}X))=2(-2f_{1}+f_{2}-m+4),
	\end{align*}
	which is even.
	This observation is consistent with the following more general principle. 
	Let $X$ be a $(4n+2)$-dimensional $\mathbb{Q}$-pseudomanifold with $\mathbb{Q}$-isolated singularities. 
	 By using the duality on the intersection homology groups, we find
	 \begin{align*}
	 	\chi(I^{\bar{n}}H_{\ast}(X)) = \sum_{i=1}^{4n+2}(-1)^{i} \operatorname{rk}(I^{\bar{n}}H_{i}(X))=2 \sum_{i=1}^{2n}(-1)^{i} \operatorname{rk}(I^{\bar{n}}H_{i}(X))-\operatorname{rk}(I^{\bar{n}}H_{2n+1}(X)).
	 \end{align*}
	  The intersection form on $I^{\bar{n}}H_{2n+1}(X)$ is skew-symmetric and non-degenerate. Hence, there is a basis with respect to which the intersection form has the matrix representation
	  \begin{align*}
	\left( \begin{matrix}
		\begin{matrix}
			0 & 1 \\
		   -1 & 0 \\
		\end{matrix} & 0 & \cdots &  0\\
	0 & \begin{matrix}
		0 & 1 \\
		-1 & 0 \\
	\end{matrix}  &  & \vdots \\
       \vdots &  & \ddots & 0 \\
       0 & \cdots & 0 & \begin{matrix}
       	0 & 1 \\
       	-1 & 0 \\
       \end{matrix} 
	\end{matrix} \right).  
	  \end{align*}
	  Consequently, we have $\operatorname{rk}(I^{\bar{n}}H_{2n+1}(X)) \equiv 0(2)$
	  and thus $\chi(I^{\bar{n}}H_{\ast}(X)) \equiv 0 (2).$
      For an intersection space $I^{\bar{n}}X$ of $X$, it follows from Lefschetz duality 
      that 
      $\operatorname{rk}(H_{2n+1}(I^{\bar{n}}X))=- \operatorname{rk}(I^{\bar{n}}H_{2n+1}(X))+2 \operatorname{rk}(H_{2n+1}(\mathcal{M}))$. Therefore,
      \begin{align*}
      	\chi(H_{\ast}(I^{\bar{n}}X)) \equiv 0 (2).
      \end{align*}
\end{remark}

\begin{remark}
	In the context of Corollary \ref{AllSing2}, the homology groups of $I^{\bar{n}}X$ determine the ordinary homology groups of $X_{\mathcal{P}}$ up to and including the middle degree. In contrast, the intersection homology groups do not determine the ordinary homology below the middle degree. In this sense, 
	the homology of intersection spaces of toric varieties sees more structure than the intersection homology.
\end{remark}
\begin{remark}\label{MVIXB}
	We can nicely fit all of the above data into a Mayer-Vietoris sequence. 
	Let $I^{\bar{n}}X$ be the associated generalized intersection space of a 
	6-dimensional toric variety $X$. Then we take $\mathcal{U} \simeq \mathcal{M}$, 
	where $\mathcal{M}$ is the $\mathbb{Q}$-manifold obtained by cutting out the 
	$\mathbb{Q}$-isolated singularities of $X$. We also take $\mathcal{V} \cong \mathcal{C}(\displaystyle{\sqcup} \mathcal{L}_{<3})$. The intersection $\cap := \mathcal{U} \cap \mathcal{V}$
	is homotopy equivalent to $\mathcal{L}_{<3}$. Thus, we have
	\begin{footnotesize}
		\begin{align*}
		0 &\xrightarrow{\phantom{-\cong-}} \overbrace{H_{6}(\cap)}^{=0} \xrightarrow{\phantom{-\cong-}} \overbrace{H_{6}(\mathcal{U})}^{=0} \xrightarrow{\phantom{-} \cong \phantom{-}} \overbrace{H_{6}(I^{\bar{n}}X)}^{=0} 
		\xrightarrow{\phantom{-\cong-}} \overbrace{H_{5}(\cap)}^{=0} \xrightarrow{\phantom{-\cong-}} \overbrace{H_{5}(\mathcal{U})}^{\cong \mathbb{Q}^{m-1}} \xrightarrow{\phantom{-} \cong \phantom{-}} \overbrace{H_{5}(I^{\bar{n}}X)}^{\cong \mathbb{Q}^{m -1}} \\
		&\xrightarrow{\phantom{-\cong-}} \overbrace{H_{4}(\cap)}^{=0} \xrightarrow{\phantom{-\cong-}} \overbrace{H_{4}(\mathcal{U})}^{\cong \mathbb{Q}^{f_{1}-3-b} } \xrightarrow{\phantom{-} \cong \phantom{-}} \overbrace{ H_{4}(I^{\bar{n}}X)}^{\cong \mathbb{Q}^{f_{1}-3-b} } \xrightarrow{\phantom{-\cong-}} \overbrace{H_{3}(\cap)}^{=0} \xrightarrow{\phantom{-\cong-}} \overbrace{H_{3}(\mathcal{U})}^{\cong \mathbb{Q}^{3f_{1}-f_{2}-b-6} } \xrightarrow{\phantom{-\cong-} } \overbrace{H_{3}(I^{\bar{n}}X)}^{\cong \mathbb{Q}^{2(3f_{1}-f_{2}-b-6)}  } \\
		&\xrightarrow{\phantom{-\cong-} } \overbrace{H_{2}(\cap)}^{\cong \mathbb{Q}^{ r} } \xrightarrow{\phantom{-\cong-} } \overbrace{ H_{2}(\mathcal{U})}^{\cong \mathbb{Q}^{f_{1}-3 }} \xrightarrow{\phantom{-\cong-} } \overbrace{H_{2}(I^{\bar{n}}X)}^{\cong \mathbb{Q}^{f_{1}-3-b }} \xrightarrow{\phantom{-\cong-} } \overbrace{H_{1}(\cap)}^{=0} \xrightarrow{\phantom{-\cong-} } \overbrace{H_{1}(\mathcal{U})}^{=0} \xrightarrow{\phantom{-\cong-} } \overbrace{H_{1}(I^{\bar{n}}X)}^{\cong \mathbb{Q}^{m -1} } \\
		& \xrightarrow{\phantom{-} \cong \phantom{-} } \overbrace{\widetilde{H}_{0}(\cap)}^{\cong \mathbb{Q}^{m -1}} \xrightarrow{\phantom{-\cong-} } \overbrace{\widetilde{H}_{0}(\mathcal{U})}^{=0} \oplus \overbrace{\widetilde{H}_{0}(\mathcal{V})}^{=0} \xrightarrow{\phantom{-\cong-} } \overbrace{ \widetilde{H}_{0}(I^{\bar{n}}X) }^{=0} \xrightarrow{\phantom{-\cong-} } 0,
		\end{align*}
	\end{footnotesize}
	where $r := \sum_{i=1}^{m}(f_{1}^{i}-3)=2 f_{2}-3f_{3}$.
\end{remark}
\begin{corollary}
	For each 6-dimensional toric variety $X$ with $H_{3}(X)=0$, the equality $b=r$ holds and
	$b$ is combinatorially invariant. 
\end{corollary}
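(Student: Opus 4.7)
The approach is to exploit the Mayer--Vietoris sequence displayed in Remark \ref{MVIXB}. First, by Proposition \ref{OrdiHom}, the hypothesis $H_{3}(X)=0$ is equivalent to $3f_{1}-f_{2}-b-6=0$. Via Theorem \ref{IntSecBetti}, this forces $\widetilde{H}_{3}(I^{\bar{n}}X)=0$ as well, since $\operatorname{rk}(\widetilde{H}_{3}(I^{\bar{n}}X))=2(3f_{1}-f_{2}-b-6)$. Together with the already-recorded vanishing $H_{1}(\cap)=0$, the relevant portion of the Mayer--Vietoris sequence then collapses to the short exact sequence
\[
0 \longrightarrow H_{2}(\cap) \longrightarrow H_{2}(\mathcal{U}) \longrightarrow H_{2}(I^{\bar{n}}X) \longrightarrow 0.
\]
Reading off the ranks from Remark \ref{MVIXB} immediately yields $r+(f_{1}-3-b)=f_{1}-3$, hence $b=r$.

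Since $r=2f_{2}-3f_{3}$ depends only on the face counts of the dual polytope, the equality $b=r$ exhibits $b$ as a combinatorial invariant whenever $H_{3}(X)=0$. The entire argument is essentially a bookkeeping consequence of the Mayer--Vietoris sequence already assembled in Remark \ref{MVIXB}, so no genuine obstacle arises; the only thing to verify is that both ends of the four-term sequence around $H_{2}(\cap)$ vanish, which we have just done. The degenerate case $m=0$ is handled separately: then $X$ is a closed rational homology manifold, so $b=0$ by rational Poincar\'e duality (Corollary \ref{PDRationally}), and $r=0$ since every vertex $j$ of a smooth dual polytope satisfies $f_{1}^{j}=3$, making the identity $b=r$ hold trivially.
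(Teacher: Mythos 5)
Your proof is correct and follows essentially the same route as the paper: both derive $H_3(I^{\bar{n}}X)=0$ from the hypothesis (the paper phrases this as $H_3(\mathcal{U})=0$, hence $H_3(I^{\bar{n}}X)=0$, which uses the same rank formula $\operatorname{rk}\widetilde{H}_3(I^{\bar{n}}X)=2(3f_1-f_2-b-6)$ that you cite), and both then read off $b=r$ from the resulting short exact piece of the Mayer--Vietoris sequence of Remark \ref{MVIXB}. Your explicit extraction of the short exact sequence $0\to H_2(\cap)\to H_2(\mathcal{U})\to H_2(I^{\bar{n}}X)\to 0$ makes the bookkeeping transparent, and your added remark that $m=0$ forces $b=0=r$ trivially is a reasonable defensive check, although the corollary is placed in a context (Theorem \ref{IntSecBetti} and Remark \ref{MVIXB}) that already assumes $m\geq 1$, so the paper does not address that case; the paper in addition points out that equating $b=2f_2-3f_3$ with the combinatorial consequence $b=3f_1-f_2-6$ of the hypothesis reproduces the Euler relation, which you omit but do not need.
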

\begin{proof}
	The assumption implies that $H_{3}(\mathcal{U})=0$ and hence $H_{3}(I^{\bar{n}}X)=0$. By using the Mayer-Vietoris sequence of Remark \ref{MVIXB}, we get $b=r=2f_{2}-3 f_{3}.$
Reformulating the assumption yields $b=3f_{1}-f_{2}-6.$
	Setting both expressions equal to each other will result in the Euler formula.
\end{proof}

\section{Conclusion}
\label{conclusion}

We summarize the results obtained in this work. We started by considering toric varieties as topological pseudomanifolds. We introduced the construction of a link for an arbitrary stratum in a given toric variety. We also endowed links of strata with real co-dimensions 4 and 6 with CW structures and computed their homology groups. We showed that the link of each connected component of the real 4-co-dimensional stratum is always a rational homology 3-sphere. Thus, for an arbitrary $2n$-dimensional toric variety $X$, we get a $\mathbb{Q}$-stratification whose depth 1 stratum is of real co-dimension 6. We generalized the theory of intersection spaces to rational homology stratified pseudomanifolds with rationally isolated singularities. Our main objects of study are 6-dimensional toric varieties.
For these, we compare the Betti numbers of the associated 
generalized intersection spaces $IX$ and intersection homology in the following table.\\

\begin{table}[!h] 
	\begin{center}
		\begin{tabular}{|l||*{5}{c|}}\hline
			\backslashbox{$b_{\ast}$}{Theory}
			&\makebox[7em]{$H_{\ast}(IX)$}&\makebox[6em]{$IH_{\ast}(X)$}\\\hline\hline
			$b_{6}$ & $0$&$1$\\\hline
			$b_{5}$  & $m-1$ &$0$\\\hline
			$b_{4}$  &$f_{1}-3-b$&$f_{1}-3$\\\hline
			$b_{3}$  &$2(3f_{1}-f_{2}-b-6) $&$0$\\\hline
			$b_{2}$  & $f_{1}-3-b$ &$f_{1}-3$\\\hline
			$b_{1}$  & $m-1$&$0$\\\hline
			$b_{0}$  &$1$&$1$\\\hline
		\end{tabular}
		\caption{\label{table.hixihx}The Betti numbers associated to $HI_{\ast}$ and $IH_{\ast}$. }
	\end{center}
\end{table}
The left hand column of the table shows that the $b_{\ast}^{IX}$ are not combinatorially invariant. Indeed, we can determine the ordinary Betti numbers of $X$ up to and including the middle degree by using $b_{\ast}^{IX}$. However, the right hand column, as already observed by Fieseler in \cite{fieseler1991rational}, shows that the associated Betti numbers of intersection homology neither fix the combinatorial data of the fan nor the ordinary homology below the middle degree.


\begin{thebibliography}{99}
	
	\bibitem{agustin2020self} Agust\'in, M., Essig, J. \& de Bobadilla, J. {\em Self-dual intersection space complexes}, arXiv:2005.14616 (2020)
	
	
	\bibitem{vicente2020intersection} Agust\'in Vicente, M. \& de Bobadilla, J. 
	{\em Intersection space constructible complexes}, 
	Documenta Mathematica, \textbf{25} (2020), pp. 1653--1725 
	
	\bibitem{banaglmem} Banagl, M. 
	 {\em Extending Intersection Homology Type Invariants to Non-Witt Spaces},
	 Memoirs of the Amer. Math. Soc. \textbf{160}, no. 760, American Math. Society, 2002.
	
	\bibitem{BanaglTISS} Banagl, M. {\em Topological invariants of stratified spaces}, Springer Monographs in Mathematics, Springer-Verlag Berlin Heidelberg (2007) 
	
	
	\bibitem{banagl2016foliated} Banagl, M. {\em Foliated stratified spaces and a de Rham complex describing intersection space cohomology}, Journal Of Differential Geometry, \textbf{104} (2016), pp. 1--58 
	
	\bibitem{banagl2010intersection} Banagl, M. 
	{\em Intersection spaces, spatial homology truncation, and string theory}, 
	Lecture Notes in Mathematics \textbf{1997}. (Springer-Verlag
	Berlin - Heidelberg, 2010)
	
	
	\bibitem{banagl2013isometric}Banagl, M. {\em Isometric group actions and the cohomology of flat fiber bundles},  Groups Geometry And Dynamics, \textbf{7} (2013), pp. 293--321 
	
	
	\bibitem{banagl2014intersection}Banagl, M., Budur, N. \& Maxim, L. {\em Intersection spaces, perverse sheaves and type IIB string theory}, Advances In Theoretical And Mathematical Physics, \textbf{18} (2014), pp. 363-399 
	
	
	\bibitem{banagl2019hodge}Banagl, M. \& Hunsicker, E. {\em Hodge theory for intersection space cohomology}, Geometry \& Topology, \textbf{23} (2019), pp 2165--2225 
	
	
	\bibitem{banagl2012intersection}Banagl, M. \& Maxim, L. {\em Intersection spaces and hypersurface singularities}, Journal Of Singularities, \textbf{5} (2012), pp. 48--56 
	
	\bibitem{borel} Borel, A., et al.,
 {\em Intersection Cohomology},
 Progress in Math., vol. 50, Birkh\"auser, 1984.
	
	\bibitem{cheeger1980hodge} Cheeger, J. 
	{\em On the Hodge theory of Riemannian pseudomanifolds}, 
	in Proc. of Symposia in Pure Math, \textbf{36} (1980), pp. 91--146,
	American Math. Soc. 
	
	
	\bibitem{cheeger1979spectral}Cheeger, J. {\em On the spectral geometry of spaces with cone-like singularities},  Proceedings Of The National Academy Of Sciences, \textbf{76} (1979), pp. 2103--2106 
	
	\bibitem{cheeger1983spectral} Cheeger, J. {\em Spectral geometry of singular Riemannian spaces}, Journal Of Differential Geometry, \textbf{18} (1983), pp. 575--657 
	
	\bibitem{cox2011toric} Cox, D., Little, J. \& Schenck, H. {\em Toric varieties}, 
	(American Mathematical Soc., 2011)
	
	\bibitem{essig2016rham} Essig, T. {\em About a de Rham Complex describing 
	intersection space cohomology in a three strata case}, 
	PhD thesis, (Universität Heidelberg 2016)
	
	\bibitem{essigmatcontemp} Essig, T.
	{\em A generalized de Rham Theorem for Intersection Spaces},
	Matem\'atica Contempor\^anea, \textbf{53} (2023), pp. 233--253
	
	\bibitem{fieseler1991rational} Fieseler, K. 
	{\em Rational intersection cohomology of projective toric varieties}, 
	J. f\"ur die reine und angewandte Mathematik, \textbf{413} (1991), pp. 88--98
	
	
	\bibitem{fischli1992toric} Fischli, S. On toric varieties. PhD thesis. 
	(Universität Bern, 1992)
	
	\bibitem{friedman2020singular}Friedman, G. {\em Singular intersection homology}. (Cambridge University Press, 2020)
	
	\bibitem{friedman2013cup}Friedman, G. \& McClure, J. {\em Cup and cap products in intersection (co)homology}, Advances In Mathematics, \textbf{240} (2013), pp. 383--426 
	
	
	\bibitem{goresky1980intersection}Goresky, M. \& MacPherson, R. {\em Intersection homology theory},  Topology, \textbf{19}  (1980), pp. 135--162
	
	\bibitem{gmih2} Goresky, M. \& MacPherson, R.
 {\em Intersection Homology II},
 Invent. Math. \textbf{71} (1983), 77 -- 129.  
	
	\bibitem{iversen} Iversen, B. {\em Cohomology of Sheaves},
 Universitext,
 Springer-Verlag Berlin Heidelberg 1986. 
	
	\bibitem{kirwan2006introduction}Kirwan, F. \& Woolf, J. {\em An introduction to intersection homology theory}, (CRC Press, 2006)
	
	
	\bibitem{klimczak2017poincare}Klimczak, M. {\em Poincaré duality for spaces with isolated singularities},  Manuscripta Mathematica, \textbf{153} (2017), pp. 231--262 
	
	
	\bibitem{mcconnell1989rational}McConnell, M. {\em The rational homology of toric varieties is not a combinatorial invariant},  Proceedings of the American Mathematical Society, \textbf{105} (1989), pp. 986--991 
	
	
\bibitem{rourkesanderson} Rourke, C., Sanderson, B.,
 {\em Homology stratifications and intersection homology},
 Geometry \& Topology Monographs, \textbf{2} (1999), pp. 455--472	
	
	\bibitem{schloder2019multiplicative} Schlöder, F. \& Essig, J. T. 
    {\em	Multiplicative de Rham Theorems for Relative and Intersection Space Cohomology},
	 J. of Singularities, \textbf{19} (2019), pp. 97--130
	
	
	\bibitem{sella} Sella, Y. 
 {\em Comparison of sheaf cohomology and singular cohomology}, 
 arXiv:1602.06674.      
	
	\bibitem{spiegel2013k}Spiegel, M. {\em K-theory of intersection spaces}, PhD thesis (Universität Heidelberg 2013)
	
	
	\bibitem{stanley1987generalized}Stanley, {\em R. Generalized  $H$-vectors, intersection cohomology of toric varieties, and related results}, Commutative Algebra And Combinatorics, \textbf{11} (1987), pp. 187--214 
	

	\bibitem{wrazidlo2021fundamental}Wrazidlo, D. {\em A fundamental class for intersection spaces of depth one Witt spaces}, Manuscripta Mathematica, \textbf{166} (2021), pp. 199--236 
	
	\bibitem{WrazidloMA} Wrazidlo, D. {\em Induced maps between intersection spaces}, Master thesis, (Universität Heidelberg, 2013)
	
	\bibitem{yavin1990intersection}Yavin, D. {\em The intersection homology with twisted coefficients of toric varieties}, PhD
	thesis, (Massachusetts Institute of Technology, 1990)
	
\end{thebibliography}
\end{document}